\newcommand{\blabla}[1]{\quad\text{#1}\quad}
\newcommand{\Comp}{\mathbb{C}}
\definecolor{ABColor}{cmyk}{0.1,0.8,0,0.5}
\definecolor{Qcolor}{cmyk}{0,1,1,0.1}
\newcommand\Lim{``\lim"}
\newtheorem{theorem}{Theorem}[section]
\newtheorem{corollary}[theorem]{Corollary}
\newtheorem{proposition}[theorem]{Proposition}
\newtheorem{lemma}[theorem]{Lemma}
\theoremstyle{definition}
\newtheorem{definition}[theorem]{Definition}
\newtheorem{example}[theorem]{Example}
\newcommand{\kk}{\Bbbk}
\newcommand{\kt}{$\Bbbk$\nobreakdash-\hspace{0pt}}
\newcommand{\ff}{F}
\theoremstyle{remark}
\newtheorem{remark}[theorem]{Remark}
\numberwithin{equation}{section}\theoremstyle{plain}
\newcommand{\iso}{\stackrel{\sim}{\longrightarrow}}
\newcommand{\toto}{\longrightarrow}
\newcommand{\into}{\hookrightarrow}
\newcommand{\C}{{\mathcal C}}
\newcommand{\E}{{\mathcal E}}
\newcommand{\A}{{\mathcal A}}
\newcommand{\B}{{\mathcal B}}
\newcommand{\D}{{\mathcal D}}
\newcommand{\T}{{\mathcal T}}
\newcommand{\U}{{\mathcal U}}
\newcommand{\Z}{{\mathcal Z}}
\newcommand{\Ll}{{\mathcal L}}
\renewcommand{\1}{\textbf{1}}
\newcommand{\KER}{\mathfrak{Ker}}
\newcommand{\TT}{\mathbb T}
\newcommand\car{\operatorname{char}}
\newcommand\id{\operatorname{id}}
\newcommand\Aut{\operatorname{Aut}}
\newcommand\co{\operatorname{co}}
\newcommand\cha{\operatorname{char}}
\newcommand\End{\operatorname{End}}
\newcommand\infl{\operatorname{infl}}
\newcommand\Res{\operatorname{Res}}
\newcommand\Ind{\operatorname{Ind}}
\newcommand\Hom{\operatorname{Hom}}
\newcommand\Pic{\operatorname{Pic}}
\newcommand\rep{\operatorname{rep}}
\newcommand\FPdim{\operatorname{FPdim}}
\newcommand\FPind{\operatorname{FPind}}
\newcommand\vect{\operatorname{vect}}
\newcommand\coker{\operatorname{coker}}
\newcommand\Pro{\operatorname{Pro}}
\newcommand\MOD[2]{\mathrm{mod}_{#1}{#2}}
\newcommand\ti{\mbox{-}\!}
\newcommand\Rep[1]{#1\ti\operatorname{mod}}
\newcommand\CoRep[1]{\operatorname{comod}\!\mbox{-} #1}
\newcommand\COREP[1]{\operatorname{Comod}\!\mbox{-} #1}
\newcommand{\ldual}[1]{\leftidx{^\vee}{\!#1}{}}
\newcommand{\rdual}[1]{{#1}^\vee}
\newcommand{\dual}[1]{{#1}^*}
\newcommand{\opp}{\mathrm{op}}
\newcommand{\prettydef}[1]{\left\{\begin{array}{ccl}#1\end{array}\right.}
\newcommand{\implique}{$\Rightarrow$}
\newcommand{\ssi}{$\Leftrightarrow$}
\begin{document}
\title[]{Exact sequences of tensor categories}
\author{Alain Brugui\`{e}res}
\author{Sonia Natale}
\address{Alain Brugui\`{e}res: D\' epartement de Math\' ematiques.
\; Universit\' e Montpellier II. Place Eug\` ene Bataillon. 34 095
Montpellier, France} \email{bruguier@math.univ-montp2.fr
\newline \indent \emph{URL:}\/ http://www.math.univ-montp2.fr/~bruguieres/}
\address{Sonia Natale: Facultad de Matem\'atica, Astronom\'\i a y F\'\i sica.
Universidad Nacional de C\'ordoba. CIEM -- CONICET. Ciudad
Universitaria. (5000) C\'ordoba, Argentina}
\email{natale@famaf.unc.edu.ar
\newline \indent \emph{URL:}\/ http://www.famaf.unc.edu.ar/$\sim$natale}

\thanks{The work of the second author was partially supported by CONICET,
ANPCyT, SeCYT--UNC and Alexander von Humboldt Foundation}

\subjclass{Primary 16W30; Secondary 18}

\date{\today.}

\begin{abstract}  We introduce the notions of normal tensor functor and exact sequence of tensor
categories. We show that exact sequences of tensor categories generalize strictly exact sequences of Hopf algebras as defined by Schneider, and in particular, exact sequences of (finite) groups. We classify exact sequences of tensor categories $\C' \to \C \to \C''$ (such that $\C'$ is finite) in terms of normal faithful Hopf monads on $\C''$ and also, in terms of self-trivializing commutative algebras in the center of $\C$.
More generally, we  show that, given any dominant
tensor functor $\C \to \D$ admitting an exact (right or left)
adjoint  there exists a canonical commutative algebra $(A,\sigma)$
in the center of $\C$ such that $\ff$ is tensor equivalent to the
free module functor $\C \to \MOD{\C}{(A,\sigma)}$, where
$\MOD{\C}{(A,\sigma)}$ denotes the category of $A$-modules in $\C$
endowed with a monoidal structure defined using $\sigma$.
We re-interpret equivariantization under a finite group action on a tensor
category and, in particular, the modularization construction, in terms of exact sequences, Hopf monads and commutative central algebras.
As an application, we prove
that a braided fusion category whose dimension is odd and
square-free is equivalent, as a fusion category, to the
representation category of a group.
\end{abstract}

\maketitle

\setcounter{tocdepth}{1} \tableofcontents

\section*{Introduction}

Tensor categories are abelian categories over a field $\kk$ having finite dimensional $\Hom$ spaces and objects of finite length, endowed with a rigid (or autonomous) structure,
that is a monoidal structure with duals, such that the monoidal tensor product is \kt bilinear and the unit object $\1$ is simple ($\End(\1)=\kk$).  A fusion category is a split semisimple tensor category having finitely many isomorphism  classes of simple objects. A tensor functor is a strong monoidal \kt linear functor between tensor categories.

In this paper, we introduce and study the notion of exact sequence of tensor categories, defined as follows. Let  $F : \C \to \D$ be a tensor functor. Then $F$ is \emph{dominant} if any object $Y$ of $\C'$ is a subobject of $F(X)$ for some  $X$ in $\C$. It is  \emph{normal} if any object $X$ of $\C$ admits a subobject $X_0$ such that $F(X_0)$ is the largest subobject of $F(X)$ which is trivial, that is isomorphic to $\1^n$. Denote by $\KER_F$ the full subcategory of $\C$ whose objects have a trivial image under $F$.

 An exact sequence of tensor categories is a diagram of tensor functors
 \begin{equation*}\label{exact}\xymatrix{\C' \ar[r]^i & \C \ar[r] ^\ff & \C''}\end{equation*}
such that
\begin{enumerate}[(a)]
\item $\ff$ is normal;
\item  $\ff$ is dominant;
\item $i$ induces an equivalence between $\C'$ and $\KER_\ff \subset \C$.
\end{enumerate}

This notion extends the notion of strictly exact sequence of Hopf algebras introduced by  Schneider in~\cite{schn2}. Indeed any strictly exact sequence of Hopf algebras:
$$K \overset{i} \toto H \overset{p} \toto  H'$$  gives rise to
an exact sequence of tensor categories:
\begin{equation*}\xymatrix{\CoRep{K} \ar[r] & \CoRep{H} \ar[r] &\CoRep{H'},}\end{equation*}
and, if $H$ is finite-dimensional, to a second exact sequence of tensor categories:
\begin{equation*}\xymatrix{\Rep{H'} \ar[r] & \Rep{H} \ar[r] &\Rep{K}.}\end{equation*}

For instance, an exact sequence of groups $\xymatrix@C=1.5em{1 \ar[r] &G' \ar[r]& G \ar[r]& G'' \ar[r] &
1}$ yields an exact sequence of tensor categories:
\begin{equation*}\label{ext-groupcats}\xymatrix{\C(G') \ar[r]& \C(G) \ar[r]& \C(G''),}\end{equation*}
(where $\C(G)$ denotes the tensor category of $G$-graded vector spaces) and if $G$ is finite,  a second exact sequence of tensor categories:
\begin{equation*}\label{ext-groupcats}\xymatrix{\rep G'' \ar[r]& \rep G \ar[r]& \rep G'.}\end{equation*}


In particular, we study exact sequences of fusion categories, and we show that the Frobenius-Perron dimension is multiplicative, that is, given an exact sequence of fusion categories
$\C' \to \C \to \C''$, we have  $$\FPdim \C = \FPdim \C' \,\FPdim \C''.$$
We show that an exact sequence of pointed categories is classified by an exact sequence of finite groups $1 \to G' \to G \to G'' \to 1$ together with a cohomology class
$\alpha \in H^3(G'',\kk^\times)$.  Also, generalizing a well-known result for semisimple Hopf algebras, we
show that a dominant tensor functor $\ff: \C
\to \C''$ of Frobenius-Perron index $2$ between fusion categories is  normal, and therefore gives rise to an exact sequence: $\rep \mathbb{Z}_2 \to \C \to \C''$.


Can we interpret an exact sequence of tensor categories $$(\E)\quad \C' \toto \C \overset{F}\toto \C''$$
in terms of `algebraic' data on $\C''$, or on $\C$? For technical reasons, we assume that $\C'$ is finite, that is, $F$ has adjoints. Then we show that $(\E)$ is encoded by a certain Hopf monad on $\C$, and also, if the right adjoint of $F$ is exact, by a certain commutative algebra in the center $\Z(\C)$ of $\C$.

A Hopf monad on a rigid  category $\D$ (as defined in
\cite[3.3]{bv})  is an algebra $T$ in the monoidal category
$\End(\D)$ of endofunctors of $\D$, which is also a comonoidal
functor in a compatible way, and possesses left and right
antipodes.

An exact sequence of tensor categories over a field $\kk$:
$$\C' \overset{f}\toto \C \overset{F} \toto \C''\leqno{(\E)}$$
 defines a fiber functor $\omega=\Hom(\1,Ff) : \C' \to \vect_\kk$,
hence by Tannaka reconstruction a Hopf algebra $H=L(\omega)$, called the induced Hopf algebra of $(\E)$, such that $\C' \simeq \CoRep{H}$.
The Hopf algebra $H$ is finite-dimensional if and only if the tensor functor $F$ has adjoints.

A \kt linear right exact Hopf monad $T$ on a tensor category $\C$ is
\emph{normal} if $T(\1)$ is a trivial object. If $T$ is such a Hopf monad, and if in addition $T$ is faithful, then it gives rise to an exact sequence of tensor categories
$$\CoRep{H} \to \C^T \to \C,$$
where $H$ is the induced Hopf algebra of $T$, isomorphic to $\Hom(T(\1),\1)$.

We show that, given tensor categories $\C'$ and $\C''$, with $\C$ finite, exact sequences $(\E)$  are classified
by \kt linear normal faithful Hopf monads $T$ on the tensor category $\C''$ whose induced Hopf algebra $H$ is such that $\CoRep{H}$ is tensor equivalent to $\C'$.

Equivariantization provides examples of  exact sequences of tensor categories. Indeed, if a finite group $G$ acts on a tensor
category $\C$ by tensor autoequivalences, then the
equivariantization $\C^G$ is again a tensor category, and we have an exact sequence of tensor categories:
$$\rep G \to \C^G \to \C.$$
An action $\rho$ of a finite group $G$ on a tensor
category $\C$ by tensor autoequivalences can be encoded in the form of a Hopf monad $T^\rho$ on $\C$, defined by $$T^\rho= \bigoplus_{g \in G} \rho(g)$$
as an endofunctor of $\C$,
so that  $\C^G$ is the category of $T^\rho$-modules
$\C^{T^\rho}$.

We show that a Hopf monad $T$ on a tensor category $\C$ is of the form $T^\rho$ for some group action $\rho$ if and only if $T$ is
\kt linear right exact faithful normal cocommutative (see Definition~\ref{def-cocom}) and its induced Hopf algebra is split semisimple.

A special case of an equivariantization is given by the
modularization procedure \cite{bruguieres, muger}. A premodular
category $\C$ over an algebraically closed field $\kk$ of
characteristic zero is modularizable if there exists a dominant
ribbon tensor functor  $F :\C \to  \widetilde{\C}$, where
$\widetilde{\C}$ is a modular category. Such is the case
if and only if the tensor subcategory of transparent objects $\T$
of $\C$ is tannakian, that is $\T \simeq  \rep G$ as a symmetric
tensor category, for some finite group $G$
(see~\cite{bruguieres}). In that case, we have an exact sequence
of fusion categories
$$\rep G \to \C \overset{F}\to \widetilde{\C},$$ where $F$ is the modularization functor, and $G$ acts on $\widetilde{\C}$ by braided tensor equivalences, so that $F$ is an equivariantization. Conversely, given a modular category $\D$, we classify premodular categories admitting $\D$ as a modularization in terms of \kt linear semisimple faithful  normal Hopf monads on $\D$ which are compatible with the ribbon structure.

We also interpret exact sequences of tensor categories in terms of commutative central algebras using  results of  \cite{blv}.
If $\C$ is a tensor category and $(A,\sigma)$ is a commutative algebra in the categorical center $\Z(\C)$ of $\C$, then the \kt linear abelian  category $\MOD{\C}{(A,\sigma)}$ of right $A$-modules in $\C$ admits a monoidal structure involving the half-braiding $\sigma$, so that the free module functor $F_A : \C \to \MOD{\C}{A}$,  $X \mapsto X \otimes A$ is strong monoidal.
We show that, given a dominant tensor functor $F : \C \to \D$ admitting an exact  right adjoint $R$, the object $A=R(\1)$ admits a canonical structure  of commutative algebra in the center of $\C$ such that $\MOD{\C}{(A,\sigma)}$ is a tensor category, and $F=F_A$ up to a tensor equivalence  $\D \simeq \MOD{\C}{A}$.
The central algebra $(A,\sigma)$ is called the \emph{induced central algebra of $F$.}
We show that $F$ is normal if and only if $A$ is \emph{self-trivializing,} that is, $F_A(A)$ is trivial. Then the induced Hopf algebra of $F$ is $\Hom(\1,F(A))$.

Thus, an exact sequence of tensor categories $\C' \toto \C
\overset{F}\toto \C''$ such that $F$ has an exact right adjoint is
equivalent to $\langle A\rangle \toto \C \toto
\MOD{\C}{(A,\sigma)}$, where $(A,\sigma)$ is the induced central
algebra of $F$  and $\langle A\rangle$ denotes the smallest abelian
subcategory of  $\C$ containing $A$ and stable by direct sums,
subobjects and quotients. Moreover, we show that $F$ is an
equivariantization if and only if $F(\sigma)$ is trivial and the
induced Hopf algebra of $F$ is split semisimple.

We introduce the notions of \emph{simple} fusion category and
\emph{normal} fusion subcategory arising naturally from the
definition of an exact sequence. If $\C$ is a fusion category and $\C' \subset \C$ is a fusion subcategory, we say that $\C' \subset \C$ is normal if it fits in an exact sequence of fusion categories $\C' \to \C \to \C''$. We say that $\C$ is simple if it has no non-trivial normal fusion strict subcategory.
 We characterize normal fusion subcategories in terms of commutative central algebras, and show that our notion of simplicity differs from that introduced in
\cite{eno2}. If $G$ is a finite
group, then the simplicity of $\rep G$ is equivalent to the
simplicity of $G$ and also to the simplicity of the fusion
category $\C(G)$ of $G$-graded vector spaces.

As an application of the notion of exact sequence of fusion
categories, we show the following classification
result:

\begin{theorem}\label{classif} Let $\mathcal C$ be a braided fusion category over an algebraically closed field $\kk$ of characteristic $0$.
If  $\dim \mathcal C$ is odd and square-free, then $\mathcal
C $ is equivalent to $\rep \Gamma$ as fusion categories for some finite group
$\Gamma$.
\end{theorem}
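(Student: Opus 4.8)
The plan is to use the braiding of $\mathcal C$ only as a tool, reducing the problem to the symmetric (Tannakian) and the nondegenerate cases and gluing them by an equivariantization; the final equivalence $\mathcal C\simeq\rep\Gamma$ is one of plain fusion categories, the braiding being discarded at the end. Set $n=\dim\mathcal C$ and let $\mathcal C_0\subseteq\mathcal C$ be the M\"uger center, the symmetric fusion subcategory of transparent objects. Its dimension divides $n$ and is therefore odd; a symmetric fusion category of odd dimension in characteristic zero is Tannakian by Deligne's classification (the super-Tannakian case forces even dimension), so $\mathcal C_0\simeq\rep G$ for a finite group $G$ of odd order.

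First I would settle the extremes. If $\mathcal C_0=\mathcal C$ then $\mathcal C$ is symmetric Tannakian and we are done. If $\mathcal C_0=\vect$ then $\mathcal C$ is nondegenerate, and I claim it is pointed. Since $n$ is odd, $\mathcal C$ is integral, so the dimensions $d_i$ of its simple objects are positive integers; endowing the nondegenerate category with its canonical spherical structure (available by pseudounitarity in the odd-dimensional case), the formal codegrees $n/d_i^2$ are algebraic integers, whence $d_i^2\mid n$, and square-freeness forces $d_i=1$. Thus $\mathcal C$ is pointed with underlying group $A$ of odd order $n$. Because $|A|$ is odd, every quadratic form on $A$ is the diagonal of a symmetric bicharacter, so the braided structure may be realized with trivial associator; hence the underlying fusion category is $\vect_A\simeq\rep\widehat A$, giving $\mathcal C\simeq\rep\Gamma$.

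In the general case $\vect\subsetneq\mathcal C_0\subsetneq\mathcal C$, I would de-equivariantize by the Tannakian M\"uger center. As $\mathcal C_0\simeq\rep G$ lies in the M\"uger center, the de-equivariantization $\widetilde{\mathcal C}:=\mathcal C_G$ is a braided fusion category which is now nondegenerate and carries a $G$-action with $\mathcal C\simeq(\widetilde{\mathcal C})^{G}$; this is precisely the exact sequence $\rep G\to\mathcal C\to\widetilde{\mathcal C}$. By multiplicativity of the Frobenius--Perron dimension, $\dim\widetilde{\mathcal C}=n/|G|$ is again odd and square-free, so the nondegenerate case already treated shows that $\widetilde{\mathcal C}\simeq\rep\widehat A$ is pointed, with $A$ abelian of odd order. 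It remains to identify $\mathcal C\simeq(\rep\widehat A)^{G}$ as the representation category of a finite group.

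I expect this last reconstruction to be the crux. The $G$-action comes from braided autoequivalences of the metric group $(A,q)$, that is, from automorphisms of $\widehat A$, and equivariantizing $\rep K$ along an action by automorphisms of $K$ yields $\rep(K\rtimes G)$; so the naive answer is $\mathcal C\simeq\rep(\widehat A\rtimes G)=\rep\Gamma$. The difficulty --- and the place where the hypotheses \emph{odd} and \emph{square-free} must really be used --- is to rule out a cohomological twist of this equivariantization, of the same type as the class in $H^3(G,\kk^\times)$ that measures exact sequences of pointed categories, which a priori could produce only a group-theoretical category rather than a genuine $\rep\Gamma$. To kill it I would argue by induction on $n$, peeling off one prime at a time: by Feit--Thompson the odd group $G$ is solvable, so $\mathcal C_0$ contains a Tannakian subcategory $\rep(\mathbb Z/p)$; de-equivariantizing by it reduces the dimension and lets the induction hypothesis apply to the quotient, while square-freeness of $n$ forbids the repeated prime factor that a nontrivial twist would require. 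Once the twist is shown to vanish at each stage, $\mathcal C\simeq\rep\Gamma$ follows and the induction closes.
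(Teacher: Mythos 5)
Your first half is essentially the paper's own argument: the transparent (M\"uger) subcategory is Tannakian because its dimension is odd, the category is modularizable, and the modularization $\widetilde{\C}$ is pointed with trivial associator --- your formal-codegree divisibility and odd-order quadratic-form arguments correspond to the paper's appeal to \cite[Theorem 2.11 (ii)]{eno2} and \cite[Proposition 7.5.3 iii)]{FK} in Lemmas~\ref{mod-odd} and~\ref{mod-sqf}. The genuine gap is in the step you yourself call the crux: you never prove that the cohomological twist of the equivariantization vanishes. The sentence ``square-freeness of $n$ forbids the repeated prime factor that a nontrivial twist would require'' is a hope, not an argument, and the difficulty it must address is real: a categorical action of $G$ on a pointed category is \emph{not} determined by the induced homomorphism $G \to \Aut(\widehat{A})$, and twisted equivariantizations of exactly this shape exist --- for instance a suitably twisted action of $\mathbb{Z}_2$ on $\rep(\mathbb{Z}_2 \times \mathbb{Z}_2)$ has equivariantization $\Rep{H_8}$, where $H_8$ is the Kac--Paljutkin Hopf algebra, and this fusion category is not $\rep\Gamma$ for any group $\Gamma$. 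Your Feit--Thompson induction does not shrink the problem either: after de-equivariantizing by $\rep(\mathbb{Z}/p)$ and invoking the inductive hypothesis, you must identify $(\rep K)^{\mathbb{Z}/p}$ for a possibly nonabelian $K$, where in addition to the lifting data you would have to exclude tensor autoequivalences of $\rep K$ not induced by automorphisms of $K$; that is the original problem again, in harder form. Your intuition can in fact be made rigorous in one shot, with no induction: for a braided action of $G$ on the nondegenerate pointed category $\widetilde{\C}$, the twist is lifting data in $H^2(G,\widehat{A})$ (not $H^3(G,\kk^\times)$), with an obstruction in $H^3(G,\widehat{A})$, and both groups vanish because square-freeness gives $\gcd(|G|,|A|)=1$; then the action is induced by a homomorphism $G\to\Aut(A)$ preserving the quadratic form and the equivariantization is $\rep(\widehat{A}\rtimes G)$. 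But that repair requires the homotopy-theoretic classification of categorical group actions, machinery far beyond what your sketch sets up.

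The paper bypasses the crux entirely, and this is the move you missed: once $\widetilde{\C}$ is known to be pointed with trivial associator, it admits a fiber functor, and composing it with the dominant modularization functor $\C \to \widetilde{\C}$ yields a fiber functor on $\C$ itself. Hence $\C \simeq \Rep{H}$ for a semisimple Hopf algebra $H$ of odd square-free dimension, and $H$ is quasitriangular because $\C$ is braided; by \cite[Theorem 1.2]{qt-quotient} such a Hopf algebra is a group algebra, so $\C \simeq \rep\Gamma$. In other words, the braiding is used a second time, at the Hopf-algebra level, where the entire twist-killing problem is delegated to a known theorem; your proposal discards the braiding after establishing pointedness of the quotient and is then forced to fight the equivariantization by hand.
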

The proof relies on modularization and on the fact that a
quasitriangular Hopf algebra whose dimension is odd and
square-free is in fact a group algebra (\cite{qt-quotient}).

The paper is organized as follows: general definitions and  classical results are recalled in Section \ref{nuts}, which contains also elementary facts about Hopf monads. In Section \ref{two} we define dominant and normal tensor functors, and
exact sequences of tensor categories. We prove several fundamental results and study the relations between strictly exact sequences of Hopf algebras as defined by Schneider and exact sequences of tensor categories. In Section \ref{sect-es-fus}, we study exact sequences of fusion categories. Section \ref{sect-hm-es} is devoted to the classification of exact sequences of tensor categories in terms of Hopf monads, as well as equivariantization and the special case of modularization. In Section~\ref{sect-es-cca}, we revisit tensor functors and exact sequences of tensor categories in terms of commutative central algebras, and
study normal fusion subcategories and simple fusion categories.
Section
\ref{ribbon-sqfree} is devoted to the proof of Theorem
\ref{classif}.

\section{Tensor categories  and Hopf monads}\label{nuts} %

\subsection{Conventions and notations}
Monoidal categories will be strict, unless otherwise specified,
and the unit object will be denoted by $\1$.  A monoidal category
is \emph{rigid} (or \emph{autonomous}) if any object admits a left
dual and a right dual. If such is the case, the left dual and
right dual functors are denoted $\ldual{?}$ and $\rdual{?}$
respectively.

Most of the time, we work over a  base field $\kk$.
If $\A$ is an abelian \kt linear category, we say that an object
$X$ of $\A$ is \emph{scalar} if $\End(X)=\kk\id_X$. The category
$\A$ is \emph{split semisimple} if there is a set $\Lambda$ of
scalar objects in $\A$ such that every object of $\A$ is a finite
direct sum of elements of $\Lambda$, and such that $\Hom_{\C}(X, Y) = 0$ for $X \neq Y$ in
$\Lambda$. Note that
if $\A$ is split semisimple, scalar objects and simple objects
coincide in $\A$.


An abelian \kt linear category $\A$ is \emph{finite} if it is \kt
linearly equivalent to the category of finite dimensional left
modules over a finite dimensional \kt algebra. A \kt linear
functor $\ff : \A \to \B$ between finite abelian \kt linear
categories has a left (resp.\@ right) adjoint if and only if it is
left exact (resp.\@ right exact).


\subsection{Tensor categories and tensor functors}
A \emph{tensor category over $\kk$} is a \kt linear abelian rigid
monoidal category where Hom spaces are finite dimensional, all
objects have finite length, the monoidal product is \kt linear in
each variable, and the unit object $\1$ is scalar. The monoidal
product is then exact in each variable.

A tensor category over $\kk$ is \emph{finite} if it is finite as a
\kt linear abelian category.

A \emph{tensor functor} is
a \kt linear exact strong monoidal functor between two tensor
categories. Note that a tensor functor preserves duals and is
automatically  faithful.

If $H$ is a Hopf algebra over $\kk$, $\Rep{H}$  denotes the tensor
category of finite dimensional representations of $H$,  that is,
finite dimensional left $H$-modules. Similarly $\CoRep{H}$ denotes
the tensor category of finite-dimensional  right
$H$-comodules. In particular, if $G$ is a finite group, $\rep G :
= \Rep{\kk\,G} \simeq \CoRep{\kk^G}$ is the category of finite
dimensional representations of $G$, whereas $\CoRep{\kk \,G}$ is the
category of finite-dimensional $G$-graded vector spaces.

A morphism of Hopf algebras $f : H \to H'$ defines two tensor functors:
\begin{align*}
f_* :\prettydef{\CoRep{H} & \to & \CoRep{H'} \\ (M,\delta) &\mapsto&  (M,(\id_M \otimes f)\delta),} \\
f^* :\prettydef{\Rep{H'} & \to & \Rep{H} \\ (M,r) &\mapsto& (M, r(f \otimes \id_M)).}
\end{align*}

A \emph{fiber functor} for a tensor category $\C$ over $\kk$ is a
tensor functor $\omega: \C \to \vect_\kk$. By Tannaka theory,
given a  fiber functor $\omega$ for a tensor category $\C$ over
$\kk$, the coend $L(\omega)=\int^{X \in \C} \omega(X) \otimes_\kk
\rdual{\omega(X)}$ is a Hopf algebra over $\kk$, and we have a
canonical equivalence of tensor categories $\C
\overset{\simeq}\longrightarrow \CoRep{L(\omega)}$.

Let $\C$ be a tensor category. For a finite dimensional vector
space $E$ and an object $X \in \C$, their tensor product, denoted
by $E \otimes X \in \C$ is defined by the adjunction $$\Hom_{\C}(E
\otimes X, Y) \simeq \Hom_\kk(E, \Hom_{\C}(X, Y)).$$ The
assignment $(E,X) \mapsto E \otimes X$ makes $\C$ a left
$\vect_\kk$-module category. In particular, the functor $\vect_k
\to \C$, $E \mapsto E \otimes \1$ is a tensor functor from
$\vect_\kk$ to $\C$, and in fact the only such functor up to
tensor isomorphism. It is right adjoint to the \emph{global
section functor} $\Gamma: \C \to \vect_\kk$ defined by
$\Gamma(X)=\Hom_\kk(\1,X)$.

If $X$ is an object or set of objects of a tensor category $\C$,
we denote by $\langle X\rangle$ the smallest abelian subcategory
of $\C$ containing $X$ and stable by direct sums, subobjects and
quotients.

An object of a tensor category $\C$ is \emph{trivial} if it is
isomorphic to $\1^n$ for some natural integer $n$. The full subcategory of trivial objects of $\C$ is $\langle \1 \rangle \subset \C$.
It is a tensor category equivalent to $\vect_\kk$ via the tensor functor $X \mapsto \Hom_\C(\1,X)$.
A tensor category $\C$ is \emph{trivial} if $\C=\langle\1\rangle$, that is, if $\C$ is tensor equivalent to $\vect_\kk$.

\subsection{Existence of adjoints}\label{rladj}
Let $F: \C \to \D$ be a strong monoidal functor between rigid
categories. According to~\cite[Lemma 3.4]{bv-double}, if $F$ has a left adjoint $G$, then it has a right
adjoint $R$, and conversely.
In that case, $R$ and $G$ are related thus:
$$R(X) \simeq \ldual{G(\rdual{X})} \simeq \rdual{G(\ldual{X})} \blabla{and} R(X) \simeq\rdual{G(\ldual{X})} \simeq \ldual{G(\rdual{X})} \quad\mbox{(for $X$ in $\C$).}$$
In that case we say that \emph{$F$ has adjoints.}
A tensor functor between finite tensor categories has adjoints. In general, a tensor functor need not have adjoints; for instance, a fiber functor for a tensor category $\C$ has adjoints if and only if $\C$ is finite.

However, a tensor functor  $\ff : \C \to \D$  has an
Ind-adjoint and a Pro-adjoint because it is exact
(see~\cite{SGA4}). In other words the functor $\Ind \ff : \Ind \C
\to \Ind \D$ obtained by extending $\ff$ to the categories of
Ind-objects of $\C$ and $\D$ has a right adjoint $R : \Ind  \D
 \to \Ind \C$ called the Ind-adjoint of $\ff$, and, dually,
the functor $\Pro \ff : \Pro \C \to \Pro \D$ obtained by extending
$\ff$ to the categories of Pro-objects of $\C$ and $\D$ has a left
adjoint $G : \Pro \D \to \Pro \C$ called the Pro-adjoint of $\ff$.

\subsection{Fusion categories.}
A \emph{fusion category} over $\kk$ is a split semisimple finite
tensor category over $\kk$. Note that if $\kk$ is algebraically
closed, a tensor category $\C$ is split semisimple if and only if
it is semisimple. See for instance \cite[Section 2]{TY}.

Let $\C$ be a fusion category over $\kk$. The finite set of
isomorphism classes of simple (or scalar) objects in $\C$ will be
denoted by $\Lambda_\C$. The class of an object  $X$ of $\C$ in
the Grothendieck ring $K_0(\C)$ will be denoted by $[X]$. If $X\in
\Lambda_\C$ and  $Y$ is an object of $\C$, denote by $m_X(Y)$  the
multiplicity of $X$ in $Y$, that is: $m_X(Y) = \dim \Hom_\C(X,
Y)=\dim \Hom_\C(Y,X)$, so that we have:
$$Y \simeq \bigoplus_{X \in \Lambda_\C} X^{m_X(Y)}.$$
The \emph{Frobenius-Perron dimension} $\FPdim X$ of $X \in
\Lambda_{\C}$ is the largest positive eigenvalue of the matrix of
left multiplication by $X$ in the Grothendieck ring of $\C$ with
respect to the basis $\Lambda_{\C}$. It is  a real non-negative
algebraic integer. The \emph{Frobenius-Perron dimension} of $\C$
is  $\FPdim \C : = \sum_{X \in \Lambda_\C} (\FPdim X)^2$.

See \cite{ENO, muger-luminy} for a survey and a general reference
on fusion categories.

\subsection{Monads}
A \emph{monad} on a category $\A$ is an algebra $T$ in
the monoidal category $\End(\A)$ of endofunctors of $\A$. In other
words, it is an endofunctor $T$ of $\A$  endowed with natural
transformations $\mu: T^2 \to T$ and $\eta: \id_{\A} \to T$ (the
multiplication and unit of $T$, respectively), satisfying the
associativity and unit axioms in $\End(\A)$.

Let $T$ be a monad on $\A$. A \emph{$T$-module in $\A$} (often
called a $T$-algebra) is a pair $(M, r)$, where $M\in \D$ and $r :
T(M) \to M$ is a morphism in $\D$, such that
\begin{equation}\label{onr}rT(r) = r\mu_M, \qquad r\eta_M =
\id_M.\end{equation} A morphism $f: (M', r') \to (M, r)$ between
$T$-modules $(M', r')$ and $(M, r)$ is a morphism $f: M' \to M$ in
$\D$ such that $fr' = rT(f)$. This defines a category $\A^{T}$ of
$T$-modules in $\A$. Let $\U: \A^{T} \to \A$ denote the forgetful
functor: $\U(M, r) = M$. Then $\U$ admits a left adjoint $\Ll: \A
\to \A^{T}$, defined by $\Ll(X) = (T(X), \mu_X)$. We have $T =
\U\Ll$.

If $T$ and $T'$ are monads on $\C$, a \emph{morphism of monads $f
: T' \to T$} is a natural transformation such that $f\mu'_X =
\mu_Xf_{T(X)}T'(f_X)$ and $f_X\eta'_X = \eta_X$, for all object
$X$ of $\C$.

We will require the following characterization of faithful monads:

\begin{lemma} \label{lem-mon-faith}
Let $T$ be a monad on a category $\A$.
The following assertions are equivalent:
\begin{enumerate}[(i)]
\item For any $X$ object of $\A$, there exists a $T$\ti module $(M,r)$ and a monomorphism $X \to M$;
\item The unit $\eta$ of $T$ is monomorphic;
\item The monad $T$ is faithful;
\item The free module functor $\Ll : \A \to \A^T$ is faithful.
\end{enumerate}
\end{lemma}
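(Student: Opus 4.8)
**The plan is to prove the cyclic chain of implications $(i) \Rightarrow (ii) \Rightarrow (iii) \Rightarrow (iv) \Rightarrow (i)$**, since each adjacent pair seems tractable and the cycle covers all four equivalences. I would start by recalling the basic adjunction data: the free module functor $\Ll : \A \to \A^T$ with $\Ll(X) = (T(X), \mu_X)$ is left adjoint to the forgetful functor $\U : \A^T \to \A$, and the unit $\eta_X : X \to T(X) = \U\Ll(X)$ of $T$ is precisely the unit of this adjunction. This dictionary is what links the purely functorial statements to the module-theoretic ones.

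For the forward implications, here is how I expect each step to go. For $(i) \Rightarrow (ii)$: given $X$, choose a $T$-module $(M,r)$ and a monomorphism $j : X \to M$. Since $j$ is a morphism into a $T$-module, one can factor it through the unit, using the adjunction: $j$ corresponds to a $T$-module map $\tilde\jmath : \Ll(X) \to (M,r)$, and the counit/unit triangle identities give $j = \U(\tilde\jmath)\,\eta_X = r\,T(j)\,\eta_X$. Concretely $\eta$ is natural, so $T(j)\,\eta_X = \eta_M\, j$, whence $j = r\,\eta_M\, j$; combined with the unit axiom $r\,\eta_M = \id_M$ this is consistent, but the useful consequence is that $j$ factors as $X \xrightarrow{\eta_X} T(X) \xrightarrow{r\,T(j)} M$, so $\eta_X$ is a monomorphism because $j$ is. For $(ii) \Rightarrow (iii)$: faithfulness of $T$ means $T$ is injective on hom-sets; if $\eta$ is monic, then from naturality $T(f)\,\eta_X = \eta_Y\, f$ for $f : X \to Y$, so $T(f) = T(g)$ forces $\eta_Y f = \eta_Y g$, and $\eta_Y$ monic gives $f = g$. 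For $(iii) \Rightarrow (iv)$: since $\U\Ll = T$ and $\U$ is faithful (forgetful functors from module categories are always faithful), faithfulness of $\Ll$ follows once $T = \U\Ll$ is faithful — indeed $\Ll(f) = \Ll(g)$ implies $\U\Ll(f) = \U\Ll(g)$, i.e. $T(f) = T(g)$, hence $f = g$. For $(iv) \Rightarrow (i)$: the obvious candidate module for a given $X$ is $\Ll(X) = (T(X), \mu_X)$, with the unit $\eta_X : X \to T(X)$ as the candidate monomorphism; one must check $\eta_X$ is monic, which follows because $\Ll$ faithful forces $\eta$ monic by a standard adjunction argument (if $\eta_X f = \eta_X g$ then applying the adjunction isomorphism these correspond to $\Ll(f) = \Ll(g)$, hence $f = g$).

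\textbf{The main obstacle} I anticipate is getting the adjunction bookkeeping exactly right in the equivalence between $\eta$ being a monomorphism and $\Ll$ being faithful — the cleanest route is to use that for any adjunction $\Ll \dashv \U$ with unit $\eta$, the map $\eta$ is a monomorphism \emph{if and only if} $\Ll$ is faithful, which is a standard categorical fact but deserves a careful one-line justification via the natural bijection $\Hom_{\A^T}(\Ll X, \Ll Y) \simeq \Hom_\A(X, \U\Ll Y)$ sending $\Ll(f)$ to $\eta_Y f$. This collapses $(ii) \Leftrightarrow (iv)$ into a single clean statement and makes the cycle robust. The remaining subtlety is verifying in $(i) \Rightarrow (ii)$ that the factorization genuinely exhibits $\eta_X$ as monic rather than merely showing $j$ factors through it; the point to stress is that a morphism with a monic left-divisor ($j = h\,\eta_X$ with $j$ monic) forces $\eta_X$ itself to be monic, which is immediate.
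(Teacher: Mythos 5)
Your proposal is correct and follows essentially the paper's own proof: your implications (i)$\Rightarrow$(ii) (factoring $j = rT(j)\eta_X$ through the unit) and (ii)$\Rightarrow$(iii) (naturality $T(f)\eta_X = \eta_Y f$) are word-for-word the paper's arguments, and your (iii)$\Rightarrow$(iv) is one half of the paper's observation that $T=\U\Ll$ with $\U$ faithful gives (iii)$\iff$(iv). The only difference is organizational: you close the cycle with (iv)$\Rightarrow$(i) via the adjunction hom-bijection $\Hom_{\A^T}(\Ll X,\Ll Y)\simeq\Hom_\A(X,T(Y))$, whereas the paper instead proves (iii)$\Rightarrow$(ii) by the equally short monad computation $T(u)=\mu_X T(\eta_X)T(u)$; both are valid one-line arguments resting on the same adjunction data.
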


\begin{proof}
Recall that $\Ll$ is left adjoint to the forgetful functor $\U : \A^T \to \A$, and $T=\U \Ll$.
Clearly we have (ii) $\Rightarrow$ (i). Conversely (i)
$\Rightarrow$ (ii): if $f : X \to M$ is a monomorphism, $(M,r)$ being a $T$\ti module, then
$\eta_X$ is a monomorphism because we have $f=r \eta_M f =r T(f)\eta_X$. Now, since $T=\U\Ll$ and $\U$ is faithful, we have
(iv) $\iff$ (iii). Also (iii) $\Rightarrow$ (ii): let $u,v  : Y \to
X$  be  morphisms such that  $\eta_X u=\eta_X v$. Then $T(u)= \mu_X T(\eta_X) T(u)=\mu_X T(\eta_X) T(v)=T(v)$ hence $u=v$ if $T$ is faithful.  Lastly, (ii) $\Rightarrow$ (iii).
Indeed let $u, v: X \to Y$ be  morphisms such that $T(u)=T(v)$. We have
$\eta_Y u=T(u)\eta_X=T(v) \eta_X= \eta_Y v$, so $u=v$ if $\eta_Y$ is a monomorphism.
\end{proof}

\subsection{Hopf monads}
Let $\C$ be a monoidal category. A monad on $\C$ is a
\emph{bimonad} when the category $\C^T$ is endowed with a monoidal
structure  such that the forgetful functor $\U : \C^T \to \C$ is
monoidal strict. This is equivalent to saying that the monad $T$
is endowed  with a structure of comonoidal endofunctor, that is, a
natural transformation
$$T_2(X,Y): T(X \otimes Y) \to T(X) \otimes T(Y)\quad \mbox{($X,Y$ in $\C$)}$$ and a morphism $T_0: T(\1) \to \1$ satisfying:
\begin{align*}
&(T_2(X,Y)\otimes \id_{T(Z)})T_2(X \otimes Y, Z)=(\id_{T(X)}\otimes T_2(Y,Z))T_2(X, Y \otimes Z),\\
&(\id_{T(X)} \otimes T_0)T_2(X,\1)=\id_{T(X)}=(T_0\otimes
\id_{T(X)} )T_2(\1,X);
\end{align*}
and  such that the product $\mu$ and the unit $\eta$ are monoidal
transformations, that is:
\begin{align*}
&T_2(X,Y)\mu_{X\otimes Y}=(\mu_X \otimes \mu_Y) T_2(T(X),T(Y))T(T_2(X,Y)),\\
& T_0 \mu_\1= T_0T(T_0), \quad T_2(X,Y) \eta_{X\otimes Y}=\eta_X
\otimes \eta_Y,\quad T_0 \eta_\1=\id_\1.
\end{align*}
Bimonads are introduced in \cite{moerdijk} under the name Hopf
monads.

If $\C$ is rigid, a bimonad $T$ on $\C$ is a \emph{Hopf monad} if
$\C^T$ is rigid; this is equivalent to saying that $T$ has a left
and a right antipode (see \cite[3.3]{bv}.) Hopf monads on
arbitrary monoidal categories are defined in \cite{blv}.

A \emph{morphism of bimonads} or \emph{Hopf monads} is a
comonoidal morphism of monads between  bimonads or Hopf monads.

\medbreak


\subsection{Monadicity}\label{comonmonadic}
Let  $(G: \A \to \B, \ff: \B \to \A)$ be an adjunction, with unit
$\eta: \id_{\A} \to \ff G$ and counit $\epsilon: G\ff \to
\id_{\B}$. Then $T = \ff G$ is a monad on $\A$, and there exists a
unique functor $\kappa: \B \to \A^{T}$ such that $\U\kappa = \ff$
and $\kappa G = \Ll$. The functor $\kappa$, called the
\emph{comparison functor} of the adjunction, is given by
$\kappa(X) = (\ff(X), \ff(\epsilon_X))$.

The adjunction $G \vdash \ff$ is called \emph{monadic} if the
comparison functor $\kappa$ is an equivalence. Necessary and
sufficient conditions for an adjunction to be monadic are given by
Beck's Theorem, see \cite[VI.7]{maclane}. In particular if $\A$
and $\B$ are abelian and the functor $\ff$ is additive and
faithful exact, then $G\vdash \ff$ is monadic.

Now let $\C$, $\D$ be monoidal categories and $\ff: \D \to \C$ be
a strong monoidal functor. Assume that $\ff$ has a left adjoint
$G$. Then \cite[Theorem 9.1]{bv} asserts that the functor $G$ is
comonoidal, the monad $\ff G$ in $\D$ has a canonical structure of
a bimonad and the comparison functor $\kappa: \C \to \D^T$ is
strong monoidal. Moreover, $\U\kappa = \ff$ as monoidal functors
and $\kappa G = \Ll$ as comonoidal functors. Lastly, if $\C$ and
$\D$ are rigid, then $T$ is a Hopf monad.

\subsection{Hopf monads and tensor categories}

Let $F : \C \to \D$ be a tensor functor between tensor categories
over a field $\kk$. Assume that $F$ admits a left adjoint $G$
(which is then unique up to unique isomorphism). Then, $F$ being
faithful exact, the adjunction $G \vdash F$ is monadic. The monad
$T=FG$ of this adjunction is a Hopf monad on $\D$, which is called
the Hopf monad of $F$. It is $\kk$ linear right exact, and we have
$\C \simeq \D^T$ as tensor categories.

Note that if  $F$ is  a  tensor functor between finite tensor categories (such as fusion categories)
then $F$ admits a left adjoint and so, it is monadic.

\begin{proposition} Let $\C$ be a tensor category over a field $\kk$, and let $T$ be a \kt linear right exact Hopf monad on $\C$. Then $\C^T$ is a tensor category over $\kk$, and
the forgetful functor $\U : \C^T \to \C$ is a tensor functor.
\end{proposition}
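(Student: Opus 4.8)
The plan is to establish each claim of the statement by reduction to results already recalled in the excerpt, chiefly the monoidal structure on $\C^T$ coming from the bimonad axioms and the monadicity discussion in Section~\ref{comonmonadic}. Recall that since $T$ is a Hopf monad on the rigid category $\C$, the category $\C^T$ carries a monoidal structure for which the forgetful functor $\U : \C^T \to \C$ is strict monoidal, and moreover $\C^T$ is rigid. So the monoidal and rigidity parts come for free from the definition of a Hopf monad; what remains is to verify that $\C^T$ satisfies the \emph{abelian} and finiteness axioms of a tensor category over $\kk$, namely that it is $\kk$-linear abelian with finite-dimensional $\Hom$ spaces, objects of finite length, a scalar unit, and a monoidal product exact and $\kk$-linear in each variable.

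First I would check that $\C^T$ is $\kk$-linear abelian. Since $T$ is $\kk$-linear and right exact, the category of $T$-modules $\C^T$ inherits a $\kk$-linear structure, and kernels and cokernels of $T$-module morphisms are computed in $\C$ (the cokernel exists because $T$ is right exact, so the induced $T$-action descends); this makes $\U$ an exact faithful $\kk$-linear functor. Because $\U$ is faithful exact and reflects exactness, a sequence in $\C^T$ is exact if and only if its image in $\C$ is exact. From this I deduce the finiteness conditions: $\Hom_{\C^T}((M,r),(N,s))$ is a $\kk$-subspace of $\Hom_\C(M,N)$, hence finite dimensional; and the length of $(M,r)$ in $\C^T$ is bounded by the length of $M=\U(M,r)$ in $\C$, which is finite, so objects of $\C^T$ have finite length. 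For the unit, the monoidal unit of $\C^T$ is the free-type object $(\1, T_0)$ with $\U$ sending it to $\1$; its endomorphism algebra embeds via $\U$ into $\End_\C(\1)=\kk\,\id_\1$, and since $\id$ is an endomorphism, $\End_{\C^T}(\1_{\C^T})=\kk$, so the unit is scalar.

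Next I would verify exactness and $\kk$-bilinearity of the tensor product on $\C^T$. Because $\U$ is strict monoidal and faithful exact, and because the tensor product on $\C$ is exact and $\kk$-linear in each variable, these properties transport to $\C^T$: given a short exact sequence in $\C^T$, applying $\U$ and tensoring in $\C$ with a fixed object stays exact, and exactness in $\C^T$ is detected by $\U$, so the tensor product on $\C^T$ is exact in each variable; $\kk$-linearity of $\otimes$ on morphisms likewise follows from faithfulness of $\U$. Assembling these verifications shows $\C^T$ is a tensor category over $\kk$ and that $\U$, being strict monoidal, $\kk$-linear, exact, and faithful, is a tensor functor.

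The main obstacle I anticipate is not any single hard computation but rather the careful bookkeeping that the monoidal structure of $\C^T$ (defined abstractly via the comonoidal data $T_2, T_0$ of the bimonad) is genuinely compatible with the abelian structure — i.e. that the tensor product of two $T$-modules, whose underlying object is the tensor product in $\C$ equipped with the action built from $\mu$ and $T_2$, is exact and $\kk$-linear in each variable. The cleanest route is to observe that $\U$ is strict monoidal, so $\U((M,r)\otimes(N,s)) = M\otimes N$ in $\C$; then every exactness or bilinearity assertion about $\otimes$ in $\C^T$ reduces, via the faithful exact functor $\U$, to the corresponding already-known assertion about $\otimes$ in $\C$. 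Once this reduction is made explicit, the proof is essentially a transport-of-structure argument along $\U$.
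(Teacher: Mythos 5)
Your proposal is correct and takes essentially the same route as the paper: both get rigidity and the strict monoidal forgetful functor from the Hopf monad axioms, invoke the general fact that modules over a $\kk$-linear right exact monad on an abelian category form a $\kk$-linear abelian category (you spell out the kernel/cokernel construction that the paper only cites), and transport finite-dimensionality of Hom spaces, finite length, and scalarness of the unit along the faithful exact functor $\U$. The only difference is that you also verify exactness of the monoidal product on $\C^T$, a step the paper can skip since, by its definition of tensor category, exactness of the product in each variable is automatic from the other axioms.
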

\begin{proof}
It is a general fact that the category of modules over a \kt
linear right exact monad on an abelian category is \kt linear
abelian and the forgetful functor is \kt linear exact. Applying
this to $T$, the category $\C^T$ is  \kt linear abelian and rigid;
its tensor product is \kt linear, and its unit object $(\1,T_0)$
is scalar. Moreover, the forgetful functor $\U$ is monoidal strict
and \kt linear faithful exact; in particular in $\C^T$, $\Hom$'s
are finite dimensional and objects have finite length. Thus,
$\C^T$ is a tensor category over $\kk$, and $\U$ is a tensor
functor.
\end{proof}

\begin{example}\label{hopf-alg} A Hopf algebra $H$ in a braided autonomous category $\B$ defines
a Hopf monad $H \otimes ?$ on $\B$, see \cite[Example 3.10]{bv} and \cite[Example 2.8]{bv-double}.
In particular a finite dimensional Hopf algebra $H$ over $\kk$
defines a Hopf monad $H \otimes ?$ on the category $\vect_\kk$ of
finite dimensional vector spaces. It is the monad of the  forgetful functor
$$\Rep H \to \vect_\kk.$$
\end{example}

In fact, \kt linear Hopf monads on trivial tensor categories are just finite dimensional Hopf algebras:

\begin{lemma}\label{HM-triv}
Let $\C$ be a trivial tensor category. If $H$ is a finite
dimensional Hopf algebra over $\kk$, then $H \otimes ? : \C \to
\C$ admits a natural structure of \kt linear Hopf monad on $\C$.
The assignment $H \mapsto H \otimes ?$ defines an equivalence of
categories between finite dimensional Hopf algebras over $\kk$ and
\kt linear Hopf monads on $\C$.
\end{lemma}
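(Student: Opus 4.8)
The plan is to reduce everything to the case $\C=\vect_\kk$ and then set up a dictionary between the structure maps of the Hopf monad $H\otimes?$ and those of the Hopf algebra $H$. First I would invoke the canonical tensor equivalence $\Gamma:\C\iso\vect_\kk$, $X\mapsto\Hom_\C(\1,X)$, available because $\C=\langle\1\rangle$ is trivial. Being \kt linear it preserves finite direct sums, and since $E\otimes X\cong X^{\dim_\kk E}$ functorially it intertwines the $\vect_\kk$-action $E\otimes X$ on $\C$ with the tensor product on $\vect_\kk$; concretely $\Gamma(E\otimes X)\cong E\otimes_\kk\Gamma(X)$ naturally. Consequently $\Gamma\circ(H\otimes?)\cong(H\otimes?)\circ\Gamma$, so $\Gamma$ transports \kt linear Hopf monads on $\C$ to \kt linear Hopf monads on $\vect_\kk$ and back, compatibly with the assignment $H\mapsto H\otimes?$. (On $\vect_\kk$, which is semisimple, right exactness is automatic, so no hypothesis is lost.) This reduces the first assertion to Example~\ref{hopf-alg} and reduces the equivalence of categories to the case $\C=\vect_\kk$.

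For $\C=\vect_\kk$, I would prove essential surjectivity by reconstructing $H$ from a given \kt linear Hopf monad $T$. Set $H:=T(\kk)$, a finite-dimensional vector space; \kt linearity and preservation of finite direct sums give a natural isomorphism $T\cong H\otimes?$. The structure maps of $T$ are then pinned down by their components at $\kk$: the multiplication $\mu$ and unit $\eta$ yield maps $m:=\mu_\kk:H\otimes H\to H$ and $u:=\eta_\kk:\kk\to H$, and the associativity and unit axioms of the monad become exactly associativity and unitality of $(H,m,u)$. Dually, the comonoidal structure gives $\Delta:=T_2(\kk,\kk):H\to H\otimes H$ and $\varepsilon:=T_0:H\to\kk$, and the comonoidal axioms become coassociativity and counitality. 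The compatibility axioms asserting that $\mu,\eta$ are comonoidal (equivalently that $T_2,T_0$ are monoidal transformations) translate precisely into the statements that $\Delta$ and $\varepsilon$ are algebra morphisms, so $H$ is a bialgebra; finally the left and right antipodes of the Hopf monad yield an antipode for $H$. Thus $H$ is a finite-dimensional Hopf algebra and $T\cong H\otimes?$ as Hopf monads.

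For full faithfulness, a morphism of Hopf monads $\phi:H\otimes?\to H'\otimes?$ is a comonoidal morphism of monads, and by naturality and \kt linearity it is determined by its component $f:=\phi_\kk:H\to H'$. The monad-morphism axioms say that $f$ is an algebra morphism, the comonoidal ones that $f$ is a coalgebra morphism; hence $f$ is a bialgebra morphism, and between Hopf algebras any such morphism automatically commutes with the antipodes. Conversely every Hopf algebra morphism $f:H\to H'$ gives $f\otimes?$, a morphism of Hopf monads. These assignments are mutually inverse bijections on $\Hom$-sets, so the functor $H\mapsto H\otimes?$ is fully faithful. Combined with essential surjectivity, this gives the desired equivalence.

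I expect the main obstacle to be purely organizational: verifying that each bimonad axiom corresponds term-for-term to the matching bialgebra axiom. This is a sequence of naturality reductions followed by diagram chases --- exactly the dictionary underlying Example~\ref{hopf-alg}, read in reverse --- and while no single step is difficult, keeping the comonoidal compatibility conditions aligned with the requirement that $\Delta$ and $\varepsilon$ be algebra maps is where care is needed.
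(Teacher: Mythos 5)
Your proposal is correct and follows essentially the same route as the paper: both exploit the identification of the trivial category with $\vect_\kk$ (the paper pushes Hopf algebras into $\C$ along the symmetric tensor functor $E \mapsto E \otimes \1$ and invokes Example~\ref{hopf-alg}, while you pull the Hopf monad back to $\vect_\kk$), both recover $H$ as the value of $T$ at the unit object ($\Hom_\C(\1,T(\1))$ versus $T(\kk)$, which agree under your equivalence $\Gamma$), both extend the isomorphism at the unit to a natural isomorphism $T \cong H \otimes ?$ using semisimplicity/additivity, and both then translate the Hopf monad axioms into the Hopf algebra axioms. Your write-up is more explicit where the paper compresses the axiom dictionary into ``one verifies,'' and unlike the paper it spells out full faithfulness on morphisms, but the substance is the same.
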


\begin{proof} A trivial tensor category, being by definition
tensor equivalent to $\vect_\kk$, admits a unique braiding and is symmetric.
The tensor functor $\vect_k \to \C$, $E \mapsto E \otimes \1,$ is
symmetric and sends a finite dimensional Hopf algebra $H$ over
$\kk$ to a Hopf algebra $H \otimes \1$ in $\C$. Thus $H \otimes ?:
\C \to \C$ is \kt linear Hopf monad on $\C$. Now if $T$ is a \kt
linear Hopf monad on $\C$, set $H=\Hom_\C(\1,T(\1))$. We have a
canonical isomorphism $a_\1 : H \otimes \1 \iso T(\1)$, which
extends uniquely to a natural isomorphism $a: H \otimes ? \iso T$
because $\C$ is semisimple and $\Lambda_\C=\{\1\}$. One verifies
that there exists a unique structure of Hopf algebra on $H$ such
that, when $H\otimes ?$ is endowed with the corresponding
structure of Hopf monad, $a$ becomes an isomorphism of Hopf monads
on $\C$.
\end{proof}

\section{Exact sequences of tensor categories}\label{two}
In this section, we introduce the notions of normal tensor functor and of exact sequences of tensor
categories over a field $\kk$.

\subsection{Dominant functors, normal functors and exact sequences}

\begin{lemma}\label{dom-sub-quot}
Let  $\ff : \C \to \D$ be a tensor functor between tensor categories. The following assertions are equivalent:
 \begin{enumerate}[(i)]
 \item Any object  $Y$ of $\D$ is a subobject of $\ff(X)$ for some object $X$ of $\C$;
 \item  Any object  $Y$ of $\D$ is a quotient of $\ff(X)$ for some object $X$ of $\C$;
 \item The Pro-adjoint of $\ff$ is faithful;
 \item The Ind-adjoint of $\ff$ is faithful.
\end{enumerate}
\end{lemma}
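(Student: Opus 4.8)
The plan is to establish the equivalences by exploiting the adjunction structure and the self-duality provided by rigidity. The key observation is that conditions (i) and (ii) are interchangeable via the duality functors: if every object $Y$ embeds as a subobject of some $\ff(X)$, then applying duals and using that $\ff$ preserves duals (being a tensor functor), every object appears as a quotient, and vice versa. Concretely, $Y$ is a subobject of $\ff(X)$ if and only if $\rdual{Y}$ is a quotient of $\rdual{\ff(X)} \simeq \ff(\rdual{X})$; since $?\mapsto\rdual{?}$ is an anti-equivalence on $\D$ and $X \mapsto \rdual X$ is surjective on objects of $\C$ up to isomorphism, (i) $\Leftrightarrow$ (ii) follows immediately.

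For the equivalence with the faithfulness of the adjoints, I would work with the Pro-adjoint $G: \Pro\D \to \Pro\C$ of $\ff$ (the Ind-adjoint case being dual). The adjunction $G \vdash \Pro\ff$ has a counit $\epsilon: \Pro\ff\, G \to \id_{\Pro\D}$. The natural move is to characterize faithfulness of $G$ via the counit: for a left adjoint $G$, faithfulness is equivalent to every component $\epsilon_Y$ of the counit being an epimorphism (this is the standard dual to Lemma~\ref{lem-mon-faith}, which characterizes faithfulness of a monad---and hence of the forgetful functor's left adjoint---via monomorphic units). I would then reinterpret $\epsilon_Y: \ff G(Y) \to Y$: since $G(Y)$ is a Pro-object, it is a (cofiltered) limit of objects of $\C$, and the image of $\epsilon_Y$ being all of $Y$ amounts to saying that $Y$ is generated by images of $\ff(X)$ for $X \in \C$, i.e.\ $Y$ is a quotient of some $\ff(X)$. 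This gives (iii) $\Leftrightarrow$ (ii).

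For (iii) $\Leftrightarrow$ (iv), I would again invoke duality: the Ind-adjoint $R$ and Pro-adjoint $G$ are related by the duality formulas of Section~\ref{rladj} (namely $R \simeq \ldual{G(\rdual{?})}$ and its variants), so that $R$ is faithful precisely when $G$ is, since the duality functors are faithful anti-equivalences. Thus all four assertions link up: (i) $\Leftrightarrow$ (ii) by duality inside $\D$, (ii) $\Leftrightarrow$ (iii) by the counit characterization of faithful left adjoints, and (iii) $\Leftrightarrow$ (iv) by duality relating the two adjoints.

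The main obstacle I anticipate is the careful handling of the Pro/Ind categories: the counit $\epsilon_Y$ lives in $\Pro\D$, and translating ``$\epsilon_Y$ is epi'' back into the statement ``$Y$ is a quotient of some $\ff(X)$ with $X\in\C$'' (rather than $X$ a Pro-object) requires checking that one can extract an honest object of $\C$ from the cofiltered system computing $G(Y)$, using that $Y$ has finite length and $\Hom$-spaces are finite dimensional in $\D$. Making the epi/generation argument precise at the level of Pro-objects, while keeping track of the compatibility between the abstract adjunction-theoretic characterization and the concrete subobject/quotient formulation, is where the real work lies; the duality steps, by contrast, should be essentially formal once $\ff$ is known to commute with duals.
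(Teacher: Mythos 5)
Your outer steps are sound and coincide with the paper's: (i) $\Leftrightarrow$ (ii) by applying duals and using that tensor functors preserve duals, and a duality argument to link the two adjoint conditions (the paper obtains (ii) $\Leftrightarrow$ (iv) by applying (i) $\Leftrightarrow$ (iii) to $\ff^\opp : \C^\opp \to \D^\opp$; for your formulation one must also note that duality exchanges Ind- and Pro-objects, since $G$ and $R$ are not defined on the same category).

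The middle step, however, is wrong as written. The Pro-adjoint $G$ is the \emph{left} adjoint of $\Pro \ff$, so the counit of the adjunction $G \vdash \Pro\ff$ is a natural transformation $G \,\Pro\ff \to \id_{\Pro\C}$: it lands in $\Pro\C$, not in $\Pro\D$. The map $\epsilon_Y : \ff G(Y) \to Y$ (for $Y$ in $\Pro\D$) that your argument analyzes does not exist; what exists is the unit $\eta_Y : Y \to \Pro\ff\, G(Y)$, going the other way. Correspondingly, the criterion you invoke is backwards: for a \emph{left} adjoint, faithfulness is equivalent to the \emph{unit} being pointwise monomorphic --- this is exactly Lemma~\ref{lem-mon-faith}, applied directly (no dualization), since by Beck's theorem $\Pro\ff$ is monadic and $G$ is identified with the free module functor --- whereas the counit-epi criterion characterizes faithfulness of a \emph{right} adjoint. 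Your image/generation/finite-length argument is the right idea, but it proves (iv) $\Leftrightarrow$ (ii), not (iii) $\Leftrightarrow$ (ii): the Ind-adjoint $R$ is a right adjoint of $\Ind\ff$, its counit $\epsilon_Y : \Ind\ff\, R(Y) \to Y$ does live in $\Ind\D$, and for $Y$ of finite length, surjectivity of this map (a filtered colimit of maps $\ff(X_i) \to Y$) forces some single $\ff(X_i) \to Y$ to be an epimorphism. So the repair is to swap the roles of Pro and Ind in that paragraph, pairing the Pro-adjoint (unit mono, via Lemma~\ref{lem-mon-faith}) with the subobject condition (i) and the Ind-adjoint (counit epi) with the quotient condition (ii); after this relabeling your proof closes up and is essentially the paper's proof in dual form.
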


\begin{proof} Assume (i) and let $Y$ be an object of $\D$. There exists $X$ object of $\C$ and a monomorphism $i : \rdual{Y} \to \ff(X)$. In a rigid category, the left dual and right dual functors are quasi-inverse contravariant equivalences, and strong monoidal functors preserve duals.
Hence $\ldual{i} : \ff(\ldual{X})\simeq \ldual{\ff(X)} \to \ldual{\rdual{Y}} \simeq Y$ is an epimorphism, hence (i) \implique (ii); the converse is proved similarly. Hence (i) \ssi (ii).

Now let us prove (i) \ssi (iii). The Pro-adjoint functor of $\ff$ is the left adjoint functor $G$ of the functor $\Pro f : \Pro \C \to \Pro D$. Now $\Pro \C$ and $\Pro \D$ are abelian categories, and $\Pro f$ is \kt linear faithful exact. By Beck's theorem, the adjunction $G \vdash \Pro F$ is monadic, with monad $T= \Pro F \, G$, so
$\Pro\C$ is equivalent to  $(\Pro \D)^T$ via the comparison functor, $F$ being the forgetful functor. Denote by $\eta$ the unit of the monad $T$.
Assume (i) holds. Then for any object $Y$ in $\Pro \D$, $\eta_Y$ is a monomorphism. Indeed if $Y$ is in $\D \subset \Pro \D$, then there exists $X$ in $\C$ and a monomorphism $i: Y \to F(X)$. Since $\C$ is a full subcategory of $\Pro \C \simeq (\Pro \D)^T$, we may view $X$ as a $T$-module, with action $r : T F(X) \to F(X)$, and $i= r T(i)\eta_Y$ so $\eta_Y$ is a monomorphism in that case. In general an object $Y$ of $\Pro \D$ is of the form $\Lim Y_i$ for some filtering system $(Y_i)$ of objects of $\D$, and $\eta_{Y_i}$ is a monomorphism for all $i$, hence $\eta_Y$ is a monomorphism. By Lemma~\ref{lem-mon-faith}, this implies that $G$ is faithful. Conversely if $G$ is faithful, again by Lemma~\ref{lem-mon-faith} for any object $Y$ of $\Pro \D$ there exists a filtering system $(X_i)$ of objects of $\C$ such that $Y$ is a subobject of $\Pro F(\Lim X_i)=\Lim F(X_i)$.
If $Y$ is in $\D$, it has finite length so there exists $i$ such that $Y \to F(X_i)$ is a monomorphism. Hence (i) \ssi (iii).

Lastly, (ii) \ssi (iv) results from (i) \ssi (iii) applied to the opposite functor $\ff^\opp : \C^\opp \to \D^\opp$, hence the Lemma is proved.
\end{proof}

\begin{definition}\label{def-dominantfunctor} A tensor functor $\ff : \C \to \C''$  is \emph{dominant} if it satisfies the equivalent conditions of Lemma~\ref{dom-sub-quot}.
\end{definition}

\begin{remark}
A dominant tensor functor in surjective in the sense of \cite{EO}.
\end{remark}

Let $\ff: \C \to \D$ be a tensor functor between tensor
categories. We denote by $\KER_\ff$ the full tensor subcategory
$\ff^{-1}(\langle \1 \rangle) \subseteq \C$  of objects $X$ of
$\C$ such that $\ff(X)$ is a trivial object of $\C$.

\begin{definition}\label{def-normalfunctor}
Let $\ff : \C \to \D$ be a tensor functor between tensor
categories. Then $\ff$ is \emph{normal} if for any object $X$ of
$\C$, there exists a subobject $X_0 \subset X$ such that
$\ff(X_0)$ is the largest trivial subobject of $\ff(X)$.
\end{definition}

%

\begin{proposition}\label{func-normal} Let $\ff : \C \to \D$ be a tensor functor between tensor categories.
\begin{enumerate}
\item If $\ff$ admits  a right adjoint $R$, or, equivalently, a left adjoint $G$,  then $\ff$ is normal if and only if $G(\1)$ belongs to $\KER_\ff$, if and only if $R(\1)$ belongs to $\KER_\ff$.
\item If $\C$ and $\D$ are fusion categories, $\ff$  is normal if and only if any  simple object $X$ of $\C$ such that
$\Hom(\1,\ff(X)) \neq 0$ belongs to $\KER_\ff$.
\end{enumerate}
\end{proposition}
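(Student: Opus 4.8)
The plan is to deduce both statements from a single claim: when $\ff$ admits a left adjoint $G$, the functor $\ff$ is normal if and only if $T(\1):=\ff G(\1)$ is a trivial object. First I would dispose of the equivalence between the two adjoint conditions. Since $\ff$ has a left adjoint if and only if it has a right adjoint, and $R(\1)\iso\ldual{G(\1)}$, while a tensor functor preserves duals and the dual of a trivial object is trivial, one gets $\ff R(\1)\iso\ldual{T(\1)}$; hence $R(\1)\in\KER_\ff$ if and only if $G(\1)\in\KER_\ff$. Throughout I would use that for $Y$ in $\D$ the largest trivial subobject $t(Y)$ is the image of the evaluation $\Hom_\D(\1,Y)\otimes\1\to Y$, that $\ff$ is exact and faithful (so it preserves images and reflects isomorphisms), and that quotients of trivial objects are trivial because $\1$ is simple.

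For the ``if'' direction, assume $T(\1)$ is trivial and fix $X$ in $\C$. Using the adjunction isomorphism $\Hom_\C(G(\1),X)\iso\Hom_\D(\1,\ff(X))$, I would set $X_0$ to be the image of the canonical evaluation $\Hom_\C(G(\1),X)\otimes G(\1)\to X$. Since $\ff$ is exact it carries this to the evaluation $\Hom_\D(\1,\ff(X))\otimes T(\1)\to\ff(X)$, so $\ff(X_0)$ is a quotient of $\Hom_\D(\1,\ff(X))\otimes T(\1)$, which is trivial as $T(\1)$ is; hence $\ff(X_0)$ is trivial. Finally, writing the adjunction bijection as $\phi\mapsto\ff(\phi)\,u_\1$ with $u_\1:\1\to T(\1)$ the unit, every $f:\1\to\ff(X)$ has the form $\ff(\tilde f)\,u_\1$, and $\tilde f:G(\1)\to X$ factors through $X_0$ by construction; thus $f$ factors through $\ff(X_0)$, so $\ff(X_0)$ contains $t(\ff(X))$ and, being trivial, equals it. This gives normality.

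The converse is the crux. Assume $\ff$ is normal and apply the definition to $X=G(\1)$: there is a subobject $j:X_0\into G(\1)$ with $\ff(X_0)=t(T(\1))$. The point is that the unit $u_\1$ corresponds under the adjunction to $\id_{G(\1)}$, and, being a nonzero map out of $\1$, its image is a trivial subobject of $T(\1)$, hence lies in $t(T(\1))=\ff(X_0)$. Factoring $u_\1=\ff(j)\,w$ and transporting $w$ through the adjunction to $\tilde w:G(\1)\to X_0$, so that $w=\ff(\tilde w)\,u_\1$, one computes $u_\1=\ff(j\tilde w)\,u_\1$, whence $j\tilde w=\id_{G(\1)}$ by injectivity of the adjunction bijection. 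Thus the monomorphism $j$ is a split epimorphism, hence an isomorphism, and $T(\1)=\ff(X_0)$ is trivial. The main obstacle is precisely this splitting argument: one must recognize that the canonical element $\id_{G(\1)}$ (equivalently the unit $u_\1$) simultaneously lands in the trivial part and ``generates'' $G(\1)$, so that normality forces $X_0=G(\1)$. A direct attempt to show $T(\1)$ trivial, or a naive application of normality to $G(\1)$ without this adjunction computation, is circular.

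For part (2), fusion categories are finite, so $\ff$ has adjoints and part (1) applies; but I would argue directly using semisimplicity. For the ``if'' direction, decompose $X$ into simple summands and let $X_0$ be the sum of those summands $X_i$ with $\ff(X_i)$ trivial; the hypothesis forces $t(\ff(X_i))=\ff(X_i)$ on these and $t(\ff(X_i))=0$ on the rest, whence $\ff(X_0)=t(\ff(X))$. For the ``only if'' direction, if $X$ is simple with $\Hom(\1,\ff(X))\neq0$ then $t(\ff(X))\neq0$, so the subobject $X_0$ furnished by normality is nonzero and therefore equals $X$ by simplicity, giving $\ff(X)=\ff(X_0)=t(\ff(X))$ trivial. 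Here the only subtlety is the reduction to simple summands, which semisimplicity makes routine.
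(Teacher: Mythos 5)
Your proof is correct, and for Part (1) it is essentially the paper's argument. The paper also reduces normality to the condition $G(\1)\in\KER_\ff$, via a commutative square comparing $\Hom_\D(\1,\ff(X_0))\to\Hom_\D(\1,\ff(X))$ with $\Hom_\C(G(\1),X_0)\to\Hom_\C(G(\1),X)$ under the adjunction isomorphisms; its crux is the same as yours, namely applying normality to $X=G(\1)$ and observing that the resulting bijectivity forces the inclusion $X_0\into G(\1)$ to split, hence to be an isomorphism. You write this splitting at the level of the unit $u_\1$ and explicit factorizations, the paper at the level of Hom-spaces; and your choice of $X_0$ in the forward direction (image of the evaluation map) versus the paper's (largest subobject of $X$ lying in $\KER_\ff$) is immaterial. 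The genuine divergence is Part (2): the paper deduces it from Part (1) by the adjunction identity $m_X G(\1)=m_\1\ff(X)$ for simple $X$, so that $G(\1)\in\KER_\ff$ if and only if every simple $X$ with $\Hom(\1,\ff(X))\neq 0$ lies in $\KER_\ff$, whereas you argue directly from semisimplicity, assembling $X_0$ from the simple summands with trivial image. Your route is more self-contained (Part (2) never needs the adjoint, so it would survive without the finiteness of fusion categories), while the paper's is shorter given Part (1) and isolates the multiplicity identity, which it reuses later (e.g.\ in the proof of Proposition \ref{inddom}).
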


\begin{proof}
Let us prove Part (1). For $X$ in $\C$, denote by $X_0 \subset X$
the largest subobject of $X$ belonging to $\KER_\ff$, which exists
because objects have finite length in $\C$. We have a commutative
diagram
$$\xymatrix{
\Hom_\D(\1,\ff(X_0)) \ar[d]_\sim\ar[r]^{a_X}& \Hom_\D(\1,\ff(X))\ar[d]^\sim\\
\Hom_\C(G(\1),X_0) \ar[r]_{b_X} & \Hom_\C(G(\1),X), }
$$
where the vertical arrows are the adjunction isomorphisms, and the
horizontal arrows $a$ and $b$ are induced by the inclusion $X_0
\subset X$. If $G(\1)$ belongs to $\KER_\ff$, then for all $X$,
$b_X$ is bijective, so $a_X$ is bijective, which means that $F(X_0)$ is the largest trivial subobject of $F(X)$. Hence $\ff$
is normal.
Conversely, suppose $\ff$ is normal, and let $X_0 \subseteq X$ be such that $\ff(X_0)$ is the largest trivial subobject of $F(X)$. Thus  $a_X$ is bijective, so
$b_X$ is bijective, for all object $X$ of $\C$. In particular $b_{G(\1)}$ is bijective,  so
$G(\1)_0=G(\1)$, hence $G(\1)$ belongs to $\KER_\ff$. Thus $\ff$
is normal $\iff$ $G(\1)$ belongs to  $\KER_\ff$ $\iff$
$R(\1)=\rdual{G(\1)}$ belongs to $\KER_\ff$.

Let us now prove Part (2). If $\C$ and $\D$ are fusion categories,
$\ff$ admits a left adjoint $G$, and $\ff$ is normal if and only
if $G(\1)$ is in $\KER_\ff$ by Part (1). By adjunction, we have
$m_X G(Y)= m_Y \ff(X)$ for all $X \in \Lambda_\C$ and $Y \in
\Lambda_\D$. In particular, $m_\1 \ff(X) > 0$ if and only if $m_X
G(\1) > 0$, hence Part (2).
\end{proof}

%
%
%
%
%

\begin{lemma}\label{lem-toto}
Let $\ff : \C \to \D$ be a tensor functor between tensor categories. Then:
\begin{enumerate}
\item The functor $\ff$ is an equivalence if and only if it is full and dominant.
\item The functor $\ff$ is full if and only if $\ff$ is normal and $\KER_\ff$ is trivial;
\end{enumerate}
\end{lemma}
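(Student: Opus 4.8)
The plan is to prove the two equivalences of Lemma~\ref{lem-toto} by reducing Part~(1) to Part~(2), and by establishing Part~(2) directly from the definitions of fullness, normality, and triviality of the kernel.

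For Part~(2), I would first argue the forward direction: assume $\ff$ is full. To see that $\KER_\ff$ is trivial, take $X$ in $\KER_\ff$, so $\ff(X)\simeq\1^n$; since $\ff$ is full and faithful (every tensor functor is faithful), the induced map $\Hom_\C(\1,X)\to\Hom_\D(\1,\ff(X))=\Hom_\D(\1,\1^n)$ is bijective, and likewise for $\Hom_\C(X,\1)$, which should force $X$ itself to be a direct sum of $n$ copies of $\1$, i.e.\@ trivial. To see that $\ff$ is normal when it is full, I would take any object $X$ of $\C$, let $Y_0\subset\ff(X)$ be the largest trivial subobject, and use fullness together with faithfulness and exactness to pull back the inclusion $Y_0\to\ff(X)$ to a subobject $X_0\subset X$ with $\ff(X_0)=Y_0$; the key point is that a split monomorphism (which $Y_0\to\ff(X)$ is not in general, but $\1^n\to\ff(X)$ decomposes through the socle-type structure) lifts along a full faithful functor. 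For the converse, assume $\ff$ is normal with $\KER_\ff$ trivial, and show $\ff$ is full, i.e.\@ that $\Hom_\C(X,Y)\to\Hom_\D(\ff(X),\ff(Y))$ is surjective for all $X,Y$. A given $g:\ff(X)\to\ff(Y)$ corresponds, by rigidity, to a morphism $\1\to\ff(X)^\vee\otimes\ff(Y)\simeq\ff(\rdual{X}\otimes Y)$, i.e.\@ to an element of the largest trivial subobject of $\ff(\rdual{X}\otimes Y)$; normality provides a subobject $Z_0\subset\rdual{X}\otimes Y$ with $\ff(Z_0)$ this trivial subobject, and triviality of $\KER_\ff$ forces $Z_0$ itself to be trivial, whence the morphism $\1\to\ff(Z_0)$ lifts to $\1\to Z_0\hookrightarrow\rdual{X}\otimes Y$ and, undoing the rigidity adjunction, to the desired preimage of $g$.

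For Part~(1), one direction is immediate: an equivalence is trivially full, and it is dominant since every object of $\D$ is isomorphic to (hence a subobject of) some $\ff(X)$. For the converse, suppose $\ff$ is full and dominant. By Part~(2), fullness gives that $\ff$ is normal with $\KER_\ff$ trivial. I would then invoke that a full faithful exact tensor functor is an equivalence onto its image provided it is also essentially surjective, and deduce essential surjectivity from dominance: given $Y$ in $\D$, dominance yields a monomorphism $Y\hookrightarrow\ff(X)$, and the full faithfulness of $\ff$ (an exact full faithful functor reflects subobjects and identifies $Y$ with the image of the corresponding subobject $X'\subset X$) shows $Y\simeq\ff(X')$. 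Thus $\ff$ is full, faithful, and essentially surjective, hence an equivalence.

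The main obstacle I anticipate is the lifting argument in the converse of Part~(2) (normal $+$ trivial kernel $\Rightarrow$ full): one must be careful that a morphism $\1\to\ff(Z_0)$ into the \emph{largest} trivial subobject genuinely comes from a morphism $\1\to Z_0$ in $\C$, which is exactly where triviality of $\KER_\ff$ is used to upgrade ``$\ff(Z_0)$ trivial'' to ``$Z_0$ trivial,'' so that $Z_0\simeq\1^m$ and $\ff$ restricted to $\langle\1\rangle$ is an equivalence onto $\langle\1\rangle\subset\D$. Managing the rigidity/adjunction bookkeeping that translates a general morphism $g$ into a vector in a trivial $\Hom$ space, and ensuring the lift is compatible on the nose, is the delicate step; the rest is a routine assembly of faithfulness, exactness, and the definitions.
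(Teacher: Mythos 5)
Your Part~(2) is correct and is essentially the paper's own proof: the converse direction (normal plus trivial $\KER_\ff$ implies full) via the rigidity identification $\Hom(X,Y)\simeq\Hom(\1,\rdual{X}\otimes Y)$ and the lifting of morphisms $\1\to\ff(Z)$ through the largest trivial subobject is exactly the paper's argument. Your forward direction is vaguer than it should be (``should force'' needs an argument), but it is repairable: since $\1^n=\ff(\1^n)$ lies in the image of $\ff$, fullness lets you lift the isomorphism $\1^n\to\ff(X)$ (for $X$ in $\KER_\ff$), respectively the inclusion of the largest trivial subobject of $\ff(X)$, to a morphism in $\C$, and a faithful exact functor reflects isomorphisms and monomorphisms. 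Note that this repair works precisely because the object being lifted is trivial, hence of the form $\ff(\text{--})$.

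Part~(1), however, contains a genuine gap. Your essential-surjectivity step rests on the principle that ``an exact full faithful functor reflects subobjects,'' i.e.\@ that a monomorphism $Y\hookrightarrow\ff(X)$ forces $Y\simeq\ff(X')$ for some subobject $X'\subset X$. This is false for general exact fully faithful functors between abelian categories: for instance, the full subcategory of the arrow category of $\vect_\kk$ consisting of the objects $(V\overset{\sim}\to W)$ with invertible structure map is abelian and the inclusion is exact and fully faithful, yet the object $(V\overset{\id}\to V)$ has the subobject $(0\to V)$, which is not isomorphic to anything in the subcategory. The underlying problem is that your $Y$ is not known to lie in the essential image of $\ff$, so fullness gives you no morphism between objects of the form $\ff(\text{--})$ to lift --- in contrast with the normality argument of Part~(2), where the subobject in question is trivial. (Within tensor categories the reflection-of-subobjects property for full tensor functors is in fact a \emph{consequence} of the lemma you are proving, so invoking it here is circular.) The paper closes this gap by using both equivalent forms of dominance from Lemma~\ref{dom-sub-quot}: choose an epimorphism $p:\ff(X_1)\to Y$ \emph{and} a monomorphism $i:Y\to\ff(X_2)$, so that $ip$ is a morphism $\ff(X_1)\to\ff(X_2)$; fullness lifts it to $\pi:X_1\to X_2$, and exactness of $\ff$ (preservation of images) gives $\ff(\operatorname{im}\pi)\simeq\operatorname{im}(ip)\simeq Y$, whence essential surjectivity. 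Your proof needs this two-sided use of dominance; as written, the step fails.
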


\begin{proof}
Part (1): clearly if $\ff$ is an equivalence it is both full and dominant. Conversely, assume $\ff$ is full and dominant. Let $Y$
be an object of $\D$. By Lemma~\ref{dom-sub-quot}, there exist $X_1$, $X_2$ in $\C$, an epimorphism $p:\ff(X_1) \to Y$ and a monomorphism $i:Y \to \ff(X_2)$, and $Y$ is (isomorphic to) the image of $ip$. Since $\ff$ is full, there exists $\pi : X_1 \to X_2$ such that $\ff(\pi)=ip$. Let $X$ be the image of $\pi$; since $\ff$ is exact, it preserves images so $\ff(X) \simeq Y$. Thus $\ff$ is essentially surjective, and is therefore an equivalence.

Part (2): if $\ff$ is full, it is normal and $\KER_\ff$ is trivial. Conversely, assume $\ff$ is normal and $\KER_\ff$ is trivial. Then for any $X$ in $\C$ we have a subobject $X_0 \subset X$ such that $\ff(X_0)$ is the largest trivial subobject of $\ff(X)$. In particular $X_0$ is trivial,
so $\ff$ induces an
isomorphism $\Hom_\C(\1, X) \to  \Hom_{\D} (\1,\ff(X))$. Since in a rigid category
$\Hom(X,X') \simeq \Hom(\1, \rdual{X} \otimes X')$, we conclude that  $\ff$ is fully faithful.
\end{proof}

\begin{definition}\label{def-sec} Let $\C', \C, \C''$ be tensor categories over $\kk$. A sequence of tensor functors
\begin{equation}\label{exacta-fusion}\xymatrix{\C' \ar[r]^f & \C \ar[r]^\ff & \C''}
\end{equation}
is an \emph{exact sequence of tensor categories} if the
following conditions hold:
\begin{enumerate}[(1)]
\item  The tensor functor $\ff$ is dominant and normal;
\item The tensor functor $f$ is a full embedding;
\item The essential image of $f$ is $\KER_\ff$;
\end{enumerate}

Two exact sequences of  tensor  categories
$$\xymatrix{\C' \ar[r]^{f_1} & \C_1 \ar[r]^{\ff_1} & \C''} \quad\mbox{and}\quad \xymatrix{\C' \ar[r]^{f_2} & \C_2 \ar[r]^{\ff_2} & \C''}$$ are \emph{equivalent} if there exists
a  tensor equivalence  $\lambda: \C_1 \to \C_2$ such
that $\ff_1 \simeq \ff_2 \lambda$ and $f_2 \simeq \lambda f_1$ as
tensor functors,  that is, such that the diagram  
$$\begin{CD}\C' @>{f_1}>> \C_1 @>{\ff_1}>> \C'' \\
@VV{=}V @VV{\lambda}V @VV{=}V\\
\C' @>{f_2}>> \C_2 @>{\ff_2}>> \C''.
\end{CD}$$
is commutative up to  monoidal  isomorphism.

 A exact sequence $\C' \to  \C \to\C''$ is called an \emph{extension of $\C''$ by $\C'$}; we also say that $\C$ is an extension of $\C''$ by $\C'$.
\end{definition}


Note that a normal dominant tensor functor $\ff: \C \to \C''$
between tensor categories defines an exact sequence of tensor
categories $$\KER_\ff \to \C \overset{\ff}\to \C''.$$

\begin{proposition}\label{trivial-sec} If $\C' \overset{f}\to \C \overset{\ff}\to \C''$
is an exact sequence of tensor categories, then:
\begin{enumerate}

\item The tensor functor $\ff$ is an equivalence if and only
if $\C'$ is trivial;

\item The tensor functor $f$ is an equivalence if and only
if $\C''$ is trivial. \end{enumerate}\end{proposition}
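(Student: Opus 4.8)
The plan is to derive both parts almost mechanically from Lemma~\ref{lem-toto} together with the three defining conditions of an exact sequence, so that the real content lies in chaining equivalences rather than in any new construction.

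For Part (1), I would begin from Lemma~\ref{lem-toto}(1), which asserts that $\ff$ is an equivalence if and only if it is full and dominant. Since the exact sequence hypothesis already guarantees that $\ff$ is dominant, this reduces the question to whether $\ff$ is full. I would then invoke Lemma~\ref{lem-toto}(2): $\ff$ is full if and only if $\ff$ is normal and $\KER_\ff$ is trivial. Normality is again built into the definition of an exact sequence, so fullness becomes equivalent to $\KER_\ff$ being trivial. Finally, because $f$ is a full embedding whose essential image is exactly $\KER_\ff$, it induces an equivalence $\C' \simeq \KER_\ff$; hence $\KER_\ff$ is trivial if and only if $\C'$ is trivial. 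Composing these three equivalences yields Part (1).

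For Part (2), the key observation is that $f$ is by definition a full embedding, hence fully faithful, so $f$ is an equivalence precisely when it is essentially surjective, that is, when its essential image $\KER_\ff$ is all of $\C$. I would then show that $\KER_\ff = \C$ is equivalent to $\C''$ being trivial. If $\C''$ is trivial, then $\ff(X)$ lies in $\langle\1\rangle$ for every object $X$ of $\C$, so $\KER_\ff = \C$. Conversely, if $\KER_\ff = \C$, then $\ff(X)$ is trivial for all $X$; using the dominance of $\ff$, every object $Y$ of $\C''$ embeds into some trivial $\ff(X) \simeq \1^n$, and since the trivial subcategory $\langle\1\rangle$ is stable under subobjects, $Y$ is itself trivial, whence $\C''$ is trivial.

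I do not expect any serious obstacle here: the argument is a formal consequence of Lemma~\ref{lem-toto} and of the definitions of dominant, normal, and exact sequence. The only point requiring a moment's care is the last step of Part (2), where I use that a subobject of a trivial object is trivial; this is immediate from the fact that $\langle\1\rangle$ is, by its very definition, closed under subobjects, or equivalently from the tensor equivalence $\langle\1\rangle \simeq \vect_\kk$ given by $X \mapsto \Hom_\C(\1,X)$, under which subobjects of $\1^n$ correspond to subspaces of $\kk^n$.
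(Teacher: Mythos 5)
Your proof is correct and follows essentially the same route as the paper's: Part (1) is reduced to Lemma~\ref{lem-toto} using the dominance and normality built into the definition of an exact sequence, and Part (2) identifies $f$ being an equivalence with $\KER_\ff=\C$ and then uses dominance of $\ff$ to conclude $\C''$ is trivial. The only difference is that you spell out details (the chaining of equivalences in Part (1), and the fact that subobjects of trivial objects are trivial in Part (2)) that the paper leaves implicit.
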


\begin{proof} Part (1): if $\ff$ is an equivalence then $\KER_\ff=\langle\1\rangle$, that is, $\C'$ is trivial. Conversely, if $\KER_\ff$ is trivial then
by Lemma~\ref{lem-toto} $\ff$ is an equivalence.

Part (2): if $\C''$ is trivial, $\KER_\ff=\C$, hence $f$ is an
equivalence. Conversely if $f$ is an equivalence, $\KER_\ff=\C$.
Hence $\ff(\C) \subset \langle\1\rangle$, and, $\ff$ being
dominant, $\C''=\langle\1\rangle$, that is, $\C''$ is trivial.
\end{proof}

\subsection{Exact sequences of tensor categories from Hopf algebras}

Strictly exact sequences of Hopf algebras as defined in \cite{schn2} give rise to exact sequences of tensor categories.
We always assume that Hopf algebras have a bijective antipode. If $H$ is a Hopf algebra, we denote by $H^+ \subset H$
the \emph{augmentation ideal} $H^+=\{x \in H \mid \varepsilon(x)=0\}$.

In the category of Hopf algebras over a field $\kk$, the trivial Hopf algebra $\kk$ is a zero object, that is, it is both initial and final.
A morphism $f : H \to H'$ of Hopf algebras over $\kk$ admits a categorical kernel and a categorical cokernel, defined by
\begin{align*}
\ker(f)&=\{x \in H \mid x_{(1)} \otimes f(x_{(2)}) \otimes x_{(3)} = x_{(1)} \otimes 1 \otimes x_{(2)} \}\\
\intertext{in Sweedler's notation, and}
\coker(f)&=H'/H' f(H^+).
\end{align*}

Observe that $\ker(f)$ is not $f^{-1}(0)$ and $\coker(f)$ is not $H'/f(H)$.

A \emph{strictly exact sequence of Hopf algebras} is a diagram
$$K \overset{i}\toto  H \overset{p}\toto H'$$
where $i$, $p$ are morphisms of Hopf algebras  such that
\begin{enumerate}[(a)]
\item $K$ is a normal Hopf subalgebra of $H$;
\item  $H$ is right faithfully flat over $K$;
\item $p$ is a categorical cokernel of $i$,
\end{enumerate}
or, equivalently, setting $I=p^{-1}(0)$, such that
\begin{enumerate}[(a')]
\item $I$ is a normal Hopf ideal of $H$;
\item $H$ is right faithfully coflat over $H'$;
\item $i$ is a categorical kernel of $p$.
\end{enumerate}
 A Hopf subalgebra $K \subset H$ is normal if it is a submodule of $H$ for the adjoint action of $H$ on itself, defined by $x.y= x_{(1)}y S(x_{(2)})$, and a Hopf ideal $I \subset H$ is normal if it is a subcomodule of $H$ for the coadjoint coaction of $H$ on itself, defined by $X \mapsto x_{(2)} \otimes x_{(1)} S(x_{(3)})$.

\begin{proposition}\label{prop-schne}
A strictly exact sequence  $K \overset{i}\toto  H \overset{p}\toto H'$  Hopf algebras over a field gives rise to an exact sequence of tensor categories:
$$\CoRep{K} \overset{i_*}\toto \CoRep{H} \overset{p_*}\toto \CoRep{H'}$$ and also, if $H$ is finite-dimensional, to a second exact sequence of tensor categories:
$$\Rep{H'} \overset{p^*}\toto \Rep{H} \overset{i^*}\toto \Rep{K}.$$
\end{proposition}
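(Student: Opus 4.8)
The plan is to verify directly the three defining conditions of an exact sequence (Definition~\ref{def-sec}) for the comodule sequence $\CoRep{K} \overset{i_*}\toto \CoRep{H} \overset{p_*}\toto \CoRep{H'}$, and then to deduce the module sequence from it by dualization. Throughout I would use that in a strictly exact sequence $i$ is injective and $p$ is surjective, and that, by the faithfully coflat descent results of \cite{schn2}, the coinvariants $H^{\co H'} := \{h \in H \mid h_{(1)} \otimes p(h_{(2)}) = h \otimes 1\}$ coincide with $i(K)$.

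First I would dispose of the formal conditions. Since $i$ is injective, the map $\id_M \otimes i$ is injective for every vector space $M$, so a linear map between two $K$-comodules intertwines the pushed-forward $H$-coactions if and only if it intertwines the original $K$-coactions; hence $i_*$ is a fully faithful embedding, giving condition (2). For condition (3), an $H$-comodule $(M,\delta)$ lies in $\KER_{p_*}$ exactly when $p_*(M)$ is trivial, i.e.\ $(\id_M \otimes p)\delta(m) = m \otimes 1$ for all $m$, which says precisely that $\delta$ takes values in $M \otimes H^{\co H'} = M \otimes i(K)$; such an $(M,\delta)$ is the image under $i_*$ of a unique $K$-comodule, and conversely the composite $p i$ is trivial (it factors through $\kk$), forcing every object in the image of $i_*$ into $\KER_{p_*}$. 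Thus the essential image of $i_*$ is $\KER_{p_*}$. Normality of $p_*$ is handled pointwise: for $(M,\delta)$ the subspace $M_0 = \{m \mid (\id_M \otimes p)\delta(m) = m \otimes 1\}$ is the coinvariant space of $p_*(M)$, hence its largest trivial subcomodule, and normality of the Hopf subalgebra $K$ guarantees that $M_0$ is an $H$-subcomodule, so $M_0$ is the subobject required by Definition~\ref{def-normalfunctor}.

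The crux is dominance of $p_*$, and this is where I expect the real work to lie. Here I would use the right adjoint of $p_*$ furnished by the cotensor product: regarding $H$ as an $(H',H)$-bicomodule (left coaction through $p$, right coaction through $\Delta$), the functor $p^! = - \,\square_{H'}\, H : \CoRep{H'} \to \CoRep{H}$ is right adjoint to $p_*$. Faithful coflatness of $H$ over $H'$ (hypothesis (b')) makes $- \,\square_{H'}\, H$ faithful and exact and forces the counit $p_* p^!(V) \toto V$ to be an epimorphism for every $V$. Consequently every object of $\CoRep{H'}$ is a quotient of an object in the image of $p_*$, which by Lemma~\ref{dom-sub-quot} is exactly dominance. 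This completes the verification for the comodule sequence.

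Finally, assuming $H$ finite-dimensional, I would obtain the module sequence by duality rather than by a separate computation. Under the equivalence $\Rep{A} \simeq \CoRep{A^*}$, the sequence $\Rep{H'} \overset{p^*}\toto \Rep{H} \overset{i^*}\toto \Rep{K}$ becomes the pushforward sequence $\CoRep{(H')^*} \toto \CoRep{H^*} \toto \CoRep{K^*}$ associated to the dual maps, where $p$ dualizes to the inclusion $(H')^* \into H^*$ and $i$ dualizes to the surjection $H^* \toto K^*$. Since dualization of finite-dimensional Hopf algebras interchanges conditions (a)--(c) with (a')--(c') (normal Hopf subalgebra $\leftrightarrow$ normal Hopf ideal, faithfully flat $\leftrightarrow$ faithfully coflat, categorical cokernel $\leftrightarrow$ categorical kernel), the dual sequence $(H')^* \toto H^* \toto K^*$ is again strictly exact, and the first part applies verbatim. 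The main obstacle is thus concentrated in the dominance step via faithful coflatness; the remaining conditions and the passage to modules are essentially formal.
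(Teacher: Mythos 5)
Your proof is correct, and it splits naturally into two halves of different character. For the comodule sequence you follow essentially the paper's own route: dominance of $p_*$ from faithful coflatness via the cotensor adjoint $?\,\square^{H'}H$ and Lemma~\ref{dom-sub-quot}, normality checked pointwise on coinvariant subspaces, and $\KER_{p_*}=\CoRep{K}$ via $H^{\co H'}=i(K)$; the paper packages exactly this as Lemmas~\ref{lem-hopf-norm}(1) and~\ref{lem-hopf-dom}(1). One caveat there: your step ``normality of the Hopf subalgebra $K$ guarantees that $M_0$ is an $H$-subcomodule'' compresses the real content. The implication is not formal: the paper first obtains the two-sided coinvariance $H^{\co H'}={}^{\co H'}\!H$ from conormality of $p$ by \cite[Lemma 1.3]{schn2}, and only then shows by a Sweedler computation that every $M^{\co H'}$ is a subcomodule; normality of $K$ enters only through that equality, so this citation is needed to close your argument. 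Where you genuinely diverge is the module sequence. The paper proves it directly: dominance of $i^*$ via the Nichols--Zoeller theorem ($H$ is free over $K$, so every $K$-module embeds in the restriction of an induced $H$-module, Lemma~\ref{lem-hopf-dom}(2)), and normality of $i^*$ with $\KER_{i^*}=\Rep{\coker(i)}=\Rep{H'}$ via Lemma~\ref{lem-hopf-norm}(2). You instead dualize: under $\Rep{A}\simeq\CoRep{\dual{A}}$ the module sequence becomes the pushforward sequence of $\dual{(H')}\toto \dual{H}\toto \dual{K}$, and since duality interchanges axioms (a)--(c) with (a')--(c'), the dual sequence is strictly exact and the first part applies. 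This is valid and economical: no work is repeated, and the Nichols--Zoeller input is replaced by the faithful flatness of $H$ over $K$ already assumed in (b), whose dual is precisely the coflatness hypothesis your first part consumes. What the paper's longer route buys is generality of the intermediate lemmas, which it reuses elsewhere: its dominance criterion for $f^*$ needs only that $f$ be injective with finite-dimensional target, and its normality criterion only that $f$ have normal image, neither being tied to a strictly exact sequence.
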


\begin{proof}
Let $f : H \to H'$ be a morphism of Hopf algebras over $\kk$.
Denote respectively by  $H^{\co H'}$ and $\,^{\co H'}\!\!H$ the
subalgebras of $H$ of right and left $H'$ coinvariants, \emph{i.
e:} $$H^{\co H'} = \{ h \in H/\, (\id_H \otimes f)\Delta(h) = h
\otimes 1 \}, \quad  ^{\co H'}\!\!H = \{  h \in H/\, (f \otimes
\id_H)\Delta(h) = 1 \otimes h \}.$$

\begin{lemma}\label{lem-hopf-norm}
Let $f : H \to H'$ be a morphism of Hopf algebras over a field. Then
\begin{enumerate}\item The tensor functor $f_*:  \CoRep{H}\to \CoRep{H'}$ is normal if and only if
$H^{\co H'}=\,^{\co H'}\!\!H$. If such is the case, we have $\KER_{f_*}= \CoRep{\ker(f)}$, with $\ker(f)=H^{\co H'}$.
\item The tensor functor $f^* : \Rep{H'} \to \Rep{H}$ is normal if $f(H)$ is a normal Hopf subalgebra of $H'$, and in that case we have $\KER_{f^*}= \Rep{\coker(f)}$, with $\coker(f)={H'/H'f(H^+)}$.
\end{enumerate}
\end{lemma}

\begin{proof}
Part (1).
If $(M,\delta)$ is a right $H$-comodule, then $f_*(M,\delta)=(M,(\id_M \otimes f)\delta)$, and the
largest trivial subobject of $f_*(M,\delta)$ is $$M^{\co H'} = \{ x \in M/\, (\id_M \otimes f) \delta(x) = x\otimes
1 \}.$$
According to Definition \ref{def-normalfunctor}, $f_*$ is
normal  if and only if for all finite-dimensional right $H$-comodule $M$, $M^{\co H'} \subset M$ is a subcomodule.
This is equivalent to saying that for all right $H$-comodule $M$,
$M^{\co H'} \subset M$ is a subcomodule, because any comodule is locally finite.

Now assume $f_*$ is normal. Then $H^{\co H'}$ is a subcomodule of the right comodule $H=(H,\Delta)$. If $h \in H^{\co H'}$ we have
 in Sweedler's notation $\Delta(h)=h_{(1)} \otimes h_{(2)}$, with
$h_{(1)} \in H^{\co H'}$.  Thus $h_{(1)} \otimes f(h_{(2)}) \otimes h_{(3)}=h_{(1)} \otimes 1 \otimes h_{(2)}$, and so
$f(h_{(1)}) \otimes h_{(2)} = 1
\otimes h$, and $h \in ^{\co H'}\!\!H$.
Thus $H^{\co H'} \subset \,^{\co H'}\!\!H$. The reverse inclusion follows from the fact that the antipode of $H$,  being an anti-bialgebra isomorphism by assumption,  exchanges $H^{\co H'}$ and $^{\co H'}\!\!H$.
Hence $f$ is normal.

Conversely, assume $H^{\co H'}=\,^{\co H'}\!\!H$. Let $(M,\delta)$ be a right $H$-comodule.
We have $\delta(M^{\co H'}) \subseteq M \otimes H^{\co
H'}$. Indeed, for  $x \in M$ let $\delta(x) = x_{(0)} \otimes x_{(1)}$ in Sweedler's notation. If
 $x\in M^{\co H'}$, we have
$$x_{(0)} \otimes (\id_M \otimes f) \Delta(x_{(1)})  = \delta x_{(0)}
\otimes  f(x_{(1)}) = \delta(x) \otimes 1 = x_{(0)} \otimes x_{(1)}
\otimes 1.$$ Now by assumption $H^{\co H'}=^{\co H'}\!\!H$, so for $x \in M^{\co H'}$ we have
$$(\id_M \otimes f) \delta  (x_{(0)}) \otimes x_{(1)}=x_{(0)} \otimes (f \otimes \id_H)\Delta (x_{(1)})=x_{(0)} \otimes 1 \otimes x_{(1)},$$
so $M^{\co H'}$ is a subcomodule of $M$, hence $f_*$ is normal.

If $f$ is normal, $K = H^{\co  H'} = \,^{\co H'}\!\!H=\ker(f)$ is a
Hopf subalgebra of $H$ and $\CoRep{K}$ is the full tensor subcategory
of $\CoRep{H}$ whose  objects are  those right $H$-comodules which are trivial as  $H'$-comodule. In other words
$\CoRep{K} = \KER_{f_*}$.

Part (2). Let $(M,r)$ be a finite-dimensional left $H'$-module.
The largest trivial subobject of  $f^*(M,r)=(M,r(f \otimes
\id_M))$ is $$M_0=\{m \in M \mid \forall x \in H,
f(x)m=\varepsilon(x)m\}= \{m \in M \mid f(H^+)m=0\}.$$ If $f(H)$
is a normal Hopf subalgebra of $H'$, we have
$f(H^+)H'=H'f(H^+)$, hence  $M_0$ is a $H'$-submodule of $M$ and
so, $f^*$ is normal. Moreover
$\KER_{f^*}=\Rep{H'/H'f(H^+)}=\Rep{\coker(f)}$.
\end{proof}

%
%
%


\begin{lemma}\label{lem-hopf-dom}
Let $f : H \to H'$ be a morphism of Hopf algebras over a field. Then
\begin{enumerate}
\item The tensor functor $f_* : \CoRep{H} \to \CoRep{H'}$ is dominant if and only if $?\, \square^{H'}H :  \COREP{H'} \to \COREP{H}$ is faithful;
\item The tensor functor $f^* : \Rep{H'} \to \Rep{H}$ is dominant if $f$ is injective and $H'$ is finite-dimensional.
\end{enumerate}
\end{lemma}

\begin{remark}
In particular, if $f_*$ is dominant then $f$ is surjective. Conversely if $f$ is surjective and $H$ is right $H'$ coflat, then $f_*$ is dominant, with exact Ind-adjoint. If $H$ is finite-dimensional, it is right $H'$ coflat; and in that case, $f_*$ is dominant if and only if $f$ is surjective.
\end{remark}

\begin{proof} Part (1). If $C$ is a coalgebra over a field,
the category of Ind-objects of the category of finite-dimensional right $C$-comodules  is the category $\COREP{C}$ of all right $C$-comodules. The Ind-adjoint $R$ of $f_*$ is the right adjoint
of  $$\Ind(f_*) : \prettydef{\COREP{H} &\to &\COREP{H'}\\(X,\delta)&\mapsto &(X,(\id_X \otimes f)\delta),}$$ that is, $R=?\, \square^{H'}H$.
By Lemma~\ref{dom-sub-quot}, $f_*$ is dominant if and only if $R$ is faithful.

Part (2). If $H'$ is finite-dimensional and $f$ is injective, then
$H'$ is a free left $H$-module  \cite{NZ}. If $M$ is a
finite dimensional left $H'$-module, then $f \otimes_{H'} M :
M \simeq H' \otimes_{H'} M \to H \otimes_{H'} M$ is a monomorphism
$M \to f_* (H \otimes_{H'} M)$, hence $f_*$ is dominant.

\end{proof}

Now let us conclude the proof of Proposition~\ref{prop-schne}. Consider a  strictly exact sequence  $K \overset{i}\toto  H \overset{p}\toto H'$  of  Hopf algebras over a field $\kk$. By Lemma~\ref{lem-hopf-dom}, $p_*$ is dominant because $H$ is right faithfully coflat over $H'$.
Set $I=p^{-1}(0)$. The fact that $I$ is, by assumption, a normal Hopf ideal of $H$ means that the morphism $p : H \to H' \simeq H/I$ is conormal. By \cite[Lemma 1.3]{schn2} we have $H^{\co  H'} = \,^{\co H'}\!\!H$, so by Lemma~\ref{lem-hopf-norm} the tensor functor $p_*$ is normal, and $\KER_{p_*}=\CoRep{K}$ since $K=\ker(p)$. Hence $\CoRep{K} \to \CoRep{H} \to \CoRep{H'}$ is an exact sequence of tensor categories.
Now assume $H$ is finite-dimensional. Then by Lemma~\ref{lem-hopf-dom} the tensor functor $i^* : \Rep{H} \to \Rep{K}$ is dominant because $i$ is injective, and by Lemma~\ref{lem-hopf-norm} it is normal,  with $\KER_{i^*}=\Rep{H'}$ because $H'=\coker(i)$.
Hence $\Rep{H'} \to \Rep{H} \to \Rep{K}$ is an exact sequence of tensor categories.
\end{proof}

\begin{remark}
Let $\C' \overset{f}\to \C \overset{F}\to \C''$ be an exact sequence of tensor categories over a field $\kk$ such that the Ind-adjoint of $F$ is exact.
Assume moreover that $\C''$ admits a fiber functor $\omega : \C'' \to \vect_\kk$. Then, setting $H'=L(\omega)$, $H''=L(\omega F)$ and $K=L(\omega Ff)$,
and denoting by $i : K \to H$ and $p : H \to H'$ the Hopf algebra morphisms induced by $f$ and $F$ respectively, we obtain a strictly exact sequence of Hopf algebras over $\kk$:
$$K\overset{i}\toto H \overset{p} \toto H',$$
and we have an isomorphism of exact sequences of tensor categories
$$\xymatrix{
\C' \ar[d]_\simeq \ar[r]^{f} & \C \ar[d]^\simeq \ar[r]^{F} &\C'' \ar[d]^\simeq\\
\CoRep{K} \ar[r]_{i_*} & \CoRep{H} \ar[r]_{p_*} & \CoRep{H''}.
}$$
Note that we must assume that the Ind-adjoint of $F$ is exact because the definition of strictly exact sequence of tensor categories implies that $H$ is right coflat over $H'$.
\end{remark}

Applying the proposition to exact sequences of group algebras, we obtain:

\begin{corollary}\label{sec-groups} An exact sequence of finite groups $1 \to G''
\overset{\iota}\to G \overset{\pi}\to G' \to 1$ gives rise to an exact sequence of
tensor categories:
$$\xymatrix{\rep G' \ar[r]^{\pi*}& \rep G \ar[r]^{\iota^*}& \rep G''}.$$
\end{corollary}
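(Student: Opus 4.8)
The plan is to deduce this from Proposition~\ref{prop-schne} by realizing the group algebras as a strictly exact sequence of Hopf algebras. First I would pass from the exact sequence of finite groups to the induced morphisms of group algebras $\kk\iota : \kk G'' \to \kk G$ and $\kk\pi : \kk G \to \kk G'$; these are morphisms of Hopf algebras, with the standard structure in which the group elements are grouplike. Since $\iota$ is injective and $\pi$ is surjective, and distinct group elements are linearly independent in a group algebra, $\kk\iota$ is injective and $\kk\pi$ is surjective. Writing $K=\kk G''$, $H=\kk G$ and $H'=\kk G'$, the claim is that
$$K \overset{\kk\iota}\toto H \overset{\kk\pi}\toto H'$$
is a strictly exact sequence of Hopf algebras over $\kk$.

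Next I would check conditions (a), (b), (c) of the definition. For (a), on a group algebra the adjoint action $x.y=x_{(1)}yS(x_{(2)})$ restricts on a grouplike $g$ to conjugation $g.y=gyg^{-1}$; as $\iota$ identifies $G''$ with the normal subgroup $\ker\pi\subset G$, conjugation by any element of $G$ preserves $G''$ and hence $\kk G''$, so $K$ is a normal Hopf subalgebra of $H$. For (b), a set of left-coset representatives of $G''$ in $G$ exhibits $\kk G$ as a free, hence faithfully flat, right $\kk G''$-module. For (c), I would compute the categorical cokernel $\coker(\kk\iota)=H/H\,(\kk\iota)(K^+)$: the augmentation ideal $K^+$ is spanned by the elements $h-1$ with $h\in G''$, so passing to the quotient by the ideal $H\,(\kk\iota)(K^+)$ identifies $g$ with $gh$ for all $g\in G$ and $h\in G''$, that is, identifies group elements lying in the same left $G''$-coset. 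This gives a Hopf algebra isomorphism $\coker(\kk\iota)\iso \kk[G/G'']\simeq \kk G'$ carrying the quotient map to $\kk\pi$, so $\kk\pi$ is a categorical cokernel of $\kk\iota$.

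Finally, since $G$ is finite the Hopf algebra $H=\kk G$ is finite-dimensional, so Proposition~\ref{prop-schne} applies and yields the second exact sequence of tensor categories $\Rep{H'}\overset{(\kk\pi)^*}\toto \Rep{H}\overset{(\kk\iota)^*}\toto \Rep{K}$. Identifying $\Rep{\kk G'}=\rep G'$, $\Rep{\kk G}=\rep G$ and $\Rep{\kk G''}=\rep G''$, and noting $(\kk\pi)^*=\pi^*$ and $(\kk\iota)^*=\iota^*$, we obtain precisely the asserted sequence $\rep G' \overset{\pi^*}\toto \rep G \overset{\iota^*}\toto \rep G''$. The only substantive point is the computation in (c) --- matching the categorical (rather than naive) cokernel of Hopf algebras with the coset algebra $\kk[G/G'']$; conditions (a) and (b) are standard structural facts about group algebras, faithful flatness being immediate from freeness over the subalgebra.
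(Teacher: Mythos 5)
Your proposal is correct and follows exactly the paper's route: the paper also obtains this corollary by applying Proposition~\ref{prop-schne} to the induced sequence of group algebras $\kk G'' \to \kk G \to \kk G'$, leaving the verification of strict exactness (normality of $\kk G''$ under the adjoint/conjugation action, faithful flatness via freeness over the subalgebra, and identification of the categorical cokernel with $\kk[G/G'']\simeq \kk G'$) implicit. Your write-up simply supplies those routine checks explicitly.
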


\subsection{Induced Hopf algebras}\label{fiberf}

Let $\ff: \C \to \C''$ be a tensor functor between tensor
categories over a field $\kk$. The functor
$$\omega_\ff:\prettydef{ \KER_\ff &\to &\vect_\kk,\\ X &\mapsto
&\Hom(\1,\ff(X))}$$ is a fiber functor for $\KER_\ff$ because
$\ff(\KER_\ff) \subset \langle\1\rangle$. Then
$$L(\omega_\ff)=\int^{X \in \KER_\ff} \omega_\ff(X) \otimes_\kk
\rdual{\omega_\ff(X)}$$ is a Hopf algebra, and we have a canonical
tensor equivalence $$\KER_\ff \iso \CoRep{L(\omega_\ff)}.$$

If $\ff$  is a normal tensor functor, $\L(\omega_\ff)$ is called the \emph{induced Hopf algebra of $\ff$}.

If $\C' \overset{i}\to \C \overset{\ff}\to \C''$ is an exact
sequence of tensor categories over $\kk$, we have a canonical tensor
equivalence $\C' \iso \CoRep{L(\omega_\ff)}$. Then
$L(\omega_\ff)$ is called the \emph{induced Hopf algebra of the short exact sequence}.

\begin{proposition}\label{prop-crit-adj}
Let $\C' \overset{f}\toto \C \overset{F}\toto \C''$ be an exact sequence of tensor categories, with induced Hopf algebra $H$. The following assertions are equivalent:
\begin{enumerate}[(i)]
\item The functor $F$ has  adjoints;
\item The tensor category $\C'$ is finite;
\item The Hopf algebra $H$ is finite-dimensional.
\end{enumerate}
In particular, if $\C'$ and $\C''$ are finite, so is $\C$.
\end{proposition}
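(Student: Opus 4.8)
The plan is to route everything through the fiber functor $\omega_F : \C' \simeq \KER_F \to \vect_\kk$ of Section~\ref{fiberf}, which satisfies $\omega_F(X)=\Hom(\1,F(X))$ and reconstructs $H=L(\omega_F)$ with $\C'\simeq\CoRep{H}$. The backbone is the principle recalled in Section~\ref{rladj}: a fiber functor has adjoints if and only if its source is finite; hence (ii) is equivalent to ``$\omega_F$ has adjoints''. I would first settle (ii)$\Leftrightarrow$(iii). Under $\C'\simeq\CoRep{H}$ the functor $\omega_F$ is the forgetful functor $U:\CoRep{H}\to\vect_\kk$, whose cofree right adjoint $V\mapsto V\otimes H$ takes finite-dimensional values precisely when $\dim_\kk H<\infty$; by uniqueness of adjoints this forces $H$ to be finite-dimensional whenever $U$ admits a right adjoint landing in $\CoRep H$. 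Since $U=\omega_F$ is strong monoidal between rigid categories, by \cite[Lemma 3.4]{bv-double} it has a right adjoint iff it has a left adjoint, so ``$\omega_F$ has adjoints''$\Leftrightarrow$(iii), and combined with the quoted principle this gives (ii)$\Leftrightarrow$(iii).

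For (i)$\Rightarrow$(ii), assume $F$ has a left adjoint $G$. Since $F$ is normal, Proposition~\ref{func-normal}(1) gives $G(\1)\in\KER_F$, and the adjunction identity $\Hom_{\C''}(\1,F(X))\cong\Hom_{\C}(G(\1),X)$ shows that on $\KER_F=\C'$ the fiber functor is represented: $\omega_F\cong\Hom_{\C'}(G(\1),-)$. The defining adjunction of the $\vect_\kk$-action then exhibits $V\mapsto V\otimes G(\1)$ as a left adjoint of $\omega_F$, whence again by \cite[Lemma 3.4]{bv-double} the functor $\omega_F$ has adjoints, i.e.\ (ii) holds.

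The main obstacle is (ii)$\Rightarrow$(i). Here $\omega_F$ has adjoints, so it is represented by $P:=\lambda(\kk)\in\KER_F$, where $\lambda$ is its left adjoint; thus $\omega_F\cong\Hom_{\C'}(P,-)$. Every tensor functor has an Ind-adjoint $R:\Ind\C''\to\Ind\C$ (right adjoint of $\Ind F$), and it suffices to show $R$ takes values in $\C$. I would first identify $A:=R(\1)$. For $X\in\C$, rigidity of $\C''$ together with the fact that $F$ preserves duals gives $\Hom_{\C''}(F(X),\1)\cong\Hom_{\C''}(\1,F(\ldual X))$. Normality of $F$ lets me replace $\ldual X$ by its largest subobject $(\ldual X)_0\in\KER_F$ carrying the whole trivial part of $F(\ldual X)$; since $P\in\KER_F$ and $\KER_F$ is closed under quotients, every morphism $P\to\ldual X$ factors through $(\ldual X)_0$, so $\Hom_{\C''}(\1,F(\ldual X))\cong\omega_F((\ldual X)_0)\cong\Hom_{\C}(P,\ldual X)\cong\Hom_{\C}(X,\rdual P)$. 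This chain of natural isomorphisms identifies $R(\1)\cong\rdual P$, an honest object of $\C$. To pass from $\1$ to arbitrary $Y$, the projection formula $R(\1)\otimes X\iso R(F(X))$ (valid because $X$ is dualizable, hence applicable at the Ind-level) yields $R(F(X))\cong\rdual P\otimes X\in\C$; as $F$ is dominant every $Y\in\C''$ embeds in some $F(X)$, and $R$, being a right adjoint, is left exact, so $R(Y)\hookrightarrow R(F(X))\in\C$, and a subobject of an object of $\C$ inside $\Ind\C$ again lies in $\C$. Thus $R$ restricts to a right adjoint $\C''\to\C$ of $F$, and by \cite[Lemma 3.4]{bv-double} $F$ has adjoints. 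The delicate points are the normality reduction identifying $R(\1)\cong\rdual P$ and checking that the projection formula really applies at the level of Ind-categories (it does, since it only uses dualizability of the objects $F(X)$).

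Finally, for the last assertion, suppose $\C'$ and $\C''$ are finite. By (i) the functor $F$ has adjoints, so $T=FG$ is a $\kk$-linear right exact Hopf monad on $\C''$ with $\C\simeq(\C'')^T$. The category of modules over a $\kk$-linear right exact monad on a finite abelian category is again finite, so $\C$ is finite.
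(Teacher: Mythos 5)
Your proof is correct, and the engine of the hard implication---constructing a genuine right adjoint out of the Ind-adjoint $R$ by proving $RF(X)\simeq R(\1)\otimes X$ and then using dominance of $F$ together with left exactness of $R$---is the same as in the paper's proof of (iii) $\Rightarrow$ (i). The differences lie in how the pieces are supplied. First, the paper identifies $R(\1)$ with the finite-dimensional induced Hopf algebra $H$ sitting inside $\KER_F$, whereas you identify $R(\1)\simeq\rdual{P}$ with $P=\lambda(\kk)$ the representing object of $\omega_F$; your observation that, by normality, $P$ represents $Z\mapsto\Hom_{\C''}(\1,F(Z))$ on \emph{all} of $\C$ (every map $P\to Z$ factors through the largest $\KER_F$-subobject $Z_0$, and $F(Z_0)$ is the largest trivial subobject of $F(Z)$) is exactly where normality enters, and it is a clean, self-contained substitute for the paper's terse claim ``$R(\1)\simeq H\otimes\1$''. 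Second, you quote the projection formula for dualizable objects where the paper proves it by the chain of Hom-isomorphisms $\Hom(Y,R(\1)\otimes X)\simeq\Hom_{\C''}(F(Y),F(X))\simeq\Hom(Y,RF(X))$; the quoted fact is true and its verification is precisely that computation, so this is only a difference of exposition. Third, your handling of the remaining implications routes everything through the principle of Section~\ref{rladj} (a fiber functor has adjoints iff its source is finite): (i) $\Rightarrow$ (ii) via representability of $\omega_F$ by $G(\1)$, and (ii) $\Leftrightarrow$ (iii) via comparison with the cofree comodule functor (where ``uniqueness of adjoints'' should strictly be applied after Ind-extending, a routine point). The paper instead proves (i) $\Rightarrow$ (iii) by the coend computation $H\simeq\Hom_{\C''}(\1,FR(\1))$ and deduces (ii) $\Leftrightarrow$ (iii) directly from $\C'\simeq\CoRep{H}$; your route leans on a stated-but-unproved assertion of the paper, while the paper's coend argument is in essence a proof of that assertion, so the two are close in mathematical content. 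Finally, you actually prove the closing assertion ($\C'$ and $\C''$ finite imply $\C$ finite) via monadicity of $F$ and the Eilenberg--Watts-type fact that modules over a \kt linear right exact monad on a finite category form a finite category; the paper's proof is silent on this point, so your sketch fills a real, if routine, gap.
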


\begin{proof} We may assume $\C'=\KER_F$, $f$ being the inclusion.
We have (ii) \ssi (iii) because  $\KER_\ff \simeq \CoRep{H}$. If $F$ has a right adjoint $R$, then the fiber functor $\omega= \Hom(\1, Ff) : \C' \to \vect_\kk$
has a right adjoint $? \otimes R(\1)$, hence by adjunction $$H = \int^{X \in \C'}  \rdual{\omega(X)} \otimes \omega(X) \simeq \int^{E \in \vect_\kk}  \rdual{E}\otimes
E \otimes \omega R(\1)=
\Hom_{\C''}(\1,FR(\1))$$
is finite-dimensional. Hence (i) \implique (iii). Conversely, assume $H$ is finite-dimensional,  and denote by $R : \Ind \C'' \to \Ind \C$  the Ind-adjoint of $F$, that is, the right adjoint of $\Ind F$.
The functor $F$ has a right adjoint if and only if for any $X$ in $\C''$, $R(X)$ is isomorphic to an object of $\C$.
Now $\Ind F$ is strong monoidal, and in particular comonoidal, so its right adjoint  $R$ is monoidal, hence we have a natural transformation $R_2(X,Y) : R(X) \otimes R(Y) \to R(X\otimes Y)$.
Let $X$ be in $\C$ and let $$H_X = R_2(\1, F(X))(\id_{R(\1)} \otimes h_X)  : R(\1) \otimes X \to RF(X),$$ where $h$ denotes the evaluation of the
adjunction $\Ind F \vdash R$. We claim that $H_X$ is an isomorphism. In order to verify this, it is enough to check that for all $Y$ in $\C$, $H_X$ induces a bijection
$\Hom_\C(Y,R(\1) \otimes X) \simeq \Hom_\C(Y,RF(X))$.  We have $\Hom_\C(Y, R(\1) \otimes X) \simeq \Hom_\C(Y \otimes \rdual{X}, R(\1)) \simeq \Hom_{\C''}(F(Y \otimes \rdual{X}), \1)
\simeq \Hom_{\C''}(F(Y),F(X)) \simeq \Hom_\C(Y, RF X)$, and this bijection is the natural map induced by $H_X$. Hence $H_X$ is an isomorphism.
Let $Y$ be an object of $\C''$. Then, $F$ being dominant, $Y$ is a subobject of $F(X)$ for some $X$ in $\C$. Since $RF(X) \simeq
R(\1) \otimes X$, $RF(X)$ belongs to $\D$ because $R(\1) \simeq H \otimes \1$, and $R(X)$, being a subobject of $RF(X)$ because $R$ is left exact, is isomorphic to an object of $\D$,
so $F$ has a right adjoint; it also has a left adjoint by Remark~\ref{rladj}, hence (iii) \implique (i).
\end{proof}

\section{Exact sequences of fusion categories}\label{sect-es-fus}

Let $\ff : \C \to \D$ be a tensor functor between fusion categories. Then $\ff$ is dominant if and only if any simple object $Y$ of $\D$ is a direct factor of $\ff(X)$ for some simple object $X$ of $\C$, and $\ff$ is normal if and only if, for  any simple object $X$ of $\C$, if $\ff(X)$ contains a copy of the unit of $\1$ then $\ff(X)$ is trivial.

\begin{example} Let  $K \subset H$ be an inclusion of finite-dimensional split semisimple Hopf
algebras. 
Then the restriction functor from $\C=\Rep{H}$ to $\D=\Rep{K}$ is a dominant tensor functor between fusion categories. We have $\FPdim \D = \dim K$ and $\FPdim
\C = \dim H$, so that the quotient $\FPdim \C / \FPdim \D = \dim
H / \dim K$ is a natural integer, called the index of $K$ in $H$.
\end{example}

This example motivates the following definition.

\begin{definition}\label{fp-index} Let $\ff: \C \to
\D$ be a dominant tensor functor between fusion
categories. The \emph{Frobenius-Perron index} of $\ff$ is the
ratio $\FPdim \C / \FPdim \D$; we denote it  by $\FPind (\ff)$
or $\FPind(\C:\D)$.
\end{definition}

It follows from \cite[Corollary 8.11]{ENO} that if $\ff: \C \to
\D$ is a dominant tensor functor, then $\FPdim \D$ divides
$\FPdim \C$, that is, the Frobenius-Perron index of a dominant
tensor functor $\ff$ between fusion categories is always an algebraic integer. More precisely:

\begin{proposition}\label{inddom}
If $\ff: \C \to \D$ is a dominant tensor functor between fusion
categories, then $$\FPind(\ff)= \FPdim G(\1),$$ where $G$ is a left
(or a right) adjoint of $\ff$.
\end{proposition}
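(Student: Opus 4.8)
The plan is to prove $\FPind(\ff) = \FPdim G(\1)$ by exploiting the multiplicative behavior of Frobenius-Perron dimension and the adjunction identity $m_X G(Y) = m_Y \ff(X)$ relating multiplicities through $G \vdash \ff$. First I would recall that for fusion categories $\FPdim$ defines a ring homomorphism $K_0(\C) \to \mathbb{R}$, so $\FPdim \ff(X) = \FPdim X$ for every object $X$ of $\C$ (since $\ff$ is a tensor functor, it sends the regular representation-type identities across, and $\FPdim$ is the unique such homomorphism sending simples to positive reals). The key computation will then express $\FPdim \C$ in terms of $\FPdim \D$ and $\FPdim G(\1)$.

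The main idea is to compute the quantity $\sum_{X \in \Lambda_\C} \FPdim X \,\FPdim \ff(X)$ in two different ways. On one hand, since $\FPdim \ff(X) = \FPdim X$, this sum equals $\sum_{X \in \Lambda_\C} (\FPdim X)^2 = \FPdim \C$. On the other hand, I would decompose $\ff(X)$ into simples of $\D$ using the multiplicities $m_Y \ff(X)$, write $\FPdim \ff(X) = \sum_{Y \in \Lambda_\D} m_Y \ff(X) \,\FPdim Y$, and substitute the adjunction identity $m_Y \ff(X) = m_X G(Y)$. This yields
$$\FPdim \C = \sum_{X \in \Lambda_\C}\sum_{Y \in \Lambda_\D} \FPdim X \, m_X G(Y)\, \FPdim Y = \sum_{Y \in \Lambda_\D} \FPdim Y \left(\sum_{X \in \Lambda_\C} m_X G(Y)\, \FPdim X\right).$$
The inner sum is precisely $\FPdim G(Y)$, and since $G$ is comonoidal (so that $G$ behaves multiplicatively enough), or more directly since $\FPdim$ is a homomorphism compatible with the left $\C$-module structure, I expect $\FPdim G(Y) = \FPdim Y \,\FPdim G(\1)$, using that $G$ is left adjoint to the strong monoidal $\ff$ and hence satisfies $G(Y) \simeq G(\1) \otimes$-type identities at the level of FP-dimensions via the projection formula $G(Y) \simeq G(\1) \otimes Y$ valid up to dimension (this follows from $m_X G(Y) = m_Y \ff(X)$ applied with the rigid-category isomorphism $\Hom(\1, \rdual{X}\otimes \ff(X')) $).

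Substituting gives $\FPdim \C = \FPdim G(\1) \sum_{Y \in \Lambda_\D} (\FPdim Y)^2 = \FPdim G(\1)\, \FPdim \D$, whence $\FPind(\ff) = \FPdim \C / \FPdim \D = \FPdim G(\1)$, as claimed. For the right adjoint $R$, I would invoke the relation $R(\1) = \rdual{G(\1)}$ from Section~\ref{rladj} (established via the duality formulas there), together with the fact that $\FPdim$ is invariant under taking duals in a fusion category, to conclude $\FPdim R(\1) = \FPdim G(\1)$, giving the same value. The main obstacle I anticipate is justifying rigorously that $\FPdim G(Y) = \FPdim Y\,\FPdim G(\1)$; the cleanest route is the projection formula $G(\ff(Z) \otimes Y) \simeq Z \otimes G(Y)$ (an adjunction consequence for a strong monoidal functor with left adjoint), which when combined with dominance of $\ff$ and the homomorphism property of $\FPdim$ forces the desired multiplicativity — this is the step requiring care, whereas the double-counting and the final division are routine.
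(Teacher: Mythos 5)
Your overall architecture is sound in its routine parts: the double count
$\FPdim \C = \sum_{Y \in \Lambda_\D} \FPdim Y \,\FPdim G(Y)$ obtained from the adjunction identity $m_X(G(Y)) = m_Y(\ff(X))$ is correct, and so is the treatment of the right adjoint via $R(\1)\simeq \rdual{G(\1)}$ and invariance of $\FPdim$ under duality. But the step you yourself flag as ``requiring care'' --- that $\FPdim G(Y) = \FPdim Y\,\FPdim G(\1)$ for \emph{all} simple $Y$ of $\D$ --- is a genuine gap, and not a technical one. Since $\FPdim G(Y) = \sum_{X\in\Lambda_\C} m_Y(\ff(X))\,\FPdim X$ is exactly the coefficient of $[Y]$ in $\ff_!(R_\C)$, where $R_\C = \sum_{X \in \Lambda_\C} \FPdim X\,[X]$, your claim is equivalent (modulo the bookkeeping you already did) to the statement $\ff_!(R_\C) = \frac{\FPdim \C}{\FPdim \D}\, R_\D$, i.e.\ to \cite[Proposition 8.8]{ENO} --- which is precisely the lemma (Lemma~\ref{FPdimdom}) that the paper cites to prove this proposition. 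So your ``key step'' \emph{is} the whole result, not a stepping stone towards it; indeed, in the paper this multiplicativity appears as the final assertion of Proposition~\ref{dom-crit2} and is itself \emph{deduced from} Lemma~\ref{FPdimdom}, downstream of the present proposition.

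Moreover, the justification you sketch cannot close this gap. The projection formula $G(\ff(Z)\otimes Y)\simeq Z \otimes G(Y)$ is indeed valid (adjunction plus rigidity), but at $Y=\1$ it gives only $\FPdim G(\ff(Z)) = \FPdim Z\,\FPdim G(\1)$, i.e.\ multiplicativity on objects \emph{in the image of} $\ff$. Dominance does not let you descend to simple summands: setting $\delta(Y) = \FPdim G(Y) - \FPdim Y\,\FPdim G(\1)$, what the projection formula yields is $\sum_{Y} m_Y(\ff(X))\,\delta(Y) = 0$ for every $X$, i.e.\ $\delta$ vanishes on the span of the classes $[\ff(X)]$ inside $K_0(\D)\otimes_{\mathbb{Z}}\mathbb{R}$ --- and for a dominant functor this span can be a \emph{proper} subspace. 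For example, for the restriction functor $\rep S_3 \to \rep \mathbb{Z}_3$ over $\Comp$ (which is dominant), the restricted classes span only a $2$-dimensional subspace of the $3$-dimensional $K_0(\rep \mathbb{Z}_3)\otimes_{\mathbb{Z}}\mathbb{R}$, so linear algebra alone permits a nonzero $\delta$. Ruling it out requires the positivity input of Perron--Frobenius theory (uniqueness of the positive eigenvector of the strictly positive matrix of multiplication by $\ff_!(R_\C)$; strict positivity is where dominance truly enters), which is exactly how \cite{ENO} prove their Proposition 8.8. So either cite Lemma~\ref{FPdimdom} outright --- as the paper does, in which case your double count is an unnecessary detour, since evaluating Equation~\eqref{form-fpdimy} at $Y=\1$ together with $m_\1(\ff(X)) = m_X(G(\1))$ finishes the proof immediately --- or supply that Perron--Frobenius argument; as written, your proof assumes what it needs to prove.
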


\begin{proof}

For any fusion category $\C$, let  $R(\C) = K_0(\C)
\otimes_{\mathbb Z}\mathbb R$ denote the $\mathbb R$-algebra
obtained by extension of scalars from the Grothendieck ring
$K_0(\C)$ of $\C$. Consider the element $$R_\C : = \sum_{X \in
\Lambda_{\C}} \FPdim X \, [X] \in R(\C).$$ We have $\FPdim R_\C =
\FPdim \C$.

\begin{lemma}\label{FPdimdom}\cite{ENO}
Let $\ff : \C \to \D$ be a dominant tensor functor between fusion
categories. Then
\begin{equation}\label{fdereg}\ff_!(R_\C) = \frac{\FPdim \C}{\FPdim \D} \,
R_{\D}\end{equation} where $\ff_!: R(\C) \to R(\D)$ is the algebra
map induced by $\ff$, and for all $Y \in \Lambda_\D$,
\begin{equation}\label{form-fpdimy}\sum_{X \in \Lambda_{\C}} m_Y(\ff(X))\, \FPdim X
= \frac{\FPdim \C}{\FPdim \D}\FPdim Y. \end{equation}
\end{lemma}

\begin{proof}Equation~\eqref{fdereg} is proved in \cite[Proposition 8.8]{ENO}, and~\eqref{form-fpdimy} is obtained by compairing the multiplicities of $Y$  in both sides of \eqref{fdereg}.
\end{proof}

Now for any simple object $X$ of $\C$, if $G$ is a right adjoint
of $\ff$, we have by adjunction $\Hom_\C(G(\1),X)\simeq
\Hom_\D(\1,\ff(X))$, hence $m_\1 F(X)=m_X G(\1)$. The same
equation holds if $G$ is a left adjoint. Thus
Equation~\eqref{form-fpdimy} for $Y=\1$ reads:
$\FPind(\ff)=\sum_{X \in \Lambda_\C} \FPdim X \,m_XG(\1)=\FPdim
G(\1)$.
\end{proof}

\begin{lemma}\label{fpdim-tf} Let $\ff: \C \to \D$ be a tensor
functor between fusion categories and $X$ be an object of $\C$. Then $X$ belongs to $\KER_\ff$ if and only if $\FPdim X =
m_\1\ff(X)$, and in this case $\ff(X) \simeq \1^{\FPdim X}$.
\end{lemma}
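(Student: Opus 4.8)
The plan is to reduce the whole statement to one fact, namely that a tensor functor between fusion categories preserves Frobenius–Perron dimension, and then to exploit the strict positivity of $\FPdim$ on nonzero simple objects. First I would record that $\FPdim \ff(X) = \FPdim X$ for every object $X$ of $\C$. This holds because $\ff$ induces a unital ring homomorphism $K_0(\C) \to K_0(\D)$ on Grothendieck rings; post-composing with $\FPdim_\D$ gives a ring homomorphism $K_0(\C) \to \mathbb R$ which is strictly positive on each simple object, since $\ff$ is faithful (so $\ff(Y) \neq 0$ for $Y$ simple) and every nonzero object of a fusion category has positive Frobenius–Perron dimension. By the uniqueness of $\FPdim$ as the positive character of $K_0(\C)$, this composite coincides with $\FPdim_\C$, which gives the claim. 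Alternatively one may factor $\ff$ through its essential image, a fusion subcategory of $\D$ on which $\ff$ is dominant, and invoke Lemma~\ref{FPdimdom}.

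Next I would decompose $\ff(X)$ into simples, $\ff(X) \simeq \bigoplus_{Y \in \Lambda_\D} Y^{m_Y(\ff(X))}$, and compute
$$\FPdim X = \FPdim \ff(X) = \sum_{Y \in \Lambda_\D} m_Y(\ff(X))\, \FPdim Y.$$
Isolating the term $Y = \1$, whose Frobenius–Perron dimension is $1$, and discarding the remaining nonnegative summands yields the inequality $\FPdim X \geq m_\1 \ff(X)$, valid for every $X$. The crux is the equality case: since $\FPdim Y > 0$ for every nonzero simple object $Y$, equality $\FPdim X = m_\1 \ff(X)$ forces $m_Y(\ff(X)) = 0$ for all $Y \in \Lambda_\D$ with $Y \neq \1$. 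This says precisely that $\ff(X)$ is a trivial object, that is, that $X$ lies in $\KER_\ff$.

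Finally I would assemble both implications. If $X \in \KER_\ff$ then $\ff(X) \simeq \1^n$ for some $n$, whence $m_\1 \ff(X) = n = \FPdim \ff(X) = \FPdim X$; conversely the equality-case analysis shows that $\FPdim X = m_\1 \ff(X)$ implies $\ff(X) \simeq \1^{m_\1 \ff(X)}$, so $X \in \KER_\ff$. In either case $m_\1 \ff(X) = \FPdim X$, so $\ff(X) \simeq \1^{\FPdim X}$, as asserted. I do not expect a genuine obstacle here: the only nonformal ingredient is the preservation of $\FPdim$ under tensor functors, and once that and the positivity of $\FPdim$ on simple objects are available, the argument is a one-line positivity estimate together with its equality case.
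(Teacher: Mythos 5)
Your proposal is correct and follows essentially the same route as the paper's proof, which rests on exactly the two facts you use: that a tensor functor preserves Frobenius--Perron dimension, and that an object $Y$ of $\D$ is trivial if and only if $\FPdim Y = m_\1(Y)$. The only difference is that the paper asserts these two ingredients without proof, whereas you supply their justifications (uniqueness of the positive character on $K_0(\C)$, and the positivity estimate with its equality case), which is a sound way to fill in the details.
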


\begin{proof} An object $Y$ of $\D$ is trivial if and only if $\FPdim(Y)=m_\1(Y)$. Besides, $F$ preserves Frobenius-Perron dimensions. Thus
$X$ is in $\KER_\ff$ if and only if $\FPdim(X)= \FPdim F(X)=m_\1 F(X)$. \end{proof}

\subsection{Exact sequences of pointed categories}\label{pointed}
Recall that a \emph{pointed category} is a fusion category $\C$
whose simple objects are invertible, so that $G=\Lambda_\C$ is a
finite group for the tensor product called the \emph{Picard group}
of $\C$ and denoted by $\Pic\C$. If $\C$ is pointed then $\C$ is
equivalent to the category $\C(G, \alpha)$ of $G$-graded vector
spaces with associativity constraint given by a $3$ cocycle in a
class $\alpha \in H^3(F, \kk^\times)$. It is well-known that
this correspondance yields a bijection $(G,\alpha) \to
\C(G,\alpha)$ between pairs $(G,\alpha)$, where $G$ is a finite
group and $\alpha \in H^3(G, \kk^\times)$,  and pointed
categories over $\kk$ up to  equivalences of tensor categories.
Note that if a pointed category $\C$ is tensor equivalent to $\Rep{H}$ for some Hopf algebra $H$, then in fact  $H \simeq \kk^G$,
where $G=\Pic \C$, so that $\C \simeq \C(G,1)=\Rep{\kk^G}$.

\begin{proposition}\label{prop-pointed}
Let $\C' \overset{f}\to \C \overset{\ff}\to \C''$ be an exact
sequence of fusion categories over a field $\kk$, and assume that $\C$ is pointed.
Then $\C'$ and $\C''$ are pointed too, and we have an exact
sequence of groups
$$1 \to G' \to G \to G'' \to 1,$$
where $G, G', G''$ denote the Picard groups of $\C, \C',\C''$
respectively. Moreover, up to equivalence, such an exact
sequence of fusion categories is of the form
$$\C(G',1) \to \C(G,\infl(\alpha)) \to \C(G'',\alpha),$$
where $1 \to G' \to G \to G'' \to 1$ is an exact sequence of
finite groups, $\alpha$ is a cohomology class in
$H^3(G'',\kk^\times)$, and $\infl: H^3(G'',\kk^\times) \to
H^3(G,\kk^\times)$ denotes the inflation map.
\end{proposition}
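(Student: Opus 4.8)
The plan is to prove Proposition~\ref{prop-pointed} in two stages: first establish that $\C'$ and $\C''$ are pointed and produce the group extension $1 \to G' \to G \to G'' \to 1$, then identify the associativity data in terms of cohomology classes via the inflation map.

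For the first stage, I would start from the observation that the simple objects of $\C'$ and $\C''$ can be controlled using the adjoint functors. Since $\C$ is pointed, its simple objects are invertible and $\FPdim X = 1$ for every simple $X$. The functor $\ff$ preserves Frobenius-Perron dimensions, so for any simple $X$ of $\C$, $\ff(X)$ has $\FPdim = 1$, hence $\ff(X)$ is itself invertible and in particular simple in $\C''$. Because $\ff$ is dominant, every simple object of $\C''$ is a direct factor of some $\ff(X)$ (using the fusion-category description of dominance stated just before Example~\ref{hopf-alg}'s analogue in Section~\ref{sect-es-fus}), so every simple object of $\C''$ is invertible; thus $\C''$ is pointed. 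For $\C' = \KER_\ff$, its simple objects are simple objects of $\C$ lying in the kernel, hence invertible, so $\C'$ is pointed as well. The surjection $G \to G''$ is induced by $\ff$ on isomorphism classes of invertibles (it is a group homomorphism since $\ff$ is monoidal, and surjective by dominance), and its kernel consists exactly of those invertibles $g$ with $\ff(g) \simeq \1$, which by Lemma~\ref{fpdim-tf} is precisely $G' = \Pic \C'$. Exactness and injectivity of $G' \to G$ follow since $f$ is a full embedding identifying $\C'$ with $\KER_\ff$.

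For the second stage, I would invoke the classification of pointed categories recalled before the proposition: $\C \simeq \C(G,\beta)$ for some $\beta \in H^3(G,\kk^\times)$, $\C'' \simeq \C(G'',\alpha)$ for some $\alpha \in H^3(G'',\kk^\times)$, and $\C' \simeq \C(G',\gamma)$. The key point is to show $\gamma$ is trivial and $\beta = \infl(\alpha)$. Triviality of $\gamma$ should follow because $\C' = \KER_\ff$ embeds in $\C$ via a tensor functor whose image lands in the kernel, and one can argue (using that $\C'$ is the Tannakian-type kernel, or directly that the induced Hopf algebra $L(\omega_\ff)$ is $\kk^{G'}$ as noted in the remark preceding the proposition) that the restricted associator is a coboundary; alternatively, the kernel of a normal functor out of a pointed category carries the inflated-then-restricted cocycle, which restricts to the trivial class on the normal subgroup when the full extension cocycle is itself inflated. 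To pin down $\beta = \infl(\alpha)$, I would compare associativity constraints: the tensor functor $\ff : \C(G,\beta) \to \C(G'',\alpha)$ sends the invertible object $g$ to the invertible object $\pi(g)$, and monoidality of $\ff$ forces the $3$-cocycle $\beta$ on $G$ to be cohomologous to the pullback $\pi^*\alpha = \infl(\alpha)$ of the cocycle $\alpha$ along the surjection $\pi : G \to G''$.

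\textbf{The main obstacle} I anticipate is the cohomological bookkeeping in the second stage: carefully verifying that the monoidal structure on $\ff$ identifies $\beta$ with $\infl(\alpha)$ up to coboundary, rather than merely showing the two classes have compatible restrictions. Concretely, a tensor functor between pointed categories is determined by a group homomorphism together with a $2$-cochain trivializing the difference of the relevant $3$-cocycles; producing this explicitly and checking that the only obstruction is the inflation relation requires tracking the monoidal constraint $\ff_2(g,h) : \ff(g)\otimes \ff(h) \to \ff(g\otimes h)$ through the pentagon axiom. I would handle this by choosing representative cocycles and computing the coherence condition directly, reducing the claim to the standard statement that $\infl : H^3(G'',\kk^\times) \to H^3(G,\kk^\times)$ records exactly the possible associators on $\C(G,-)$ that descend compatibly along $\pi$. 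The remaining verifications---that $\gamma$ is a coboundary and that every such datum $(1\to G'\to G\to G''\to 1,\,\alpha)$ genuinely arises---are then routine, the latter by checking directly that $\C(G',1) \to \C(G,\infl(\alpha)) \to \C(G'',\alpha)$ satisfies the three axioms of Definition~\ref{def-sec}.
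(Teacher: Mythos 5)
Your proposal is correct and follows essentially the same route as the paper's proof: pointedness of $\C''$ via dominance and preservation of invertibles, triviality of the associator on $\C'=\KER_\ff$ via its fiber functor (so $\C'\simeq\Rep{\kk^{G'}}$), the identification $\beta=\infl(\alpha)$ from monoidality of $\ff$, and the direct converse construction. The only difference is one of detail: you spell out the group-level exact sequence $1\to G'\to G\to G''\to 1$ and the cocycle comparison through the pentagon axiom, both of which the paper asserts without elaboration.
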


\begin{proof}
The category $\C$ is of the form $\C(G,\beta)$, where $G=\Pic \C$
and $\beta \in H^3(G,\kk^\times)$. Being a full tensor subcategory
of $\C$, $\C'$ is of the form $\C(G',\beta')$ where $\beta'\in
H^3(G',\kk^\times)$ is the image of $\beta$ under the restriction
morphism. Since $\C'=\KER_\ff$, it is of the form $\Rep H$, $H$
being the Hopf algebra associated with our exact sequence of
fusion categories (see Section~\ref{fiberf}). Therefore $\beta'=1$
and $\C'=\C(G',1)$.

%

On the other hand, tensor functors preserve invertible objects.
Since the tensor functor $\ff: \C \to \C''$ is dominant and $\C$
is pointed, so is $\C''$. Therefore $\C''$ is of the form
$\C(G'',\alpha)$ for some finite group $G''$ and $\omega'' \in
H^3(G'', \kk^\times)$. Since $\ff$ preserves the monoidal
structures, we have $\beta=\infl(\alpha)$.

Conversely, given an exact sequence of finite groups $1 \to G' \to
G \to G'' \to 1$ and a class $\alpha \in H^3(G'',\kk^\times)$, we
have a tensor functor $\ff :\C=\C(G,\infl \alpha) \to
\C(G'',\alpha)$ such that $\KER_\ff=\C(G',1)$, hence an exact
sequence of fusion categories $\C(G,1) \to \C(G,\infl \alpha) \to
\C(G'',\alpha)$.
\end{proof}

\begin{remark} In an exact sequence of fusion categories
$\C' \to \C \to \C''$, such that $\C'$ and $\C''$ are pointed,
$\C$ need not be pointed. Indeed, let $H$ be a noncommutative
self-dual split semisimple Hopf algebra of dimension $p^3$, $p$ an odd
prime \cite{masuoka}. Then  $\Rep H$ is not  pointed. On the other hand, there is  a strictly exact
sequence  of Hopf algebras $\kk^{\mathbb Z_p} \to H \to
\kk(\mathbb Z_p \times \mathbb Z_p)$, which in turn induces an
exact sequence of fusion categories $\C(\mathbb Z_p \times \mathbb
Z_p, 1) \to \Rep H \to \C(\mathbb Z_p, 1)$ by Proposition~\ref{prop-schne}.
\end{remark}

\begin{remark}
Let $1 \to G' \to G \to G'' \to 1$ be an exact sequence of finite
groups  such that the inflation map $H^3(G'', \kk^\times) \to
H^3(G, \kk^\times)$ is not injective, and let $\alpha\in
H^3(G'',\kk^\times)$ be a non trivial element of its kernel. Then by
Proposition  \ref{prop-pointed}, we have an exact sequence of
fusion categories:
$$\C(G',1) \to \C(G, 1) \to \C(G'',\alpha)\,$$
and $\C(G'',\alpha)$ does not admit a fiber functor.
\end{remark}

Let $\C' \overset{f}\to \C \overset{\ff}\to \C''$ be an exact
sequence of fusion categories. If $\C''$ admits a fiber functor,
then so does $\C$. The converse is not true, as exemplified by the
previous remark. However, we have the following proposition.

\begin{proposition}
Let $\C' \overset{f}\to \C \overset{\ff}\to
\C''$ be an exact sequence of fusion categories. 
Then $\C$ is tensor equivalent to the category of representations
of a quasi-Hopf algebra if and only if $\C''$ is.
\end{proposition}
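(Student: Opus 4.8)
The plan is to recall that a fusion category is the representation category of a quasi-Hopf algebra if and only if it admits a \emph{quasi-fiber functor}, that is, a \kt linear exact faithful (not necessarily monoidal) functor $\omega : \C \to \vect_\kk$ together with a natural isomorphism $\omega(X) \otimes \omega(Y) \simeq \omega(X \otimes Y)$, with no compatibility with associativity constraints required. Equivalently, by the results of Etingof--Nikshych--Ostrik, $\C \simeq \Rep A$ for a quasi-Hopf algebra $A$ exactly when $\C$ admits a quasi-fiber functor, and one may even take $\omega$ to be $\Hom_\C(P,-)$ for a suitable object $P$. I would phrase the criterion purely in these terms so that the whole argument reduces to transporting a quasi-fiber functor across the functors $f$ and $\ff$.

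The easy direction is: if $\C''$ is the representation category of a quasi-Hopf algebra, let $\omega'' : \C'' \to \vect_\kk$ be a quasi-fiber functor. First I would observe that $\omega'' \ff : \C \to \vect_\kk$ is automatically \kt linear, exact, and quasi-monoidal (since $\ff$ is a genuine tensor functor the quasi-monoidal structure on $\omega''$ pulls back directly), so the only thing to check is \emph{faithfulness}. Here I would use that $\ff$ is dominant: in the fusion setting, every simple object $Y$ of $\C''$ is a direct summand of $\ff(X)$ for some simple $X$ of $\C$, and combined with the fact that a tensor functor between fusion categories is faithful and exact, one shows that $\omega'' \ff$ detects nonzero objects. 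Concretely, if $\omega''\ff(X)=0$ then $\ff(X)=0$ by faithfulness of $\omega''$, whence $X=0$ since $\ff$ is faithful; exactness then upgrades this to faithfulness of the composite. Thus $\omega''\ff$ is a quasi-fiber functor on $\C$ and $\C \simeq \Rep A$.

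The converse is the harder direction and I expect it to be the main obstacle. Suppose $\C \simeq \Rep A$ for a quasi-Hopf algebra $A$, with quasi-fiber functor $\omega : \C \to \vect_\kk$. I want to produce a quasi-fiber functor $\omega''$ on $\C''$. The natural candidate is to build $\omega''$ out of $\omega$ and a \emph{section} of $\ff$ at the level of objects, using that $\ff$ admits a left adjoint $G$ (as $\C,\C''$ are fusion, hence finite): one would try $\omega'' := \omega G$ or a variant involving the induced Hopf algebra $H = L(\omega_\ff)$ of the kernel. The delicate point is that $G$ is only comonoidal, not monoidal, so $\omega G$ need not be quasi-monoidal; I would instead exploit the exact sequence structure, namely that $\KER_\ff \simeq \CoRep H$ with $H$ finite-dimensional (Proposition~\ref{prop-crit-adj}), and the multiplicativity of Frobenius-Perron dimension $\FPdim \C = \FPdim \C'\,\FPdim \C''$ to control ranks. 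The cleanest route is probably to realize $\C$ as an equivariantization or a module category over $\C''$: since the sequence is exact, $\C$ is (up to the central algebra picture developed later in the paper) a category of modules, and a quasi-fiber functor on $\C$ restricts along a suitable central inclusion to one on $\C''$.

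The key step I would isolate is the following: \emph{a quasi-fiber functor $\omega$ on $\C$ restricts to a quasi-fiber functor on the image category $\C''$}. I would argue that the forgetful-type functor relating $\C$ and $\C''$ is itself ``quasi-fiber-compatible'': because $\ff$ is dominant and normal with finite kernel, one can average $\omega$ over the kernel comodule structure (using $H$) to descend it to $\C''$. Faithfulness of the descended functor follows from dominance of $\ff$ together with the fact that $\omega$ was faithful on all of $\C$, while the quasi-monoidal isomorphism descends because $\ff$ is strong monoidal and the kernel is generated by trivial-image objects. The main obstacle, to be explicit, is verifying that the descended functor is \emph{exact and faithful on $\C''$} rather than merely on $\C$: this is where I would need dominance in its strong form (every object of $\C''$ is both a subobject and a quotient of some $\ff(X)$, by Lemma~\ref{dom-sub-quot}) to transfer exactness and faithfulness across the quotient, and where the finiteness of the induced Hopf algebra $H$ guarantees that the averaging procedure stays within finite-dimensional vector spaces.
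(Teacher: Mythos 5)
Your first direction is correct and essentially coincides with the paper's: composing a quasi-fiber functor $\omega''$ on $\C''$ with the tensor functor $\ff$ gives a quasi-fiber functor on $\C$. Note that dominance plays no role there --- a tensor functor is automatically faithful and exact, so $\omega''\ff$ is faithful and exact for free. The converse, however, is a genuine gap, and you correctly sensed that it is the crux: you never actually construct a quasi-fiber functor on $\C''$. The candidate $\omega G$ fails for the reason you give ($G$ is only comonoidal), but the proposed repairs --- ``averaging over the kernel'' via $H$, or passing through the equivariantization/central-algebra picture --- are not defined operations here, and there is no reason they produce a functor \emph{out of} $\C''$ at all: $\C''$ is a quotient of $\C$, not a subcategory, and a functor on $\C$ does not descend along a dominant tensor functor without genuine descent data, which a quasi-fiber functor (having no coherence constraint) does not supply. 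No direct construction of this kind is known, and the paper does not attempt one.

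The missing idea is to switch to the numerical form of the same criterion, namely \cite[Theorem 8.33]{ENO}: a fusion category is the representation category of a quasi-Hopf algebra if and only if the Frobenius--Perron dimensions of its objects are natural integers. With that formulation your easy direction becomes ``tensor functors preserve $\FPdim$'', and the hard direction is exactly what dominance gives: the paper simply cites \cite[Corollary 8.36]{ENO}. If you prefer an argument internal to the paper, note that $\C'$ embeds fully in $\C$, so its simple objects have integer $\FPdim$ and $\FPdim\C'$ is a positive integer; then formula \eqref{fpdimy} of Proposition~\ref{fp-dim} writes, for any simple $Y$ of $\C''$,
$\FPdim Y = \frac{1}{\FPdim \C'}\sum_{X \in \Lambda_{\C}} m_Y(\ff(X))\, \FPdim X$,
which is rational as soon as all $\FPdim X$ are integers; since $\FPdim Y$ is an algebraic integer, it is an integer, and the criterion applies to $\C''$.
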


\begin{proof}
A fusion category is tensor equivalent to the representation
category of a quasi-Hopf algebra if and only if the
Frobenius-Perron dimensions of its objects are natural integers,
see \cite[Theorem 8.33]{ENO}. Hence the `if' Part because a tensor
functor preserves Frobenius-Perron dimensions (or by Tannaka
reconstruction), and the `if' Part  by \cite[Corollary 8.36]{ENO}.
\end{proof}

\subsection{Multiplicativity of Frobenius-Perron dimensions}

\begin{proposition}\label{fp-dim} Let $\C$, $\C'$, $\C''$ be fusion categories and $i : \C' \to \C$, $\ff : \C \to \C''$ be tensor functors. Assume that
$\ff$ is dominant, $i$ is full and $i(\C) \subset \KER_\ff$. Then
$\FPdim \C \ge \FPdim \C' \, \FPdim \C''$.  Moreover the diagram
$$\C' \overset{i}\to \C \overset{\ff}\to \C''$$ is an exact sequence if and only if
$\FPdim \C = \FPdim \C' \, \FPdim \C''$. If such is the case, then
for all simple object $Y \in \C''$,
\begin{equation}\label{fpdimy}\FPdim Y = \frac{1}{\FPdim \C'}\sum_{X \in
\Lambda_{\C}} m_Y (\ff(X)) \FPdim X. \end{equation}
\end{proposition}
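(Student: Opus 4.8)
The plan is to squeeze $\FPdim\C$ between two products by inserting $\FPdim\KER_\ff$, via the chain
\[
\FPdim\C \;\ge\; \FPdim\KER_\ff\cdot\FPdim\C'' \;\ge\; \FPdim\C'\cdot\FPdim\C'',
\]
and to identify each equality with one of the two clauses that, together with the standing hypotheses, make the sequence exact: the left inequality will be an equality exactly when $\ff$ is normal, and the right one exactly when the essential image of $i$ exhausts $\KER_\ff$.

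First I would dispose of $i$. Being a tensor functor, $i$ is faithful, so the full functor $i$ is a fully faithful embedding; since its essential image lies in $\KER_\ff$, it identifies $\C'$ with a full tensor subcategory of the fusion category $\KER_\ff$. As tensor functors preserve Frobenius--Perron dimensions, the simple objects of $\C'$ are sent to pairwise non-isomorphic simple objects of $\KER_\ff$ of the same dimension, so $\Lambda_{\C'}$ embeds dimension-preservingly into $\Lambda_{\KER_\ff}$. Summing squares gives $\FPdim\C'\le\FPdim\KER_\ff$, with equality precisely when this embedding is a bijection, i.e. precisely when the essential image of $i$ is all of $\KER_\ff$.

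The heart of the argument is the second inequality, which I would extract from formula~\eqref{form-fpdimy} evaluated at $Y=\1$:
\[
\frac{\FPdim\C}{\FPdim\C''}\;=\;\sum_{X\in\Lambda_\C} m_\1(\ff(X))\,\FPdim X.
\]
I would split this sum according to whether $X$ belongs to $\KER_\ff$. For $X\in\Lambda_{\KER_\ff}$, Lemma~\ref{fpdim-tf} gives $m_\1(\ff(X))=\FPdim X$, so the kernel terms reassemble into $\sum_{X\in\Lambda_{\KER_\ff}}(\FPdim X)^2=\FPdim\KER_\ff$, while the remaining terms are nonnegative. Hence $\FPdim\C/\FPdim\C''\ge\FPdim\KER_\ff$, and this is an equality if and only if $m_\1(\ff(X))=0$ for every simple $X\notin\KER_\ff$, which by Proposition~\ref{func-normal}(2) is exactly the normality of $\ff$. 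Multiplying by $\FPdim\C''$ and combining with the previous step proves $\FPdim\C\ge\FPdim\C'\,\FPdim\C''$; moreover, all quantities being positive, the global equality $\FPdim\C=\FPdim\C'\,\FPdim\C''$ forces both intermediate inequalities to be equalities, i.e. $\ff$ normal and $i$ an equivalence onto $\KER_\ff$. Together with the given hypotheses ($\ff$ dominant, $i$ a full embedding) these are precisely conditions (1)--(3) in the definition of an exact sequence, which settles the equivalence.

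Finally, in the exact case $\FPdim\C/\FPdim\C''=\FPdim\C'$, so formula~\eqref{form-fpdimy} for an arbitrary simple $Y\in\C''$ becomes $\sum_{X\in\Lambda_\C} m_Y(\ff(X))\,\FPdim X=\FPdim\C'\cdot\FPdim Y$, which rearranges to~\eqref{fpdimy}. The only delicate point --- and the main obstacle --- is the bookkeeping in the displayed computation: one must check that the kernel terms recombine exactly into $\FPdim\KER_\ff$ (this is where Lemma~\ref{fpdim-tf} is indispensable) and that the leftover sum vanishes on the nose with normality via Proposition~\ref{func-normal}(2). Everything else is a routine matching of the two equality conditions against the clauses defining an exact sequence.
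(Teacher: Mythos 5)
Your proof is correct and follows essentially the same route as the paper's: both reduce the comparison of $\FPdim\C'$ with $\FPdim\KER_\ff$ to fullness of $i$, then split the sum in Equation~\eqref{form-fpdimy} at $Y=\1$ into the kernel terms (which reassemble into $\FPdim\KER_\ff$ via Lemma~\ref{fpdim-tf}) plus a nonnegative remainder whose vanishing is exactly normality. The only cosmetic difference is that the paper first assumes $\C'=\KER_\ff$ and then splits, whereas you keep $\KER_\ff$ as an explicit intermediary and combine the two equality conditions at the end.
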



\begin{proof} Firstly, notice that since the tensor functor $i_0 : \C' \to \KER_\ff$ is full, we have
$\FPdim \C' \le \FPdim \KER_\ff$, with equality if  and
only if $i_0$ is an equivalence. Thus, we may assume that
$\C'=\KER_\ff$, $i$ being the inclusion.

Equation~\eqref{form-fpdimy} of Lemma~\ref{FPdimdom}, applied to
the dominant functor $\ff : \C \to \C''$ and the simple object
$Y=\1$, yields:
\begin{equation}\label{y=1}\sum_{X \in \Lambda_{\C}} m_\1(\ff(X)) \,\FPdim X =
\frac{\FPdim \C}{\FPdim \C''}.
\end{equation}
Since $\C'=\KER_\ff$ is a full subcategory of $\C$, we may choose
representatives of classes of simple objects so that
$\Lambda_{\C'} \subset \Lambda_{\C}$. For $X \in \Lambda_{\C'}$,
we have $\ff(X)=\1^{\FPdim X}$, and therefore $\sum_{X \in
\Lambda_{\C'}} m_\1(\ff(X)) \,\FPdim X = \FPdim \C'$. Thus:
\begin{equation}\frac{\FPdim \C}{\FPdim \C''} = \FPdim \C' + E. \end{equation}
where $E=\sum_{X \in \Lambda_\C\backslash\Lambda_{\C'}}
m_\1(\ff(X))\, \FPdim X$. Now we have $E \ge 0$, and $E=0$ if and
only if $\ff$ is normal, hence the Proposition is proved.
\end{proof}

\begin{remark} Relation \eqref{fpdimy} implies the identity
$\ff_!(R_\C) = \frac{1}{\FPdim \C'} R_{\C''}$. \end{remark}


\begin{corollary}\label{equiv-ext} Consider a a diagram of tensor functors between fusion
categories with exact rows, and commutative up to tensor
isomorphisms:
$$\begin{CD}\C' @>{f}>> \C @>{\ff}>> \C'' \\
@VV{l}V @VV{\lambda}V @VV{r}V\\
\D' @>{g}>> \D @>{G}>> \D''.
\end{CD}$$
If $l$ and $r$ are equivalences, then $\lambda$ is an equivalence.
\end{corollary}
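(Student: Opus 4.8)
The plan is to show that $\lambda$ is a dominant tensor functor whose Frobenius-Perron index equals $1$, and then to upgrade this to full faithfulness, so that $\lambda$ is an equivalence by Lemma~\ref{lem-toto}(1). I will use repeatedly that a tensor equivalence preserves Frobenius-Perron dimensions, and that for a fusion subcategory $\mathcal B \subseteq \D$ one has $\Lambda_{\mathcal B} \subseteq \Lambda_\D$, so that the equality $\sum_{X}(\FPdim X)^2 = \FPdim\mathcal B = \FPdim \D$ forces $\mathcal B = \D$. As a first, purely numerical, consequence, since both rows are exact and $l,r$ are equivalences, Proposition~\ref{fp-dim} gives
\begin{equation*}
\FPdim \C = \FPdim \C'\,\FPdim \C'' = \FPdim \D'\,\FPdim \D'' = \FPdim \D.
\end{equation*}

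The main step is to prove that $\lambda$ is dominant. I would introduce the full subcategory $\D_0 \subseteq \D$ whose objects are the subobjects (equivalently, direct summands) of $\lambda(X)$ for $X$ in $\C$. As $\lambda$ is a tensor functor, $\D_0$ is closed under tensor products, duals and summands and contains $\1 \simeq \lambda(\1)$, hence is a fusion subcategory, and by construction $\lambda : \C \to \D_0$ is dominant. I then claim that $\KER_G \to \D_0 \overset{G}\to \D''$ is an exact sequence. Indeed, the commutativity $\lambda f \simeq g l$, together with $l$ being essentially surjective and $g$ a full embedding onto $\KER_G$, shows $\KER_G = g\,l(\C') \simeq \lambda f(\C') \subseteq \D_0$, so $\KER_{G|_{\D_0}} = \D_0 \cap \KER_G = \KER_G$; normality of $G$ restricts to $\D_0$ because $\D_0$ is closed under subobjects; and $G|_{\D_0}$ is dominant onto $\D''$, since any simple $W$ of $\D''$ is $r(W')$ with $W'$ a summand of some $\ff(X)$ ($r$ an equivalence, $\ff$ dominant), whence $W$ is a summand of $r\ff(X) \simeq G(\lambda(X))$ with $\lambda(X)\in\D_0$. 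Thus the sequence is exact, and Proposition~\ref{fp-dim} gives $\FPdim \D_0 = \FPdim\KER_G \,\FPdim \D'' = \FPdim \D'\,\FPdim \D'' = \FPdim \D$. Hence $\D_0 = \D$ and $\lambda$ is dominant.

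Once dominance is established, $\FPind(\lambda) = \FPdim\C/\FPdim\D = 1$, so by Proposition~\ref{inddom} the left adjoint $G_\lambda$ of $\lambda$ satisfies $\FPdim G_\lambda(\1) = 1$. By Frobenius reciprocity $m_X G_\lambda(Y) = m_Y \lambda(X)$ for simple objects, so $m_\1 G_\lambda(\1) = m_\1 \lambda(\1) = 1$ (using $\lambda(\1)\simeq\1$); thus $\1$ is a summand of $G_\lambda(\1)$, and as $\FPdim G_\lambda(\1)=1$ there is no room for more, forcing $G_\lambda(\1) \simeq \1$. Consequently, for every simple object $Z$ of $\C$,
\begin{equation*}
\dim\Hom_\D(\1,\lambda Z) = m_\1\lambda(Z) = m_Z G_\lambda(\1) = \dim\Hom_\C(\1,Z),
\end{equation*}
and this extends to all $Z$ by additivity. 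Since $\lambda$ is faithful, the natural injective map $\Hom_\C(\1,Z)\to\Hom_\D(\1,\lambda Z)$ is then bijective for all $Z$. Applying this to $Z = \rdual{X}\otimes X'$ and using $\Hom(X,X')\simeq\Hom(\1,\rdual{X}\otimes X')$ in rigid categories, exactly as in the proof of Lemma~\ref{lem-toto}, shows that $\lambda$ is fully faithful. Being full and dominant, $\lambda$ is an equivalence by Lemma~\ref{lem-toto}(1).

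I expect the dominance step to be the crux: one must manufacture the auxiliary fusion subcategory $\D_0$ and verify that the restriction of $G$ fits it into an exact sequence, so that multiplicativity of Frobenius-Perron dimension (Proposition~\ref{fp-dim}) forces $\D_0=\D$. The remaining steps are formal consequences of Propositions~\ref{inddom} and~\ref{fp-dim} together with Frobenius reciprocity and the rigidity argument already used for Lemma~\ref{lem-toto}.
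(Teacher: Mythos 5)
Your proof is correct and follows essentially the same strategy as the paper's: both introduce the dominant image of $\lambda$ (your $\D_0$, the paper's $\E$) and use Proposition~\ref{fp-dim} to force it to equal all of $\D$, then conclude from the resulting index-one situation that $\lambda$ is an equivalence via Lemma~\ref{lem-toto}. The only differences are tactical: the paper applies the inequality part of Proposition~\ref{fp-dim} to the (not necessarily exact) sequence $\C' \to \E \to \C''$ and then finishes by applying the same proposition once more to $\KER_\lambda \to \C \to \D$, whereas you assemble a genuine exact sequence $\KER_G \to \D_0 \to \D''$ (which requires the easy extra check that normality of $G$ restricts to $\D_0$) and finish by hand with Proposition~\ref{inddom}, Frobenius reciprocity and the rigidity argument already used in Lemma~\ref{lem-toto}.
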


\begin{proof} Since $r$ and $l$ are equivalences, we may assume that they are identities, $\C'=\D'$, $\C''=\D''$, and the diagram commutes.
By Proposition \ref{fp-dim}, we have: $\FPdim \C = \FPdim \D$. Now
denote by $\E$ the dominant image of $\lambda$, that is, the full
subcategory of $\D$ whose objects are direct factors of objects
belonging to the image of $\lambda$. The tensor functor $\lambda$
factors as $j \lambda_0$, where $\lambda_0 : \C \to \E$ is
dominant, and $j : \E \to \D$ is a replete inclusion. In
particular, $\FPdim \E \le \FPdim \D$. In the diagram of tensor
functors
$$\xymatrix@C3em{\C' \ar[r]^{\lambda_0f}& \E \ar[r]^{Gj}& \C''}$$
the functor $Gj$ is dominant because $\ff=Gj\lambda_0$ is
dominant, $\lambda_0f$ is fully faithful because $g=j\lambda_0f$
and $j$ are fully faithful, and $\lambda_0f(\C'_1) \subset
\KER_{Gj}$ because $Gj\lambda_0f=\ff f$. By
Proposition~\ref{fp-dim}, we have $\FPdim \E \ge \FPdim \C' \,
\FPdim \C'' = \FPdim \D$. Hence $\FPdim \E=\FPdim \D$, so that
$\E=\D$ and therefore $\lambda$ is dominant. Applying again
Proposition~\ref{fp-dim}, this time to the sequence
$$\xymatrix{\KER_\lambda \ar[r]& \C \ar[r]^{\lambda}& \D}$$
we find that $\FPdim \KER_\lambda \le 1$, hence $\KER_\lambda
=\langle\1\rangle$, and the sequence is exact, so $\lambda$ is an
equivalence.
%
%
%
\end{proof}

\subsection{Functors of Frobenius-Perron index 2}\label{ind-2}


It is well known that if  $H$ is a split semisimple Hopf algebra
and $K$ is a Hopf subalgebra of index $2$, then $K$ is a normal
Hopf subalgebra and there is a cocentral exact sequence of Hopf
algebras $K \to H \to \kk\mathbb Z_2$. See \cite{kob-mas},
\cite[Corollary 1.4.3]{ssld}.  Hence an exact sequence of fusion
categories $\rep \mathbb Z_2 \to \Rep{H} \to \Rep{K}$.

We extend this result to general fusion categories. Recall that
the the Frobenius-Perron index  $\FPind \ff$ of a dominant tensor
functor  $\ff : \C \to \D$ between fusion categories is  the ratio
$\FPind \ff = \FPdim \C / \FPdim \C''$.

\begin{proposition}\label{indice2}  Let $\ff : \C \to \D$ be a dominant tensor functor of Frobenius-Perron index 2 between fusion categories.
Then  $\ff$ is normal, and we have an exact sequence of fusion
categories
$$\rep \mathbb Z_2 \to \C
\overset{\ff}\to \D.$$
\end{proposition}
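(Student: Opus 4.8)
The plan is to reduce normality to understanding the left adjoint $G$ of $\ff$ on the unit object, and then to show directly that the nontrivial constituent of $G(\1)$ lies in $\KER_\ff$. Since $\C$ and $\D$ are fusion categories, $\ff$ admits a left adjoint $G$, and Proposition~\ref{inddom} gives $\FPdim G(\1)=\FPind(\ff)=2$. By adjunction, $m_\1 G(\1)=\dim\Hom_\C(G(\1),\1)=\dim\Hom_\D(\1,\ff(\1))=\dim\Hom_\D(\1,\1)=1$, so $\1$ occurs in $G(\1)$ exactly once. As every simple object of a fusion category has Frobenius-Perron dimension at least $1$, the complement of $\1$ in $G(\1)$ has Frobenius-Perron dimension $1$ and is therefore a single invertible simple object $g\not\simeq\1$; that is, $G(\1)\simeq\1\oplus g$. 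By Proposition~\ref{func-normal}(1), $\ff$ is normal if and only if $G(\1)\in\KER_\ff$, equivalently (since $\1\in\KER_\ff$) if and only if $g\in\KER_\ff$, equivalently $\ff(g)\simeq\1$, because $\ff(g)$ is invertible.

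The crux is proving $\ff(g)\simeq\1$, and the key is a one-line multiplicity identity obtained from the same adjunction:
\[
m_\1\ff(g)=\dim\Hom_\D(\1,\ff(g))=\dim\Hom_\C(G(\1),g)=m_g\bigl(G(\1)\bigr)=1 .
\]
Thus $\1$ is a subobject of $\ff(g)$; since $\ff$ preserves invertibility, $\ff(g)$ is simple of Frobenius-Perron dimension $1$, and hence $\ff(g)\simeq\1$. Therefore $\ff(G(\1))\simeq\1\oplus\ff(g)\simeq\1^2$ is trivial, so $G(\1)\in\KER_\ff$ and $\ff$ is normal. Being both dominant (by hypothesis) and normal, $\ff$ fits into an exact sequence $\KER_\ff\to\C\overset{\ff}\to\D$.

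It then remains to identify $\KER_\ff$. The displayed computation shows, more generally, that a simple object $X$ satisfies $m_\1\ff(X)=\dim\Hom_\C(G(\1),X)>0$ if and only if $X\simeq\1$ or $X\simeq g$; combined with Lemma~\ref{fpdim-tf}, this means the only simple objects of $\KER_\ff$ are $\1$ and $g$, both invertible. Hence $\KER_\ff$ is a pointed fusion category of rank $2$ and Frobenius-Perron dimension $2$. Since $\KER_\ff\simeq\CoRep{L(\omega_\ff)}$ carries a fiber functor (Section~\ref{fiberf}), its induced Hopf algebra $L(\omega_\ff)$ has dimension $2$; the only $2$-dimensional Hopf algebra is $\kk\mathbb Z_2$ (and semisimplicity forces $\car\kk\neq 2$), so $\KER_\ff\simeq\rep\mathbb Z_2$, yielding the exact sequence $\rep\mathbb Z_2\to\C\overset{\ff}\to\D$. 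The only genuine content lies in pinning down the structure of $G(\1)$ and in the identity $m_\1\ff(g)=m_g(G(\1))$; everything afterward is formal, so I expect no serious obstacle.
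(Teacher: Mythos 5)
Your proof is correct and follows essentially the same route as the paper: the paper applies Lemma~\ref{FPdimdom} at $Y=\1$ to produce the unique invertible simple $J\not\simeq\1$ with $m_\1\ff(J)>0$, deduces $\ff(J)\simeq\1$, and invokes Proposition~\ref{func-normal}(2), whereas you transport the same information through the adjunction, decomposing $G(\1)\simeq\1\oplus g$ via Proposition~\ref{inddom} and invoking Proposition~\ref{func-normal}(1) -- two equivalent packagings of one argument, followed by the identical identification of $\KER_\ff$ as the rank-two pointed category with a fiber functor. (Your parenthetical care about $\car\kk\neq 2$ is if anything slightly more scrupulous than the paper's own conclusion.)
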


\begin{proof}
Equation~\eqref{form-fpdimy} of Lemma~\ref{FPdimdom}, applied to
$Y=\1$, yields:
$$\sum_{X \in \Lambda_\C} m_\1(\ff(X)) \FPdim X = {\FPdim \C\over \FPdim \D}=2.$$
Since $\FPdim X \ge 1$ for any simple object $X$, we conclude that
there is exactly one element $J \in \Lambda_\C$, $J \not\simeq
\1$, such that $\ff(J)$ contains $\1$. Moreover we have $\FPdim
\ff(J)= \FPdim J=1$ so $\ff(J) \simeq \1$.  Thus $\ff$ is normal,
and we have an exact sequence of fusion categories $\KER_\ff \to
\C \to \D$. Note that $J$ is invertible. On the other hand,
$\KER_\ff$ is a pointed category whose group of invertibles is of
order $2$ (because $J$ is invertible). Therefore $\KER_\ff \simeq
\Rep H$, where $H$ is a split semisimple Hopf algebra $H$ of
dimension $2$, that is, $H\simeq \kk \mathbb Z_2$, hence the
Proposition holds.
\end{proof}

\section{Hopf monads and exact sequences}\label{sect-hm-es}

In this section, we study tensor functors and in particular, exact sequences of tensor categories in terms of Hopf monads. Recall that a tensor functor $\ff : \C \to \D$ is monadic if and only if it has a left adjoint $G$ (whose existence is equivalent to that of a right adjoint, see Section~\ref{rladj}). If such is the case,
the monad $T=\ff G$ of $G$ is a Hopf monad on $\D$, and $\C$ is tensor equivalent to the category $\D^T$ of $T$-modules in $\D$.

\begin{proposition}\label{dom-crit} Let $F : \C \to \D$ be a tensor functor between tensor categories over a field $\kk$, and assume $F$ admits a left adjoint $G$. Let $T=FG$ be the Hopf monad of $F$.
Then the following assertions are equivalent:
\begin{enumerate}[(i)]
\item The functor $\ff$ is dominant;
\item The unit $\eta$ of the monad $T$ is a monomorphism;
\item The monad $T$ is faithful;
\item The left adjoint of $\ff$ is faithful;
\item The right adjoint of $\ff$ is faithful.
\end{enumerate}
\end{proposition}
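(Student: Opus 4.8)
The plan is to prove the chain of equivalences by combining Lemma~\ref{dom-sub-quot} with the monad machinery of Lemma~\ref{lem-mon-faith}, exploiting the fact that under the present hypotheses the honest adjoint $G$ coincides (on $\C$ and $\D$) with the Pro-adjoint appearing in the definition of dominance. First I would observe that since $F$ admits a left adjoint $G$ and $\C,\D$ are tensor categories, $F$ is faithful exact, so the adjunction $G \vdash F$ is monadic, $T=FG$ is the Hopf monad of $F$, and $\C \simeq \D^T$ as tensor categories with $F$ identified with the forgetful functor $\U : \D^T \to \D$ and $G$ with the free module functor $\Ll$. This is exactly the setting in which Lemma~\ref{lem-mon-faith} applies verbatim: it already gives the equivalence of (ii), (iii), and (iv), since (ii) says $\eta$ is monomorphic, (iii) says $T$ is faithful, and (iv) says $\Ll=G$ is faithful.

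It then remains to connect these three conditions with dominance (i) and with faithfulness of the right adjoint (v). For (i) \ssi (iv), the key point is that a genuine left adjoint $G$ of $F$, when it exists, restricts the Pro-adjoint: the Pro-adjoint of $F$ is the left adjoint of $\Pro F$, and its restriction to the subcategory $\D \subset \Pro\D$ agrees with $G$ up to canonical isomorphism. Thus $G$ is faithful on $\D$ exactly when the Pro-adjoint is faithful, which by Lemma~\ref{dom-sub-quot}(iii) is equivalent to $F$ being dominant. I would spell this out by noting that for objects $X,Y$ of $\D$ the two adjunctions induce the same hom-space isomorphisms $\Hom_\C(G(X),Z)\simeq\Hom_\D(X,F(Z))$, so faithfulness of $G$ as a functor $\D\to\C$ is detected on morphisms between objects of $\D$ precisely as in the Pro-adjoint.

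For the equivalence with (v), I would invoke Section~\ref{rladj}: since $F$ is strong monoidal between rigid categories, the existence of a left adjoint $G$ forces the existence of a right adjoint $R$, related by $R(X)\simeq \ldual{G(\rdual X)}\simeq\rdual{G(\ldual X)}$. Because the left and right dual functors are (contravariant) equivalences, they preserve and reflect faithfulness; hence $G$ is faithful if and only if $R$ is faithful, giving (iv) \ssi (v). Alternatively one can argue (i) \ssi (v) directly by applying Lemma~\ref{dom-sub-quot} to $F^{\opp}:\C^{\opp}\to\D^{\opp}$, for which the Ind-adjoint plays the role the Pro-adjoint played above and the dual characterization (quotient rather than subobject) intervenes.

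The main obstacle I anticipate is the bookkeeping in the step (i) \ssi (iv): one must be careful that the \emph{honest} adjoint $G$ and the Pro-adjoint of $F$ really do agree on $\D$, and that faithfulness tested on the full subcategory $\D\subset\Pro\D$ suffices. This is essentially the content already extracted in the proof of Lemma~\ref{dom-sub-quot}, where dominance was shown equivalent to the unit $\eta$ of the Pro-monad being monomorphic; here the situation is cleaner because $G$ exists honestly, so no passage to filtered limits is needed and one can apply Lemma~\ref{lem-mon-faith} to the genuine monad $T$ on $\D$ directly. The remaining equivalences are then formal consequences of faithfulness being preserved by the duality functors and by the monadic identification $\U=F$, $\Ll=G$, $T=FG$.
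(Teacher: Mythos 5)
Your proposal is correct and, in its final form, is exactly the paper's proof: under the monadic identification $\C\simeq\D^T$ (with $F=\U$, $G=\Ll$), dominance is literally condition (i) of Lemma~\ref{lem-mon-faith} (every object of $\D$ embeds in a $T$-module, i.e.\ in some $F(X)$), so that lemma gives (i)--(iv) at once, and (v) follows from $R\simeq \ldual{G(\rdual{?})}$ being the composite of $G$ with the (anti)equivalences given by the duality functors. The Pro-adjoint detour in your second paragraph is unnecessary and is the one shaky step---faithfulness of the Pro-adjoint must be tested on morphisms between arbitrary pro-objects, not just on the full subcategory $\D\subset\Pro\D$, so ``$G$ faithful $\Rightarrow$ Pro-adjoint faithful'' needs more than the agreement of the two adjunctions on $\D$---but your own fallback of applying Lemma~\ref{lem-mon-faith} to the genuine monad $T$ directly is precisely what the paper does and closes this gap.
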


\begin{proof} The equivalence of the first four assertions results immediately from Lemma \ref{lem-mon-faith}, considering that, since $F$ is monadic, we may assume that $\C=\D^T$, $F$ is the forgetful functor $\U :\D^T \to \D$ and $G$ is
the free module functor. The right adjoint of $F$ is  $R\simeq \ldual{G(\rdual{?})}$, hence (iv) \ssi (v).
\end{proof}

\begin{proposition}\label{dom-crit2} Let $\ff :\C \to \D$ be a tensor functor between fusion categories over a field $\kk$, and let $T$ be the Hopf monad of $F$.
Then  $\FPind (\C:\D)\le \FPdim T(\1)$, and $\ff$ is dominant
if and only if
$$\FPind(\C:\D) = \FPdim T(\1).$$
If such is the case, then for all $X\in \Lambda_\C$, $\FPdim
T(X)=\FPdim T(\1)\, \FPdim X$.
\end{proposition}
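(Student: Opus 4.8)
The plan is to translate everything into Frobenius--Perron dimensions in the Grothendieck rings, using that a tensor functor between fusion categories preserves Frobenius--Perron dimensions. (I read $\FPind(\C:\D)$ as the ratio $\FPdim\C/\FPdim\D$, which makes sense even when $F$ is not dominant.) Since $T=FG$ and $F$ preserves $\FPdim$, we have $\FPdim T(X)=\FPdim FG(X)=\FPdim G(X)$ for every object $X$ of $\D$; in particular $\FPdim T(\1)=\FPdim G(\1)$. The first step is to record the adjunction identity $m_X(G(Y))=\dim\Hom_\C(G(Y),X)=\dim\Hom_\D(Y,F(X))=m_Y(F(X))$, valid for $X\in\Lambda_\C$ and $Y\in\Lambda_\D$. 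Decomposing $G(Y)$ into simples then yields
\[\FPdim T(Y)=\FPdim G(Y)=\sum_{X\in\Lambda_\C} m_Y(F(X))\,\FPdim X,\]
and in particular $\FPdim T(\1)=\sum_{X\in\Lambda_\C} m_\1(F(X))\,\FPdim X$.

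Next I would treat a possibly non-dominant $F$ by factoring it as $\C\xrightarrow{F_0}\D_0\xrightarrow{\iota}\D$, where $\D_0$ is the dominant image of $F$ (the replete full fusion subcategory whose simple objects are the direct factors of the $F(X)$, $X$ in $\C$), $F_0$ is dominant and $\iota$ is a full inclusion, exactly as in the proof of Corollary~\ref{equiv-ext}. Because $\1\in\D_0$ and $\iota$ is fully faithful with $\iota(\1)\simeq\1$, the multiplicities satisfy $m_\1(F(X))=m_\1(F_0(X))$ for all $X$; hence the formula above computes the same value for $F_0$, namely $\FPdim T(\1)=\sum_{X}m_\1(F_0(X))\,\FPdim X=\FPdim G_0(\1)$, where $G_0$ is the left adjoint of $F_0$. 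Since $F_0$ is dominant, Proposition~\ref{inddom} (equivalently, \eqref{form-fpdimy} with $Y=\1$) gives $\FPdim G_0(\1)=\FPdim\C/\FPdim\D_0$. Therefore $\FPdim T(\1)=\FPdim\C/\FPdim\D_0\ge\FPdim\C/\FPdim\D=\FPind(\C:\D)$, the inequality coming from $\D_0\subseteq\D$.

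To pin down the case of equality I would use that $\D_0$ is a full fusion subcategory, so $\FPdim\D_0=\sum_{Y\in\Lambda_{\D_0}}(\FPdim Y)^2\le\sum_{Y\in\Lambda_\D}(\FPdim Y)^2=\FPdim\D$, with equality if and only if $\Lambda_{\D_0}=\Lambda_\D$, i.e.\ $\D_0=\D$, i.e.\ $F$ is dominant. This establishes the inequality, shows that it is strict exactly when $F$ fails to be dominant, and proves the equivalence $F\text{ dominant}\iff\FPind(\C:\D)=\FPdim T(\1)$ in one stroke.

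Finally, assuming $F$ dominant (so $\D_0=\D$ and $\FPdim T(\1)=\FPdim\C/\FPdim\D$), for any simple object $X$ of $\D$ equation~\eqref{form-fpdimy} of Lemma~\ref{FPdimdom} gives $\sum_{Z\in\Lambda_\C} m_X(F(Z))\,\FPdim Z=(\FPdim\C/\FPdim\D)\,\FPdim X$; combining this with the identity $\FPdim T(X)=\sum_{Z\in\Lambda_\C} m_X(F(Z))\,\FPdim Z$ from the first step yields $\FPdim T(X)=\FPdim T(\1)\,\FPdim X$, completing the argument. The only genuine work is the reduction to the dominant case through the dominant image and the bookkeeping of the multiplicity of $\1$; once $F$ is dominant, everything is an immediate consequence of Lemma~\ref{FPdimdom} and Proposition~\ref{inddom}, which already encapsulate the arithmetic of Frobenius--Perron dimensions under dominant functors.
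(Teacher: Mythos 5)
Your proof is correct and follows essentially the same route as the paper's: factor $F$ through its dominant image, apply Proposition~\ref{inddom} (equivalently \eqref{form-fpdimy}) to the dominant part, use that a proper full fusion subcategory has strictly smaller Frobenius--Perron dimension to get the inequality and the equality criterion, and finish the last assertion with \eqref{form-fpdimy} plus the adjunction identity $m_Y(F(X))=m_X(G(Y))$. The only cosmetic difference is that you identify $\FPdim T(\1)$ with $\FPdim G_0(\1)$ by tracking the multiplicity of $\1$ through the full embedding, whereas the paper observes directly that the left adjoint of $F_0$ is the restriction of $G$ to the dominant image; these are the same fact applied at different levels of generality.
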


\begin{proof} Let $G$ be a left adjoint of $\ff$.
If $\ff$ is dominant, we have $\FPdim \C= \FPdim T(\1) \FPdim \D$
by Proposition~\ref{inddom}, noting that $T(\1)=G(\1)$. In general
let $\E$ be the dominant image of $\ff$, that is, the full
subcategory of $\D$ whose objects are direct factors of objects
belonging to the image of $\ff$. Denote by $\ff_0$ the dominant
tensor functor $\C \to \E$ induced by $\ff$, and $i$ the full
embedding  $\E \subset \D$. Then $\Ll_{\mid \E}$ is left adjoint
to $\ff_0$ so we have $\FPdim \C= \FPdim T(\1) \FPdim \E$. We also
have $\FPdim \E \le \FPdim \D$, with equalitity if and only if
$\E=\D$, that is, $\ff$ is dominant. This proves both the
inequality and the equivalence of the Proposition.

Now assume $\ff$ is dominant. For $X\in \Lambda_\D$ we have by
Equation~\eqref{form-fpdimy} of Lemma~\ref{FPdimdom}:
\begin{align*}
\FPdim(X)&={\dim \D \over \dim \C} \sum_{Y \in \Lambda_\C} m_Y(\ff(X)) \FPdim Y\\
&= {1\over \FPdim T(\1)} \sum_{Y \in \Lambda_\C} m_X(G(Y)) \FPdim Y\quad\text{(by adjunction)}\\
&={1\over \FPdim T(\1)} \FPdim G(X)= {1\over \FPdim T(\1)} \FPdim
T(X),
\end{align*}
hence the last assertion of the Proposition holds.\end{proof}

\begin{definition}\label{normal} Let $\C$ be a tensor category. A \kt linear right exact Hopf monad $T$ on $\C$ is \emph{normal} if $T(\1)$ is a trivial
object of $\C$. \end{definition}

Recall that a tensor functor $\ff$ gives rise to a fiber functor
$\omega_\ff : \KER_\ff \to \vect_\kk, X \mapsto \Hom(\1,\ff(X))$,
hence a Hopf algebra $L(\omega_\ff)$ such that there exists a
canonical tensor equivalence $c:\KER_\ff \iso  \CoRep{
L(\omega_\ff)}$ (see Section~\ref{fiberf}).

\begin{lemma}
Let $T$ be  a \kt linear right exact normal Hopf monad
 on a
tensor category $\C$, with forgetful functor $\U : \C^T \to \C$.
Then the Hopf algebra $L=L(\omega_\U)$ is finite-dimensional, and we have a canonical tensor equivalence
$$\KER_\U \iso \CoRep{L}.$$
\end{lemma}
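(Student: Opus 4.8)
The plan is to identify $\KER_\U$, the full subcategory of $\C^T$ whose objects become trivial under the forgetful functor $\U:\C^T\to\C$, and to apply the general machinery of Section~\ref{fiberf} to the fiber functor $\omega_\U:\KER_\U\to\vect_\kk$ sending $(M,r)\mapsto\Hom_{\C^T}(\1,(M,r))$. The key observation is that an object $(M,r)$ of $\C^T$ lies in $\KER_\U$ precisely when $M$ is a trivial object of $\C$, that is $M\simeq\1^n$. Since $\U$ is a tensor functor between tensor categories (Proposition~\ref{func-normal} and the proposition preceding Example~\ref{hopf-alg} guarantee $\C^T$ is a tensor category), we already have from Section~\ref{fiberf} that $L=L(\omega_\U)$ is a Hopf algebra and that there is a canonical tensor equivalence $\KER_\U\iso\CoRep{L}$. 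Thus the genuinely new content to be established is only that $L$ is \emph{finite-dimensional}; the tensor equivalence itself is a direct instance of the general reconstruction statement.

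First I would show that normality of $T$ forces $\U$ to be a normal tensor functor, so that $\KER_\U$ is a full tensor subcategory admitting the fiber functor $\omega_\U$. The free module functor $\Ll:\C\to\C^T$ is left adjoint to $\U$, and $\Ll(\1)=(T(\1),\mu_\1)$ has underlying object $T(\1)$, which is trivial by the definition of a normal Hopf monad (Definition~\ref{normal}). By Proposition~\ref{func-normal}(1), since the left adjoint $G=\Ll$ satisfies $G(\1)\in\KER_\U$, the functor $\U$ is normal. This both justifies calling $L$ the induced Hopf algebra and sets up the adjunction computation needed for finiteness.

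The main step is the finite-dimensionality of $L$, and I would obtain it by essentially the same adjunction calculation used in the proof of Proposition~\ref{prop-crit-adj}. Because $T$ is $\kk$-linear right exact, the forgetful functor $\U$ has a \emph{left} adjoint $\Ll$, and hence (by the discussion in Section~\ref{rladj}, $\U$ being strong monoidal between rigid categories) a right adjoint $R$ as well, with $R\simeq\ldual{\Ll(\rdual{?})}$. Using the adjunction isomorphism and the coend formula for $L$, I would compute
$$L=\int^{(M,r)\in\KER_\U}\rdual{\omega_\U(M,r)}\otimes\omega_\U(M,r)\simeq\Hom_{\C}(\1,\U R(\1)),$$
exactly as in Proposition~\ref{prop-crit-adj}, where the coend over $\KER_\U$ collapses against the coend over $\vect_\kk$ via the fiber functor $\omega_\U$. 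Since $R(\1)$ is an object of $\C^T$ and $\U R(\1)$ is an object of the tensor category $\C$, the space $\Hom_\C(\1,\U R(\1))$ is finite-dimensional, establishing the claim.

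The step I expect to require the most care is the coend manipulation proving $L\simeq\Hom_\C(\1,\U R(\1))$: one must check that the right adjoint $R$ of $\U$ exists (relying on right exactness of $T$ together with the existence of $\Ll$ and the rigidity argument of Section~\ref{rladj}), and that $\omega_\U$ indeed admits $?\otimes R(\1)$ as a right adjoint so that the Fubini-type collapse of the coend is valid. Once $R$ is in hand the computation is a verbatim transcription of the argument in Proposition~\ref{prop-crit-adj}, with $F$ replaced by $\U$ and $\C''$ by $\C$; the finiteness of $\Hom_\C(\1,\U R(\1))$ is then immediate from the axioms of a tensor category.
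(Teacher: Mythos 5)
Your proof is correct, but it takes a genuinely different route from the paper's. The paper reduces to the trivial subcategory: since $T$ is $\kk$-linear and normal, $T(\langle\1\rangle)\subset\langle\1\rangle$, so $T$ restricts to a Hopf monad on $\langle\1\rangle$ and $\KER_\U=\langle\1\rangle^{T_{\mid\langle\1\rangle}}$; one may therefore assume $\C=\langle\1\rangle\simeq\vect_\kk$, where Lemma~\ref{HM-triv} identifies $T$ with $H\otimes\,?$ for a finite-dimensional Hopf algebra $H$, so that $\KER_\U\simeq\Rep{H}$ and $L\simeq\dual{H}$ is finite-dimensional. You instead upgrade $\U$ to a normal tensor functor admitting both adjoints and transcribe the coend computation of Proposition~\ref{prop-crit-adj}, getting $L\simeq\Hom_\C(\1,\U R(\1))$. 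Your inputs do check out: $\Ll(\1)=(T(\1),\mu_\1)$ lies in $\KER_\U$ by normality of $T$, so $\U$ is normal by Proposition~\ref{func-normal}; $\C^T$ is rigid because $T$ is a Hopf monad, so the left adjoint $\Ll$ forces a right adjoint $R$ by Section~\ref{rladj}, with $R(\1)\simeq\ldual{\Ll(\1)}$ again in $\KER_\U$; and the identification of $?\otimes R(\1)$ as right adjoint to $\omega_\U$ (resting on $\Hom_\C(\1,M)^*\simeq\Hom_\C(M,\1)$ for trivial $M$) carries over word for word --- note that dominance, which your $\U$ need not satisfy since $T$ is not assumed faithful, is never used in that step of Proposition~\ref{prop-crit-adj}, only normality. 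As for what each approach buys: the paper's reduction avoids the right adjoint $R$ and the coend manipulation altogether, and exhibits $\KER_\U$ concretely as $\Rep{H}$ with $H=\Hom_\C(\1,T(\1))$, which is what makes Remark~\ref{remtriv} transparent; your argument recycles a computation already established earlier in the paper and produces the explicit formula $L\simeq\Hom_\C(T(\1),\1)$ asserted in the introduction. One cosmetic slip: the free module functor $\Ll$ is left adjoint to $\U$ for any monad whatsoever, so right exactness of $T$ is needed not for the existence of $\Ll$ but only for $\C^T$ to be abelian and a tensor category, which you cite correctly elsewhere.
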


The Hopf algebra $L(\omega_\U)$ is called the \emph{induced Hopf algebra of the
normal Hopf monad $T$}.

\begin{proof} Firstly, we show that $L(\omega)$ is finite-dimensional. Since $T$ is normal, we have  $T(\langle\1\rangle) \subset \langle\1\rangle$, so that $T$ restricts to
a Hopf monad $T_{\mid\langle\1\rangle}$ on $\langle\1\rangle$, and
$\KER_\U={\langle\1\rangle}^{T_{\mid\langle\1\rangle}}$. Thus, we
may assume $\C=\langle\1\rangle$ (that is, $\C \simeq \vect_\kk$).
Then we have $\C_\U=\C^T$, and by Lemma~\ref{HM-triv}, there
exists a finite dimensional Hopf algebra $H$ on $\kk$  such that
$T=H \otimes ?$. We have a commutative triangle  of tensor
functors
$$\xymatrix{
\C^T \ar[dr]_{\omega_\U}\ar[rr]^{\sim\quad}&& \Rep{H}\ar[dl]\\
&\vect_\kk& }$$ whose horizontal is an equivalence, hence
$L(\omega_\U)\simeq \dual{H}$ is finite-dimensional.
\end{proof}

\begin{remark}\label{remtriv}
The induced Hopf algebra of a normal Hopf monad $T$ is the induced
algebra of the restriction of $T$ to the trivial tensor
subcategory $\langle\1\rangle \subset \C$.
\end{remark}

We have by definition:

\begin{proposition}
Let $F: \C\to \D$ be a tensor functor admitting a left adjoint.
Then $F$ is normal if and only if its Hopf monad $T$ is normal.
\end{proposition}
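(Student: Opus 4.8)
The plan is to observe that the two conditions in the statement unwind, via results already established in the paper, to one and the same condition on the single object $T(\1)=FG(\1)$ of $\D$. Since $F$ admits a left adjoint $G$, the discussion of Section~\ref{nuts} guarantees that $T=FG$ is a $\kk$-linear right exact Hopf monad on $\D$, so that Definition~\ref{normal} applies to it; by that definition, $T$ is normal precisely when $T(\1)$ is a trivial object of $\D$. This settles the right-hand side of the equivalence, and it only remains to rephrase the left-hand side in the same terms.

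For the left-hand side I would invoke Proposition~\ref{func-normal}(1): since $F$ has a left adjoint $G$, the functor $F$ is normal if and only if $G(\1)$ belongs to $\KER_F$. By the definition $\KER_F=F^{-1}(\langle\1\rangle)$, the condition $G(\1)\in\KER_F$ means exactly that $F(G(\1))$ is a trivial object of $\D$. As $T=FG$ we have $F(G(\1))=T(\1)$, so this is the same as saying that $T(\1)$ is trivial. Chaining these equivalences yields
$$F \text{ normal} \iff G(\1)\in\KER_F \iff T(\1)=FG(\1) \text{ trivial} \iff T \text{ normal},$$
which is the desired statement.

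There is essentially no obstacle here: the substantive content has already been extracted in Proposition~\ref{func-normal}, whose proof uses the adjunction isomorphism $\Hom_\D(\1,F(-))\simeq\Hom_\C(G(\1),-)$ to compare largest trivial subobjects. The present proposition merely reformulates the criterion ``$G(\1)\in\KER_F$'' in the language of the Hopf monad and matches it against Definition~\ref{normal}. The only point that deserves a word of care is that the notion of a normal Hopf monad presupposes $T$ to be $\kk$-linear and right exact; this holds automatically for the Hopf monad of a tensor functor admitting adjoints, as recalled in Section~\ref{nuts}. This is precisely why the authors preface the statement with ``We have by definition''.
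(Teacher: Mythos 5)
Your proof is correct and follows exactly the route the paper intends: the paper disposes of this proposition with the remark ``We have by definition,'' the substance being, as you say, Proposition~\ref{func-normal}(1) (normality of $F$ is equivalent to $G(\1)\in\KER_F$) combined with the identification $T(\1)=FG(\1)$ and Definition~\ref{normal}. Your additional check that $T$ is $\kk$-linear and right exact (so that the definition of a normal Hopf monad applies) is also consistent with what the paper records in Section~\ref{nuts} for the Hopf monad of a tensor functor with adjoints.
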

%

%
%
%
%

\begin{example}\label{ex-center}
Let $\C$ be a fusion category over a field $\kk$. It is shown in
\cite{bv-double} that the forgetful functor $\U: \Z(\C) \to \C$ is
monadic, with monad $Z$ defined by
$$Z(X)= \sum_{Y \in \Lambda_\C} \ldual{Y} \otimes X \otimes Y.$$ The unit of the monad
$Z$ is the $\1$-component inclusion $X\hookrightarrow Z(X)$; it is
therefore a monomorphism. Hence $\U$ is dominant by
Proposition~\ref{dom-crit}. However, $\U$ is normal if and only if
$\C$ is a pointed  category, because  $Z(\1)=\sum_{Y \in
\Lambda_\C} \ldual{Y} \otimes Y$ is trivial exactly in that case.
\end{example}


%
%

\subsection{Exact sequences and normal faithful Hopf monads}
The following theorem classifies extensions of tensor categories
in terms of normal faithful Hopf monads.

%

\begin{theorem}\label{t-mmod} Let
$\C'$, $\C''$ be tensor categories over a field $\kk$, and assume that $\C'$ is finite. Then the following
data are equivalent:
\begin{enumerate}
\item A normal faithful \kt linear right exact Hopf monad $T$ on $\C''$, with induced Hopf algebra $H$, endowed with a tensor equivalence $K :\C' \iso \CoRep{H}$;
\item An extension $\C' \to \C \to \C''$ of $\C''$ by $\C'$.
\end{enumerate}
\end{theorem}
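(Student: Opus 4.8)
The plan is to construct explicit passages back and forth between the two kinds of data and to prove, by invoking monadicity, that they are mutually inverse up to the appropriate notions of isomorphism. \emph{From $(1)$ to $(2)$:} given a normal faithful $\kk$-linear right exact Hopf monad $T$ on $\C''$ with induced Hopf algebra $H$ and tensor equivalence $K : \C' \iso \CoRep{H}$, I set $\C = (\C'')^T$ with forgetful functor $\U : \C \to \C''$. By the proposition asserting that $\C^T$ is a tensor category for a $\kk$-linear right exact Hopf monad, $\C$ is a tensor category and $\U$ is a tensor functor. Since $T$ is faithful, $\U$ is dominant by Proposition~\ref{dom-crit}; since $T$ is normal, $\U$ is normal by the proposition identifying normality of a functor with normality of its Hopf monad. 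Hence $\KER_\U \to \C \overset{\U}\to \C''$ is an exact sequence. The lemma computing the induced Hopf algebra of a normal Hopf monad yields a canonical tensor equivalence $\KER_\U \iso \CoRep{H}$; composing with $K^{-1}$ gives $\C' \iso \KER_\U$, which furnishes the embedding $f$ of an extension of $\C''$ by $\C'$.

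\emph{From $(2)$ to $(1)$:} given an extension $\C' \overset{f}\to \C \overset{F}\to \C''$ with $\C'$ finite, Proposition~\ref{prop-crit-adj} shows that $F$ admits adjoints, so $F$ is monadic with Hopf monad $T = FG$ on $\C''$, which is $\kk$-linear right exact. As before, $F$ dominant forces $T$ faithful by Proposition~\ref{dom-crit}, and $F$ normal forces $T$ normal. The induced Hopf algebra of $T$ coincides with the induced Hopf algebra $H = L(\omega_F)$ of $F$ from Section~\ref{fiberf}, and the canonical equivalence $\KER_F \iso \CoRep{H}$ composed with the equivalence $\C' \iso \KER_F$ supplied by $f$ produces the required tensor equivalence $K : \C' \iso \CoRep{H}$.

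\emph{Mutual inverseness:} starting from a Hopf monad $T$ as in $(1)$, the Hopf monad of $\U : (\C'')^T \to \C''$ is $\U\Ll = T$ on the nose, and both $H$ and $K$ are recovered by construction, so the round trip $(1)\to(2)\to(1)$ returns the original data. Conversely, starting from an extension as in $(2)$, the monadicity results recalled in Section~\ref{comonmonadic} provide a comparison functor $\kappa : \C \iso (\C'')^T$ which is a tensor equivalence with $\U\kappa \simeq F$ and $\kappa G \simeq \Ll$. This $\kappa$ restricts to an equivalence $\KER_F \iso \KER_\U$ compatible with the two embeddings, so the reconstructed sequence is equivalent, in the sense of Definition~\ref{def-sec}, to the one we began with.

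The genuinely delicate step is this last bookkeeping of compatibilities: one must verify that $\kappa$ intertwines the embedding $f$ with $\KER_\U \into \C$, and carries the induced Hopf algebra $H$ and the equivalence $K$ to those produced by the inverse construction, so that the correspondence descends to equivalence classes rather than merely matching objects. Everything here is formal once monadicity and the identification of induced Hopf algebras are in place, but the diagram-chasing that certifies the two passages are inverse up to tensor equivalence is the part that must be carried out with care.
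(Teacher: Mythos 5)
Your proposal is correct and follows essentially the same route as the paper: from (1) to (2) via the category of $T$\ti modules $(\C'')^T$ with its forgetful functor, from (2) to (1) via Proposition~\ref{prop-crit-adj} and monadicity of $F$, and mutual inverseness via the comparison functor $\kappa$ with $\U\kappa \simeq F$ and the matching of embeddings. The only deviation is cosmetic: where the paper invokes Corollary~\ref{cor-ex-from-T} to produce the exact sequence $\KER_\U \to (\C'')^T \to \C''$, you assemble it directly from Proposition~\ref{dom-crit}, the normality criterion and the lemma on induced Hopf algebras of normal Hopf monads, which is an equally valid (indeed, in this general tensor-category setting, arguably cleaner) justification of that step.
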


\begin{proof}
Let $T$ be a normal faithful \kt linear right exact  Hopf monad on
$\C''$ with induced Hopf algebra $H$. Denote by $\U : {\C''}^T \to
\C''$ the forgetful functor. Let $q$ be a quasi-inverse of the
canonical tensor equivalence $\KER_\U \to \CoRep{H}$. According to
Corollary~\ref{cor-ex-from-T}, we have an exact sequence:
$$\CoRep{H}\overset{iq}\toto
(\C'')^{T} \overset{\U}\toto \C'',$$ hence an extension
$\C'\overset{iqK} \toto (\C'')^{T} \overset{\U}\toto \C''$ of
$\C''$ by $\C'$.

Conversely, let $\C' \overset{f}\toto \C \overset{\ff}\toto \C''$
be an extension of $\C''$ by $\C'$.  Then $\ff$ has a left adjoint
by Proposition~\ref{prop-crit-adj}.   Let $T$ be the Hopf monad of
$\ff$. Then $T$ is \kt linear, faithful (because $\ff$ is
dominant) and normal (because $\ff$ is normal). The tensor functor
$f$ induces a tensor equivalence $f_0: \C' \to \KER_\ff$, and we
also have a canonical tensor equivalence $c : \KER_\ff \iso
\CoRep{H}$, where $H$ is the induced Hopf algebra of the Hopf
monad $T$ of $\ff$, hence a tensor equivalence $K=c f_0 :\C' \iso
\CoRep{H}$.

These two constructions are mutually quasi-inverse. Indeed, if $T$
and $K$ are as in (1), then the Hopf monad of the forgetful
functor $\U: {\C''}^T \to \C''$ is $T$ and the reconstructed
tensor equivalence $\C' \to \CoRep{H}$ is $cqK \simeq K$.

On the other hand, given an extension $\C' \overset{f}\to \C
\overset{\ff}\to \C''$, and denoting by $T$ and $K$ the
corresponding Hopf monad and tensor equivalence, we have an
equivalence of extensions:
$$\xymatrix{
\C' \ar[d]_= \ar[r]^f & \C\ar[d]_{\kappa} \ar[r]^\ff & \C''\ar[d]^=\\
\C'\ar[r]_{iqK} & {\C''}^T \ar[r]_{\U} & \C'', }$$ where $\U$
denotes the forgetful functor, and $\kappa$ is the comparison
functor of the monadic functor $\ff$. Indeed $\kappa$ is a tensor
equivalence, we have $\U\kappa=\ff$, and $iqK=iqcf_0 \simeq
if_0=f$.
\end{proof}

Combining Theorem \ref{t-mmod} with Proposition \ref{indice2} we
obtain:

\begin{corollary}\label{index2} Let $\ff :\C \to \C''$ be a dominant tensor functor between fusion
categories $\C$ and $\C''$ such that $\FPind \ff = 2$. Then there
exists a \kt linear faithful normal semisimple Hopf monad $T$ on
$\C''$ having induced Hopf algebra  $\kk^{\mathbb Z_2}$, such that
$\C \simeq (\C'')^{T}$ as fusion categories. \qed \end{corollary}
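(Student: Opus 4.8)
The plan is to obtain the Corollary as a formal consequence of Proposition~\ref{indice2} and Theorem~\ref{t-mmod}, with only the identification of the induced Hopf algebra requiring a small computation. First I would feed the dominant tensor functor $\ff : \C \to \C''$ of Frobenius--Perron index $2$ into Proposition~\ref{indice2}. That proposition tells me at once that $\ff$ is normal and produces an exact sequence of fusion categories
$$\rep \mathbb Z_2 \overset{f}\toto \C \overset{\ff}\toto \C'',$$
in which $\KER_\ff \simeq \rep \mathbb Z_2 = \Rep{\kk\mathbb Z_2}$, the normal subcategory being pointed with a group of invertibles of order $2$.

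Next I would apply Theorem~\ref{t-mmod} with $\C' = \rep\mathbb Z_2$. Since $\rep\mathbb Z_2$ is a fusion category it is finite, so the finiteness hypothesis of the theorem is satisfied, and the extension above corresponds to a \kt linear right exact \emph{normal faithful} Hopf monad $T$ on $\C''$, equipped with an induced Hopf algebra $H$ and a tensor equivalence $\C' \iso \CoRep{H}$, such that $\C \simeq (\C'')^{T}$ as tensor categories. As in the construction underlying Theorem~\ref{t-mmod}, the monad $T$ is faithful because $\ff$ is dominant (Proposition~\ref{dom-crit}) and normal because $\ff$ is normal.

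It then remains to identify $H$ and to verify semisimplicity. The induced Hopf algebra is $H = L(\omega_\ff)$, the Tannaka reconstruction of $\KER_\ff \simeq \rep \mathbb Z_2$ through its fiber functor $\omega_\ff(X) = \Hom(\1,\ff(X))$ (see Section~\ref{fiberf}). Since every fiber functor on $\rep\mathbb Z_2 = \Rep{\kk\mathbb Z_2}$ is isomorphic to the forgetful functor, the coend computes to $L(\omega_\ff) \simeq (\kk\mathbb Z_2)^* = \kk^{\mathbb Z_2}$, and indeed $\CoRep{H} = \CoRep{\kk^{\mathbb Z_2}} = \rep\mathbb Z_2$, consistently with the equivalence above. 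Finally, because $\C''$ and $\C \simeq (\C'')^{T}$ are fusion categories, every additive functor between them is automatically exact; hence $T = \ff G$ is exact and its module category is semisimple, so $T$ is semisimple.

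The combination is essentially formal, and the one step demanding genuine care is the identification of the induced Hopf algebra as precisely $\kk^{\mathbb Z_2}$ rather than $\kk\mathbb Z_2$. This rests on the convention relating the coend $L(\omega)$ to the finite dual Hopf algebra and on the fact that $\KER_\ff$ carries an essentially unique fiber functor; the two candidate Hopf algebras coincide here only because $\mathbb Z_2$ is abelian, so that $\kk^{\mathbb Z_2}$ is both commutative and cocommutative. Once this point is settled, all the remaining adjectives --- \kt linear, right exact, faithful, normal and semisimple --- follow directly from the construction of $T$ and from the fusion hypothesis on $\C$ and $\C''$.
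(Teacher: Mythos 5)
Your proposal is correct and follows exactly the route the paper intends: the Corollary is stated there as an immediate combination of Proposition~\ref{indice2} (normality and the exact sequence $\rep \mathbb Z_2 \to \C \to \C''$) with Theorem~\ref{t-mmod}, which is precisely your argument. Your added details --- identifying the induced Hopf algebra as $\kk^{\mathbb Z_2}$ via $\rep\mathbb Z_2 \simeq \CoRep{\kk^{\mathbb Z_2}}$ and deducing semisimplicity of $T$ from semisimplicity of $\C \simeq (\C'')^T$ (Proposition~\ref{propss}) --- are exactly what the paper leaves implicit behind its \qed.
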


\begin{example}\label{monad-secgroups}
Let $1 \to G'' \overset{\iota}\to G \overset{\pi}\to G' \to 1$ be
an exact sequence of finite groups.
Then we have an exact sequence of tensor categories
$$\xymatrix{\rep G' \ar[r]^{\pi^*}& \rep G \ar[r]^{\iota^*}& \rep G''}$$
as in Example \ref{sec-groups}. Let us describe the normal Hopf
monad $T$ on $\rep G''$ associated with this exact
sequence. We may assume without loss of generality that $G''$ is a
normal subgroup of $G$, and that $G'=G/G''$, $i$ being the
inclusion and $\pi$ the canonical surjection. The induction
functor  $\Ind^G_{G''}: \rep G'' \to \rep G$ is left
adjoint to the restriction functor $i^*=\Res^G_{G''}$. Let $Y$ be
a $\kk G''$-module. As a consequence of Mackey's Subgroup Theorem,
there is a natural isomorphism
$$\Res^G_{G''}\Ind^G_{G''} (Y) \simeq \oplus_{\gamma \in
G/G''} {}^\gamma Y,$$ where ${}^\gamma Y$ denotes the
$kG''$-module conjugated to $Y$ under the action of an element $g
\in G$ representing the class $\gamma$. See \cite[Remark
(10.11)]{curtis-reiner}. Then the Hopf monad $T$ is given, as an
endofunctor of  $\rep G'$,  by:
$$T(Y) = \oplus_{ \gamma \in G'} \,
{}^\gamma Y.$$ In fact, conjugation under elements $\gamma \in
G/G''$ defines an action of the group $G'$ on  $\rep G''$
by tensor autoequivalences. In subsequent subsections we will
study group actions on tensor categories in terms of Hopf monads.
\end{example}

\subsection{Semisimplicity}

A monad $T$ on a category  $\A$ is said to be
\emph{semisimple} if any $T$-module is a $T$-linear retract of a
free $T$-module, that is, of $(T(X), \mu_X)$, for some $X\in \A$.
We have the following analogue of Maschke's semisimplicity
criterion  for Hopf monads:

\begin{theorem}  \cite[Theorem 6.5.]{bv}\label{masch} Let $T$ be a Hopf monad on a rigid category $\C$. Then $T$ is semisimple if and only if there exists a
morphism $\Lambda : \1 \to T(\1)$ such that $\mu_\1 \Lambda=
\Lambda T_0$ and $T_0 \Lambda=\id_\1$.
\end{theorem}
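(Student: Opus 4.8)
The plan is to prove both implications, and the observation that drives everything is that the two stated conditions on $\Lambda$ are precisely the assertion that $\Lambda$ is a morphism of $T$-modules splitting a canonical free-module epimorphism. Recall that $(\1,T_0)$ is a $T$-module, by the bimonad axioms $T_0\mu_\1=T_0T(T_0)$ and $T_0\eta_\1=\id_\1$; that $\Ll(\1)=(T(\1),\mu_\1)$ is the free module on $\1$; and that $T_0\colon \Ll(\1)\to(\1,T_0)$ is a morphism in $\C^T$ — again by the same axiom — which is split in $\C$ by $\eta_\1$, since $T_0\eta_\1=\id_\1$. A morphism $\Lambda\colon\1\to T(\1)$ is $T$-linear as a map $(\1,T_0)\to(T(\1),\mu_\1)$ exactly when $\Lambda T_0=\mu_\1 T(\Lambda)$, which is the first condition, and it is a section of $T_0$ exactly when $T_0\Lambda=\id_\1$, the second. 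Thus the theorem amounts to the claim that the free-module epimorphism $T_0\colon\Ll(\1)\to(\1,T_0)$ splits in $\C^T$ if and only if $T$ is semisimple.

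For the ``only if'' direction (which uses only the bimonad structure, not the antipodes) I would argue formally. If $T$ is semisimple, the module $(\1,T_0)$ is a retract of a free module. I would first record the standard fact that any free module $\Ll(X)$ lifts against $\U$-split epimorphisms: given $p\colon(A,a)\to(B,b)$ in $\C^T$ admitting a section $\sigma$ in $\C$, and any $g\colon\Ll(X)\to(B,b)$, the $T$-linear map $a\,T(\sigma\,g\eta_X)$ is a lift of $g$ through $p$, as one checks using $T$-linearity of $p$ and the adjunction $\Ll\dashv\U$; and a retract of a module with this lifting property inherits it. Applying this to the $\U$-split epimorphism $T_0\colon\Ll(\1)\to(\1,T_0)$ and to $\id_{(\1,T_0)}$ yields a $T$-linear section $\Lambda$ of $T_0$, which by the reformulation above is exactly an integral satisfying the two conditions.

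For the ``if'' direction — the substantial one — I would use the integral to produce, for every $T$-module $(M,r)$, a natural $T$-linear section $j_M$ of the structure morphism $r\colon\Ll(M)=(T(M),\mu_M)\to(M,r)$; this exhibits every module as a retract of a free one, hence $T$ as semisimple. The construction is the Hopf-monad avatar of the classical Maschke averaging $m\mapsto\Lambda_{(1)}\otimes S(\Lambda_{(2)})\,m$: one combines $\Lambda$, the comonoidal comultiplication $T_2(\1,\1)\colon T(\1)\to T(\1)\otimes T(\1)$ (which plays the role of $\Lambda\mapsto\Lambda_{(1)}\otimes\Lambda_{(2)}$), the antipode of the Hopf monad applied on the second leg, and the action $r$. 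That $r\,j_M=\id_M$ should follow from the normalization $T_0\Lambda=\id_\1$ together with the defining antipode axiom, while the $T$-linearity $j_M\,r=\mu_M\,T(j_M)$ should follow from the integral condition $\mu_\1 T(\Lambda)=\Lambda T_0$ together with the comonoidality of $\mu$ and the compatibility of the antipode with $T_2$. I expect this verification to be the main obstacle: writing the averaging operator correctly in the coherence calculus for Hopf monads and checking these two identities is where all the work lies, and it is precisely here that the antipodes (that is, the Hopf, rather than merely bi-, monad structure) are indispensable — for a general bimonad such a section need not exist.
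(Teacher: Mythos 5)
The paper itself gives no proof of this statement --- it is quoted from \cite[Theorem 6.5]{bv} --- so your attempt must be judged against the standard argument. Two things you do are exactly right: you silently repair the typo in the statement as printed (the expression $\mu_\1\Lambda$ does not typecheck; the intended condition is $\mu_\1 T(\Lambda)=\Lambda T_0$), and you observe that the two conditions say precisely that $\Lambda$ is a section, in $\C^T$, of $T_0\colon \Ll(\1)=(T(\1),\mu_\1)\to(\1,T_0)$. Your proof of the ``only if'' direction --- free modules lift along $\U$-split epimorphisms, retracts inherit the lifting property, apply this to the $\U$-split epimorphism $T_0$ and to $\id_{(\1,T_0)}$ --- is complete, correct, and indeed uses only the bimonad structure.

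The ``if'' direction, however, contains a genuine gap, which you yourself flag: the averaging morphism $j_M$ is never constructed, and the transcription you propose cannot be carried out as stated. In the Hopf-monad calculus the antipodes are natural transformations $s^l_X\colon T(\ldual{T(X)})\to\ldual{X}$ and $s^r_X\colon T(\rdual{T(X)})\to\rdual{X}$; there is no morphism ``$S$'' from $T(\1)$ to itself, so ``the antipode applied to the second leg of $T_2(\1,\1)\Lambda$'' is not a composable expression, and no amount of routine verification will fix a formula that cannot be written. The missing idea is to average \emph{elements} rather than module maps, and to use the rigidity of $\C^T$ (which is the very definition of Hopf monad adopted in this paper) to reduce module maps to elements; the antipode then enters only through rigidity, never through an explicit formula. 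Concretely: (1) for any $T$-module $(X,x)$, the map $\Hom_\C(\1,X)\to\Hom_\C(\1,X)$, $\phi\mapsto x\,T(\phi)\,\Lambda$, takes values in $\Hom_{\C^T}((\1,T_0),(X,x))$, restricts to the identity on that subspace, and is natural with respect to $T$-linear maps: indeed $x\,T\bigl(x\,T(\phi)\Lambda\bigr)=x\,\mu_X\,T^2(\phi)\,T(\Lambda)=x\,T(\phi)\,\mu_\1\,T(\Lambda)=x\,T(\phi)\,\Lambda\,T_0$ by the module axiom, naturality of $\mu$ and the first condition on $\Lambda$, while if $\phi\,T_0=x\,T(\phi)$ then $x\,T(\phi)\Lambda=\phi\,T_0\Lambda=\phi$ by the second; no antipode appears here. (2) Since $\C^T$ is rigid and $\U$ is strict monoidal and preserves duals, for $T$-modules $A,B$ one has $\Hom_{\C^T}(A,B)\cong\Hom_{\C^T}\bigl((\1,T_0),B\otimes\ldual{A}\bigr)$ compatibly with $\Hom_\C(\U A,\U B)\cong\Hom_\C\bigl(\1,\U(B\otimes\ldual{A})\bigr)$; transporting the averaging of (1) through these identifications yields a retraction of the inclusion $\Hom_{\C^T}(A,B)\subset\Hom_\C(\U A,\U B)$, natural with respect to $T$-linear maps in $B$. (3) Apply this with $A=(M,r)$, $B=\Ll(M)$ to $\eta_M\in\Hom_\C(M,T(M))$: its average $j_M$ lies in $\Hom_{\C^T}((M,r),\Ll(M))$ by construction, and naturality with respect to the $T$-linear map $r\colon\Ll(M)\to(M,r)$ shows that $r\,j_M$ equals the average of $r\,\eta_M=\id_M$, which is $\id_M$ itself since $\id_M$ is already $T$-linear. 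Thus every module is a retract of a free module and $T$ is semisimple. In effect this is the separability-of-the-forgetful-functor argument; without step (2) --- which is where ``Hopf'' is genuinely used --- your sketch cannot be completed.
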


\begin{corollary}\label{cormasch}
Let $T$ be a Hopf monad on a rigid category $\C$ and let $\C'
\subset \C$ be a full rigid subcategory of $\C$ such that $T(\C')
\subset \C'$. Then $T$ is semisimple if and only if its
restriction $T_{\mid \C'}$  to $\C'$ is semisimple.
\end{corollary}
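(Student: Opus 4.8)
The plan is to reduce the statement to Maschke's semisimplicity criterion for Hopf monads, Theorem~\ref{masch}, applied to both $T$ and its restriction $T_{\mid \C'}$. The key point is that this criterion involves only data supported at the unit object, which lies in the subcategory $\C'$.

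First I would observe that $\1 \in \C'$, since $\C'$ is a rigid monoidal subcategory, and hence $T(\1) \in \C'$ because $T(\C') \subset \C'$ by hypothesis; likewise $T^2(\1) \in \C'$. Moreover, the restricted Hopf monad $T_{\mid \C'}$ has its multiplication, unit and comonoidal structure obtained by restriction, so that the counit-type morphism of $T_{\mid \C'}$ is literally $T_0 : T(\1) \to \1$, and the component at $\1$ of the multiplication of $T_{\mid \C'}$ is literally $\mu_\1 : T^2(\1) \to T(\1)$. All the objects $\1$, $T(\1)$, $T^2(\1)$ appearing here belong to $\C'$.

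Next, by Theorem~\ref{masch}, the monad $T$ is semisimple if and only if there exists a morphism $\Lambda : \1 \to T(\1)$ in $\C$ satisfying $\mu_\1 \Lambda = \Lambda T_0$ and $T_0 \Lambda = \id_\1$; likewise, $T_{\mid \C'}$ is semisimple if and only if such a $\Lambda$ exists in $\C'$. Since $\C'$ is a full subcategory and both $\1$ and $T(\1)$ lie in $\C'$, we have $\Hom_{\C'}(\1, T(\1)) = \Hom_{\C}(\1, T(\1))$, and the two defining equations are the very same morphism identities whether read in $\C'$ or in $\C$ (their sources and targets all lying in $\C'$). Hence a morphism $\Lambda$ with the required properties exists for $T$ if and only if one exists for $T_{\mid \C'}$, which yields the claimed equivalence.

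The only thing to be careful about — and the sole potential obstacle — is the assertion that the Maschke integral $\Lambda$, together with its two defining identities, lives entirely inside $\C'$. This rests on $T(\C') \subset \C'$ and $\1 \in \C'$, together with the fact that the structural morphisms of $T_{\mid \C'}$ are exactly the restrictions of those of $T$. Since these are precisely the facts that make $T_{\mid \C'}$ a Hopf monad on $\C'$ in the first place, no genuine computation is needed: the corollary follows immediately from the observation that semisimplicity of a Hopf monad is detected by an integral \emph{at the unit object}.
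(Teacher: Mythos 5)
Your proposal is correct and is essentially the paper's own proof: the paper likewise disposes of the corollary by applying Theorem~\ref{masch} to both $T$ and $T_{\mid \C'}$, the latter being a Hopf monad on $\C'$. You simply make explicit the details the paper leaves implicit, namely that $\1$, $T(\1)$, $T^2(\1)$ lie in $\C'$, that the structural morphisms of $T_{\mid \C'}$ are restrictions of those of $T$, and that fullness of $\C'$ ensures the Maschke integral $\Lambda$ and its defining identities are the same whether read in $\C'$ or in $\C$.
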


\begin{proof}
Apply the theorem to $T$ and $T_{\mid \C'}$, which is a Hopf monad
on $\C'$.
\end{proof}

\begin{proposition}\label{propss}
Let $\C$ be a semisimple tensor category over a field $\kk$, and
let $T$ be a \kt linear Hopf monad on $\C$. Then the tensor
category $\C^T$ is semisimple if and only if the monad $T$ is
semisimple. In particular, if $\kk$ is algebraically closed and
$\C$ is a fusion category, then $\C^T$ is a fusion category if and
only if $T$ is semisimple.
\end{proposition}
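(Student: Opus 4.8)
The plan is to prove both implications directly from the \emph{definition} of a semisimple monad---that every $T$-module is a $T$-linear retract of a free module $(T(X),\mu_X)=\Ll(X)$---rather than through the Maschke-type criterion of Theorem~\ref{masch}. The standing hypothesis that $\C$ is semisimple will enter only in the implication ``$T$ semisimple $\Rightarrow\C^T$ semisimple''. Throughout I would use that the forgetful functor $\U:\C^T\to\C$ is faithful and exact (as already established), so that it both preserves and reflects epimorphisms; hence a $T$-linear morphism is epi in $\C^T$ if and only if its underlying morphism is epi in $\C$. I would also record as a preliminary observation the formal fact that if $(P,r_P)$ is a retract of \emph{some} free module then it is already a retract of its own free cover $(T(P),\mu_P)$ via the canonical epimorphism $r_P$.

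For ``$T$ semisimple $\Rightarrow\C^T$ semisimple'' I would show that every short exact sequence $0\to(N,r_N)\to(M,r_M)\xrightarrow{p}(P,r_P)\to0$ in $\C^T$ splits. Applying $\U$ and using that $\C$ is semisimple, the epimorphism $p$ admits a section $\sigma:P\to M$ in $\C$; the point is to \emph{average} $\sigma$ into a $T$-linear section. First I would lift it through the free module: by the adjunction $\Ll\vdash\U$ the morphism $\tilde\sigma=r_M T(\sigma):(T(P),\mu_P)\to(M,r_M)$ is $T$-linear, and a short computation using the module axiom $r_M\mu_M=r_M T(r_M)$ together with the $T$-linearity of $p$ yields $p\tilde\sigma=r_P$. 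Semisimplicity of $T$ then furnishes a $T$-linear retraction $\rho:(P,r_P)\to(T(P),\mu_P)$ with $r_P\rho=\id_P$, whence $s=\tilde\sigma\rho$ is $T$-linear and $ps=r_P\rho=\id_P$ splits $p$. Thus $\C^T$ is semisimple.

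The reverse implication is immediate: for any $(M,r)$, the counit $r:(T(M),\mu_M)\to(M,r)$ is $T$-linear and is a split epimorphism in $\C$ (with section $\eta_M$), hence epi in $\C^T$. If $\C^T$ is semisimple this splits, exhibiting $(M,r)$ as a retract of the free module $(T(M),\mu_M)$; so $T$ is semisimple. Notice that this direction does not use semisimplicity of $\C$.

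For the ``in particular'' statement, recall that on a fusion category every $\kk$-linear additive endofunctor is exact, so $T$ is automatically right exact and $\C^T$ is a tensor category by the Proposition on modules over a $\kk$-linear right exact Hopf monad; it is finite since $\C$ is. Over an algebraically closed field split semisimplicity and semisimplicity coincide, so $\C^T$ is a fusion category if and only if it is semisimple, which by the main equivalence holds exactly when $T$ is semisimple; the set of simple $T$-modules is finite because each is a summand of some $\Ll(X)$ with $X$ simple in $\C$, and there are finitely many such $X$ while each $\Ll(X)$ has finite length in $\C^T$. I expect the only genuine subtlety to be the averaging step of the forward direction---manufacturing a $T$-linear section from a merely $\C$-linear one---which is precisely where both hypotheses, semisimplicity of $\C$ and semisimplicity of $T$, are used.
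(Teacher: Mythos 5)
Your proof is correct and follows essentially the same route as the paper's: the reverse direction is identical (splitting the $T$-linear epimorphism $r:\Ll(M)\to (M,r)$, which is split epi in $\C$ via $\eta_M$), and your forward direction rests on the same adjunction trick---transporting a $\C$-section $\sigma$ to the $T$-linear morphism $r_M T(\sigma)$ out of a free module---which the paper packages as the statement that the free modules $\Ll(X)$ are projective in $\C^T$. The only difference is organizational: the paper concludes by ``every object is a direct factor of a projective, hence projective, hence $\C^T$ is semisimple,'' whereas you split short exact sequences directly, at the cost of the small extra (correct) observation that a retract of \emph{some} free module is a retract of its own free cover $(T(P),\mu_P)$ via $r_P$.
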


\begin{proof}
Observe that, since $\C$ is semisimple, $T$ is exact. Let $\U :
\C^T \to \C$ be the forgetful functor, and $\Ll: \C \to \C^T$ be
the free module functor, defined by $\Ll(X)=(T(X),\mu_X)$, which
is left adjoint to $\U$. If $X$ is an object of $\C$, then
$\Ll(X)$ is a projective object of $\C^T$. Indeed, let $p :(Y,r)
\to \Ll(X)$ be an epimorphism in $\C^T$; then $p$ is an
epimorphism in $\C$, so it has a section $i : TX \to Y$ in $\C$
because $\C$ is semisimple. Then $r T(i)T(\eta_X)$ is a $T$-linear
section of $p$.

In particular, if $T$ is semisimple, any object of $\C^T$ is
projective, being a direct factor of a projective object, so
$\C^T$ is semisimple.

Conversely, assume $\C^T$ is semisimple, and let $(X,r)$ be a
$T$-module.  Then $r: T(X) \to X$ is an epimorphism because $r
\eta_X=\id_X$. It is also a morphism of $T$-modules from to
$(X,r)$, which is an epimorphism in $\C^T$ because the forgetful
functor $\U : \C^T \to \C$ is faithful exact. By semisimplicity of
$\C^T$, $r$ has a $T$-linear section so $(X,r)$ is a direct factor
of $\Ll(X)$. Hence $T$ is semisimple.
\end{proof}

\begin{corollary}\label{cor-ex-from-T}
Let $\C$ be a fusion category over an algebraically closed field
$\kk$, and let $T$ be a \kt linear faithful normal semisimple Hopf
monad on $\C$, with induced Hopf algebra $H$.  Then we have an
exact sequence of fusion categories
$$\CoRep{H} \to
(\C'')^{T} \overset{\U}\to \C''.$$
\end{corollary}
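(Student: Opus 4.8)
The plan is to assemble the exact sequence by verifying the three defining conditions of Definition~\ref{def-sec} for the forgetful functor $\U : (\C'')^{T} \to \C''$, using the hypotheses on $T$ one at a time. Since $T$ is \kt linear right exact and faithful, $\U$ is a tensor functor admitting a left adjoint $\Ll$ (the free module functor), so $T=\U\Ll$ is the Hopf monad of $\U$. First I would check dominance: by Proposition~\ref{dom-crit}, $\U$ is dominant precisely because $T$ is faithful (equivalently, because the unit $\eta$ of $T$ is a monomorphism). Next, normality: since $T$ is normal, $T(\1)$ is trivial, and by the Proposition identifying normal functors with normal Hopf monads (the one just before Example~\ref{ex-center}), $\U$ is a normal tensor functor. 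Since $\kk$ is algebraically closed and $\C''$ is a fusion category, semisimplicity of $T$ gives, by Proposition~\ref{propss}, that $(\C'')^{T}$ is again a fusion category, so all categories in the sequence are fusion categories as required.

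The third condition requires identifying $\KER_\U$ with $\CoRep{H}$, and this is where the induced Hopf algebra $H$ enters. By the Lemma defining the induced Hopf algebra of a normal Hopf monad (and Remark~\ref{remtriv}), the functor $\omega_\U : \KER_\U \to \vect_\kk$, $X\mapsto \Hom(\1,\U(X))$, is a fiber functor whose reconstructed Hopf algebra is exactly $H=L(\omega_\U)$, finite-dimensional because $T$ is normal. Tannaka reconstruction then furnishes a canonical tensor equivalence $\KER_\U \iso \CoRep{H}$. Composing with the inclusion $i : \KER_\U \into (\C'')^{T}$ gives a full embedding $\CoRep{H} \to (\C'')^T$ whose essential image is precisely $\KER_\U$, which is conditions (2) and (3) of Definition~\ref{def-sec}.

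Putting these together, the diagram
$$\CoRep{H} \overset{i q}{\toto} (\C'')^{T} \overset{\U}{\toto} \C''$$
(where $q$ is a quasi-inverse of the canonical equivalence $\KER_\U \iso \CoRep{H}$) satisfies all three axioms and is therefore an exact sequence of fusion categories. The main obstacle I expect is not any single computation but rather the bookkeeping of assembling the correct hypotheses for each condition: matching faithfulness to dominance via Proposition~\ref{dom-crit}, normality of the monad to normality of the functor, and finiteness of $H$ (needed so that $\CoRep{H}$ is a genuine fusion category and the sequence lives among fusion categories) to the normality of $T$. Semisimplicity of $H$ itself follows since $T$ semisimple forces $\KER_\U \simeq \CoRep{H}$ to be semisimple, hence $\CoRep{H}$ is a fusion category; once these pieces are lined up, the result is essentially immediate and in fact is exactly the input used in the forward direction of the proof of Theorem~\ref{t-mmod}.
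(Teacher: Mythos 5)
Your proof is correct and assembles exactly the ingredients the paper intends: Proposition~\ref{dom-crit} for dominance (faithfulness of $T$), the proposition identifying normal functors with normal Hopf monads, the lemma giving the canonical equivalence $\KER_\U \simeq \CoRep{H}$ with $H$ finite-dimensional, and Proposition~\ref{propss} for semisimplicity of $(\C'')^T$. The paper leaves this corollary without an explicit proof precisely because it follows by this bookkeeping, so your argument matches the intended one.
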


\begin{lemma} Let $\C$ be a tensor category over a field $\kk$, and let  $T$ be a \kt linear right exact normal dominant Hopf monad on $\C$ with induced Hopf algebra $H$.
Then $T$ is semisimple if and only if $H$ is cosemisimple.
\end{lemma}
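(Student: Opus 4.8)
The plan is to reduce the statement to the trivial subcategory $\langle\1\rangle$, where Hopf monads are simply finite-dimensional Hopf algebras by Lemma~\ref{HM-triv}, and there the semisimplicity of $T$ should correspond to the cosemisimplicity of its induced Hopf algebra $H$ by the classical Maschke theorem. First I would observe that since $T$ is normal, $T(\langle\1\rangle)\subset\langle\1\rangle$, so $T$ restricts to a \kt linear right exact Hopf monad $T_{\mid\langle\1\rangle}$ on the trivial tensor category $\langle\1\rangle$; by Corollary~\ref{cormasch}, $T$ is semisimple if and only if $T_{\mid\langle\1\rangle}$ is semisimple. By Remark~\ref{remtriv}, the induced Hopf algebra of $T$ coincides with that of $T_{\mid\langle\1\rangle}$. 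Thus it suffices to treat the case $\C=\langle\1\rangle\simeq\vect_\kk$.

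Next, I would invoke Lemma~\ref{HM-triv}: on the trivial tensor category, $T_{\mid\langle\1\rangle}$ is isomorphic, as a Hopf monad, to $\widetilde{H}\otimes ?$ for a finite-dimensional Hopf algebra $\widetilde{H}$ with $\widetilde{H}=\Hom_\C(\1,T(\1))$. The induced Hopf algebra $H=L(\omega_\U)$ was identified in the proof of the lemma defining it (the displayed commutative triangle) as $H\simeq\dual{\widetilde{H}}$, the dual Hopf algebra of $\widetilde{H}$. So the claim becomes: the Hopf monad $\widetilde{H}\otimes ?$ on $\vect_\kk$ is semisimple if and only if $\dual{\widetilde{H}}$ is cosemisimple, i.e.\ if and only if $\widetilde{H}$ is semisimple as an algebra.

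To finish, I would apply the Maschke-type criterion of Theorem~\ref{masch} to the Hopf monad $T'=\widetilde{H}\otimes ?$. Under the identification of $\vect_\kk$-endofunctors of this type with $\widetilde{H}$, the multiplication $\mu_\1$ becomes the algebra multiplication of $\widetilde{H}$ and $T'_0$ becomes the counit $\varepsilon$; a morphism $\Lambda:\1\to T'(\1)$ is an element $\Lambda\in\widetilde{H}$, and the two conditions $\mu_\1\Lambda=\Lambda T'_0$ and $T'_0\Lambda=\id_\1$ translate into $x\Lambda=\varepsilon(x)\Lambda$ for all $x$ and $\varepsilon(\Lambda)=1$, that is, $\Lambda$ is a normalized two-sided integral in $\widetilde{H}$. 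Hence $T'$ is semisimple precisely when $\widetilde{H}$ admits a normalized integral, which by the classical Maschke theorem for Hopf algebras is equivalent to $\widetilde{H}$ being semisimple, equivalently to its dual $H\simeq\dual{\widetilde{H}}$ being cosemisimple.

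The main obstacle is bookkeeping rather than mathematical depth: one must carefully verify that under the equivalence of Lemma~\ref{HM-triv} the structural data $(\mu_\1,T_0)$ of the Hopf monad really do correspond to the multiplication and counit of $\widetilde{H}$, and that the normalized-integral condition coming out of Theorem~\ref{masch} matches the existence of a Haar integral witnessing semisimplicity of $\widetilde{H}$. The only subtlety worth flagging is the duality: semisimplicity of $\widetilde{H}$ (an integral in $\widetilde{H}$) corresponds to cosemisimplicity of $H\simeq\dual{\widetilde{H}}$ (a cointegral, i.e.\ an integral in $\dual{H}\simeq\widetilde{H}$), so one should make sure the passage from $T(\1)$ to the induced Hopf algebra $H$ introduces exactly the one dualization that turns the semisimplicity statement for $T$ into a \emph{co}semisimplicity statement for $H$.
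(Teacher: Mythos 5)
Your proof is correct, and it diverges from the paper's in an interesting way after the common first step. Both arguments begin identically: normality gives $T(\langle\1\rangle)\subset\langle\1\rangle$, Corollary~\ref{cormasch} reduces semisimplicity of $T$ to that of the restriction $T_0=T_{\mid\langle\1\rangle}$, and Remark~\ref{remtriv} identifies the induced Hopf algebras. From there the paper stays entirely categorical: it observes that ${\langle\1\rangle}^{T_0}=\KER_\U\simeq\CoRep{H}$ and invokes Proposition~\ref{propss} (a Hopf monad on a semisimple tensor category is semisimple if and only if its module category is semisimple), so that $T_0$ semisimple $\iff$ $\CoRep{H}$ semisimple $\iff$ $H$ cosemisimple, with no need to unwind what $T_0$ looks like concretely. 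You instead make $T_0$ explicit via Lemma~\ref{HM-triv} as $\widetilde{H}\otimes ?$ with $\widetilde{H}=\Hom_\C(\1,T(\1))$ and $H\simeq\dual{\widetilde{H}}$, then apply the Maschke criterion of Theorem~\ref{masch} directly, translating its conditions into the existence of a normalized integral in $\widetilde{H}$, and finish with the classical Larson--Sweedler/Maschke theorem for Hopf algebras plus the duality semisimple/cosemisimple. Your route costs more bookkeeping (the identification of $(\mu_\1,T_0)$ with multiplication and counit, and the dualization you rightly flag) and leans on classical Hopf algebra theory outside the paper, but it buys a concrete witness: semisimplicity of $T$ is exhibited by an integral, rather than deduced abstractly from semisimplicity of a module category. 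One cosmetic point: the condition you extract, $x\Lambda=\varepsilon(x)\Lambda$ with $\varepsilon(\Lambda)=1$, is a normalized \emph{left} integral rather than a two-sided one as you wrote; this is harmless, since a normalized one-sided integral already forces semisimplicity (and semisimple Hopf algebras are unimodular, so it is two-sided after the fact), but the derivation itself only yields one side.
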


\begin{proof}
Let $T_0$ be the restriction of $T$ to $\langle\1\rangle$. Then $T$ is semisimple if and only if $T_0$ is semisimple by
Corollary~\ref{cormasch}. On the other hand,
${\langle\1\rangle}^{T_0}=\KER_\U \simeq \CoRep{H}$, where $\U$ is the forgetful functor, so $T_0$ is semisimple
 $\iff$ ${\langle\1\rangle}^{T_0}$ is semisimple  (by Proposition~\ref{propss})
$\iff$ $H$ is cosemisimple.
\end{proof}

\begin{corollary} An extension of fusion categories over an algebraically closed field is a fusion category.
\end{corollary}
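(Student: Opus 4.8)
The plan is to deduce this from the structural results already assembled in the excerpt, chiefly Theorem~\ref{t-mmod} together with the semisimplicity machinery. Let $\C' \overset{f}\toto \C \overset{\ff}\toto \C''$ be an extension of fusion categories over an algebraically closed field $\kk$. Since fusion categories are finite, $\C'$ is finite and $\ff$ admits a left adjoint by Proposition~\ref{prop-crit-adj}; let $T=\ff G$ be its Hopf monad on $\C''$. By the proof of Theorem~\ref{t-mmod}, $T$ is \kt linear, right exact, faithful (as $\ff$ is dominant) and normal (as $\ff$ is normal), and there is a tensor equivalence $\C \simeq (\C'')^T$. The goal is to show that $(\C'')^T$ is again a fusion category, which by Proposition~\ref{propss} amounts to showing that $T$ is a semisimple monad, since $\C''$ is a fusion category over the algebraically closed field $\kk$.

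To establish semisimplicity of $T$, I would invoke the final unnamed Lemma of the excerpt: a \kt linear right exact normal \emph{dominant} Hopf monad $T$ on $\C$ with induced Hopf algebra $H$ is semisimple if and only if $H$ is cosemisimple. Here $T$ is dominant in the required sense precisely because $\ff$ is dominant (Proposition~\ref{dom-crit}), so the Lemma applies. Thus the problem reduces to verifying that the induced Hopf algebra $H$ is cosemisimple. Now $H=L(\omega_\ff)$ satisfies $\C' \simeq \KER_\ff \simeq \CoRep{H}$, and $\C'$, being a fusion subcategory (the kernel) of the semisimple category $\C$, is itself split semisimple. Hence $\CoRep{H}$ is semisimple, which is exactly the statement that $H$ is cosemisimple.

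Assembling the pieces: $H$ cosemisimple forces $T$ semisimple by the Lemma; $T$ semisimple forces $(\C'')^T$ semisimple by Proposition~\ref{propss}; and $(\C'')^T$ is finite and rigid as the category of modules over a \kt linear right exact Hopf monad on a finite tensor category, with finite-dimensional Hom spaces inherited through the faithful exact forgetful functor $\U$. Therefore $\C \simeq (\C'')^T$ is a split semisimple finite tensor category, i.e.\ a fusion category.

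I do not expect a serious obstacle here, since every ingredient is already in place; the only point requiring a little care is confirming that the hypotheses of the semisimplicity Lemma are genuinely met—in particular that the relevant Hopf monad $T$ is simultaneously normal, faithful (hence dominant), \kt linear and right exact, all of which follow from the defining properties of an exact sequence via Theorem~\ref{t-mmod} and Proposition~\ref{dom-crit}. One might alternatively argue more directly that $\C$ is semisimple because $\ff$ is a faithful exact tensor functor to a semisimple category with semisimple kernel, but routing through the monad-theoretic Lemma keeps the argument within the framework developed in this section and makes the cosemisimplicity of the induced Hopf algebra $H$ the single decisive input.
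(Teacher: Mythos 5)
Your proof is correct and follows essentially the same route as the paper's: invoke Proposition~\ref{prop-crit-adj} to get the left adjoint and monadicity, observe that the Hopf monad $T$ is normal and faithful/dominant with cosemisimple induced Hopf algebra $H$ (since $\C'\simeq\CoRep{H}$ is split semisimple), apply the unnamed semisimplicity lemma and Proposition~\ref{propss}, and finish with finiteness and algebraic closedness of $\kk$. The paper's proof is just a more compressed version of exactly this argument.
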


\begin{proof}
Let $\C' \overset{f}\toto \C \overset{F}\toto \C''$ be an exact sequence of fusion categories. Then $\C'$ is finite, so by Proposition~\ref{prop-crit-adj}, $F$ has a left adjoint and is
therefore monadic, and $\C$ is finite. Its monad $T$ is normal, dominant and its induced Hopf algebra $H$ is cosemisimple because $\C' \simeq \CoRep{H}$, so $T$ is semisimple by the previous lemma,
and $\C$ is semisimple. Since $\kk$ is algebraically closed, $\C$ is a fusion category.
\end{proof}

\subsection{Equivariantization}\label{equivariantization}
Let $\C$ be a tensor category over a field $\kk$. Denote by
$\underline \End_{\otimes}\C$ the monoidal category whose objects
are tensor endofunctors of $\C$, the morphisms being monoidal
natural transformations, the monoidal product being the
composition $\circ$ of tensor functors, and the unit object, the
identity functor $\id_\C$.

If $G$ is  a  group, denote by  $\underline G$ the strict monoidal
category whose objects are the elements of $G$ and morphisms are
identities, the monoidal product being the multiplication of $G$.

\begin{definition}An \emph{action of a group $G$ on a tensor category $\C$ (by tensor autoequivalences)} is a
strong monoidal functor
\begin{equation}\label{action}\rho: \underline G \to \underline \End_{\otimes}
\C.\end{equation} In other words, it consists in the following
data:
\begin{enumerate}
\item For each $g \in G$, a tensor endofunctor $\rho^g : \C \to \C$;
\item For each pair $g, h \in G$, a monoidal isomorphism $\rho^{g,h}_2 : \rho^g \rho^h \iso \rho^{gh}$;
\item A monoidal isomorphism  $\rho_0 : \id_\C \iso \rho^1$;
\end{enumerate}
such that for any $g, h, k$ in $G$ the following diagrams commute:
$$\xymatrix{
\rho^g \rho^h \rho^k \ar[r]^{\rho^{g,h}_2 \rho^k} \ar[d]_{\rho^g \rho^{h,k}_2}& \rho^{gh} \ar[d]^{\rho^{gh,k}_2} \rho^k & &\rho^g \ar[r]^{\rho^g \rho_0} \ar[d]_{\rho_0 \rho^g} \ar[rd]^= & \rho^g \rho^1 \ar[d]^{\rho^{g,1}_2}\\
\rho^g \rho^{hk} \ar[r]_{\rho^{g,hk}_2} & \rho^{ghk} & &\rho^1
\rho^g \ar[r]_{\rho^{1,g}_2} & \rho^g .}$$

Observe that if $G$ is a monoidal action of $G$ on $\C$, the
$\rho^g$'s are in fact tensor autoequivalences of $\C$,
$\rho^{g^{-1}}$ being  quasi-inverse to $\rho^g$ for all $g \in
G$.
%
%
\end{definition}


Let $\rho: \underline G \to \underline \Aut_{\otimes} \C$ be an
action of a group $G$ on a tensor category $\C$. A $G$-equivariant
object in $\C$ is a pair $(X, u)$, where $X$ is an object of $\C$,
and $u$ is a family $(u^g)_{g \in G}$, where for each $g \in G$,
$u^g:\rho^gX \to X$ is a morphism  satisfying:
\begin{equation}\label{deltau} u^g \rho^g(u^h) = u^{gh} {\rho^{g,
h}_{2_X}}\quad \text{for all $g,h \in G$}\quad \text{and}\quad
u_1{\rho_0}_X=\id_X.\end{equation} Note that the morphisms $u^g$
are then actually isomorphisms.

A $G$-equivariant morphism $f: (X, u) \to (Y, v)$ between
$G$-equivariant objects is a morphism $f: X \to Y$ in $\C$ such
that $f u_g = v_g f$ for all $g \in G$.

The $G$-\emph{equivariantization} of $\C$, denoted $\mathcal C^G$,
is, by definition, the category of $G$-equivariant objects and
$G$-equivariant morphisms \cite{agaitsgory, fw, nik, tambara}. It
is a tensor category, with monoidal product defined as follows: if
$(X, u)$ and $(Y, v)$ are $G$-equivariant objects, then
$$(X,u) \otimes (Y,v)=(X\otimes Y, w),\quad\text{where}\quad w=\bigl(w^g=(u^g \otimes v^g){{\rho^g_2}^{\,-1}_{X,Y}}\bigr)_{g\in G},$$
the unit object being $(\1,({\rho^g_0}^{\,-1})_{g \in G})$.

Moreover if $\C$ is a fusion category, $G$ is finite, $\kk$ is
algebraically closed and $\car(\kk)$ does not divide the order of
$G$, then $\C^G$ is a fusion category. In that case, it is shown
in \cite{nik} that $\C^G$ is dual to a crossed product fusion
category $\C \rtimes G$ with respect to the indecomposable module
category $\C$.

\begin{example}
Let $G$ be a group and let $\rho$ be the trivial action of $G$ on
$\vect_\kk$. Then $(\vect_\kk)^G=\rep G$.
\end{example}

\begin{definition}
A tensor functor $\ff : \C \to \D$ between tensor categories is an \emph{equivariantization}
if there exists a finite group $G$ acting on $\D$ by tensor equivalences, and a tensor equivalence
$\C \simeq \D^G$ over $\D$.
\end{definition}

\subsection{Characterization of equivariantizations in terms of  Hopf monads}\label{monad-groupaction}
In this subsection we show that group actions on tensor categories
and equivariantization can be interpreted in the language of Hopf
monads.

\begin{definition}\label{def-cocom} A normal \kt linear right exact Hopf monad $T$ on a tensor category $\C$ is \emph{cocommutative} if
for any morphism $g : T(\1) \to \1$ and any object $X$ of $\C$,
the following square is commutative:
$$\xymatrix@C5em{
T(X) \ar[r]^{T_2(X,\1)\quad} \ar[d]_{T_2(\1,X)} & T(X)\otimes T(\1)\ar[d]^{\id_{T(X)} \otimes \,g}\\
T(\1) \otimes T(X)\ar[r]_{g \,\otimes\,\id_{T(X)}} & T(X). }$$
\end{definition}

\begin{theorem}\label{monact} Let $\C$ be a  tensor
category over a field $\kk$, and let $\rho$ be an action of a
finite group $G$ on $\C$ by tensor autoequivalences. Then:
\begin{enumerate}
\item The \kt linear exact endofunctor
$$\TT^\rho= \bigoplus_{g \in G} \rho^g$$
admits a canonical structure of Hopf monad on $\C$;
\item There is a canonical isomorphism of categories:
$$\C^G \simeq \C^{\TT^\rho}$$ over $\C$, where $\C^G$ denotes the equivariantization of $\C$ under $G$;
\item
The Hopf monad $\TT^\rho$ is faithful, normal, and cocommutative.
\item The induced Hopf algebra of $T$ is $\kk^G$. In particular $\TT^\rho$ is semisimple if and only if  $\car(\kk)$ does not divide the order of $G$.
\end{enumerate}
\end{theorem}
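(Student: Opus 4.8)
The plan is to construct the Hopf monad structure on $\TT^\rho$ explicitly, then verify the four assertions in turn, treating the comonoidal structure as the main construction and the identification of the induced Hopf algebra as the main conceptual point. First I would define the multiplication and unit of $\TT^\rho = \bigoplus_{g \in G} \rho^g$ as a monad: the unit $\eta : \id_\C \to \TT^\rho$ is the inclusion of the summand $\rho^1 \simeq \id_\C$ (via $\rho_0$), and the multiplication $\mu : \TT^\rho \TT^\rho = \bigoplus_{g,h} \rho^g \rho^h \to \bigoplus_k \rho^k$ is assembled from the isomorphisms $\rho^{g,h}_2 : \rho^g\rho^h \iso \rho^{gh}$, sending the $(g,h)$-summand to the $gh$-summand. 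The associativity and unit axioms for the monad then follow \emph{exactly} from the two coherence diagrams in the definition of a group action (the pentagon-type diagram for $\rho^{g,h}_2$ gives associativity, and the triangle diagram gives the unit axioms).

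Next I would equip $\TT^\rho$ with its comonoidal structure. The key observation is that each $\rho^g$ is a \emph{tensor} functor, hence comes with isomorphisms $\rho^g_2(X,Y) : \rho^g(X\otimes Y) \iso \rho^g(X) \otimes \rho^g(Y)$ and $\rho^g_0 : \rho^g(\1) \iso \1$. To define $T_2 : \TT^\rho(X\otimes Y) \to \TT^\rho(X) \otimes \TT^\rho(Y)$, I would send the $g$-summand $\rho^g(X\otimes Y)$ diagonally via $\rho^g_2(X,Y)$ into the $(g,g)$-component $\rho^g(X) \otimes \rho^g(Y)$ of $\bigoplus_{g,h} \rho^g(X)\otimes\rho^h(Y) = \TT^\rho(X)\otimes\TT^\rho(Y)$, and define $T_0 : \TT^\rho(\1) \to \1$ using $\rho^1_0$ on the unit summand and zero (or rather the canonical $\rho^g_0$ composed with projection) appropriately; the cleanest choice is $T_0 = \rho^1_0 \circ (\text{projection to } g=1)$. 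One then checks the comonoidal associativity and counit axioms from the fact that each $\rho^g_2, \rho^g_0$ satisfies them, and the compatibility of $\mu, \eta$ with $T_2, T_0$ from the monoidality of the natural isomorphisms $\rho^{g,h}_2$. Since $\C$ is rigid and $\C^{\TT^\rho}$ will be shown to be rigid (each $\rho^g$ preserves duals), $\TT^\rho$ is a Hopf monad, which proves (1).

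For assertion (2), I would exhibit a functor $\C^G \to \C^{\TT^\rho}$ and its inverse by unwinding definitions: a $\TT^\rho$-module is an object $X$ with a map $r : \bigoplus_g \rho^g(X) \to X$, which is precisely a family $(u^g : \rho^g(X) \to X)_{g\in G}$, and the module axioms \eqref{onr} translate verbatim into the equivariance conditions \eqref{deltau}; moreover the monoidal structures match because the tensor product formula for $\C^G$ via $w^g = (u^g\otimes v^g){\rho^g_2}^{-1}$ is exactly the monoidal product induced on $\C^{\TT^\rho}$ by $T_2$. For (3), normality is immediate since $\TT^\rho(\1) = \bigoplus_g \rho^g(\1) \simeq \bigoplus_g \1 = \1^{|G|}$ is trivial; faithfulness follows from Lemma~\ref{lem-mon-faith} because $\eta$ is a split monomorphism (it is a summand inclusion); and cocommutativity I would check by evaluating the square in Definition~\ref{def-cocom} summand-by-summand, where a morphism $g : \TT^\rho(\1) \to \1$ amounts to scalars on each $\rho^g(\1)\simeq\1$, and both composites through $T_2(X,\1)$ and $T_2(\1,X)$ reduce to the same scalar-weighted identity by the diagonal nature of $T_2$.

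The main obstacle, and the conceptually important step, is assertion (4): identifying the induced Hopf algebra as $\kk^G$. By Remark~\ref{remtriv}, the induced Hopf algebra depends only on the restriction of $\TT^\rho$ to $\langle\1\rangle \simeq \vect_\kk$, where $\TT^\rho$ restricts to $\bigoplus_{g\in G}\id$, i.e.\ to $\kk^{|G|}\otimes ?$ as an endofunctor. By Lemma~\ref{HM-triv} this corresponds to a $|G|$-dimensional Hopf algebra $\Hom_\C(\1, \TT^\rho(\1)) \cong \kk^G$ as a vector space, and I must verify that the Hopf algebra structure transported from the monad structure of $\TT^\rho$ is precisely the function algebra $\kk^G$ (dual to the group algebra $\kk G$): the algebra structure comes from $\mu$, which via $\rho^{g,h}_2$ encodes the group multiplication, so the induced \emph{coalgebra} structure on $\Hom(\1,\TT^\rho(\1))$ is that of $\kk G$ and hence the Hopf algebra is $\kk^G$. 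Finally, by the lemma preceding Corollary~\ref{cor-ex-from-T} (or directly by Theorem~\ref{masch}), $\TT^\rho$ is semisimple if and only if $H = \kk^G$ is cosemisimple, i.e.\ if and only if $\kk G$ is semisimple, which by Maschke's theorem holds precisely when $\car(\kk) \nmid |G|$. The delicate point throughout is bookkeeping the identifications $\rho^g(\1)\simeq\1$ consistently so that the transported structure genuinely matches $\kk^G$ rather than some twisted form.
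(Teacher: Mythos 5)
Your strategy is workable and close in spirit to the paper's, but the counit you commit to is wrong, and with it the comonoidal structure you build fails the bimonad axioms. You take $T_0$ to be $\rho^1_0$ composed with the projection onto the $g=1$ summand. Test the counit axiom $(\id_{\TT^\rho(X)} \otimes T_0)\,T_2(X,\1)=\id_{\TT^\rho(X)}$: your diagonal $T_2$ sends the summand $\rho^g(X)$ into $\rho^g(X)\otimes\rho^g(\1)$, and for $g\neq 1$ your $T_0$ annihilates the second factor, so the composite is the projection onto $\rho^1(X)$, not the identity. The axiom $T_0\mu_\1=T_0T(T_0)$ fails as well: the left-hand side is supported on all components $\rho^g\rho^{g^{-1}}(\1)$, the right-hand side only on $\rho^1\rho^1(\1)$. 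Equivalently, $(\1,T_0)$ with your choice is not even a $\TT^\rho$-module, so it cannot serve as the unit object of $\C^{\TT^\rho}$. The correct counit is given on \emph{every} summand by the canonical isomorphism $\rho^g(\1)\iso\1$, that is $T_0=\bigoplus_{g\in G}\rho^g_0$ in your notation; this is also forced by your own Part (2), since the unit object of $\C^G$ is $(\1,({\rho^g_0}^{\,-1})_{g\in G})$ in the paper's notation, which involves all $g$, not just $g=1$. Once $T_0$ is corrected, your verification plan for (1)--(3) goes through; in particular your cocommutativity and faithfulness arguments are fine as stated.

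Two further points of comparison. The paper never checks the bimonad axioms by hand: it proves your Part (2) first (the bijection between monad actions $r$ and equivariant structures $(u^g)_{g\in G}$), and then transports the known tensor structure of $\C^G$ through the isomorphism $\kappa$, so that $\C^{\TT^\rho}$ becomes a rigid monoidal category with strict monoidal forgetful functor; by the very definition of bimonad and Hopf monad this yields Part (1) with no computation, and the formulas for $T_2$, $T_0$ are merely read off afterwards. Your direct route is legitimate but additionally obliges you to produce duals of $\TT^\rho$-modules (your remark that each $\rho^g$ preserves duals is the right ingredient, but you must exhibit the dual module, e.g. $\ldual{M}$ with action built from the inverses of the $u^g$), which the transport argument gets for free. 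Finally, in Part (4) your dualization is inverted: by Lemma~\ref{HM-triv}, the space $\Hom_\C(\1,\TT^\rho(\1))$ with the multiplication induced by $\mu$ is the group algebra $\kk G$ --- consistently with the fact that $\TT^\rho$-modules in $\langle\1\rangle$ are representations of $G$ --- and the induced Hopf algebra in the paper's sense (Tannaka reconstruction from $\KER_\U$) is its dual $\kk^G$; the monad multiplication does not directly induce a coalgebra structure of $\kk G$ on that space. The conclusion $H\simeq\kk^G$ and the Maschke argument for semisimplicity are nevertheless correct.
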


\begin{proof} The endofunctor $\TT^\rho$ is \kt linear exact by construction.
Define natural transformations
$$\mu : (\TT^\rho)^2 =\bigoplus_{g,g'} \rho^g\rho^{g'} \to \TT^\rho=\bigoplus_{h \in G} \rho^h \quad \text{and}\quad \eta: \id_\C \to \TT^\rho=\bigoplus_{g\in G}\rho^g$$ componentwise by the collection of morphisms $\rho^{g,g'}_2 : \rho^g\rho^{g'} \to \rho^{gg'}$ and by the morphism $\rho_0: \id_\C \to \rho^1$ respectively. The axioms of a group action imply that
$(\TT^\rho,\mu,\eta)$ is a monad on $\C$. Given an object $X$ of
$\C$, the canonical bijection $$\prod_{g \in G} \Hom_\C(\rho^gX,X)
\iso \Hom_\C(\bigoplus_{g \in G} \rho^gX,X)$$ restricts to a
bijection between families $u=(u^g)$ such that $(X,u)$ is an
object of $\C^G$ on one hand, and actions $r: \TT^\rho(X) \to X$
of the monad $\TT^\rho$ on $X$, and this induces an isomorphism of
categories $\kappa : \C^G \to \C^{\TT^\rho}$ over $\C$. In
particular, $\C^{\TT^\rho}$ is a tensor category over $\kk$, and
the forgetful functor $\U : \C^{\TT^\rho} \to \C$ is a tensor
functor. This implies that $\TT^\rho$ is a Hopf monad on $\C$. The
comonoidal structure of $\TT^\rho$ is as follows:
$$\TT^\rho_2(X,Y): \bigoplus_{g \in G} \rho^g(X \otimes Y) \to \bigoplus_{g',g'' \in G} \rho^{g'}X \otimes \rho^{g''}Y\quad\text{and}\quad \TT^\rho_0:  \bigoplus_{g \in G} \rho^g\1\to \1$$
are given componentwise by the strong (co)monoidal structure of
the tensor functors $\rho^g$, that is, by
$${\rho^g_2}^{\,-1}_{X,Y}: \rho^g(X \otimes Y) \iso \rho^gX \otimes \rho^gY\quad\text{and}\quad{\rho^g_0}^{\,-1} : \rho^g\1 \iso \1.$$
Hence Parts (1) and  (2). Now $\TT^\rho$ is faithful because
$\eta$ is a monomorphism.  It is normal because we have
$\TT^\rho(\1) \iso \1^G$. The cocommutativity of $\TT^\rho$
results from the fact that the endofunctor $\rho^g$ being strong
monoidal for  all $g \in G$,  for any object $X$ of $\C$ the
diagram of isomorphisms:
$$\xymatrix@C6em{
\rho^gX \ar[r]^{{\rho^g_2}^{\,-1}_{X,\1}} \ar[d]_{{\rho^g_2}^{\,-1}_{\1,X}}  & \rho^gX \otimes \rho^g\1 \ar[d]^{\id_{\rho^gX}\otimes {\rho^g_0}^{\,-1}}\\
\rho^g\1 \otimes \rho^gX \ar[r]_{{\rho^g_0}^{\,-1}\otimes
\id_{\rho^gX}} &  \rho^gX }$$ is commutative, hence Part (3).

Let $L$ be the induced Hopf algebra of $\TT^\rho$. As noted
previously (see Remark~\ref{remtriv}), $L$ is also the induced
Hopf algebra of the restriction of $\TT^\rho$ to
$\langle\1\rangle$, that is, of $\TT^{\rho'}$, where $\rho'$ is
the restriction to $\langle\1\rangle$ of the action of $G$ on
$\C$. In order to compute $L$ we may therefore assume that $\C$ is
a trivial fusion category (that is, all objects of $\C$ are
trivial).  In that case, the category of  tensor endofunctors of
$\C$ is equivalent to the point; we may consequently  assume that
$\rho$ is the trivial action. We have a commutative square of
tensor functors:
$$\xymatrix@C=3em{
\C^G \ar[rr]^{\sim\qquad} \ar[dr]_{\Hom_\C(\1,\U)}&&
{\vect_\kk}^G=\rep G\ar[dl] \\  & \,\vect_\kk&, }$$ whose
horizontal arrow is an equivalence, hence
$L=L(\Hom_\C(1,\U))\simeq \kk^G$. In particular,
$\TT^\rho$ is semisimple $\iff$ $\rep G$ is semisimple $\iff$
$\car(\kk)$ does not divide the order of $G$. Hence Part (4).
\end{proof}

The Hopf monad $\TT^\rho$ is called the \emph{monad of the group
action $\rho$.} We also denote it by $\TT^G$ when the action is
clear from the context.
%
%
%

%


\begin{corollary}\label{exact-gequiv} An action of a finite group $G$  on a tensor category $\C$ by tensor autoequivalences gives rise to an exact sequence of tensor categories:
$$\rep G \to \C^G\to
\C.$$ 
It is is an exact sequence of fusion categories if $\kk$ is algebraically closed, $\C$ is a fusion category and $\car(\kk)$ does not divide the order of $G$.
\end{corollary}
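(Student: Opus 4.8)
The plan is to deduce the statement directly from Theorem~\ref{monact}, which has already done all the substantive work, combined with the classification results of the preceding subsections. First I would recall the facts furnished by Theorem~\ref{monact}: the endofunctor $\TT^\rho=\bigoplus_{g\in G}\rho^g$ carries a canonical Hopf monad structure on $\C$; there is an isomorphism $\C^G\simeq \C^{\TT^\rho}$ over $\C$; and this Hopf monad is \kt linear, exact (hence right exact), faithful and normal, with induced Hopf algebra $\kk^G$. Since $G$ is finite, $\kk^G$ is finite dimensional, so the tensor category $\CoRep{\kk^G}\simeq \rep G$ is finite; I would fix the canonical tensor equivalence $K:\rep G\iso \CoRep{\kk^G}$ to be used below.

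For the general statement I would invoke Theorem~\ref{t-mmod} with $\C''=\C$. The data consisting of the normal faithful \kt linear right exact Hopf monad $T=\TT^\rho$ on $\C$, its induced Hopf algebra $H=\kk^G$, and the equivalence $K:\rep G\iso \CoRep{H}$ satisfy the hypotheses of part~(1) of that theorem, the required finiteness of $\C'=\rep G$ being exactly the point checked above. The theorem then produces an extension $\rep G\toto \C^{\TT^\rho}\overset{\U}\toto \C$ in which $\U$ is the forgetful functor. Transporting along the isomorphism $\C^G\simeq \C^{\TT^\rho}$ of Theorem~\ref{monact}(2) yields the asserted exact sequence $\rep G\to \C^G\to \C$, the functor $\C^G\to\C$ being the forgetful functor.

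For the refinement to fusion categories I would add the hypotheses that $\kk$ is algebraically closed, $\C$ is fusion, and $\car(\kk)$ does not divide $|G|$. By Theorem~\ref{monact}(4) the last condition is equivalent to $\TT^\rho$ being semisimple, so $\TT^\rho$ is now a \kt linear faithful normal \emph{semisimple} Hopf monad on the fusion category $\C$. Applying Corollary~\ref{cor-ex-from-T} with $H=\kk^G$ (and again $\CoRep{\kk^G}\simeq \rep G$) then exhibits $\rep G\to \C^{\TT^\rho}\to \C$ as an exact sequence of fusion categories, which is the sequence of the previous paragraph now known to consist of fusion categories.

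I do not expect a genuine obstacle here: the corollary is essentially a packaging of Theorem~\ref{monact} through the classification theorems. The only points that require attention are the finiteness of $\rep G$ (needed to apply Theorem~\ref{t-mmod}) and the standard identification $\CoRep{\kk^G}\simeq \rep G$, together with keeping careful track of the fact that the middle term produced by the classification theorems is $\C^{\TT^\rho}$, which must be identified with $\C^G$ rather than with $\C$.
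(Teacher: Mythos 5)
Your proposal is correct and follows essentially the same route as the paper, whose entire proof reads ``Results from Theorem~\ref{monact} and Corollary~\ref{cor-ex-from-T}.'' If anything, your argument is slightly more careful: for the general (non-fusion) statement you route through Theorem~\ref{t-mmod}, which applies to arbitrary tensor categories once one checks (as you do) that $\C'=\rep G\simeq\CoRep{\kk^G}$ is finite, whereas Corollary~\ref{cor-ex-from-T} as stated covers only the semisimple fusion situation needed for the second assertion.
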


\begin{proof} Results from Theorem~\ref{monact} and Corollary~\ref{cor-ex-from-T}.\end{proof}

\begin{remark}
The fusion category $\C_p$ constructed in \cite[4.1]{nik} is
group-theoretical and admits an action of the group $\mathbb Z_2$,
such that $\C_p^{\mathbb Z_2}$ is not group-theoretical
\cite[Corollary 4.6]{nik}. The resulting  exact sequence $\rep
\mathbb Z_2 \to \C_p^{\mathbb Z_2} \to \C_p$ shows that an
extension of group-theoretical categories need not be
group-theoretical.
\end{remark}

The converse of Theorem~\ref{monact} is true:

\begin{theorem}\label{converse-equiv}
Let $T$ be a \kt linear right exact faithful normal cocommutative
Hopf monad on a tensor category $\C$ over a field $\kk$, whose
induced Hopf algebra $H$ is split semisimple. Then $H$ is
isomorphic to $\kk^G$ for some finite group $G$,  and  there
exists an action of  $G$ on $\C$ by tensor autoequivalences $\rho:
\underline G \to \underline \End_{\otimes} \C$ such that $T \simeq
\TT^\rho$.
\end{theorem}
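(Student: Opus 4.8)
The plan is to reverse-engineer the construction of Theorem~\ref{monact}: first pin down the induced Hopf algebra as $\kk^G$, and then split $T$ into a direct sum of tensor autoequivalences indexed by $G$ using the primitive idempotents of $H$. For the first step, recall that since $T$ is normal it restricts to a Hopf monad on $\langle\1\rangle\simeq\vect_\kk$, and by Lemma~\ref{HM-triv} this restriction is $B\otimes?$ for the finite-dimensional Hopf algebra $B=\Hom(\1,T(\1))$, with induced Hopf algebra $H=B^*=\Hom(T(\1),\1)$; by hypothesis $H$ is split semisimple. The product of $H$ is $g\cdot g'=(g\otimes g')T_2(\1,\1)$. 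Putting $X=\1$ in Definition~\ref{def-cocom} gives $(\id_{T(\1)}\otimes g')T_2(\1,\1)=(g'\otimes\id_{T(\1)})T_2(\1,\1)$, and feeding this into the product shows at once that $g\cdot g'=g'\cdot g$; thus cocommutativity of $T$ forces $H$ to be commutative. A commutative split semisimple Hopf algebra over $\kk$ is $\kk^G$ for the finite group $G=\Alg(H,\kk)$ (its spectrum is a finite constant group scheme), so $H\cong\kk^G$ and dually $B\cong\kk G$, which is the first assertion.

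For the grading of $T$, for each $g\in H=\Hom(T(\1),\1)$ set $p^g_X=(\id_{T(X)}\otimes g)T_2(X,\1)$; by cocommutativity this equals $(g\otimes\id_{T(X)})T_2(\1,X)$, so $p^g$ is a natural endomorphism of $T$. Using the coassociativity of $T_2$ with $Y=Z=\1$ together with both descriptions of $p^g$, one checks $p^g\circ p^{g'}=p^{g\cdot g'}$, while the counit axiom $(\id\otimes T_0)T_2(X,\1)=\id$ gives $p^{T_0}=\id_T$; hence $g\mapsto p^g$ is a unital $\kk$-algebra map $H\to\mathrm{Nat}(T,T)$. Applying it to the primitive idempotents $e_g\in\kk^G=H$ yields orthogonal idempotent natural transformations $P^g=p^{e_g}$ with $\sum_{g\in G}P^g=p^{T_0}=\id_T$. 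As $\C$ is abelian, hence idempotent-complete, this produces a decomposition $T\simeq\bigoplus_{g\in G}\rho^g$ into $\kk$-linear right exact functors, with $\rho^g=\mathrm{im}\,P^g$.

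It remains to lift the Hopf-monad structure of $T$ to this decomposition. From coassociativity and cocommutativity one gets the identity $(P^g_X\otimes\id)T_2(X,Y)=(\id\otimes P^g_Y)T_2(X,Y)$, which together with orthogonality forces $(P^g_X\otimes P^h_Y)T_2(X,Y)=0$ for $g\neq h$; thus $T_2$ is ``doubly diagonal'' and restricts to candidate monoidal constraints $c^g:\rho^g(X\otimes Y)\to\rho^g X\otimes\rho^g Y$. The monoidality axiom for $\eta$ together with $e_g(1_B)=\delta_{g,1}$ gives $P^g\eta=\delta_{g,1}\eta$, so $\eta$ factors through $\rho^1$; faithfulness of $T$ makes $\eta$ monic and the monad unit law supplies a retraction, yielding $\rho_0:\id\simeq\rho^1$. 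A parallel computation from the bimonad compatibility between $\mu$ and $T_2$, combined with the coproduct $e_g\circ\mu_\1=\sum_{ab=g}e_a\otimes e_b$ of $H=\kk^G$, shows that $\mu$ is supported on the components $\rho^a\rho^b\to\rho^{ab}$, producing constraints $\rho^{a,b}_2:\rho^a\rho^b\to\rho^{ab}$. The associativity and unit axioms of the monad $T$ then translate into the pentagon and unit coherences of a $G$-action, and reassembling the pieces identifies $T$ with $\TT^\rho$ as a Hopf monad.

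The essential difficulty lies entirely in the last step: proving that the diagonal components $c^g$ and $\rho^{a,b}_2$ are \emph{isomorphisms}, so that each $\rho^g$ is a genuine tensor autoequivalence. This is exactly the point where the full Hopf-monad hypothesis — invertibility of the fusion operators, equivalently rigidity of $\C^T$ — must be invoked, rather than merely the bimonad axioms; similarly, that $\rho^1\simeq\id$ (and not just a retract of $T$) rests on the faithfulness of $T$, i.e.\@ on $\eta$ being monic. Once invertibility is established, verifying the coherence axioms of the group action is a routine bookkeeping translation of the associativity and unit laws of $T$, and the resulting $\TT^\rho$ recovers $T$ by construction.
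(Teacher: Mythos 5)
Your proposal follows the same strategy as the paper's proof: restrict $T$ to $\langle\1\rangle$ to identify $H\simeq\kk^G$, use the primitive idempotents of $H=\kk^G$ to split $T=\bigoplus_{g\in G}\rho^g$, and decompose $\eta$, $\mu$, $T_2$, $T_0$ into $G$-indexed components. Those parts are correct and match the paper (your check that cocommutativity at $X=\1$ forces $H$ commutative is even more explicit than the paper's). But there is a genuine gap, and it sits exactly where you yourself locate "the essential difficulty": you never prove that the components $c^g=f^g_2$, $\rho^{a,b}_2$, $f^g_0$ and $\rho_0$ are isomorphisms. Naming the tool ("invertibility of the fusion operators") is not the same as using it; the deduction from invertibility of the fusion operator to invertibility of the individual components is the real content of this step and is not routine bookkeeping. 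In the paper it runs as follows: the left fusion operator $H^l_{X,Y}=(\id_{T(X)}\otimes\mu_Y)T_2(X,T(Y))$ is invertible by \cite[Theorem 3.10]{blv} because $T$ is a Hopf monad; it is diagonal with respect to the $G$-decompositions of source and target, with components $\omega^{g,h}_{X,Y}=(\id_{\rho^gX}\otimes\rho^{gh}_{2,Y})f^g_2(X,\rho^hY)$, so each $\omega^{g,h}$ is invertible; then $f^g_0$ is invertible (a consequence of $T_0=\sum_g e^g$), hence $\rho^{gh}_{2,Y}=(f^g_0\otimes\id)\omega^{g,h}_{\1,Y}$ is invertible; only then can one handle $\rho_0$, and finally $f^g_2$ (via $Y\simeq\rho^1Y$ and invertibility of $\omega^{g,1}$).

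Moreover, the one concrete invertibility argument you do give is fallacious. You claim that "faithfulness of $T$ makes $\eta$ monic and the monad unit law supplies a retraction, yielding $\rho_0:\id\simeq\rho^1$." A monomorphism admitting a retraction is only a split monomorphism, not an isomorphism (consider $\1\to\1\oplus\1$); and in any case the unit laws only give retractions of the whiskered transformations $\rho_0\rho^h$ and $\rho^h(\rho_0)$ (namely $\rho^{1,h}_2$ and $\rho^{h,1}_2$), not of $\rho_0$ itself. The correct argument is genuinely intertwined with the fusion-operator step: one first proves that $\rho^{1,1}_2$ is invertible as above, deduces that $\rho^1(\rho_0)=(\rho^{1,1}_2)^{-1}$ is an isomorphism, then uses that $\rho^1$ is faithful (because $\rho_0$ is monic, $\eta$ being monic) and right exact (a direct summand of $T$) to conclude that $\rho_0$ is an epimorphism, and finally monic plus epic gives iso since $\C$ is abelian. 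So the isomorphism $\id\simeq\rho^1$ does not rest on faithfulness of $T$ alone, contrary to what your closing paragraph suggests; it also requires the Hopf (rigidity) hypothesis through the invertibility of $\rho^{1,1}_2$.
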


\begin{proof} The restriction $T_0$ of $T$  to $\langle\1\rangle$ is isomorphic to $L \otimes ?$ as a Hopf monad,
where $L$ is the Hopf algebra $H^*$.  Since $T$ is cocommutative,
so is $L$, that is, $H$ is commutative. Being split semisimple,
$H$  is of the form $\kk G$ for some finite group $G$.

For $g \in G$, denote by $e^g : T(\1) \to \1$ the morphism
corresponding to the map $\kk G \to \kk$, $h \mapsto \delta_{g,h}$
via an isomorphism $T_0 \simeq \kk G \otimes ?$

The morphisms $e^g : T (\1) \to \1$ satisfy the following
equations:
\begin{align*}
&(1)\quad (e^g \otimes e^h)T_2(\1,\1)=\delta_{g,h} e^g,
&(2)\quad \sum_{g} e^g= T_0,\\
&(3)\quad e^g \mu_\1=\sum_{g'g''=g} e^{g'} T(e^{g''}) , &(4)\quad
e^g \eta_\1 =\delta_{g,1} \id_\1,
\end{align*}
which reflect the Hopf algebra structure of $\kk G$.

For $g \in G$, define a natural endomorphism $\pi^g$ of $T$ by
setting $$\pi^g_X=(e^g \otimes \id_{T(X)})T_2(\1,X), \quad
\text{for $X$ object of $\C$.}$$ Note that we also have
$\pi^g_X=(\id_{T(X)} \otimes e^g)T_2(X,\1)$, $T$ being
cocommutative.

Using Equations (1) and (2) above and the comonoidality of $T$,
one verifies easily the following equations: \begin{align*}
&(5)\quad \pi^g_X \pi^h_X= \delta_{g,h} \pi^g_X, \quad (6)\quad \sum_g \pi^g_X=\id_{T(X)},\\
&(7)\quad T_2(X,Y)\pi^g_{X\otimes Y}= (\pi^g_X \otimes \id_{T(Y)}) T_2(X,Y)=(\id_{T(X)}\otimes \pi^g_Y) T_2(X,Y),\\
\end{align*}
the last equation resulting from cocommutativity of $T$.

By (5) and (6), for $X$ object of $\C$ the family $(\pi^g_X)_{g
\in G}$ is a complete orthogonal system of idempotents of $T(X)$ .
Denote by $\rho^g_X$ the image of the idempotent $\pi^g_X$. This
defines an endofunctor $\rho^g$ of $\C$, and we have
$$T= \bigoplus_{g \in G} \rho^g.$$
The point is now to show that $\rho : g \mapsto \rho^g$ is an
action of $G$ on $\C$ by tensor autoequivalences, so that
$T=\TT^\rho$. From Equations (3) and~(4), we deduce
$$(8) \quad \pi^g_X=\sum_{g'g''=g} \mu_X \pi^{g'}_{T(X)} T(\pi^{g''}_X), \quad (9) \quad \pi^g_X \eta_X=\delta_{g,1} \eta_X.$$
Indeed, we have
\begin{align*}
&\pi^g_X \mu_X = (e^g \otimes \id_{T(X)})T_2(\1,X)\mu_X
=(e^g \mu_\1\otimes \mu_X)T_2(T(\1), T(X))T(T_2(\1,X))\\
&=\sum_{g'g''=g}(e^{g'}T(e^{g''})\otimes \mu_X)T_2(T(\1),
T(X))T(T_2(\1,X))
=\sum_{g'g''=g} \mu_X \pi^{g'}_{T(X)} T(\pi^{g''}_X);\\
&\pi^g_X \eta_X = (e^g \otimes \id_{T(X)})T_2(\1,X)\eta_X=e^g \eta_\1\otimes \eta_X=\delta_{g,1} \eta_X.\\
\end{align*}
Thus $\mu : \bigoplus_{g,h} \rho^g\circ \rho^h \to \bigoplus_k
\rho^k$ is given componentwise by morphisms
$$\rho^{g,h}_2 : \rho^g \circ \rho^h \to \rho^{gh},$$
and $\eta : \id_\C \to \bigoplus_g \rho^g$, by a morphism $\rho_0:
\id_\C \to \rho^1$. Since $T$ is a monad, we have:
$$\rho^{g,hk}_2\rho^g\rho^{h,k}_2=\rho^{gh,k}_2\rho^{g,h}_2\rho^k\quad\text{and}\quad \rho^{g,1}\rho^g \rho_0=\id_{\rho^g}=\rho^{1,g}\rho_0 \rho^g.$$
Using Equations~(5) and (7), we also have
$$(\pi^g_X \otimes \pi^h_Y)T_2(X,Y)=\delta_{g,h}T_2(X,Y)\pi^g_{X\otimes Y}.$$
Thus $T_2(X,Y) : \bigoplus_g \rho^g(X\otimes Y) \to
\bigoplus_{h,k} \rho^h(X) \otimes \rho^k(Y)$ is given
componentwise by morphisms
$$f^g_2(X,Y) : \rho^g(X \otimes Y) \to \rho^g(X) \otimes \rho^g(Y).$$
Lastly, $T_0 : \bigoplus_g \rho^g(\1) \to \1$ is given by
morphisms $f^g_0 : \rho^g(\1) \to \1$. The fact that $T$ is a
bimonad implies that $(\rho^g,f^g_2,f^g_0)$ is a comonoidal
endofunctor of $\C$ for all $g \in G$, and that the natural
transformations $\rho^{g,h}_2: \rho^g \rho^h \to \rho^{gh}$ and
$\rho_0 : \id_\C \to \rho^1$ are comonoidal.

Next we show that the structure morphisms $\rho^{g,h}_2$,
$\rho_0$, $f^g_2$, $f^g_0$ are isomorphisms. The left fusion
operator $H^l$ of the bimonad $T$, introduced in~\cite{blv} and
defined as
$$H^l_{X,Y}=(\id_{T(X)} \otimes \mu_Y)T_2(X,T(Y)) : T(X\otimes T(Y)) \to T(X) \otimes T(Y),$$
is an isomorphism by \cite[Theorem 3.10]{blv}, because $T$ is a
Hopf monad. We have  $T(X\otimes T(Y))=\bigoplus_{g,h \in G}
\rho^g(X \otimes \rho^hY)$
and $T(X) \otimes T(Y)=\bigoplus_{m,n\in G}  \rho^mX \otimes
\rho^nY$,
%
%
and $H^l_{X,Y}$ is defined componentwise by isomorphisms
$$\omega^{g,h}_{X,Y}=(\id_{\rho^gX} \otimes {\rho^{gh}_2}_Y)f^g_2(X,\rho^h(Y)): \rho^g(X \otimes \rho^h(Y))\iso \rho^gX \otimes \rho^{gh}Y$$
Now for $g \in G$, $f^g_0$ is an isomorphism because $T_0 =\sum_g
e_g$. So
$${\rho^{gh}_2}_Y=(f^g_0 \otimes \id_{\rho^{gh}_Y}) \,\omega^{g,h}_{\1,Y}$$
is an isomorphism too. Now we check that $\rho_0 : \id_\C \to
\rho^1$ is an isomorphism. Observe first that $\rho_0$ is a
monomorphism because, $T$ being faithful, $\eta$ is a
monomorphism, and $\eta$ factors through $\rho_0$. In particular,
$\rho^1$ is faithful; it is also \kt linear right exact since it
is a direct summand of $T$. We have ${\rho^{11}_2}
\rho^1(\rho_0)=\id_{\rho^1}$, so
$\rho^1(\rho_0)={{\rho^{11}_2}^{-1}}$ is an isomorphism. Since
$\rho^1$ is faithful right exact,  $\rho_0$ is an epimorphism. The
category $\C$ being abelian, $\rho_0$ is an isomorphism.

In particular $\rho^g$ is a \kt linear autoequivalence of $\C$,
with quasi-inverse $\rho^{g^{-1}}$.

Now $f^g_2(X,Y)$ is an isomorphism, because  $Y \simeq \rho^1Y$,
$\omega^{g,1}$ and $f^g_2$ are isomorphisms. Thus $(\rho^g,
f^g_2,f_0)$ is a strong comonoidal  functor, that is, $(\rho^g,
(f^g_2)^{-1}, (f^g_0)^{-1})$ is a strong monoidal functor. It is
therefore a tensor autoequivalence of $\C$. We have shown that
$\rho$ is an action of $G$ on $\C$ by tensor autoequivalences, and
$T=\TT^\rho$.
\end{proof}

\begin{corollary}\label{corequiv} Let $\ff : \C \to \D$ be a tensor functor between tensor categories over a field $\kk$ admitting a left adjoint.
Then $\ff$ is an equivariantization if and only if $\ff$ is dominant normal, its Hopf monad $T$ is cocommutative and its induced Hopf algebra is split semisimple.
\end{corollary}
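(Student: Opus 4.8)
The plan is to prove Corollary~\ref{corequiv} by reducing it to the two theorems just established, namely Theorem~\ref{monact} (an equivariantization yields a faithful normal cocommutative Hopf monad with induced Hopf algebra $\kk^G$) and Theorem~\ref{converse-equiv} (its converse). Since $\ff$ admits a left adjoint, it is monadic, so $\C \simeq \D^T$ as tensor categories, where $T=\ff G$ is the Hopf monad of $\ff$, and $\ff$ is identified with the forgetful functor $\U : \D^T \to \D$. This identification is what lets us translate properties of $\ff$ into properties of $T$.

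First I would prove the ``only if'' direction. Suppose $\ff$ is an equivariantization, so there is a finite group $G$ acting on $\D$ by tensor autoequivalences and a tensor equivalence $\C \simeq \D^G$ over $\D$. By Theorem~\ref{monact}, $\D^G \simeq \D^{\TT^\rho}$ over $\D$, and $\TT^\rho$ is faithful, normal and cocommutative with induced Hopf algebra $\kk^G$, which is split semisimple. Because the monad of a tensor functor admitting a left adjoint is unique up to isomorphism, the Hopf monad $T$ of $\ff$ is isomorphic to $\TT^\rho$, and hence inherits all these properties. Moreover $\ff$ is dominant by Proposition~\ref{dom-crit} (its monad is faithful) and normal by the Proposition identifying normality of $\ff$ with normality of $T$. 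This gives all the asserted conditions.

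For the ``if'' direction, suppose $\ff$ is dominant and normal, its Hopf monad $T$ is cocommutative, and its induced Hopf algebra $H$ is split semisimple. Since $\ff$ is dominant, $T$ is faithful by Proposition~\ref{dom-crit}; since $\ff$ is normal, $T$ is normal; and $T$ is $\kk$-linear right exact as the monad of a tensor functor. Thus $T$ satisfies exactly the hypotheses of Theorem~\ref{converse-equiv}, which produces a finite group $G$ (with $H \simeq \kk^G$) and an action $\rho$ of $G$ on $\D$ by tensor autoequivalences such that $T \simeq \TT^\rho$. Then $\C \simeq \D^T \simeq \D^{\TT^\rho} \simeq \D^G$ as tensor categories over $\D$, the last equivalence again by Theorem~\ref{monact}, so $\ff$ is an equivariantization.

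The work is essentially bookkeeping: the genuine mathematical content lives in Theorems~\ref{monact} and~\ref{converse-equiv}, and the corollary only has to package it. The one point requiring a little care — and the likeliest source of a gap — is checking that all the equivalences are \emph{over $\D$} (i.e.\ compatible with the forgetful functors) so that the data really assemble into an equivariantization \emph{of $\ff$} rather than merely an abstract equivalence of categories. This follows because monadicity gives $\U\kappa = \ff$ for the comparison functor $\kappa$, and the equivalence $\D^{\TT^\rho} \simeq \D^G$ of Theorem~\ref{monact} is explicitly constructed over $\D$; one simply composes these compatibilities.
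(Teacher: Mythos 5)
Your proposal is correct and follows essentially the same route as the paper: reduce via monadicity to the case where $\ff$ is the forgetful functor $\U : \D^T \to \D$, then invoke Theorem~\ref{monact} for the ``only if'' direction and Theorem~\ref{converse-equiv} for the ``if'' direction. Your write-up is in fact more complete than the paper's (which is terse and only sketches the application of Theorem~\ref{converse-equiv}), and your attention to the equivalences being over $\D$ is exactly the right point of care.
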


\begin{proof}
The functor $\ff$ is monadic, with monad $T$. We may therefore assume
$\C=\D^T$, $\ff$ being the forgetful functor $\U$. Let $L$ be the
induced Hopf algebra of $T$.
divisible by $\car \kk$. We conclude by
Theorem~\ref{converse-equiv}, noting that if $L$ is commutativ, then it is the function algebra of a finite group if and only if it is  split semisimple.
\end{proof}

\begin{example} Let $\Gamma$ be a finite group and let $L$ be a finite-dimensional Hopf algebra over an algebraically closed field $\kk$ endowed with a $\Gamma$-graduation $L=\bigoplus_{g \in \Gamma} L_g$ such that $L_g \neq 0$ for all $g \in \Gamma$. Let $H=\dual{L}$. The $\Gamma$-graduation on $L$ translates into an injective  Hopf algebra morphism
$i : \kk^\Gamma \to H$ whose image is central in $H$. This
morphism is characterized by:
$$\langle i(\varphi), \lambda \rangle = \sum_{g \in \Gamma  } \varphi(g) \varepsilon (\lambda_g)$$
for all $\phi \in \kk^{\Gamma }$, $\lambda \in L$, $\varepsilon$
denoting the counit of $L$. We have $H=\bigoplus_{g\in \Gamma}
H_g$, where $H_g=\dual{L}_g$.
The dominant tensor functor  $\Res_{\kk^\Gamma}^H :   \Rep{H}
\to \Rep{\kk^\Gamma}$ is monadic,  with Hopf monad $\T=
\Res^H_{\kk^\Gamma}\Ind_{\kk^\Gamma}^H$. The tensor category
$\Rep{\kk^\Gamma}$ is the pointed category $\C(\Gamma)$ of
$\Gamma$-graded vector spaces, whose simple objects are indexed by
the elements of $\Gamma$, and we have $\T(g)=H_g \otimes g$; in
particular $\T$ is normal, with induced Hopf algebra $H_1$. In
particular, if $L$ is cosemisimple, $H$ is semisimple and we have
an exact sequence of fusion categories
$$\Rep{H_1} \to\Rep{H} \to \C(\Gamma).$$
Moreover, $T$ is cocommutative if and only if $L_1$ is contained
in the center of $L$. Now let $\Gamma$ act on a finite group $G$
in a non-trivial way by $\Gamma \times G \to G$, $(x, g) \to x.g$,
and let $L$ be the abelian extension $L=\kk^G \# \kk \Gamma$
corresponding with this action. Then the multiplication and
comultiplication in $L$ are given by:
$$(e_g\# x)(e_h\# y) = \delta_{g, x . h} e_g\# xy, \quad \Delta(e_g\# x) = \sum_{st = g}e_s \# x \otimes e_t \# x = \Delta(e_g)
\Delta(x),$$ where $e_g \in \kk^G$ are defined as $e_g(h) =
\delta_{g,h}$, $g, h \in G$. Thus $L_1 = \kk^G$ is a Hopf subalgebra
which is not central in $L$ and $\kk^{\Gamma}$ is a central Hopf
subalgebra in $H = \dual{L}$.

Then $L$ is a cosemisimple $\Gamma$-graded Hopf algebra, and
$L_1=\kk^G$ is commutative, but not central in $L$, hence for
$H=\dual{L}$ there is  an
exact sequence of fusion categories
$$\rep G \to \Rep{H}  \to \C(\Gamma)$$
which is not induced by an action of the group $G$ on $\C(\Gamma)$ by tensor autoequivalences.
\end{example}

\subsection{The braided case - Modularization revisited}

\begin{definition} A bimonad  $T$ on a braided category $\C$ is \emph{braided} if the following diagram is commutative:
$$\xymatrix@C5em{
T(X \otimes Y) \ar[r]^{T_2(X,Y)} \ar[d]_{T(c_{X,Y})}& T(X) \otimes T(Y)\ar[d]^{c_{T(X),T(Y)}}\\
T(Y \otimes X) \ar[r]^{T_2(Y,X)}& T(Y) \otimes T(X) }$$ for any
$X, Y$ objects of $T$, where $c$ denotes the braiding of $\C$.
\end{definition}
\begin{remark} This is equivalent to saying that $R_{X,Y}=(\eta_Y \otimes \eta_X)c_{X,Y}$ defines a $R$-matrix for $T$. \end{remark}

\begin{proposition} Let $T$ be a bimonad on a braided category $\C$, with forgetful functor  $\U : \C^T \to \C$.
There exists a braiding on $\C^T$ such that $\U$ is braided if and
only if $T$ is braided.
\end{proposition}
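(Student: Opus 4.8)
The plan is to reduce the statement to the assertion that, for every pair of $T$-modules, the braiding of $\C$ is a morphism of $T$-modules, and then to test this condition on free modules. Since the forgetful functor $\U : \C^T \to \C$ is strict monoidal and faithful, any braiding $\tilde c$ on $\C^T$ for which $\U$ is braided must satisfy $\U(\tilde c_{(M,r),(N,s)}) = c_{M,N}$, and such a lift is unique if it exists. Recalling that the monoidal structure of $\C^T$ equips $(M,r) \otimes (N,s)$ with the action $(r \otimes s)T_2(M,N)$, the existence of $\tilde c$ amounts precisely to the requirement that, for all $T$-modules $(M,r)$ and $(N,s)$, the morphism $c_{M,N}$ be $T$-linear from $\bigl(M \otimes N,\,(r\otimes s)T_2(M,N)\bigr)$ to $\bigl(N \otimes M,\,(s\otimes r)T_2(N,M)\bigr)$, that is,
\begin{equation*}
c_{M,N}\,(r \otimes s)\,T_2(M,N) = (s \otimes r)\,T_2(N,M)\,T(c_{M,N}).
\end{equation*}
Once this holds for all modules, the naturality of $\tilde c$, the hexagon axioms, and the invertibility of $\tilde c$ all transfer from the corresponding facts for $c$ in $\C$ by faithfulness of $\U$ (in particular, the inverse of a $T$-linear isomorphism is automatically $T$-linear). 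So the whole proposition comes down to matching this $T$-linearity identity against the braided-bimonad condition $c_{T(X),T(Y)}T_2(X,Y) = T_2(Y,X)T(c_{X,Y})$.

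For the direct implication I would assume $T$ braided and verify the displayed $T$-linearity by a short diagram chase. First apply naturality of the braiding to the module actions $r$ and $s$ to rewrite $c_{M,N}(r\otimes s) = (s\otimes r)\,c_{T(M),T(N)}$, and then invoke the braided-bimonad condition at $(M,N)$ to replace $c_{T(M),T(N)}T_2(M,N)$ by $T_2(N,M)T(c_{M,N})$. This produces exactly the right-hand side, so $c_{M,N}$ is $T$-linear for every pair of modules and the desired braiding on $\C^T$ exists.

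For the converse I would assume a braiding on $\C^T$ lifting $c$ and test the $T$-linearity identity on the free modules $\Ll(X) = (T(X),\mu_X)$ and $\Ll(Y) = (T(Y),\mu_Y)$, whose actions are $\mu_X$ and $\mu_Y$. The resulting equation must then be massaged into the braided-bimonad condition, and the key move is to precompose it with $T(\eta_X \otimes \eta_Y)$. Using naturality of $T_2$ (comonoidality of $T$) to push $T(\eta_X \otimes \eta_Y)$ through each $T_2$, naturality of $c$ to commute the units past the braiding, and finally the monad unit axiom $\mu_X T(\eta_X) = \id_{T(X)}$ to cancel the surviving units, both sides collapse: the left-hand side reduces to $c_{T(X),T(Y)}T_2(X,Y)$ and the right-hand side to $T_2(Y,X)T(c_{X,Y})$. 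This is precisely the braided-bimonad condition, so $T$ is braided.

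I expect the main obstacle to be the converse, specifically recognizing the precomposition by $T(\eta_X \otimes \eta_Y)$ as the device that extracts the object-level braided condition from the module-level identity on free modules; by contrast, the forward computation and the transfer of the braiding axioms across the faithful strict monoidal functor $\U$ are routine.
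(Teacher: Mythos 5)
Your proof is correct and follows essentially the same route as the paper: both exploit faithfulness of $\U$ to force $\tilde c_{(M,r),(N,s)} = c_{M,N}$ and then reduce the whole proposition to the equivalence between $T$-linearity of these morphisms and the braided-bimonad condition. The paper states this equivalence without detail, whereas you supply the verification (naturality of $c$ plus the braided condition in one direction; evaluation on free modules followed by precomposition with $T(\eta_X \otimes \eta_Y)$ in the other), so your argument is a correct, fleshed-out version of the paper's proof.
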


\begin{proof} Since $\U$ is faithful, a braiding $\tilde{c}$ on $\C^T$ such that $\U$ is braided is necessarily given by
$$\tilde{c}_{(M,r),(N,s)} = c_{M,N}\quad \mbox{for any $T$-modules $(M,r)$, $(N,s)$.}$$
Now the morphism $\tilde{c}$ so defined is $T$-linear if and only
if $T$ is braided; and if such is the case, it is a braiding on
$\C^T$.
\end{proof}

\begin{proposition}\label{braidcocom} A braided \kt linear right exact normal Hopf monad on a braided tensor category $\C$ over a field $\kk$ is cocommutative.
\end{proposition}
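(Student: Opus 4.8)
The plan is to verify directly the square of Definition~\ref{def-cocom}, reducing it to two elementary ingredients: the defining property of a braided bimonad specialized at $Y=\1$, and the naturality of the braiding of $\C$. Throughout I would use the standard fact that in a braided category the braiding with the unit object is trivial, $c_{V,\1}=\id_V$ and $c_{\1,V}=\id_V$.

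First I would exploit that $T$ is braided. Writing its defining commutative square with $Y=\1$, and using $c_{X,\1}=\id_X$ so that $T(c_{X,\1})=\id_{T(X)}$, one obtains at once
$$T_2(\1,X)=c_{T(X),T(\1)}\,T_2(X,\1)$$
for every object $X$ of $\C$; this expresses the left vertical arrow of the cocommutativity square in terms of the top arrow followed by a braiding. Next I would bring in naturality: for any morphism $g:T(\1)\to\1$, naturality of $c$ in its second variable together with $c_{T(X),\1}=\id_{T(X)}$ yields
$$\id_{T(X)}\otimes g=(g\otimes\id_{T(X)})\,c_{T(X),T(\1)}$$
as morphisms $T(X)\otimes T(\1)\to T(X)$. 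Note that this holds for an arbitrary $g$, requiring nothing beyond the braiding axioms; the normality of $T$ intervenes only in making the notion of cocommutativity applicable in the first place, and right exactness and \kt linearity play no role in the argument itself.

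Combining the two displayed relations gives
$$(g\otimes\id_{T(X)})\,T_2(\1,X)=(g\otimes\id_{T(X)})\,c_{T(X),T(\1)}\,T_2(X,\1)=(\id_{T(X)}\otimes g)\,T_2(X,\1),$$
which is exactly the commutativity of the square in Definition~\ref{def-cocom}. I do not anticipate any genuine obstacle here: the entire content is the assembly of the braided property of $T$ with the naturality of $c$, and the only care needed is the bookkeeping of the unit identifications $\1\otimes T(X)=T(X)=T(X)\otimes\1$.
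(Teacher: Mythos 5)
Your proof is correct and is essentially the paper's own argument: specialize the braided-bimonad square at the unit object to express $T_2(\1,X)$ and $T_2(X,\1)$ through a braiding $c_{T(X),T(\1)}$, then conclude by naturality of $c$ and the triviality of braidings with $\1$. The only (immaterial) difference is that the paper sets the \emph{first} slot to $\1$, obtaining $T_2(X,\1)=c_{T(\1),T(X)}T_2(\1,X)$, while you set the second slot to $\1$.
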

\begin{proof} Denoting by $c$ the braiding of $\C$, we have $T_2(X,\1)=c_{T(1),T(X)} T_2(\1,X)$ for any object $X$ of $\C$, hence
$(g \otimes\, \id_{T(X)})T_2(\1,X)=(\id_{T(X)} \otimes\,
g)T_2(X,\1)$ for any $g: T(\1) \to \1$ by functoriality of $c$.
\end{proof}

\begin{corollary}\label{brequiv}
Let $\ff : \C \to \D$ be a braided tensor functor between braided
tensor categories over an algebraically closed field $\kk$
admitting a left adjoint, and assume that $\ff$ is normal and
dominant.  Assume that $\car \kk$ does not divide $\FPind \ff$.
Then there exists a finite group $G$ acting on $\D$ by braided
tensor autoequivalences and a braided tensor equivalence $\C \to
\D^G$ over $\C$.
\end{corollary}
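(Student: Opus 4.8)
The plan is to realize $\ff$ as the forgetful functor of a braided Hopf monad on $\D$ and then invoke Theorem~\ref{converse-equiv}, taking care to upgrade the resulting data to the braided setting. Since $\ff$ admits a left adjoint $G$, it is monadic, with Hopf monad $T=\ff G$ on $\D$, and the comparison functor $\kappa:\C\iso\D^T$ is a tensor equivalence with $\U\kappa=\ff$. First I would transport the braiding of $\C$ along $\kappa$ to a braiding on $\D^T$ for which $\kappa$ is braided; since $\ff=\U\kappa$ and $\ff$ is braided, the forgetful functor $\U$ is then braided, so by the Proposition characterizing braided forgetful functors the Hopf monad $T$ is braided. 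Moreover $T$ is $\kk$-linear right exact (being the Hopf monad of a tensor functor), faithful because $\ff$ is dominant (Proposition~\ref{dom-crit}), and normal because $\ff$ is normal; being braided and normal, it is cocommutative by Proposition~\ref{braidcocom}.

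Next I would check that the induced Hopf algebra $H$ of $T$ is split semisimple. Cocommutativity of $T$ forces $H$ to be commutative, exactly as in the proof of Theorem~\ref{converse-equiv} (there $T_{\mid\langle\1\rangle}\simeq H^*\otimes?$, and $H^*$ is cocommutative). Since $\KER_\ff\simeq\CoRep{H}$ is a fusion category, $H$ is cosemisimple and $\dim H=\FPdim\KER_\ff$, which equals $\FPind\ff$ by the multiplicativity of Frobenius--Perron dimension in the exact sequence $\KER_\ff\to\C\to\D$ (Proposition~\ref{fp-dim}). By hypothesis $\car\kk\nmid\dim H$, so the finite commutative group scheme $\operatorname{Spec}H$ has order prime to $\car\kk$; its connected (infinitesimal) component has order a power of $\car\kk$ and is therefore trivial, so $\operatorname{Spec}H$ is étale, hence constant over the algebraically closed field $\kk$. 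Thus $H\simeq\kk^G$ for a finite group $G$; in particular $H$ is split semisimple.

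I may then apply Theorem~\ref{converse-equiv} to obtain an action $\rho:\underline G\to\underline{\End}_\otimes\D$ by tensor autoequivalences together with an isomorphism of Hopf monads $T\simeq\TT^\rho=\bigoplus_{g\in G}\rho^g$, whence $\C\simeq\D^T\simeq\D^{\TT^\rho}\simeq\D^G$ over $\D$ by Theorem~\ref{monact}(2). The last, and main, step is to verify that all of this is braided, and I expect the componentwise verification that a braided $T$ forces each $\rho^g$ to be braided to be the chief obstacle. Transporting the braided structure of $T$ through $T\simeq\TT^\rho$ and decomposing the braiding axiom of $T$ along the complete orthogonal system of idempotents $(\pi^g)$, I would use that $T_2$ is diagonal ($(\pi^g_X\otimes\pi^h_Y)T_2(X,Y)=\delta_{g,h}T_2(X,Y)\pi^g_{X\otimes Y}$) and that the braiding $c$ of $\D$ sends the $(g,h)$-summand of $T(X)\otimes T(Y)$ to the $(h,g)$-summand of $T(Y)\otimes T(X)$; the axiom then reads, on the $g$-th summand, $c_{\rho^gX,\rho^gY}\,f^g_2(X,Y)=f^g_2(Y,X)\,\rho^g(c_{X,Y})$, which is precisely the assertion that each $\rho^g$ is a braided tensor autoequivalence. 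Consequently $G$ acts on $\D$ by braided tensor autoequivalences, and the natural braiding on the equivariantization $\D^G$ (given by $c_{X,Y}$ on underlying objects, which is $G$-equivariant exactly because the $\rho^g$ are braided) coincides, under the isomorphism $\D^G\simeq\D^{\TT^\rho}=\D^T$, with the braiding transported from $\C$. Therefore $\C\to\D^G$ is a braided tensor equivalence over $\D$, as required.
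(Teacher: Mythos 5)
Your proof is correct and follows essentially the same route as the paper's: monadicity gives a braided Hopf monad $T$ on $\D$ which is faithful, normal, and cocommutative by Proposition~\ref{braidcocom}, and then Corollary~\ref{corequiv} (equivalently Theorem~\ref{converse-equiv}) produces the group action, with braidedness of the $\rho^g$ read off componentwise from the braiding axiom for $T$. The only difference is one of completeness: you explicitly verify the split semisimplicity of the induced Hopf algebra (identifying $\dim H$ with $\FPind \ff$ and running the connected--\'etale group-scheme argument, which is precisely where the hypothesis $\car \kk \nmid \FPind \ff$ enters), a hypothesis of Corollary~\ref{corequiv} that the paper's own proof invokes without checking.
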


\begin{proof}
The tensor functor $\ff$ being monadic, with Hopf monad $T$, we
have an equivalence of tensor categories $\C \to \D^T$ over $\C$.
The Hopf monad $T$ is \kt linear right exact, faithful, normal,
and by Proposition~\ref{braidcocom}, it is cocommutative. By
Corollary~\ref{corequiv}, $T$ is the Hopf monad of an action of a
finite group $G$ on $\D$ by tensor autoequivalences and we have
$\C \simeq \D^G$. Moreover $T$ is braided, which means that $G$
acts by braided autoequivalences.
\end{proof}

\begin{example}\label{equivalencia} Let $1 \to G'' \overset{\iota}\to G \overset{\pi}\to G' \to
1$ be an exact sequence of finite groups, with its associated exact sequence of tensor categories
 $$\rep G' \overset{\pi^*}\toto \rep G
\overset{\iota^*}\toto \rep G''.$$ The tensor functor $i_* : \rep G \to \rep G''$ is symmetric, so its monad $T$ is cocommutative
and $i^*$ is an equivariantization functor. In fact, $T$ is the monad on $\rep G''$ introduced in Example~\ref{monad-secgroups}, and it is
the monad of the action of $G'$ on $\rep G''$ by conjugation. The tensor equivalence  $\rep G \simeq (\rep G'')^{G'}$ is a special case of the one established in
\cite{ext-ty} for cocentral extensions of finite dimensional Hopf
algebras. \end{example}

\begin{example}\label{exmodable}
Let $\C$ be a premodular category, and let $\T \subseteq \C$ be
the category of transparent objects of $\C$. Assume $\C$ is
modularizable,  and let $\ff : \C \to \widetilde \C$ be its
modularization (see \cite{bruguieres}). The modularization functor
is dominant and normal, and we have $\KER_\ff= \T $ (see
\cite[Propositions 2.3 and 3.2]{bruguieres}) hence an exact
sequence of fusion categories:
$$\xymatrix{\T \ar[r]& \C \ar[r]^\ff&
\widetilde \C.}$$ Moreover, $\ff$ is a braided functor; it is
therefore an equivariantization by Corollary~\ref{brequiv}. In
fact, $\T$ is a tannakian category, so that we have a symmetric
tensor equivalence $\T \simeq \rep G$,  $G$ being a finite group,
$G$ acts on $\tilde{\C}$ and $\C=\tilde{\C}^G$. Modularization is
therefore a special case of the de-equivariantization procedure
described in \cite[2.6]{eno2}.
%
%
%
\end{example}

\begin{proposition}
Let $\D$ be a modular category over an algebraically closed field
$\kk$ of characteristic $0$, with twist $\theta$. The following
data are equivalent:
\begin{enumerate}[(A)]
\item A premodular category $\C$ and a modularization functor $\C \to \D$;
\item A \kt linear faithful normal  braided semisimple Hopf monad $T$ on $\D$ preserving the twist, that is, such that $\theta_T=T(\theta)$.
\end{enumerate}
\end{proposition}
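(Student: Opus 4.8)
The plan is to deduce the proposition from the equivariantization results of this subsection, chiefly Corollary~\ref{brequiv}, together with the description of modularization functors recalled in Example~\ref{exmodable}; the two constructions will be mutually quasi-inverse, exactly as in the proof of Theorem~\ref{t-mmod}.

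First I would treat the implication (A)~$\Rightarrow$~(B). Given a premodular category $\C$ and a modularization functor $\ff : \C \to \D$, Example~\ref{exmodable} tells us that $\ff$ is dominant and normal. Since $\C$ is premodular, hence a fusion category, $\ff$ admits a left adjoint and is monadic; let $T$ be its Hopf monad, so that $\C \simeq \D^T$ over $\D$. Then $T$ is \kt linear right exact by construction, faithful by Proposition~\ref{dom-crit} (because $\ff$ is dominant) and normal (because $\ff$ is normal). As $\ff$ is a ribbon, hence braided, functor, the monad $T$ is braided; and since $\C \simeq \D^T$ is semisimple, $T$ is semisimple by Proposition~\ref{propss}. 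It remains to verify that $T$ preserves the twist: this is the translation into monad language of the fact that $\ff$ commutes with the twists of $\C$ and $\D$, and it amounts to the identity of natural transformations $\theta_T = T(\theta)$, which is precisely the condition under which the twist of $\D$ descends to a $T$-linear twist on $\D^T$.

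For the converse (B)~$\Rightarrow$~(A), I would start from a Hopf monad $T$ as in~(B) and set $\C = \D^T$. Since $T$ is \kt linear right exact, $\C$ is a tensor category and the forgetful functor $\U : \C \to \D$ is a tensor functor; it is dominant (as $T$ is faithful, by Proposition~\ref{dom-crit}) and normal (as $T$ is normal). Because $T$ is semisimple, $\C$ is semisimple, hence a fusion category (Proposition~\ref{propss}); because $T$ is braided, $\C$ carries a braiding for which $\U$ is braided. Finally the hypothesis $\theta_T = T(\theta)$ guarantees that $\theta$ descends to a $T$-linear twist on $\C$, turning $\C$ into a ribbon, and so premodular, category and making $\U$ a ribbon functor. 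Thus $\U : \C \to \D$ is a dominant ribbon tensor functor into the modular category $\D$, that is, a modularization functor; in particular $\KER_\U = \T$ is the subcategory of transparent objects of $\C$, by Example~\ref{exmodable}.

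The step I expect to require the most care is the interplay between the ribbon structures and the monad identity $\theta_T = T(\theta)$: one must check in each direction that $\ff$ (resp.\@ $\U$) commutes with the twists exactly when this natural-transformation identity holds, and, in the converse direction, that the descended twist together with the descended braiding satisfies the ribbon axioms, so that $\C$ is genuinely premodular. Once this is settled, the verification that the two assignments are mutually quasi-inverse is routine and parallels Theorem~\ref{t-mmod}: the Hopf monad reconstructed from $\U : \D^T \to \D$ is $T$ itself, while the modularization reconstructed from $\ff$ agrees with $\ff$ through the comparison equivalence $\C \simeq \D^T$.
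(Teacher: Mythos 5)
Your proposal is correct and follows essentially the same route as the paper: in both directions one passes between a modularization functor and its Hopf monad (respectively, the forgetful functor $\U : \D^T \to \D$), with the key observation — stated in the paper as an easy verification, and which you rightly flag as the delicate point — that the condition $\theta_T = T(\theta)$ is equivalent to the twist of $\D$ descending to a twist on $\D^T$ preserved by $\U$. The extra details you supply (citing Proposition~\ref{dom-crit}, Proposition~\ref{propss}, Example~\ref{exmodable}, and the mutual quasi-inverse check) merely make explicit what the paper leaves implicit.
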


\begin{proof} Let $T$ be a \kt linear faithful normal braided semisimple Hopf monad on $\D$, and let $\U : \D^T \to \D$ be the forgetful functor.
Then $\D^T$ is a braided fusion category, and $\U$ is a dominant
braided tensor functor. One verifies easily that the condition
that $T$ preserves the twist is equivalent to saying that there
exists a twist $\tilde{\theta}$ on $\D^T$ which is preserved by
$\U$. So if $T$ preserves the twist, $\D^T$ is premodular, and
$\U$ is a modularization. Conversely, let $\ff : \C \to \D$ be a
modularization. Its monad $T$  is a \kt linear faithful
normal braided semisimple Hopf monad, and since $\ff$ preserves
twists, $T$ preserves the twist of $\D$.
\end{proof}

\section{Exact sequences and  commutative central algebras}\label{sect-es-cca}

The modularization $F : \C \to \tilde{\C}$ of a modularizable
premodular category $\C$  is constructed in \cite{bruguieres} as
the free module functor $\C \to \mod_\C(A)$, $A$ being a
commutative algebra in the braided category $\C$. More precisely,
$A$ is a trivializing algebra of the full subcategory $\T \subset
\C$ of transparent objects of $\C$.

In this section we show that, more generally, any dominant functor
$\ff : \C \to \D$ between tensor categories admitting an exact right adjoint is, up to tensor
equivalence, a free module functor $\C \to \mod_\C A$, $A$ being a
certain commutative algebra in the center of $\C$ called the
induced central algebra of $\ff$.  Such a functor is normal if and
only if $A$ is self-trivializing.

\subsection{Induced central algebra of a tensor functor}

If $A$ is an algebra in a tensor category $\C$ over $\kk$, with
product $m: A \otimes A \to A$ and unit $u : \1 \to A$,  we denote
by $\MOD{\C}{A}$  the abelian \kt linear category of right
$A$-modules in $\C$. The forgetful functor $V_A: \MOD{\C}{A}\to
\C$ is \kt linear exact, and has a left adjoint, namely the
\emph{free $A$-module functor} $F_A : \C \to \MOD{\C}{A}$ defined
by $X \mapsto (X \otimes A, \id_X \otimes m)$.

We say that $A$ is \emph{semisimple} in $\C$ if every right
$A$-module in $\C$ is a direct factor of $F(X)$ for some object
$X$ of $\C$. Note that if  $A$ is semisimple and $\C$ is
semisimple, then $\MOD{\C}{A}$ is semisimple too.

A \emph{central algebra of $\C$} is an algebra $A$ in $\C$ endowed
with a half-braiding $\sigma: A \otimes \id_\C \iso \id_C \otimes
A$ such that the pair $(A,\sigma)$ is an algebra in the
categorical center $\Z(\C)$ of $\C$. This means that the product
$m$ and unit $u$ of $A$ are morphisms of half-braidings, that is:
$$\sigma_X(m \otimes \id_X)=(\id_X \otimes m)(\sigma_X \otimes \id_A)(\id_A \otimes \sigma_X) \quad\mbox{and}\quad \sigma_X(u \otimes \id_X)=\id_X \otimes u_.$$
A central algebra $(A,\sigma)$ is \emph{commutative} if
$m\sigma_A=m$.

Now let $(A,\sigma)$ be a commutative central algebra of $\C$. We
define a tensor product $\otimes_A$ on $\MOD{\C}{A}$, as follows.
Given two right $A$-modules $M$ and $N$, with $A$-actions $r : M
\otimes A \to M$ and $s: N \otimes A \to N$, the tensor product
$M\otimes_A N$ is the coequalizer of the pair of morphisms
$$(r \otimes \id_N,\id_M \otimes s \sigma_N) : M \otimes A \otimes N \rightrightarrows M \otimes N.$$
It is a right $A$-module, with action $t : M \otimes_A N \otimes A
\to M \otimes_A N$ defined by $t(\pi \otimes A)=\pi (\id_M \otimes
s)$, where $\pi$ is the canonical epimorphism $M \otimes N \to M
\otimes_A N$.

One verifies that $\otimes_A$ defines a monoidal structure on
$\MOD{\C}{A}$, with unit object $F(\1)=A$.  We denote by
$\MOD{\C}{(A,\sigma)}$ this monoidal category. The functor $F_A$
admits a natural structure  of strong monoidal functor from $\C$
to $\MOD{\C}{(A,\sigma)}$, which we denote by $F_{A,\sigma}$.

A tensor functor admitting a right adjoint
defines a central coalgebra:

\begin{proposition}\label{dominant2}
Let $F : \C \to \D$ be a  tensor functor  between tensor
categories over a field $\kk$, admitting a right adjoint $R$. Then
$A=R(\1)$ has a natural structure of commutative central algebra
in $\C$, with half-braiding denoted by $\sigma$. Moreover, if $R$
is faithful exact, then  $\MOD{\C}{(A,\sigma)}$ is a tensor
category and we have a tensor equivalence $K :  \D \to
\MOD{\C}{(A,\sigma)}$ such that the following triangle of tensor
functors commutes up to tensor isomorphism:
$$\xymatrix{\C \ar[r]^F \ar[rd]_{F_A} & \D \ar[d]^K\\ & \MOD{\C}{(A,\sigma)}}$$
\end{proposition}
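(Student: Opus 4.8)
The plan is to produce the algebra and then its half-braiding from the adjunction, and to read off the equivalence from monadicity. Since $F$ is strong monoidal (in particular comonoidal), its right adjoint $R$ is lax monoidal, with structure morphisms $R_2(X,Y)\colon R(X)\otimes R(Y)\to R(X\otimes Y)$ and $R_0\colon\1\to R(\1)$; this makes $A=R(\1)$ an algebra in $\C$ with unit $u=R_0$ and product $m=R_2(\1,\1)\colon R(\1)\otimes R(\1)\to R(\1\otimes\1)=R(\1)$. To build the half-braiding, write $\eta$ for the unit of $F\dashv R$ and introduce the two projection-formula morphisms $H^l_X=R_2(\1,F(X))(\id_{R(\1)}\otimes\eta_X)\colon A\otimes X\to RF(X)$ and $H^r_X=R_2(F(X),\1)(\eta_X\otimes\id_{R(\1)})\colon X\otimes A\to RF(X)$. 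Exactly as in the proof of Proposition~\ref{prop-crit-adj}, rigidity and the adjunction furnish natural isomorphisms $\Hom_\C(Y,A\otimes X)\cong\Hom_\C(Y\otimes\rdual{X},R(\1))\cong\Hom_\D(F(Y)\otimes\rdual{F(X)},\1)\cong\Hom_\C(Y,RF(X))$ realizing $H^l_X$, and the mirror chain using left duals shows $H^r_X$ is an isomorphism as well. I then define $\sigma_X=(H^r_X)^{-1}H^l_X\colon A\otimes X\iso X\otimes A$.

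That $(A,\sigma)$ is a commutative algebra in $\Z(\C)$ is the conceptual core of the first assertion, and I would organize it through Hopf monads. The adjunction $F\dashv R$ is monoidal, so $T=RF$ is a \kt linear lax monoidal monad on $\C$; moreover $H^r\colon{-}\otimes A\iso T$ is an isomorphism of monads, since on units $H^r_X(\id_X\otimes u)=\eta_X$ by the unit axiom of $R$, and it intertwines $\id\otimes m$ with the multiplication $R\epsilon F$ of $T$ by the associativity of $R_2$. Transporting the monoidal-monad structure of $T$ along $H^r$ equips ${-}\otimes A$ with a monoidal-monad structure, and by the dictionary of~\cite{blv} this datum is exactly a half-braiding turning $A$ into a commutative algebra in $\Z(\C)$; unwinding the transport identifies the resulting half-braiding with the $\sigma$ above. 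The hexagon and unit axioms for $\sigma$, the centrality of $m$ and $u$, and the commutativity $m\sigma_A=m$ then follow from naturality of $H^l,H^r$ together with the monad axioms of $T$; this coherence bookkeeping is the main obstacle, and I would lean on~\cite{blv} to keep it in hand.

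For the second assertion, assume $R$ is faithful exact. Being additive and faithful exact, $R$ makes the adjunction $F\dashv R$ monadic by Beck's theorem (Section~\ref{comonmonadic}, \cite[VI.7]{maclane}), so the comparison functor $\D\iso\C^{T}$, $Y\mapsto(R(Y),R(\epsilon_Y))$, is an equivalence. Under the monad isomorphism $H^r\colon{-}\otimes A\iso T$ the category $\C^{T}$ is identified with the category $\MOD{\C}{A}$ of right $A$-modules, the $T$-module $R(Y)$ becoming the $A$-module with action $R_2(Y,\1)\colon R(Y)\otimes A\to R(Y)$. This yields an equivalence of \kt linear abelian categories $K=R\colon\D\iso\MOD{\C}{A}$, and $KF(X)=RF(X)\cong X\otimes A=F_A(X)$ naturally, via $(H^r_X)^{-1}$ and compatibly with the $A$-actions.

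It remains to upgrade $K$ to a tensor equivalence. I equip $\MOD{\C}{(A,\sigma)}$ with the monoidal product $\otimes_A$ attached to the commutative central algebra $(A,\sigma)$, for which $F_{A,\sigma}$ is strong monoidal~\cite{blv}. I would then give $K$ the lax monoidal structure coming from $R_2$: for right $A$-modules $R(Y),R(Y')$ the morphism $R_2(Y,Y')\colon R(Y)\otimes R(Y')\to R(Y\otimes Y')$ coequalizes the pair defining $R(Y)\otimes_A R(Y')$ --- this is precisely where the half-braiding $\sigma$ enters --- and hence factors through a natural morphism $K_2\colon K(Y)\otimes_A K(Y')\to K(Y\otimes Y')$, with $K_0$ the canonical identification $F_A(\1)=A=R(\1)$. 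To see $K_2$ is invertible I would check it on free modules, where $KF\cong F_A=F_{A,\sigma}$ matches the strong monoidal structures of $F$ and $F_{A,\sigma}$; since every $A$-module admits a presentation by free modules and both $K=R$ and $\otimes_A$ are right exact, invertibility of $K_2$ propagates. Thus $K$ is a strong monoidal equivalence; transporting the rigid structure of $\D$ across $K$ shows that $\MOD{\C}{(A,\sigma)}$ is a tensor category, and the triangle commutes up to tensor isomorphism since $KF\cong F_A$, which completes the proof.
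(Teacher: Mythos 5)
Your construction is correct, and it produces exactly the objects the paper produces: the algebra $A=R(\1)$ with product $R_2(\1,\1)$ and unit $R_0$, the half-braiding $\sigma_X=(H^r_X)^{-1}H^l_X$ built from the two projection-formula morphisms, and the comparison functor $K(Y)=(R(Y),R_2(Y,\1))$. The difference lies in how the work is distributed. The paper's proof is essentially a two-step citation: it observes that the monoidal adjunction $F\vdash R$ becomes, in the opposite categories $\C^\opp$ and $\D^\opp$, a comonoidal Hopf adjunction (rigidity making every comonoidal adjunction Hopf), and then invokes \cite[Theorems 6.5 and 6.6]{blv} --- the first giving that the induced coalgebra of a Hopf adjunction is a cocommutative central coalgebra, i.e.\ a commutative central algebra $(A,\sigma)$ in $\C$, the second giving the monoidal equivalence $\D\simeq\MOD{\C}{(A,\sigma)}$ with $F_A\simeq KF$ when the right adjoint is conservative (in particular, faithful exact). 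Your route reconstructs the content of those two theorems by hand: you prove the Hopf morphisms $H^l,H^r$ are invertible by the same Hom-yoga the paper uses in Proposition~\ref{prop-crit-adj}, establish the monad isomorphism $?\otimes A\iso RF$, obtain the abelian equivalence from Beck monadicity (Section~\ref{comonmonadic}), and upgrade $K$ to a strong monoidal functor by factoring $R_2$ through the coequalizers defining $\otimes_A$ (where, as you correctly note, $\sigma$ is exactly what makes $R_2(Y,Y')$ coequalize the pair) and propagating invertibility of $K_2$ from free modules along presentations. This is legitimate and more self-contained --- in effect you re-prove the relevant parts of \cite{blv}, and your argument makes transparent why the statement holds (monadicity plus the projection formula). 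What it costs is precisely the coherence bookkeeping you flag: identifying a monoidal-monad structure on $?\otimes A$ with a commutative central algebra structure on $A$, and verifying the monoidal-functor axioms for $K_2$; since you delegate those points back to \cite{blv} anyway, your proof does not actually reduce the trusted input compared with the paper's citation, it only makes the mechanism explicit.
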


\begin{definition} The commutative central algebra $(A,\sigma)$ associated with a tensor functor $F$ admitting a right adjoint is called the \emph{induced central algebra of $F$.}
\end{definition}

\begin{proof}
Let $F : \C \to \D$ be a strong monoidal functor between rigid
categories, and let $R : \D \to \C$ be a right adjoint of $F$.
Note that $R$ is unique up to unique isomorphism. Then the
adjunction $F \vdash R$ is monoidal, which means that $R$ has a
natural structure of monoidal functor such that the adjunction
morphisms are monoidal. Considering the opposite monoidal
categories $\C^\opp$ and $\D^\opp$ (with opposite composition and
tensor products, we have a comonoidal  adjunction $R^\opp \vdash
F^\opp$, which is in fact a Hopf adjunction because $\C^\opp$,
$\D^\opp$ are rigid. The induced coalgebra of this Hopf adjunction
is by \cite[Theorem 6.5]{blv}  a cocommutative central coalgebra
in $\C^\opp$, that is, a commutative central algebra $(A,\sigma)$
in $\C$.

As an algebra, $A=R(\1)$ with product $R_2(\1,\1)$ and unit $R_0$,
where $(R_2,R_0)$ denotes the monoidal structure of $R$. The
half-braiding $\sigma : A \otimes \id_\C \iso \id_\C \otimes A$ is
defined by the following commutative diagram:
$$
\xymatrix@C5em{
R\1 \otimes X \ar[rr]^{\sigma_X}\ar[rd]^\sim \ar[d]_{\id_{R\1} \otimes \eta_X}&& X \otimes R\1\ar[d]^{\eta_X \otimes \id_{R\1}}\ar[dl]_\sim\\
R\1 \otimes RFX\ar[r]_{R_2(\1,FX)}& RFX & \ar[l]^{R_2(FX,\1)}RFX
\otimes R\1, }
$$
where $X$ is an object of $\C$, $\eta_X : X \to RF X$ is the
adjunction unit, and the slanted arrows are the Hopf isomorphisms
of the Hopf adjunction, see \cite{blv}.

Now assume $R$ is faithful exact. In particular $R$ and $R^\opp$
are conservative. By \cite[Theorem 6.6]{blv}, $\D$ is monoidal
equivalent to $\MOD{\C}{(A,\sigma)}$ via the \kt linear functor $K
: \D \to \MOD{\C}{(A,\sigma)}$, $Y \mapsto (R(Y), R_2(Y,\1))$, and
$F_A \simeq KF$ as  tensor functors.
\end{proof}


\begin{example}\label{exa-fib}
Let $H$ be a finite dimensional Hopf algebra over a field $\kk$.
The forgetful functor $U : \CoRep{H} \to \vect_\kk$ admits a right
adjoint $R=? \otimes H$. The induced central algebra $(A,\sigma)$
is a commutative algebra in $\Z(\Rep{H})$, that is, a commutative
algebra in $\Rep{D(H)}$.  As an algebra in $\CoRep{H}$,
$A=H$ with right coaction $\Delta$. The half-braiding $\sigma$ is
defined, in Sweedler's notation, by
$$\sigma_{(V,\partial)} : \prettydef{A \otimes V &\to V \otimes A\\h \otimes x &\mapsto x_{(0)} \otimes S(x_{(1)})\, h\, x_{(2)}}$$
for any right $H$-comodule $(V,\partial)$.
We have $\MOD{\CoRep{H}}{(A,\sigma)} \simeq \vect_\kk$ as tensor categories.
\end{example}

\begin{example}\label{exa-hopfdom} Let $f: H \to H'$ be a surjective morphism between finite dimensional Hopf algebras over a field $\kk$, and denote by
 $\ff= f_* : \CoRep{H} \to \CoRep{H'}$  the dominant tensor functor defined by $f$. It has a right adjoint $R=? \,\square^{H'} H$, which is exact because $H$ is $H'$-coflat.
The induced central algebra
$(B,\sigma')$ of $\ff$ is a commutative algebra in $\Z(\CoRep H)$.
We have $B=R(\1)=\kk\, \square^{H'}H= H^{\co H'} \subset H$, where $H$ is seen as a commutative central algebra of $\CoRep{H}$ (see Example~\ref{exa-fib}).
According to Proposition~\ref{dominant2}, we have a tensor equivalence:
$$\MOD{\CoRep{H}}{(B,\sigma')} \simeq \CoRep{H'}.$$
See also \cite[Theorem II]{schn}.
\end{example}

Commutative central algebras define tensor functors.

\begin{proposition}\label{dominant1}
Let $\C$ be a tensor category over a field $\kk$, and let $(A,\sigma)$
be a commutative central algebra of $\C$ such that $\Hom_\C(\1,A)=\kk$. Then:
\begin{enumerate}
\item
If $\MOD{\C}{(A,\sigma)}$ is rigid, it is a tensor category and
the free module functor $F_{A,\sigma} : \C \to \MOD{\C}{(A,\sigma)}$ is a dominant
tensor functor, whose induced central algebra is $(A,\sigma)$;
\item If $A$ is semisimple as an algebra in $\C$, then $\MOD{\C}{(A,\sigma)}$ is rigid;
\item If $\C$ is a fusion category, $A$ is semisimple and
$\kk$ is algebraically closed, then $\MOD{\C}{(A,\sigma)}$ is a fusion
category.
\end{enumerate}
\end{proposition}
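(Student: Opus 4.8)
The plan is to establish the three parts in order, relying on the facts, already set up above, that $\MOD{\C}{(A,\sigma)}$ carries a monoidal structure making the free module functor $F_{A,\sigma}$ strong monoidal, and that the forgetful functor $V_A : \MOD{\C}{(A,\sigma)} \to \C$ is \kt linear faithful exact and right adjoint to $F_A$.

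For Part (1), I would first verify that $\MOD{\C}{(A,\sigma)}$ is a tensor category once rigidity is assumed. It is \kt linear abelian as a category of $A$-modules, its monoidal product is \kt linear, and it is rigid by hypothesis; since $V_A$ is faithful exact, $\Hom$ spaces embed into those of $\C$ and so are finite-dimensional, while any strictly increasing chain of submodules maps to one in $\C$, forcing finite length. The unit object is $F_A(\1)=A$, and by adjunction $\End(A)\simeq\Hom_\C(\1,V_A A)=\Hom_\C(\1,A)=\kk$, so the unit is scalar. To see that $F_{A,\sigma}$ is dominant, note that for every right $A$-module $(M,r)$ the action $r$ is a morphism of $A$-modules $F_A(V_A M)\to M$ which admits the section $\id\otimes u$ in $\C$, hence is a split epimorphism in $\C$; as $V_A$ reflects epimorphisms, $M$ is a quotient of $F_A(V_A M)$, giving condition~(ii) of Lemma~\ref{dom-sub-quot}. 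Finally, the right adjoint of $F_{A,\sigma}$ is $V_A$, so by Proposition~\ref{dominant2} its induced central algebra has underlying algebra $V_A(A)=A$; that the reconstructed half-braiding equals $\sigma$ is precisely the assertion that $(A,\sigma)\mapsto F_{A,\sigma}$ and the passage to the induced central algebra are mutually inverse, which I would extract from~\cite{blv}.

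For Part (2) the strategy is to reduce rigidity first to free modules and then to direct summands. Since $F_{A,\sigma}$ is strong monoidal and $\C$ is rigid, each $F_A(X)$ admits a left and a right dual, represented by $F_A(\ldual X)$ and $F_A(\rdual X)$. When $A$ is semisimple, every right $A$-module is a direct summand of some $F_A(X)$; it then suffices to use that in an abelian, hence idempotent-complete, monoidal category the objects admitting a right (resp.\ left) dual are stable under direct summands, the idempotent cutting out a summand transposing to an idempotent on the dual whose splitting provides the required dual. Hence every object of $\MOD{\C}{(A,\sigma)}$ is rigid.

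For Part (3) I would simply assemble the previous parts with the semisimplicity hypotheses: Parts (1) and (2) show $\MOD{\C}{(A,\sigma)}$ is a tensor category; since $\C$ and $A$ are both semisimple, $\MOD{\C}{A}$ is semisimple; each simple $A$-module is a summand of $F_A(S)$ for some simple $S$ of $\C$, and as $\C$ has finitely many simples and each $F_A(S)$ has finite length there are finitely many simple $A$-modules, so the category is finite; and over an algebraically closed field a semisimple tensor category is split semisimple. Thus $\MOD{\C}{(A,\sigma)}$ is a fusion category. The one genuinely delicate point, which I expect to be the main obstacle, is the on-the-nose identification in Part~(1) of the induced half-braiding with $\sigma$; this is the naturality content of the correspondence in~\cite{blv} and should be cited rather than recomputed, all the remaining steps being routine transport of finiteness and semisimplicity through the faithful exact functor $V_A$ together with the preservation of duals by strong monoidal functors.
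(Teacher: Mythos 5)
Your proof is correct and follows essentially the same route as the paper: verify the tensor-category axioms for $\MOD{\C}{(A,\sigma)}$ using the faithful exact forgetful functor $V_A$ and the adjunction $\End_\D(A)\simeq\Hom_\C(\1,A)=\kk$, get rigidity in Part (2) from strong monoidality of $F_A$ plus semisimplicity of $A$ (duals passing to direct summands), and assemble Part (3) from semisimplicity and finiteness. The only differences are that you spell out details the paper leaves implicit — dominance via the split-epi action map $r:F_A(V_AM)\to M$, the idempotent argument for duals of summands, and the finite-length transport through $V_A$ — which is harmless elaboration, not a different argument.
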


\begin{proof}
Since $(A,\sigma)$ is a commutative algebra in $\Z(\C)$,
$\MOD{\C}{A}$ admits a monoidal structure denoted by
$\MOD{\C}{(A,\sigma)}$, with tensor product $\otimes_A$ and unit
object $F(\1)=A$, hence a strong monoidal functor  $F_{A,\sigma} :
\C \to \MOD{\C}{(A,\sigma)}$. Set $\D=\MOD{\C}{(A,\sigma)}$.

The category $\D$ is abelian \kt linear, it has  finite
dimensional Homs, and objects have finite length in $\D$. Moreover
its tensor product $\otimes_A$ is \kt bilinear, and
$\End_\D(\1)=\Hom_{\MOD{\C}{A}}(A,A)\simeq \Hom_\C(\1,A)\simeq
\kk$ by assumption. If $\D$ is rigid, it is a tensor category and
$F_{A,\sigma}$ is a dominant tensor functor. Its right adjoint $R$
is the forgetful functor. Its induced central algebra is
$R(\1)=A$, with the half-braiding defined in
Proposition~\ref{dominant2}, which is in fact $\sigma$, hence Part
(1).

Since $F_A$ is strong monoidal, all objects of the form $F_A(X)$ have a left and a right dual. If $A$ is semisimple, then any object
of $\D$ is a direct factor of $F_A(X)$ for some $X$ in $\C$, so it has also a left and a right dual, so $\D$ is rigid, hence Part (2).

If $\kk$ is algebraically closed, $\C$ is a fusion category, and $A$ is semisimple, then
$\D$ is semisimple and finite because $F_A$ is dominant, so it is a fusion category, hence Part (3).
\end{proof}

\begin{corollary}\label{suff-dominant} Let $\C$ be a fusion category over an algebraically field $\kk$.

Then the following data are equivalent:
\begin{enumerate}[(A)]
\item A commutative central algebra $(A,\sigma)$ of $\C$ such that $A$ is a semisimple algebra in $\C$ and $\Hom_\C(\1,A)=\kk$;
\item A dominant tensor functor  $\ff: \C \to \D$, where $\D$ is a fusion category over $\kk$.
\end{enumerate}

\end{corollary}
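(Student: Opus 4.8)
The plan is to prove the two implications (A)$\Rightarrow$(B) and (B)$\Rightarrow$(A) and to check that the two assignments are mutually quasi-inverse, so that the data genuinely correspond. Throughout I would use that a fusion category is finite, so every tensor functor between fusion categories has adjoints (Section~\ref{rladj}), and that every additive functor between semisimple categories is automatically exact. For (A)$\Rightarrow$(B), starting from a commutative central algebra $(A,\sigma)$ of $\C$ with $A$ semisimple in $\C$ and $\Hom_\C(\1,A)=\kk$, I would invoke Proposition~\ref{dominant1}(2) to see that $\MOD{\C}{(A,\sigma)}$ is rigid and Proposition~\ref{dominant1}(3) to see that it is a fusion category, which I take to be $\D$. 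Proposition~\ref{dominant1}(1) then says precisely that the free module functor $F_{A,\sigma}:\C\to\MOD{\C}{(A,\sigma)}$ is a dominant tensor functor into a fusion category, and moreover that its induced central algebra is again $(A,\sigma)$; this last clause already takes care of one of the two round-trip identities.

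For (B)$\Rightarrow$(A), let $\ff:\C\to\D$ be a dominant tensor functor between fusion categories. Since $\D$ is finite, $\ff$ admits a right adjoint $R$; as $\C$ and $\D$ are semisimple, $R$ is exact, and by Proposition~\ref{dom-crit} the dominance of $\ff$ is equivalent to the faithfulness of $R$. Hence $R$ is faithful exact and Proposition~\ref{dominant2} applies: $A=R(\1)$ carries a canonical structure of commutative central algebra $(A,\sigma)$, there is a tensor equivalence $K:\D\iso\MOD{\C}{(A,\sigma)}$, and $F_A\simeq K\ff$ as tensor functors. It then remains to verify the two conditions of (A). The scalar condition is immediate by adjunction, since $\Hom_\C(\1,A)=\Hom_\C(\1,R(\1))\simeq\Hom_\D(\ff(\1),\1)=\Hom_\D(\1,\1)=\kk$, using $\ff(\1)\simeq\1$.

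For the semisimplicity of $A$ in $\C$ I would argue as follows: every object $Y$ of $\D$ is a finite direct sum of simple objects, and since $\ff$ is dominant each simple object of $\D$ is a direct factor (a subobject, hence a summand in the semisimple setting) of $\ff(X)$ for a suitable $X$ in $\C$; thus every $Y$ is a direct factor of $\ff(X)$ for some $X$. Transporting this across the tensor equivalence $K$ and using $F_A\simeq K\ff$, the $A$-module $K(Y)$ is a direct factor of $F_A(X)$, and as $K$ is essentially surjective every right $A$-module in $\C$ arises in this way, so $A$ is semisimple in $\C$. Finally the two constructions are mutually quasi-inverse: Proposition~\ref{dominant1}(1) identifies the induced central algebra of $F_{A,\sigma}$ with $(A,\sigma)$, and the equivalence $F_A\simeq K\ff$ of Proposition~\ref{dominant2} identifies the functor reconstructed from the induced central algebra of $\ff$ with $\ff$ itself up to tensor equivalence. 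I expect the one genuinely delicate point to be this semisimplicity of $A$, which cannot be checked directly on $\C$ but must instead be read off from the equivalence $\D\simeq\MOD{\C}{(A,\sigma)}$ combined with the dominance of $\ff$.
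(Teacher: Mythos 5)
Your proposal is correct and follows essentially the same route as the paper: (A)$\Rightarrow$(B) via Proposition~\ref{dominant1} and (B)$\Rightarrow$(A) by noting that the right adjoint $R$ is exact (semisimplicity of $\D$) and faithful (dominance of $\ff$) so that Proposition~\ref{dominant2} applies. Your additional verifications --- that $\Hom_\C(\1,A)=\kk$ by adjunction and that $A$ is semisimple in $\C$ by transporting direct factors across the equivalence $K$ --- are correct and fill in details the paper's proof leaves implicit.
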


\begin{proof}
According to Proposition~\ref{dominant1}, a commutative central
algebra $(A,\sigma)$ as in (A) gives rise to a dominant tensor
functor $F_A : \C \to \MOD{\C}{(A,\sigma)}$, and
$\MOD{\C}{(A,\sigma)}$ is a fusion category over $\kk$.
Conversely, let $\D$ be a fusion category and let $F : \C \to \D$
be a dominant tensor functor. Then $F$ admits a right adjoint $R$,
which is exact because $\D$ is semisimple, and faithful because
$F$ is dominant. Thus we may apply Proposition~\ref{dominant2}.
Let $(A,\sigma)$ be the induced central algebra of $F$. Then $F$
is equivalent to $F_A$, in the sense that there exists a tensor
equivalence $K : \D \to \MOD{\C}{(A, \sigma)} $ such that $F_A
\simeq K F$ as tensor functors.
\end{proof}

%
%

%

\subsection{Normal functors and trivializing algebras}

We have seen that dominant tensor functors between fusion
categories are classified by their induced central algebras. We
now characterize similarly normal tensor functors between fusion
categories.

Let $A$ be an algebra in a tensor category $\C$. We say that
\emph{$A$ trivializes an object $X$ of $\C$} if $F_A(X) \simeq
F_A(\1)^n$ for some natural integer $n$, and  $A$ is
\emph{self-trivializing} if it trivializes its underlying object.

\begin{proposition}\label{toto}
Let $F : \C \to \D$ be an exact tensor functor between tensor
categories admitting an exact right adjoint, and let $(A,\sigma)$
be its induced central algebra (in $\C$). Then $F$ is normal if
and only if the algebra $A$ is self-trivializing.  If such is the
case, $\KER_\ff = \langle A\rangle \subset \C$ so that we have an
exact sequence of tensor categories:
$$\langle A \rangle \toto \C \toto \D.$$
\end{proposition}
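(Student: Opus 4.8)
My plan is to reduce everything to the free-module picture and then read off both assertions. Since the right adjoint $R$ of $F$ is exact by hypothesis and faithful (because $F$ is dominant, by Proposition~\ref{dom-crit}), Proposition~\ref{dominant2} applies, and I would use it to identify $\D$ with $\MOD{\C}{(A,\sigma)}$, the functor $F$ with the free module functor $F_A : X \mapsto X\otimes A$, and $R$ with the forgetful functor $V_A : \MOD{\C}{A}\to\C$. Under this identification the unit object of $\D$ is $F_A(\1)=A$, so an object $X$ of $\C$ lies in $\KER_F$ exactly when $F_A(X)=X\otimes A$ is isomorphic, as a right $A$-module, to $F_A(\1)^n=A^n$ for some $n$; in other words $\KER_F$ is precisely the class of objects trivialized by $A$.

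First I would settle the equivalence ``$F$ normal $\iff$ $A$ self-trivializing''. Since $F$ has adjoints, Proposition~\ref{func-normal}(1) says that $F$ is normal if and only if $R(\1)$ belongs to $\KER_F$. But $R(\1)=A$, and by the previous paragraph $A\in\KER_F$ means exactly $F_A(A)\simeq F_A(\1)^n$, i.e.\ that $A$ trivializes its own underlying object. This is the definition of $A$ being self-trivializing, so the first assertion is immediate.

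Next, assuming this common condition, the task is to prove $\KER_F=\langle A\rangle$. The inclusion $\langle A\rangle\subseteq\KER_F$ is formal: $\KER_F=F^{-1}(\langle\1\rangle)$ is a full subcategory closed under direct sums, subobjects and quotients (because $F$ is exact and $\langle\1\rangle\simeq\vect_\kk$ is closed under these operations), and it contains $A$ since $A$ is self-trivializing; minimality of $\langle A\rangle$ then gives the inclusion. For the reverse inclusion I would take $X\in\KER_F$, so that $F_A(X)\simeq F_A(\1)^n$ as $A$-modules, and apply the forgetful functor $R=V_A$ to obtain an isomorphism $X\otimes A\simeq A^n$ in $\C$; hence $X\otimes A\in\langle A\rangle$. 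At the same time the adjunction unit $\eta_X : X\to RF(X)=X\otimes A$ is a monomorphism, since $F$, being a tensor functor, is faithful, and the unit of an adjunction whose left adjoint is faithful is pointwise a monomorphism. Therefore $X$ embeds as a subobject of $X\otimes A\simeq A^n\in\langle A\rangle$, and stability of $\langle A\rangle$ under subobjects forces $X\in\langle A\rangle$. Thus $\KER_F\subseteq\langle A\rangle$, and the two subcategories coincide.

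Finally, $\langle A\rangle=\KER_F$ is a full tensor subcategory of $\C$, its inclusion is a full embedding with essential image $\KER_F$, and $F$ is dominant and normal; so all the conditions of Definition~\ref{def-sec} hold and $\langle A\rangle\to\C\overset{F}\to\D$ is an exact sequence of tensor categories. The step that will require genuine care — the main obstacle — is the reverse inclusion $\KER_F\subseteq\langle A\rangle$: the key is that the $A$-module isomorphism $F_A(X)\simeq A^n$ descends, after forgetting the module structure, to an isomorphism $X\otimes A\simeq A^n$ in $\C$, and that $X$ sits inside this object through the monic unit $\eta_X$, so that closure of $\langle A\rangle$ under subobjects can be invoked.
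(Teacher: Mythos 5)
Your proposal is correct and is essentially the paper's own proof: the same combination of Proposition~\ref{func-normal} and Proposition~\ref{dominant2} yields the equivalence between normality and self-trivialization, the same minimality argument gives $\langle A\rangle \subseteq \KER_F$, and the same key step ($F_A(X)\simeq A^n$ as $A$-modules plus a monomorphism $X \to X\otimes A$) gives the reverse inclusion --- the paper merely justifies that monomorphism by tensoring the algebra unit $\1 \into A$ with $X$, which is exactly the same map as your adjunction unit $\eta_X$. One shared caveat: like the paper's proof, you invoke dominance of $F$ (to make $R$ faithful so that Proposition~\ref{dominant2} applies, and to get the exact-sequence conclusion), a hypothesis the statement literally omits but which the surrounding section clearly intends.
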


\begin{proof} Denote by $R$ a left adjoint of $F$, so that $A$ is isomorphic to $R(\1)$. According to Proposition~\ref{func-normal},
$\ff$ is normal $\iff$ $\ff R(\1)$ is trivial $\iff$ $\ff_A(A)$ is trivial by
Proposition~\ref{dominant2}.  Now assume $A$ is self-trivializing.
Since $\KER_F$ is an abelian subcategory  of $\C$ containing $A$
and stable under subobjects and quotients, it contains $\langle A
\rangle$ by definition. Conversely if $X$ is in
$\KER_\ff=\KER_{F_{A,\sigma}}$, then $X\otimes A \simeq A^n$ and
$\1 \into A$ so $X \into A^n$, hence $X$ is in $\langle
A \rangle$.
\end{proof}

\begin{corollary}
 An exact sequence of tensor categories $\C' \toto \C \overset{F}\toto \C''$  such that $F$ has a faithful right adjoint is equivalent to
 the exact sequence of tensor categories
 $$\xymatrix{\langle A \rangle \ar[r]& \C \ar[r]^-{F_{A,\sigma}}&\MOD{\C}(A,\sigma)},$$ where $(A,\sigma)$ denotes the induced central algebra of $F$.
\end{corollary}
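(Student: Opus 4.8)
The plan is to recognize this corollary as the synthesis of Proposition~\ref{dominant2} and Proposition~\ref{toto}, applied to the functor $F$ of the given exact sequence, and then to package the two resulting equivalences into a single equivalence of exact sequences. First I would record that, being part of an exact sequence, $F$ is dominant and normal, and that $\C'$ is identified with $\KER_F \subset \C$ through the full embedding. Since $F$ is dominant, Proposition~\ref{dom-crit} shows that its right adjoint $R$ is automatically faithful, in accordance with the hypothesis; I then set $A=R(\1)$ and equip it with the half-braiding $\sigma$ of Proposition~\ref{dominant2}, so that $(A,\sigma)$ is by definition the induced central algebra of $F$.

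Next I would apply Proposition~\ref{dominant2}: it yields a tensor equivalence $K : \C'' \iso \MOD{\C}{(A,\sigma)}$ together with a monoidal isomorphism $F_{A,\sigma} \simeq K F$, thereby identifying the right-hand functor of the target sequence with $F$ up to the equivalence $K$. In parallel I would invoke Proposition~\ref{toto}: since $F$ is normal, the algebra $A$ is self-trivializing and $\KER_F = \langle A\rangle$, so the kernel term of the target sequence coincides with the kernel term of the original one.

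Finally I would assemble these into the ladder
$$\begin{CD}
\C' @>>> \C @>{F}>> \C'' \\
@VV{\simeq}V @VV{=}V @VV{K}V \\
\langle A \rangle @>>> \C @>{F_{A,\sigma}}>> \MOD{\C}{(A,\sigma)},
\end{CD}$$
where the left vertical arrow is the composite equivalence $\C' \iso \KER_F = \langle A\rangle$, the middle arrow is the identity, and the right arrow is $K$. The right-hand square commutes up to monoidal isomorphism by $F_{A,\sigma}\simeq KF$, while the left-hand square commutes because, up to the equivalence $\C'\simeq\langle A\rangle$, both horizontal arrows are the inclusion of $\KER_F$ into $\C$ (the corestriction $\C'\iso\KER_F$ composed with the inclusion returns the original embedding). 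By the notion of equivalence of exact sequences formalized in Definition~\ref{def-sec} and used in Corollary~\ref{equiv-ext}, this exhibits the two sequences as equivalent.

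The only genuinely substantive point, and hence the main obstacle, is ensuring that the hypotheses of Propositions~\ref{dominant2} and~\ref{toto} are in force — specifically that the right adjoint $R$ is exact, not merely faithful. Faithfulness is free from dominance, but it is exactness that drives both the equivalence $\C''\simeq\MOD{\C}{(A,\sigma)}$ and the identification $\KER_F=\langle A\rangle$; in the semisimple (fusion) setting it holds automatically, and in general it is what must be extracted from the standing hypothesis on the right adjoint. Once exactness is secured, the remainder is the routine bookkeeping of transporting the half-braiding and checking commutativity of the ladder up to monoidal isomorphism.
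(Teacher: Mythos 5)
Your proof is correct and follows essentially the paper's own route: the corollary is stated in the paper without a separate proof, being the immediate combination of Proposition~\ref{dominant2} (the tensor equivalence $K:\C''\iso\MOD{\C}{(A,\sigma)}$ with $F_{A,\sigma}\simeq KF$) and Proposition~\ref{toto} ($A$ is self-trivializing and $\KER_F=\langle A\rangle$ since $F$ is normal), assembled into the three-term ladder exactly as you describe. Your closing caveat is also the right resolution of the one delicate point: faithfulness of $R$ is automatic from dominance (Proposition~\ref{dom-crit}) and cannot yield exactness in general, so the corollary's hypothesis must be read as ``exact right adjoint'' --- which is precisely how this statement is phrased in the paper's introduction --- and with that reading your argument is complete.
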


\begin{proposition} Let $\ff : \C \to \D$ be a tensor functor between  fusion categories, and let $(A,\sigma)$ be its induced central algebra.
Then any simple object of $\KER_\ff$ is a direct factor of $A$.
The functor $\ff$ is normal if and only if $A$ is
self-trivializing, and if such is the case, then we have
$$A \simeq \bigoplus_{X \in \Lambda_{\KER_\ff }} X^{\FPdim X},$$ and in particular $\FPdim(\KER_\ff)= \FPdim(A)$.
\end{proposition}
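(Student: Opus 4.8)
The plan is to reduce all three assertions to a single computation of the multiplicities $m_X(A)$ of the simple objects $X$ of $\C$ in the induced central algebra $A$, and then to read off each claim from the semisimplicity of fusion categories.

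First I would record the key multiplicity identity. Since $\C$ and $\D$ are fusion categories, $\ff$ admits a right adjoint $R$, which is automatically exact and faithful, and by Proposition~\ref{dominant2} the induced central algebra is $A=R(\1)$. For any simple object $X$ of $\C$ the adjunction gives
\[
m_X(A)=\dim\Hom_\C(X,R(\1))=\dim\Hom_\D(\ff(X),\1)=m_\1(\ff(X)),
\]
where the last equality uses that in a fusion category $\dim\Hom(Y,\1)=\dim\Hom(\1,Y)$.

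For the first assertion I would take $X\in\Lambda_{\KER_\ff}$; by Lemma~\ref{fpdim-tf} we have $\ff(X)\simeq\1^{\FPdim X}$, so $m_X(A)=m_\1(\ff(X))=\FPdim X>0$, and since $\C$ is semisimple this exactly says that $X$ is a direct factor of $A$. Note that this step uses no normality hypothesis. The equivalence ``$\ff$ is normal if and only if $A$ is self-trivializing'' then follows immediately from Proposition~\ref{toto}, whose hypotheses hold here because $\ff$ is exact and has an exact right adjoint.

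Finally, assuming $\ff$ normal, I would determine $A$ completely. For a simple $X\notin\KER_\ff$, normality in the form of Proposition~\ref{func-normal}(2) forces $\Hom_\D(\1,\ff(X))=0$, whence $m_X(A)=m_\1(\ff(X))=0$; combined with $m_X(A)=\FPdim X$ for $X\in\Lambda_{\KER_\ff}$ this yields $A\simeq\bigoplus_{X\in\Lambda_{\KER_\ff}}X^{\FPdim X}$, and taking Frobenius--Perron dimensions gives $\FPdim A=\sum_{X\in\Lambda_{\KER_\ff}}(\FPdim X)^2=\FPdim(\KER_\ff)$. There is no genuine obstacle in this argument; the only point demanding care is the multiplicity identity above, where one must use the right adjoint rather than the left and invoke the self-duality of Hom-dimensions in fusion categories, after which the effect of the normality hypothesis is transparent.
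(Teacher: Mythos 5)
Most of your argument is correct and is essentially the paper's: the adjunction identity $m_X(A)=\dim\Hom_\C(X,R(\1))=\dim\Hom_\D(\ff(X),\1)=m_\1(\ff(X))$ is exactly the identity the paper uses, it gives the first assertion via Lemma~\ref{fpdim-tf} without any normality hypothesis, and your case analysis using Proposition~\ref{func-normal}(2) correctly yields $A\simeq\bigoplus_{X\in\Lambda_{\KER_\ff}}X^{\FPdim X}$ and $\FPdim A=\FPdim(\KER_\ff)$ once normality is granted.

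The gap is in the middle step, and it is signalled by your (false) claim that the right adjoint $R$ is ``automatically exact and faithful.'' Exactness is automatic by semisimplicity, but faithfulness of $R$ is \emph{equivalent} to dominance of $\ff$ (Proposition~\ref{dom-crit}), and the present proposition does not assume $\ff$ dominant. This matters precisely for the equivalence ``$\ff$ normal $\iff$ $A$ self-trivializing,'' which you obtain by citing Proposition~\ref{toto} directly. Read literally, the hypotheses of Proposition~\ref{toto} (exact tensor functor with exact right adjoint) are met; but self-trivialization concerns $F_{A,\sigma}(A)$ in $\MOD{\C}{(A,\sigma)}$, while normality concerns $\ff(A)$ in $\D$, and the bridge between the two in the proof of Proposition~\ref{toto} is the identification $\D\simeq\MOD{\C}{(A,\sigma)}$, $\ff\simeq F_{A,\sigma}$, of Proposition~\ref{dominant2} --- which requires $R$ to be faithful exact, i.e.\ $\ff$ to be dominant (this is also visible in the conclusion of Proposition~\ref{toto}, an exact sequence $\langle A\rangle\to\C\to\D$, which presupposes dominance). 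For a non-dominant $\ff$, $\MOD{\C}{(A,\sigma)}$ is equivalent to the dominant image of $\ff$, not to $\D$. The paper bridges exactly this gap by first factoring $\ff=i\ff_0$, where $\ff_0:\C\to\E$ is dominant onto the dominant image $\E\subset\D$ and $i$ is the full embedding: one checks that $\ff$ and $\ff_0$ have the same induced central algebra (the right adjoint of $i$ fixes $\1$) and that $\ff$ is normal iff $\ff_0$ is (since $i$ is a full embedding, $\KER_\ff=\KER_{\ff_0}$ and trivial objects match), and only then applies Proposition~\ref{toto} to the dominant functor $\ff_0$. Inserting this reduction makes your proof complete; everything else goes through unchanged.
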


\begin{proof} The functor $\ff$ can be decomposed as $\ff=i \ff_0$,
where $i: \C  \subset \D$ is the inclusion of the dominant image
of $\E$, that is, the full subcategory of $\D$ whose objects are
direct summands of elements of the image of $\ff$, and $\ff_0 : \C
\to \E$ is the dominant tensor functor induced by $\ff$. Then
$\ff$ is normal if and only if $\ff_0$ is normal, and $(A,\sigma)$
is the induced central algebra of $\ff_0$, so by
Proposition~\ref{toto}, $\ff$ is normal if and only if $A$ is
self-trivializing.

For $X \in \Lambda_\C$ we have by adjunction
$\Hom_\D(F(X),\1)=\Hom_\C(X,A)$. In particular if  $F$ trivializes
$X$ then $X$ is a direct factor of $A$. Assume  that $\ff$ is
normal. Then $\ff$ trivializes $A$, so
for any $X$ simple direct factor of $A$, we have $F(X) \simeq
\1^n$, with $n=\FPdim X$.  Also, $n=\dim \Hom(F(X),\1)=\dim
\Hom(X,A)=m_{X,A}$. Hence $A= \sum_X X^{\FPdim X}$, where $X$
ranges over the set of classes of simple factors of $A$, that is,
$\Lambda_{\KER_\ff}$.
\end{proof}

\begin{corollary}
Let $\C$ be a  finite tensor category.  The following are
equivalent:
\begin{enumerate}[(A)]
\item Fiber functors for $\C$;
\item Commutative central algebras $(A,\sigma)$ of $\C$ such that $A$  trivializes all objects of $\C$ and satisfies  $\Hom_\C(\1,A)=\kk$.
\end{enumerate}
\end{corollary}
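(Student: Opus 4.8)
The plan is to exhibit two mutually inverse constructions relating the two kinds of data, relying on Propositions~\ref{dominant2} and~\ref{dominant1}. A fiber functor $\omega : \C \to \vect_\kk$ produces its induced central algebra $(A,\sigma)$ with $A = R(\1)$, where $R$ is the right adjoint of $\omega$; conversely a central algebra $(A,\sigma)$ as in (B) produces the free module functor $F_{A,\sigma} : \C \to \MOD{\C}{(A,\sigma)}$, once one checks that its target is a trivial tensor category, i.e. tensor equivalent to $\vect_\kk$.

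For the direction (A) $\Rightarrow$ (B): given a fiber functor $\omega$, since $\C$ is finite $\omega$ has adjoints (Section~\ref{rladj}); let $R$ be its right adjoint. I would first observe that $R$ is automatically faithful exact. It is faithful because $\omega$ is dominant (every object of $\vect_\kk$ is a quotient of $\omega(\1^n)$), so by Proposition~\ref{dom-crit} its right adjoint is faithful; and it is exact because its source $\vect_\kk$ is semisimple, so that every short exact sequence there splits and the additive functor $R$ preserves it. Proposition~\ref{dominant2} then applies: $A = R(\1)$ is a commutative central algebra, there is a tensor equivalence $K : \vect_\kk \iso \MOD{\C}{(A,\sigma)}$, and $F_A \simeq K\omega$. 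Since $K$ preserves the unit, $K(\kk^n) \simeq A^n$, whence $F_A(X) \simeq K\omega(X) \simeq A^{\dim \omega(X)}$, so $A$ trivializes every object of $\C$; and $\Hom_\C(\1,A) = \Hom_\C(\1, R\1) \simeq \Hom_{\vect_\kk}(\omega\1, \1) = \kk$ by adjunction. Thus $(A,\sigma)$ is of type (B).

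For the direction (B) $\Rightarrow$ (A): given $(A,\sigma)$ as in (B), the key point is to prove that $\MOD{\C}{(A,\sigma)}$ is a trivial tensor category. Since $A$ trivializes all objects, every free module $F_A(X) = X\otimes A$ is isomorphic to $A^n$; and every object $M$ of $\MOD{\C}{A}$ is a quotient of the free module $F_A(M)$ via its action map (which is split epi in $\C$, hence epi in $\MOD{\C}{A}$ as the forgetful functor is faithful exact). Hence every object of $\MOD{\C}{A}$ is a quotient of some $A^n$. Now $\End(A) = \Hom_{\MOD{\C}{A}}(A, A) \simeq \Hom_\C(\1, A) = \kk$, so any morphism $A^k \to A$ is given by a scalar row vector and therefore has image $0$ or $A$; since every submodule $N \subseteq A$ is itself a quotient of some $A^k$, the resulting map $A^k \to A$ with image $N$ forces $N \in \{0, A\}$, so $A$ is simple in $\MOD{\C}{A}$. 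Consequently each $A^n$ is semisimple, and every object, being a quotient of some $A^n$, is a direct sum of copies of $A$. Thus $\MOD{\C}{A} \simeq \vect_\kk$ as $\kk$-linear abelian categories, with unit $A$; the monoidal structure $\MOD{\C}{(A,\sigma)}$ is then forced to coincide with that of $\vect_\kk$, which is rigid. Therefore $\MOD{\C}{(A,\sigma)}$ is a trivial tensor category and $F_{A,\sigma} : \C \to \MOD{\C}{(A,\sigma)} \simeq \vect_\kk$ is a fiber functor.

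Finally, these constructions are mutually inverse: starting from $(A,\sigma)$, Proposition~\ref{dominant1}(1) identifies the induced central algebra of $F_{A,\sigma}$ with $(A,\sigma)$; starting from $\omega$, the relation $F_A \simeq K\omega$ of Proposition~\ref{dominant2} shows the reconstructed fiber functor agrees with $\omega$ up to the tensor equivalence $K$. The main obstacle is the direction (B) $\Rightarrow$ (A), and precisely the proof that $A$ is a simple object of $\MOD{\C}{A}$: this is what upgrades ``$A$ trivializes all objects'' to ``$\MOD{\C}{(A,\sigma)}$ is trivial'', and it lets us bypass any separate verification that $A$ is a semisimple algebra. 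The argument above achieves it using only $\End(A) = \kk$ together with the fact that every module is a quotient of a free, hence trivial, module.
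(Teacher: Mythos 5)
Your proposal is correct, and in the direction (A) $\Rightarrow$ (B) it coincides with the paper's own argument: the right adjoint $R$ exists because $\C$ is finite, is faithful because $\omega$ is dominant, is exact because $\vect_\kk$ is semisimple, and then Proposition~\ref{dominant2} applies; you merely make explicit the verifications, left implicit in the paper, that the induced algebra trivializes all objects (via $F_A\simeq K\omega$) and satisfies $\Hom_\C(\1,A)=\kk$ (by adjunction). The genuine difference is in the converse: the paper's proof simply \emph{asserts} that $\MOD{\C}{(A,\sigma)}$ is tensor equivalent to $\vect_\kk$, with no justification, whereas you prove it. Your key lemma --- $A$ is a simple object of $\MOD{\C}{A}$, because $\End(A)\simeq\Hom_\C(\1,A)=\kk$ while every module is a quotient of a free module $X\otimes A\simeq A^n$, so every submodule of $A$ is the image of a morphism $A^k\to A$, necessarily $0$ or $A$ --- is exactly the missing ingredient: it gives that every module is a direct sum of copies of the unit, whence triviality of the category, and also rigidity, since finite biproducts of the (dualizable) unit object are dualizable; that last point is the cleanest way to cash in your slightly sloganish claim that the monoidal structure is ``forced to coincide'' with that of $\vect_\kk$. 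Your argument moreover shows, en passant, that the hypotheses of (B) automatically make $A$ a semisimple algebra in $\C$, reconnecting this corollary with Proposition~\ref{dominant1}(2) and Corollary~\ref{suff-dominant} --- a point the published proof never addresses. In short: same framework (induced central algebra versus free module functor, mutually inverse by Propositions~\ref{dominant2} and~\ref{dominant1}), but your write-up closes a real gap in the paper's converse direction.
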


\begin{proof}
A fiber functor for $\C$ is just a tensor functor $\omega: \C \to
\vect_\kk$. Such a tensor functor is automatically dominant and
normal; it admits a right adjoint $R$, $\C$ being finite; and $R$
is faithful (because $\omega$ is dominant) and exact, $\vect_\kk$
being semisimple. Thus by Proposition~\ref{dominant2} $\omega$ it
is classified by its induced central algebra $(A,\sigma)$.
Conversely, a commutative central algebra $(A,\sigma)$ which
trivializes all objects of $\C$ and satisfies $\Hom(\1,A)=\kk$.
Then $\D=\MOD{\C}(A,\sigma)$ is tensor equivalent to $\vect_\kk$,
and
$$\omega :
\prettydef{ \C& \to &\vect_\kk,\\X &\mapsto
&\Hom_\D(\1,F_{A,\sigma}(X))\simeq \Hom_\C(\1, X \otimes A) }$$ is
a fiber functor for $\C$.
\end{proof}

\begin{remark}
In particular, let $H$ be  a finite-dimensional Hopf algebra, with
product $m$, unit $u$, coproduct $\Delta$ and counit
$\varepsilon$. The forgetful functor $\U: \CoRep(H) \to \vect_\kk$
is a fiber functor for $\CoRep(H)$.
The induced central algebra $(A,\sigma)$ of $\U$  is $H_H$,
seen as a central algebra of $\CoRep{H}$ (see Example~\ref{exa-fib}), and we have $\U \simeq
\Hom_{\CoRep{H}}(\1, ? \otimes H_H)$ as fiber functors.
\end{remark}

\subsection{Equivariantizations in terms of central commutative algebras}\label{actionofG}

Let $\ff: \C \to \D$ be a dominant functor between tensor
categories admitting an exact right adjoint, and let $(A,\sigma)$
be its induced central algebra, so that $\D \simeq
\MOD{\C}{(A,\sigma)}$. Denote by  $G=\Aut(A,\sigma)$ the group of
automorphisms of the central algebra $(A,\sigma)$ (that is,
algebra automorphisms compatible with the half-braiding $\sigma$).
The group $G$ acts on the category $\MOD{\C}{(A,\sigma)}$ by
tensor autoequivalences (see \cite[Proposition 2.10]{eno2}),
setting:
$$\rho^g: \prettydef{
\MOD{\C}{(A,\sigma)} & \to &\MOD{\C}{(A,\sigma)}\\
(X,r)& \mapsto &\bigl(X,r(\id_X \otimes a^{-1}_g)\bigr).}$$
Moreover, for each $g \in G$, let  $u^g = \id_X \otimes a(g) :
\rho^g \ff_A(X) \iso \ff_A(X)$. This defines a tensor functor over
$\D$:
$$\ff^G_A : \prettydef{\C &\to &\mod{\C}{(A,\sigma)}^G \\ X & \mapsto & (\ff_A(X),(u^g)_{g \in G}).}$$

Hence, via the tensor equivalence $\D \simeq
\MOD{\C}{(A,\sigma)}$, we get  an action of $\Aut(A,\sigma)$ on
the tensor category $\D$ and a tensor functor $\ff^G : \C \to
\D^G$ such that the following triangle of tensor functors
commutes:
$$\xymatrix{
\C\ar[rr]^{\ff^G}\ar[rd]_\ff && \D^G\ar[ld]^{\U^G}\\
& \D & }$$

\begin{lemma}\label{glke} Assume that $\ff$ is normal. Denote by $H$ its induced Hopf algebra,
and by $G(H)$ the group of grouplike elements of $H$. Then
$\Aut(A,\sigma)$ is isomorphic to a subgroup of $G(H)$, and in
particular its order  divides  $\dim_\kk H= \dim_{ \kk }
\End_\C(A)$.\end{lemma}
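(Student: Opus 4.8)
The plan is to realise $\Aut(A,\sigma)$ as a group of grouplike elements of a finite-dimensional Hopf algebra and then invoke the Nichols--Zoeller theorem. Since $\ff$ is normal, Proposition~\ref{toto} gives $\KER_\ff=\langle A\rangle$, and by Section~\ref{fiberf} there is a canonical tensor equivalence $\KER_\ff\iso\CoRep{H}$ under which the object $A=R(\1)$ is carried to the regular right $H$-comodule $(H,\Delta)$. First I would record the dimension count asserted in the statement: as $\KER_\ff$ is a full subcategory of $\C$, we have $\End_\C(A)=\End^H(H,\Delta)$, and an endomorphism of the regular comodule is exactly left convolution $h\mapsto\chi(h_{(1)})h_{(2)}$ by a functional $\chi=\varepsilon\phi\in H^*$; this identifies $\End_\C(A)\cong (H^*)^{\opp}$ as algebras, whence $\dim_\kk\End_\C(A)=\dim_\kk H$.

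Next I would pin down the relevant group. Under the equivalence the algebra structure of $A$ corresponds to the multiplication of $H$, so that $(A,\sigma)$ becomes the comodule algebra $(H,m_H,\Delta)$; this is the content of the induced central algebra construction of Proposition~\ref{dominant2} and can be read off the explicit formula of Example~\ref{exa-fib}. An element of $\Aut(A,\sigma)$ is in particular an endomorphism of $A$ in $\C$, hence of the form $\phi_\chi=(\chi\otimes\id)\Delta$ for a unique $\chi\in H^*$, and the requirement that $\phi_\chi$ respect $m_H$ forces $\chi$ to be an algebra map $H\to\kk$, that is, a grouplike element of the dual Hopf algebra $H^*\cong\End_\C(A)$. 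The assignment $a\mapsto\chi_a$ is then an injective homomorphism of $\Aut(A,\sigma)$ into the group $G(H^*)$ of these grouplikes, the compatibility with the half-braiding $\sigma$ only cutting the image down to a subgroup; intrinsically $G(H^*)$ is nothing but $\Aut^{\otimes}(\omega_\ff)$, the group of tensor automorphisms of the fiber functor $\omega_\ff$.

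Finally I would deduce the divisibility. Grouplike elements of a Hopf algebra are linearly independent, so the $\chi_a$ span a Hopf subalgebra $\kk[\Aut(A,\sigma)]\subseteq H^*$ of dimension exactly $|\Aut(A,\sigma)|$; by the Nichols--Zoeller theorem this order divides $\dim_\kk H^*=\dim_\kk H=\dim_\kk\End_\C(A)$, which is the numerical conclusion of the lemma.

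The step I expect to be the main obstacle is the precise identification, under $\KER_\ff\simeq\CoRep H$, of the algebra carried by $A$ with $(H,m_H,\Delta)$, together with the verification that the $\sigma$-compatible algebra automorphisms land among the grouplikes rather than in the larger group of convolution-invertible functionals in $H^*$; the remaining points are bookkeeping or citations. One also uses that $H$ is finite-dimensional, which holds because $\ff$ admits adjoints, so that $\C'=\KER_\ff$ is finite by Proposition~\ref{prop-crit-adj} and the Nichols--Zoeller theorem indeed applies.
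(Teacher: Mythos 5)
Your proof is correct, but it runs on the opposite side of the adjunction from the paper's. The paper argues in the base category: $\ff$ is monadic with Hopf monad $T$, and the equivalence $\C\iso\D^T$ carries $(A,\sigma)$ to the left dual of the induced central coalgebra $\hat{C}=(T(\1),T_2(\1,\1),T_0)$ with $T$-action $\mu_\1$; thus $\Aut(A,\sigma)$ becomes a group of $T$-linear coalgebra automorphisms of the regular module-coalgebra, these are right translations by grouplike elements, and the divisibility follows from freeness of a Hopf algebra over the group algebra of its grouplikes. You argue instead inside the kernel: under $\KER_\ff\simeq\CoRep{H}$ the algebra $A$ becomes the regular comodule algebra $(H,m_H,\Delta)$, its comodule endomorphism algebra is $\dual{H}$ up to opposition (giving $\dim_\kk\End_\C(A)=\dim_\kk H$), its comodule algebra automorphisms are precisely the characters $\Alg(H,\kk)$, and Nichols--Zoeller \cite{NZ} gives the divisibility. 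The two arguments are dual to one another; each must justify one identification under an equivalence (yours: $A\mapsto(H,m_H,\Delta)$, which you correctly single out as the crux; the paper's: $(A,\sigma)$ dual to $(\hat{C},\hat\sigma)$, which it takes from \cite{blv}), and both finish with the same freeness input.

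There is one substantive discrepancy, and it is in your favor. You embed $\Aut(A,\sigma)$ into $G(\dual{H})=\Alg(H,\kk)$, while the lemma as stated says $G(H)$. With the paper's own conventions ($H$ is defined by $\KER_\ff\simeq\CoRep{H}$, so $T(\1)\simeq\dual{H}\otimes\1$ and $H\simeq\Hom_\D(T(\1),\1)$ rather than $\Hom_\D(\1,T(\1))$), your target group is the correct one: for an equivariantization under a finite group $\Gamma$ the induced Hopf algebra is $H=\kk^\Gamma$ by Theorem~\ref{monact}, and $\Aut(A,\sigma)\cong\Gamma=G(\dual{H})$, whereas $G(H)=\Hom(\Gamma,\kk^\times)$ is too small when $\Gamma$ is nonabelian. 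The paper's proof makes the same silent identification of $H$ with $\Hom_\D(\1,T(\1))=\dual{H}$ when it writes $T(\1)\simeq H\otimes\1$; carried out with consistent conventions it proves exactly what you prove. Since $\dim_\kk H=\dim_\kk\dual{H}$, the numerical conclusion---that the order of $\Aut(A,\sigma)$ divides $\dim_\kk H=\dim_\kk\End_\C(A)$---is the same either way.
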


\begin{proof} Recall that $\ff$ is monadic, and, denoting by $T$ its monad,  we have a canonical tensor equivalence $\C \iso \D^T$
which sends $(A,\sigma)$ to the left dual of the induced central
coalgebra $(\hat{C}, \hat\sigma )$ of $T$. In particular,
$\Aut(A,\sigma)$ is isomorphic to the group of automorphisms of
$(\hat{C},\hat{\sigma})$, which is a subgroup of the group
$\Aut(\hat{C})$ of automorphisms of the coalgebra $\hat{C}$. Now,
as a coalgebra of $\D$, $\hat{C}=(T(\1),T_2(\1,\1),T_0)$, with
$T$-action $\mu_\1$. Since $\ff$ is normal, we have $T(\1) \simeq
H \otimes \1$, and, via the canonical tensor equivalence
$\langle\1 \rangle \simeq \vect_\kk$, $\hat{C}$ is just the
coalgebra $H$ with $H$ acting by left multiplication,  whose group
of automorphisms is $G(H)$. Thus, $\Aut(A,\sigma) \subset G(H)$,
and since $H$ is free as a $\kk G(H)$-module, then the order of
$G(H)$ divides $\dim_\kk H$. We also have by adjunction $H \simeq
\Hom_\D(\1,T(\1))\simeq \End_{\D^T}(\hat{C}) \simeq \End_\C(A) $,
hence the lemma is proved.
\end{proof}

\begin{definition}
Let $X$ be a trivial object of a tensor category $\C$. There is a
unique half-braiding $\tau : X \otimes ? \to ? \otimes X$ such
that $(X,\tau)$ is a trivial object in the center $\Z(\C)$ of $\C$.  This
half-braiding is called the \emph{trivial half-braiding for $X$}.
\end{definition}

\begin{lemma}\label{lem-coco}
Let $\ff: \C \to \D$ be a dominant tensor functor between tensor
categories admitting a right adjoint, let $(A,\sigma)$ be its
induced central algebra and $T$ its monad. Denote by $\tau$ the
trivial half-braiding of the trivial object $\ff(A)$ of $\D$. Then
the following assertions are equivalent:
\begin{enumerate}[(i)]
\item The normal Hopf monad
$T$ is cocommutative;
\item For any $X$ object of $\C$, the following triangle is commutative:
$$\xymatrix{
T(X) \ar[r]^-{T_2(\1,X)} \ar[d]_{T_2(X,\1)} & T(\1) \otimes T(X) \ar[ld]^{\hat{\tau}_{T(X)}}\\
 T(X) \otimes T(\1) & }$$
\item For any object $X$ of $\C$, $\ff(\sigma_X)=\tau_{\ff(X)}$.
\end{enumerate}
\end{lemma}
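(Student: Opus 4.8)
The plan is to prove $(i)\Leftrightarrow(ii)$ by a direct manipulation of the trivial half-braiding, and $(ii)\Leftrightarrow(iii)$ by transporting both conditions through the duality between $A$ and the induced central coalgebra of $T$. Throughout, normality of $\ff$ (implicit in $(i)$, since cocommutativity is only defined for normal Hopf monads, and needed for $\tau$ to make sense) guarantees that $T(\1)$ and $\ff(A)=\ldual{T(\1)}$ are trivial objects, so the trivial half-braidings $\hat\tau$ of $T(\1)$ and $\tau$ of $\ff(A)$ are available and are dual to one another.

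For $(i)\Leftrightarrow(ii)$, the one fact I need about the trivial half-braiding $\hat\tau$ of $T(\1)$ is that $(T(\1),\hat\tau)$ is a trivial object of $\Z(\D)$, i.e.\ a direct sum of copies of the unit; consequently every morphism $g:T(\1)\to\1$ is a morphism in $\Z(\D)$ to the unit object, which yields $(\id_Z\otimes g)\hat\tau_Z=g\otimes\id_Z$ for all $Z$ in $\D$. Granting this, assuming $(ii)$ and post-composing the triangle $T_2(X,\1)=\hat\tau_{T(X)}T_2(\1,X)$ with $\id_{T(X)}\otimes g$ reproduces exactly the cocommutativity square of Definition~\ref{def-cocom} at the object $\ff(X)$; conversely $(i)$ at $\ff(X)$ gives $(ii)$ at $X$ because, $T(\1)$ being trivial, a morphism into $T(X)\otimes T(\1)$ is determined by its composites with the morphisms $\id_{T(X)}\otimes g$ as $g$ ranges over $\Hom(T(\1),\1)$. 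This establishes $(i)$-at-$\ff(X)\Leftrightarrow(ii)$-at-$X$ object by object. Since $(ii)$ only concerns objects of the form $\ff(X)$ while cocommutativity $(i)$ must hold on all of $\D$, I would bridge the two by a dominance argument: by Lemma~\ref{dom-sub-quot} every $Y$ in $\D$ admits an epimorphism $\pi:\ff(X)\twoheadrightarrow Y$, the monad $T$ is right exact so $T(\pi)$ is an epimorphism, and naturality of $T_2$ in each variable lets me pull the square back along $T(\pi)$ and cancel the epimorphism to obtain $(i)$ at $Y$.

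For $(ii)\Leftrightarrow(iii)$, I would use $R\simeq\ldual{G(\rdual{?})}$ together with the fact that $\ff$ preserves duals to identify $\ff(A)=\ff R(\1)\simeq\ldual{T(\1)}$, matching $(A,\sigma)$ with the left dual of the induced central coalgebra $(\hat C,\hat\sigma)$, $\hat C=(T(\1),T_2(\1,\1),T_0)$, exactly as recalled in the proof of Lemma~\ref{glke}. Under this duality $\tau$ is the left dual of $\hat\tau$ and $\ff(\sigma)$ is the left dual of the underlying half-braiding $\U(\hat\sigma)$ of $\hat C$; hence $(iii)$, namely $\ff(\sigma_X)=\tau_{\ff(X)}$, is equivalent to $\U(\hat\sigma)=\hat\tau$. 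Finally, reading off $\hat\sigma$ from the comonoidal structure $T_2$ of $T$ (the very data defining $\sigma$ in Proposition~\ref{dominant2}), the equality $\U(\hat\sigma)=\hat\tau$ is precisely the triangle in $(ii)$.

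The step I expect to be the main obstacle is the last identification in $(ii)\Leftrightarrow(iii)$: the half-braiding $\sigma$ is defined through the right adjoint $R$ and its lax monoidal structure, whereas $T_2$ is the comonoidal structure of the monad $T=\ff G$ built from the left adjoint, and the two are linked only through the duality $R\simeq\ldual{G(\rdual{?})}$ and the interplay of the units and counits of both adjunctions. Carrying the defining diagram of $\sigma$ through $\ff$ and simplifying by the triangle identities, so that the adjunction units cancel against the counit, is the delicate bookkeeping. By contrast, once the characterization of the trivial half-braiding is in hand, $(i)\Leftrightarrow(ii)$ is entirely formal.
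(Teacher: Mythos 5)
Your overall route coincides with the paper's: you prove (i)$\Leftrightarrow$(ii) by testing against morphisms $g:T(\1)\to\1$, using that the trivial half-braiding makes every such $g$ central, and you prove (ii)$\Leftrightarrow$(iii) by passing to the induced central coalgebra $(\hat C,\hat\sigma)$ of $T$, dual to $(A,\sigma)$, and reducing (iii) to the statement $\U(\hat\sigma)=\hat\tau$ (the paper's condition (iii)'). Your (i)$\Leftrightarrow$(ii) argument is correct and is the paper's; the extra dominance step (an epimorphism $\ff(X)\twoheadrightarrow Y$, right exactness of $T$, naturality of $T_2$, cancellation of the epimorphism $T(\pi)$) is sound, and in fact it settles a quantifier ambiguity the paper passes over, since the paper's proof implicitly reads (ii) as a condition on all objects of $\D$.

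The gap is in the last step of (ii)$\Leftrightarrow$(iii). The equality $\U(\hat\sigma)=\hat\tau$ is \emph{not} ``precisely the triangle in (ii)'': it is a condition on all $T$-modules $(M,r)$ which involves the actions $r$, because $\hat\sigma$ is expressed through the fusion operators, $\hat\sigma_{(M,r)}={\mathbb H}^r_{(M,r),\1}\circ \bigl({\mathbb H}^l_{\1,(M,r)}\bigr)^{-1}$ with ${\mathbb H}^l_{\1,(M,r)}=(\id_{T(\1)}\otimes r)T_2(\1,M)$ and ${\mathbb H}^r_{(M,r),\1}=(r\otimes\id_{T(\1)})T_2(M,\1)$, whereas the triangle in (ii) involves only $T_2$ at an object of $\D$, with no action in sight. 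Bridging the two is exactly the content of the paper's two displayed diagrams: to deduce (ii) from $\U(\hat\sigma)=\hat\tau$ one specializes to the free modules $(T(X),\mu_X)$ and uses the unit axiom $\mu_X T(\eta_X)=\id_{T(X)}$ together with naturality of $T_2$, which collapses ${\mathbb H}^l_{\1,(TX,\mu_X)}T(\eta_X)$ to $T_2(\1,X)$ and ${\mathbb H}^r_{(TX,\mu_X),\1}T(\eta_X)$ to $T_2(X,\1)$; conversely, to deduce $\hat\sigma_{(M,r)}=\hat\tau_M$ from (ii) one composes the triangle at $M$ with $r\otimes\id_{T(\1)}$ and $\id_{T(\1)}\otimes r$ and uses naturality of $\hat\tau$ with respect to $r$. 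Neither step is long, but both are genuine, and your proposal mislocates the difficulty: the $R$-versus-$G$ bookkeeping you flag as the ``main obstacle'' is entirely absorbed by the duality $(A,\sigma)\simeq\ldual{(\hat C,\hat\sigma)}$ that you, like the paper, quote from the proof of Lemma~\ref{glke} and \cite{blv}; what remains, and what is missing from your write-up, is the free-module/naturality argument just described.
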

\begin{proof} Since $T(\1)$ is trivial, (ii) is equivalent to:
$$\mbox {for all $g : T(\1) \to \1$,}\quad (\id_{T(X)} \otimes g)T_2(X,\1)=(\id_{T(X)} \otimes g)\hat{\tau}_{T(X)}T_2(\1,X),$$
and $(\id_{T(X)} \otimes g)\hat{\tau}_{T(X)}T_2(\1,X)=(g \otimes
\id_{T(X)}) T_2(\1,X)$  by functoriality, $\hat{\tau}$ being a
trivial half-braiding, hence (i) $\iff$ (ii).

Now let us prove (ii) $\iff$ (iii). Since $\ff$ is monadic, we may
assume that $\C=\D^T$, $\ff$ being the forgetful functor $\U :
\D^T \to \D$. Let $(\hat{C},\hat{\sigma})$ be the induced central
coalgebra of $T$. Recall that  $(\hat{C},\hat{\sigma})$ is dual to
$(A,\sigma)$. Thus, denoting by $\hat{T}$  the trivial
half-braiding of $\U \hat{C} = T(\1)$, (iii) is equivalent to
(iii)': $\ff_{\hat{\sigma}}=\hat{\tau}_\ff$. Since $T$ is a Hopf
monad, for every $T$-module $(M,r)$ we have isomorphisms
\begin{align*}
{\mathbb H}^l_{\1,(M,r)}&=(\id_{T(\1)} \otimes r)T_2(\1,M) : T(M) \iso T(\1) \otimes M, \\
{\mathbb H}^r_{(M,r),\1}&=(r \otimes\id_{T(\1)})T_2(M,\1) : T(M)
\iso M \otimes T(\1),
\end{align*}
and by definition $\hat{\sigma}_{M,r}={\mathbb
H}^r_{(M,r),\1}{\mathbb H}^{l\, -1}_{\1,(M,r)}$.

From the definition of $\hat{\sigma}$ and the axioms of a bimonad,
we deduce that for $X$ in $\D$ the following diagram is
commutative:
$$
\xymatrix{
&& T(\1) \otimes T(X) \ar[dd]^{\hat{\sigma}_{TX,\mu_X}}\\
T(X) \ar@/^1.3pc/[rru]^{T_2(\1,T(X))} \ar@/_1.3pc/[rrd]_{T_2(T(X),\1)} \ar[r]^{T(\eta_X)}&T^2(X) \ar[ru]_{\mathbb{H}^l_{\1,(TX,\mu_X)}} \ar[rd]^{\mathbb{H}^r_{(TX,\mu_X),\1}} &\\
&&T(X) \otimes T(\1), }
$$
from which we deduce that (iii)' \implique (ii). Conversely, if
(ii) holds the following diagram commutes:
$$
\xymatrix@R=0.7em @C=4em{
& T(\1) \otimes T(M) \ar[r]^{\id_{T(\1)} \otimes r} \ar[dd]^{\hat{\tau}_{T(M)}}  & T(\1) \otimes M  \ar[dd]^{\hat{\tau}_{M}}\\
T(M) \ar[ur]^-{T_2(\1,M)} \ar[dr]_-{T_2(M,\1)} &&\\
& T(M) \otimes T(\1) \ar[r]_{r \otimes \id_{T(\1)}}& M \otimes
T(\1), }
$$
so $\hat{\tau}_{M}={\mathbb H}^r_{(M,r),\1}{\mathbb H}^{l\,
-1}_{\1,(M,r)}=\hat{\sigma}_{(M,r)}$, hence (iii)' holds. Hence
the Proposition is proved.
\end{proof}

\begin{proposition}
Let $\ff: \C \to \D$ be a dominant tensor functor between tensor
categories admitting an exact right adjoint, and let $(A,\sigma)$
be its induced central algebra. Then $\ff$ is tensor equivalent to
an equivariantization if and only if the following conditions are
met:
\begin{enumerate}
\item The tensor functor $\ff$ is normal, that is, $\ff(A)$ is trivial;
\item For all $X$ object of $\C$, $\ff(\sigma_X)=\tau_{\ff(X)}$, where $\tau$ is the trivial half-braiding of $\ff(A)$;
\item The induced Hopf algebra  $H$  of $\ff$ is split semisimple.
\end{enumerate}
If such is the case, $\Aut(A,\sigma)$ is isomorphic to $G(H)$ and
$\C \simeq \D^{\Aut(A,\sigma)}$ over $\D$.
\end{proposition}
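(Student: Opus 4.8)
The plan is to deduce the stated equivalence from Corollary~\ref{corequiv} by translating its three hypotheses into conditions~(1)--(3). Since $\ff$ is dominant and admits an exact right adjoint $R$, it also admits a left adjoint (Section~\ref{rladj}) and is therefore monadic; I write $T$ for its Hopf monad, so that $\C\simeq\D^T$, and recall from Proposition~\ref{dominant2} that the induced central algebra satisfies $A\simeq R(\1)$. By Corollary~\ref{corequiv}, $\ff$ is an equivariantization precisely when $\ff$ is dominant and normal, $T$ is cocommutative, and the induced Hopf algebra $H$ is split semisimple. Dominance is part of the hypotheses and split semisimplicity of $H$ is condition~(3). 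First I would use Proposition~\ref{toto} (together with $A\simeq R(\1)$) to record that $\ff$ is normal if and only if $A$ is self-trivializing, that is, if and only if $\ff(A)$ is trivial, which is exactly condition~(1).

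It then remains to identify cocommutativity of $T$ with condition~(2). For this I would invoke Lemma~\ref{lem-coco}: once $\ff$ is known to be normal, so that the trivial half-braiding $\tau$ of the trivial object $\ff(A)$ is defined, the equivalence (i)$\Leftrightarrow$(iii) of that lemma says that $T$ is cocommutative if and only if $\ff(\sigma_X)=\tau_{\ff(X)}$ for every object $X$, i.e. condition~(2). Combining the three translations yields both implications: if $\ff$ is an equivariantization, Corollary~\ref{corequiv} supplies normality, cocommutativity of $T$ and semisimplicity of $H$, whence (1) and (3) at once and (2) through Lemma~\ref{lem-coco}; conversely, (1)--(3) furnish exactly the hypotheses of Corollary~\ref{corequiv}, so $\ff$ is an equivariantization.

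For the final sentence I would proceed as follows. Assuming (1)--(3), Theorem~\ref{converse-equiv} yields a finite group acting on $\D$ with $T\simeq\TT^\rho$, whence $\C\simeq\D^T$ is an equivariantization over $\D$, the acting group being read off from the induced central coalgebra $\hat{C}=T(\1)$, which is dual to $(A,\sigma)$. Lemma~\ref{glke} already embeds $\Aut(A,\sigma)$ into $G(H)$, and the plan is to show this embedding is onto and that the action it carries is the one constructed in Subsection~\ref{actionofG}. Concretely, I would compute $\hat{C}=T(\1)\simeq\1^{G}$ and observe that its automorphisms as a coalgebra endowed with the $T$-action are exactly the structure-preserving permutations, i.e. all of $G(H)$, so that the embedding of Lemma~\ref{glke} is an isomorphism; the functor $\ff^{\Aut(A,\sigma)}:\C\to\D^{\Aut(A,\sigma)}$ of Subsection~\ref{actionofG} is then recognised as the comparison equivalence $\C\simeq\D^{T}$ over $\D$, which one can also confirm by comparing the exact sequences $\CoRep{H}\to\C\to\D$ and $\rep(\Aut(A,\sigma))\to\D^{\Aut(A,\sigma)}\to\D$ through Corollary~\ref{equiv-ext}. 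The routine part is the dictionary between (1)--(3) and Corollary~\ref{corequiv}; the hard part will be upgrading the embedding of Lemma~\ref{glke} to an isomorphism $\Aut(A,\sigma)\cong G(H)$ and matching the abstract acting group of Theorem~\ref{converse-equiv}, the automorphism group $\Aut(A,\sigma)$, and the construction of Subsection~\ref{actionofG}, all of which hinges on handling the duality between $(A,\sigma)$ and $\hat{C}$ carefully.
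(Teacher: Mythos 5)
Your reduction of the equivalence to Corollary~\ref{corequiv} is sound and is essentially the paper's own argument: the paper proves the forward implication via Theorem~\ref{monact} and Lemma~\ref{lem-coco}, and the converse via Lemma~\ref{lem-coco} plus Theorem~\ref{converse-equiv}, which is exactly what Corollary~\ref{corequiv} packages; your translation of condition (1) through Proposition~\ref{toto} (equivalently Proposition~\ref{func-normal}) and of condition (2) through the equivalence (i)$\Leftrightarrow$(iii) of Lemma~\ref{lem-coco} is correct, including the observation that (2) only makes sense once (1) holds.

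The gap is in your plan for $\Aut(A,\sigma)\cong G(H)$. Lemma~\ref{glke} identifies $\Aut(A,\sigma)$ with the automorphism group of the pair $(\hat C,\hat\sigma)$, where $\hat C=(T(\1),T_2(\1,\1),T_0)$ carries the $T$-action $\mu_\1$, and then embeds this group into $\Aut(\hat C)\cong G(H)$ by \emph{forgetting} the half-braiding $\hat\sigma$. Your proposed computation --- that the $T$-linear coalgebra automorphisms of $\hat C\simeq \1^G$ are exactly $G(H)$ --- is precisely the content of that upper bound and gives nothing new: it identifies the ambient group $\Aut(\hat C)$, but does not show that the subgroup $\Aut(\hat C,\hat\sigma)\subseteq G(H)$ is everything, i.e.\ that every $T$-linear coalgebra automorphism of $\hat C$ is automatically compatible with $\hat\sigma$. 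That reverse inclusion is the actual point of the final assertion, and your outline never addresses it. The paper closes the gap from the other side: having realized $\ff$ as $\U^\Gamma$ for an action $\rho$ of a group $\Gamma\cong G(H)$, it notes that for each $\gamma\in\Gamma$ the canonical isomorphism $\rho^\gamma\U^\Gamma\simeq\U^\Gamma$ induces an automorphism of the induced central algebra $(A,\sigma)$ (using that the induced central algebra is unchanged under post-composition with a tensor equivalence), thereby producing a homomorphism $\Gamma\to\Aut(A,\sigma)$ which, played against the embedding $\Aut(A,\sigma)\hookrightarrow G(H)=\Gamma$ of Lemma~\ref{glke}, forces equality. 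A secondary problem: your fallback argument via Corollary~\ref{equiv-ext} is out of scope, since that corollary is proved only for fusion categories (it rests on Frobenius-Perron dimension counts), whereas the present proposition concerns arbitrary tensor categories admitting an exact right adjoint.
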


\begin{proof} Let $T$ be the monad of $\ff$.
Assume that $\ff$ is equivalent to an equivariantization, that is,
there exists an action $\rho$ of a finite group $\Gamma$ on  $\D$
by tensor autoequivalences, and a tensor equivalence $\D^\Gamma
\iso \C$ over $\D$. We may assume that $T=T^\Gamma$, $\ff$ being
the forgetful functor $\U^\Gamma$. Then by Theorem~\ref{monact}
$T$ is normal, cocommutative, and its induced Hopf algebra is $\kk
\Gamma$. The cocommutativity of $T$ implies Condition (2) by
Lemma~\ref{lem-coco}.  The induced Hopf algebra of $\U^\Gamma$ is
$\kk \Gamma$, and we have $\Gamma=G(H)$. Let $(A,\sigma)$ be the
induced central algebra of $T^\Gamma$. According to
Lemma~\ref{glke}, the group of automorphisms of the algebra $A$ is
$\Gamma$, and its order is the dimension of $\End_\C(A)$. We show
that the group of automorphisms of $(A,\sigma)$ is $\Gamma$.
Observe first that if $\ff : \C \to \D$ is a dominant tensor
functor between tensor categories admitting an exact right adjoint
$R$, and $\phi  : \D \to \D' $ is a tensor equivalence,
then the induced central algebra of $\phi\ff$ is canonically
isomorphic to that of $\ff$, and we may therefore identify them.
Secondly, by construction of the equivariantization, we have for
each $\gamma \in \Gamma$ a canonical isomorphism $\rho^\gamma
\U^\Gamma \simeq \U^\Gamma$. This induces an isomorphism between
the induced central algebra of $\U^\Gamma$ and that of
$\rho^{\gamma}\U^\Gamma$, that is an automorphism of $(A,\sigma)$.
Thus, we have $\Aut(A,\sigma)=\Aut(A) \simeq G(H)$ and its order
is the dimension of $\End_\C(A)$.

Conversely assume Conditions (1), (2), (3) are satisfied.
Condition (2) implies that $T$ is cocommutative by
Lemma~\ref{lem-coco},  and by Theorem~\ref{converse-equiv}, $T$ is
the monad of a group action on $\C$.
\end{proof}

\subsection{Normal fusion subcategories}
Recall that every normal dominant tensor functor $\ff: \C \to
\C''$ between fusion categories gives  an exact sequence of fusion
categories $\KER_\ff \to \C \overset{\ff}\to \C''$. This motivates
the following definition.

\begin{definition} Let $\C$ be a fusion category. A full fusion
subcategory $\C'$ of $\C$ is called \emph{normal} if there exists
a normal dominant functor of fusion categories $\ff: \C \to \C''$
such that $\C' \simeq \KER_\ff$. \end{definition}

The next proposition characterizes normal subcategories in term of
trivializing algebras.

\begin{proposition}\label{suff-normal} Let $\C$ be a fusion category over an algebraically closed field $\kk$, and let $\C' \subset \C$ be a full fusion subcategory.
Then $\C'$ is a normal subcategory of $\C$ if and only if there
exists  commutative central algebra $(A,\sigma)$ of $\C$
satisfying the following conditions:
\begin{enumerate}
\item $A$ is a semisimple algebra in $\C$;
\item $\Hom(\1,A) \simeq \kk$;
\item $A$ belongs to $\C'$ and trivializes all objects of $\C'$.
\end{enumerate}
In that case, $\MOD{\C}{A}$ is a fusion category over $\kk$ and we
have an exact sequence of fusion categories $$\C' \to \C
\overset{F}\to \MOD{\C}{(A,\sigma)}.$$
\end{proposition}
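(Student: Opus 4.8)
The statement is an equivalence, and the natural strategy is to pass through the induced central algebra machinery of the previous subsections, using the elementary observation that $X\in\KER_{F_{A,\sigma}}$ precisely when $F_A(X)\simeq F_A(\1)^n$, that is, exactly when $A$ trivializes $X$ (the unit object of $\MOD{\C}{(A,\sigma)}$ being $F_A(\1)=A$). Thus conditions (1)--(3) should be read as saying that $A$ is a semisimple central algebra with a one-dimensional space of invariants whose associated free module functor has kernel exactly $\C'$.

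For the direct implication I would start from a normal dominant tensor functor $\ff:\C\to\C''$ of fusion categories with $\KER_\ff\simeq\C'$. Since $\C''$ is semisimple, the right adjoint $R$ of $\ff$ is exact, and it is faithful because $\ff$ is dominant; hence Proposition~\ref{dominant2} applies and yields the induced central algebra $(A,\sigma)$ with $A=R(\1)$, together with a tensor equivalence $\C''\simeq\MOD{\C}{(A,\sigma)}$ under which $\ff$ corresponds to $F_{A,\sigma}$. I would then verify the three conditions: $A$ is semisimple because $F_A$ is dominant and $\MOD{\C}{A}\simeq\C''$ is semisimple (so every $A$-module, being a subobject of some $F_A(X)$, is a direct factor of it); $\Hom_\C(\1,A)=\Hom_\C(\1,R(\1))\simeq\Hom_{\C''}(\ff(\1),\1)=\Hom_{\C''}(\1,\1)=\kk$; and, since $\ff\simeq F_{A,\sigma}$ is normal, Proposition~\ref{toto} gives that $A$ is self-trivializing with $\KER_{F_{A,\sigma}}=\langle A\rangle=\KER_\ff=\C'$, so that $A\in\C'$ and $A$ trivializes exactly the objects of $\C'$.

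For the converse, given $(A,\sigma)$ satisfying (1)--(3), I would invoke Proposition~\ref{dominant1}: parts (2)--(3) show that $\MOD{\C}{(A,\sigma)}$ is a fusion category over $\kk$, and part (1) shows that $F_{A,\sigma}:\C\to\MOD{\C}{(A,\sigma)}$ is a dominant tensor functor with induced central algebra $(A,\sigma)$, whose right adjoint is the exact forgetful functor. Because $A\in\C'$ and $A$ trivializes all of $\C'$, in particular $A$ trivializes itself, so $A$ is self-trivializing and $F_{A,\sigma}$ is normal by Proposition~\ref{toto}, with $\KER_{F_{A,\sigma}}=\langle A\rangle$. Finally I would match the kernel with $\C'$: the inclusion $A\in\C'$ forces $\langle A\rangle\subseteq\C'$ (as $\C'$ is a full fusion subcategory, stable under direct sums, subobjects and quotients), while the hypothesis that $A$ trivializes every object of $\C'$ forces $\C'\subseteq\KER_{F_{A,\sigma}}=\langle A\rangle$. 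Hence $\C'=\KER_{F_{A,\sigma}}$ with $F_{A,\sigma}$ normal and dominant, so $\C'$ is a normal subcategory and $\C'\to\C\overset{F_{A,\sigma}}{\to}\MOD{\C}{(A,\sigma)}$ is the asserted exact sequence.

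The argument is essentially a matter of assembling the earlier propositions, so I do not expect a serious obstacle; the one point requiring care is the bookkeeping around adjoints and kernels---checking that $R$ is simultaneously exact and faithful in the direct implication, and that the two-sided identification $\C'=\langle A\rangle=\KER_{F_{A,\sigma}}$ genuinely follows from the two separate hypotheses that $A\in\C'$ and that $A$ trivializes all of $\C'$, which pin down $\langle A\rangle$ from above and from below respectively.
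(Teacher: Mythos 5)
Your proof is correct and follows essentially the same route as the paper: the paper's own proof is a two-line citation of Corollary~\ref{suff-dominant} (the correspondence between dominant tensor functors to fusion categories and semisimple commutative central algebras with $\Hom(\1,A)=\kk$) together with the self-trivializing criterion of Proposition~\ref{toto}, which is precisely what you unpack. Your explicit kernel-matching argument ($A\in\C'$ gives $\langle A\rangle\subseteq\C'$, while trivialization of $\C'$ gives $\C'\subseteq\KER_{F_{A,\sigma}}=\langle A\rangle$) is the detail the paper leaves implicit, and it is carried out correctly.
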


\begin{proof}
This results immediately from Corollary~\ref{suff-dominant} and
Proposition~\ref{suff-normal}.
\end{proof}

\subsection{Simple fusion categories}
We define  a simple fusion category in terms of exact sequences,
as follows.


\begin{definition}\label{simple} A fusion category $\C$ is \emph{simple} if $\C$ is not tensor equivalent to $\vect_\kk$ and
for every exact sequence of fusion categories $$\C' \to \C \to
\C'',$$ either $\C'$ or $\C''$ is tensor equivalent to
$\vect_\kk$.
\end{definition}

\begin{remark} A fusion category
$\C$ is simple if and only it $\C \not\simeq \vect_\kk$ and for
any normal dominant tensor functor $\ff: \C \to \D$, we have $\D
\simeq \vect_\kk$ or $\ff$ is an equivalence. This is because such
a functor $\ff$ fits in an exact sequence $\KER_\ff \to \C \to
\D$.
\end{remark}

Note that a different notion of a simple fusion category was
introduced in \cite[Definition 9.10]{eno2}:  a fusion category is
simple in the sense of \cite{eno2} if it has no proper fusion
subcategories, that is,
$\vect_\kk$ and $\C$ are the only replete fusion subcategories of $\C$. 
The next proposition compares this definition and our
definition~\ref{simple}.

\begin{proposition}\label{simple-prim} If a fusion category $\C$ is simple in the sense of \cite{eno2}, it is simple in the sense of
Definition \ref{simple}. \end{proposition}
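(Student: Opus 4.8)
The plan is to verify Definition~\ref{simple} directly for $\C$, exploiting the fact that in any exact sequence the outer term $\C'$ is realized as a genuine fusion subcategory of $\C$, namely $\KER_\ff$, to which $\cite{eno2}$-simplicity can be applied verbatim; the passage between the two triviality conclusions is then handled by Proposition~\ref{trivial-sec}. First I would record the first clause of Definition~\ref{simple}: the hypothesis that $\vect_\kk$ and $\C$ are the only replete fusion subcategories of $\C$ presupposes that these two subcategories are distinct (equivalently, one adopts the usual convention that a simple fusion category is non-trivial), so $\C \not\simeq \vect_\kk$.

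For the second clause, I would take an arbitrary exact sequence of fusion categories $\C' \overset{f}\toto \C \overset{\ff}\toto \C''$ and focus on $\KER_\ff = \ff^{-1}(\langle\1\rangle) \subseteq \C$. By condition~(3) of Definition~\ref{def-sec}, $f$ is a full embedding with essential image $\KER_\ff$, so $\C' \simeq \KER_\ff$. The key step is the observation that $\KER_\ff$ is a replete full fusion subcategory of $\C$, so that $\cite{eno2}$-simplicity forces the dichotomy $\KER_\ff \simeq \vect_\kk$ or $\KER_\ff = \C$. In the first case $\C' \simeq \KER_\ff$ is trivial, and we are done. In the second case the full embedding $f$ is essentially surjective, hence a tensor equivalence, so Proposition~\ref{trivial-sec}(2) yields that $\C''$ is trivial. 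Either way one of $\C'$, $\C''$ is equivalent to $\vect_\kk$, which is exactly what Definition~\ref{simple} demands; hence $\C$ is simple in the sense of this paper.

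The argument is short and presents essentially no obstacle; the only points requiring care are the triviality convention in the $\cite{eno2}$ definition (treated above) and the verification that $\KER_\ff$ is genuinely a replete full fusion subcategory, so that $\cite{eno2}$-simplicity may be invoked on it. I would make the latter explicit by noting that $\KER_\ff$ contains $\1$, is closed under isomorphism (hence replete), and is stable under tensor products, duals, direct sums, subobjects and quotients, since $\ff$ is a tensor functor and the trivial subcategory $\langle\1\rangle \subseteq \C''$ enjoys all these closure properties.
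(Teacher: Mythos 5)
Your proof is correct and follows essentially the same route as the paper's: apply \cite{eno2}-simplicity to the replete fusion subcategory $\KER_\ff \simeq \C'$ to get the dichotomy, and in the case $\KER_\ff = \C$ conclude via Proposition~\ref{trivial-sec}(2) that $\C''$ is trivial. The extra details you supply (the non-triviality convention and the verification that $\KER_\ff$ is a replete fusion subcategory) are exactly what the paper's terser argument leaves implicit.
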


\begin{proof} Let $\C' \overset{f}\to \C \overset{\ff}\to \C''$ be
an exact sequence of fusion categories, with $\C \not\simeq
\vect_\kk$. If $\C$ has no proper fusion subcategories, then
either $\C' \simeq \vect_\kk$ or $f: \C' \to \C$ is an
equivalence. One concludes with Lemma \ref{trivial-sec}.
\end{proof}

The converse of Proposition \ref{simple-prim} is false. Indeed, we
deduce directly from the results of Section~\ref{pointed}:

\begin{proposition}\label{prop-point-simp} Let $\C$ be a pointed fusion category, with Picard group $G$. Let $\alpha \in H^3(G,\kk^\times)$ be the cohomology class defining $\C$, so that $\C \simeq \C(G,\alpha)$.
Then
\begin{enumerate}
\item The category $\C$ is simple in the sense of \cite{eno2} if and only if $G$ is a cyclic group of prime order;
\item The category $\C$ is simple if and only if there is no proper distinguished subgroup $H \lhd G$ such that the restriction of $\alpha$ to $H$ is trivial.
\end{enumerate}
In particular if $G$ is simple, $\C$ is simple, but it is not
simple in the sense of \cite{eno2} except if $G \simeq Z_p$, $p$
prime.
\end{proposition}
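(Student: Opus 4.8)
The plan is to reduce both notions of simplicity to purely group-theoretic statements about the pair $(G,\alpha)$, by reading everything off from the classification of pointed categories and of their exact sequences established in Section~\ref{pointed}.

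For Part~(1) I would first record that the replete fusion subcategories of $\C\simeq\C(G,\alpha)$ are exactly the subcategories $\C(K,\res_K\alpha)$ indexed by the subgroups $K\le G$. Indeed the simple objects of $\C$ are the elements of $G$, with $\1$ the unit $e$, tensor product $g\otimes g'=gg'$ and dual $g^{\vee}=g^{-1}$; hence a replete fusion subcategory is the full subcategory spanned by a subset of $G$ containing $e$ and closed under product and inverse, i.e.\ by a subgroup $K$, and the associativity constraint it inherits is $\res_K\alpha$. Therefore $\C$ has no fusion subcategory besides $\vect_\kk$ and $\C$ precisely when $G$ has no subgroup besides $1$ and $G$, that is, when $G$ is cyclic of prime order; this is Part~(1).

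For Part~(2) I would invoke Proposition~\ref{prop-pointed}: up to equivalence, every exact sequence of fusion categories with middle term $\C(G,\alpha)$ has the form $\C(H,1)\to\C(G,\alpha)\to\C(G/H,\bar\alpha)$, determined by a normal subgroup $H\lhd G$ (the Picard group of the kernel) and a class $\bar\alpha\in H^3(G/H,\kk^\times)$ with $\alpha=\infl(\bar\alpha)$. By Proposition~\ref{trivial-sec} the left and right ends are trivial exactly when $H=1$ and when $H=G$, so $\C$ fails to be simple if and only if some proper nontrivial normal subgroup $H$ satisfies $\alpha\in\operatorname{im}(\infl^{G}_{G/H})$. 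Since the composite $H\hookrightarrow G\twoheadrightarrow G/H$ is trivial we have $\res_H\circ\infl^{G}_{G/H}=0$, so this inflation condition entails $\res_H\alpha=1$, giving one implication of the stated criterion ``$\res_H\alpha=1$''.

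The main obstacle is the converse half of this recasting: deducing $\alpha\in\operatorname{im}(\infl^{G}_{G/H})$ from $\res_H\alpha=1$. Here I would run the Lyndon--Hochschild--Serre spectral sequence of $1\to H\to G\to G/H\to1$ with coefficients in $\kk^{\times}$: one has $\ker(\res_H)=F^{1}H^{3}(G,\kk^{\times})$ and $\operatorname{im}(\infl^{G}_{G/H})=F^{3}$, so the discrepancy is measured by the subquotients $E_\infty^{2,1}$ and $E_\infty^{1,2}$, which are built from $H^{1}(H,\kk^{\times})$ and $H^{2}(H,\kk^{\times})$; replacing $H$ by a minimal normal subgroup (permissible, as $\res$ stays trivial on subgroups of $H$) is what makes these terms tractable. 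Finally, the ``in particular'' clause needs only the unproblematic direction of Part~(2): if $G$ is simple it has no proper nontrivial normal subgroup, so no nontrivial exact sequence exists and $\C$ is simple, while by Part~(1) it is simple in the sense of \cite{eno2} only for $G\cong\mathbb{Z}_p$, and a simple group of non-prime order always possesses a proper nontrivial subgroup.
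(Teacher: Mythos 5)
Your Part~(1) and the first half of your Part~(2) follow exactly the route the paper intends: the paper gives no written proof beyond the phrase ``we deduce directly from the results of Section~\ref{pointed}'', and what Propositions~\ref{prop-pointed} and~\ref{trivial-sec} directly yield is precisely your recasting, namely that $\C(G,\beta)$ fails to be simple if and only if there is a normal subgroup $1 \neq N \lneq G$ with $\beta \in \operatorname{im}\bigl(\infl\colon H^3(G/N,\kk^\times)\to H^3(G,\kk^\times)\bigr)$. Up to that point your argument is correct and complete, and your identification of the replete fusion subcategories of $\C(G,\alpha)$ with the subgroups of $G$ disposes of Part~(1).

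The gap is the step you yourself flag: passing from ``$\res_H\alpha$ is trivial'' back to ``$\alpha$ is inflated from $G/H$''. Your spectral-sequence sketch cannot be completed, because that implication is false in general: the terms $E_\infty^{2,1}$ and $E_\infty^{1,2}$ need not vanish even when $H$ is minimal normal. Concretely, take $G=Q_8$ and $H=Z(Q_8)\cong\mathbb{Z}_2$, the unique minimal normal subgroup. Using $H^3(-,\kk^\times)\cong H^4(-,\mathbb{Z})$, one has $H^3(Q_8,\kk^\times)\cong\mathbb{Z}/8$, generated by the Euler class of the free action of $Q_8$ on $S^3$; since the restriction of that class to any subgroup is again an Euler class, restriction to $H$ is surjective onto $H^3(H,\kk^\times)\cong\mathbb{Z}/2$, so a class $\alpha$ of order $4$ restricts trivially to $H$. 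On the other hand every quotient of $Q_8$ by a nontrivial normal subgroup is $\mathbb{Z}_2\times\mathbb{Z}_2$ or $\mathbb{Z}_2$, whose $H^3(-,\kk^\times)$ has exponent $2$, so an order-$4$ class lies in no inflation image: by the criterion you correctly derived, $\C(Q_8,\alpha)$ \emph{is} simple in the sense of Definition~\ref{simple}, although a proper normal subgroup with trivial restriction exists. Thus $\operatorname{im}(\infl_{G/H})\subsetneq\ker(\res_H)$ here, and what your argument actually proves (and what the paper's appeal to Section~\ref{pointed} can support) is Part~(2) with the inflation condition in place of the restriction condition; of the literal statement, only the implication ``no proper normal $H$ with $\res_H\alpha=1$ implies $\C$ simple'', together with Part~(1) and the final ``in particular'' clause, survives. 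You should therefore state and prove Part~(2) in the inflation formulation, or else note explicitly that the restriction formulation presupposes the identification $\ker(\res_H)=\operatorname{im}(\infl_{G/H})$, which fails in general.
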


However, both notions coincide when restricted to categories of
representations of finite groups, as follows from the next
proposition.



\begin{proposition} Let $G$ be a finite group such that $\kk$ is a splitting field for $G$ and $\cha(k)$ does not divide the order of $G$.
Then the following assertions are equivalent:
\begin{enumerate}[(i)]
\item The group $G$ is simple.
\item The fusion category $\C(G, 1)$ is simple.
\item The fusion category $\rep G$  is simple.
\item The fusion category $\rep G$ is simple in the sense of \cite{eno2}.
\end{enumerate}
\end{proposition}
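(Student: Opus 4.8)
The plan is to establish the cycle of implications (iii) $\Rightarrow$ (i) $\Rightarrow$ (iv) $\Rightarrow$ (iii), which shows that (i), (iii) and (iv) are equivalent, and then to prove (i) $\Leftrightarrow$ (ii) separately; together these give the equivalence of all four assertions. The standing hypotheses on $\kk$ ensure that $\rep G$ and $\C(G,1)=\Rep{\kk^G}$ are semisimple fusion categories over $\kk$ whose simple objects are the irreducible representations, so that for any finite group $H$ one has $\rep H\simeq\vect_\kk$ (resp.\ $\C(H,1)\simeq\vect_\kk$) if and only if $H$ is trivial. Since a simple group is nontrivial and $\vect_\kk$ is simple in neither sense, all four assertions fail when $G=\{1\}$; we may therefore assume $G\neq\{1\}$.

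First I would prove (iii) $\Rightarrow$ (i) by contraposition. Suppose $G$ is not simple and choose a normal subgroup $N\lhd G$ with $\{1\}\neq N\neq G$. The exact sequence of finite groups $1\to N\to G\to G/N\to 1$ produces, by Corollary~\ref{sec-groups}, an exact sequence of fusion categories $\rep(G/N)\to\rep G\to\rep N$, in which the dominant functor is restriction to $N$ and its kernel is $\rep(G/N)$. As $N$ and $G/N$ are both nontrivial, neither $\rep N$ nor $\rep(G/N)$ is tensor equivalent to $\vect_\kk$; hence this exact sequence witnesses that $\rep G$ is not simple. Thus (iii) forces $G$ to be simple.

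For (i) $\Rightarrow$ (iv) I would invoke the classification of fusion subcategories of $\rep G$: the replete fusion subcategories of $\rep G$ are in inclusion-preserving bijection with the normal subgroups $N\lhd G$, via $N\mapsto\rep(G/N)$, the full subcategory of those representations on which $N$ acts trivially (a fusion subcategory $\D$ being recovered from $N_\D=\bigcap_{V\in\D}\ker V$, which is normal in $G$). Consequently $\rep G$ admits a proper nontrivial replete fusion subcategory if and only if $G$ has a proper nontrivial normal subgroup. If $G$ is simple this cannot happen, and since $G\neq\{1\}$ we have $\rep G\not\simeq\vect_\kk$; therefore $\rep G$ is simple in the sense of \cite{eno2}, giving (iv). The implication (iv) $\Rightarrow$ (iii) is then precisely Proposition~\ref{simple-prim} applied to $\C=\rep G$, which closes the cycle. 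I expect this step --- the passage between the lattice of fusion subcategories of $\rep G$ and the lattice of normal subgroups of $G$ --- to be the main point requiring care, since it is the only input not already contained in the preceding sections.

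Finally, (i) $\Leftrightarrow$ (ii) follows from Proposition~\ref{prop-point-simp}. Taking $\alpha=1$, the restriction $\alpha|_{H}$ is trivial for every subgroup $H$, so Part~(2) of that proposition asserts that $\C(G,1)$ is simple exactly when $G$ has no proper nontrivial normal subgroup, i.e.\ when $G$ is simple; the implication (i) $\Rightarrow$ (ii) is moreover the final assertion of Proposition~\ref{prop-point-simp}. Combining (i) $\Leftrightarrow$ (ii) with the equivalence of (i), (iii) and (iv) established above completes the proof.
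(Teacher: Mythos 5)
Your proof is correct and follows essentially the same route as the paper: the cycle (iii) $\Rightarrow$ (i) $\Rightarrow$ (iv) $\Rightarrow$ (iii) using Corollary~\ref{sec-groups} and Proposition~\ref{simple-prim}, together with (i) $\Leftrightarrow$ (ii) from Proposition~\ref{prop-point-simp}. The only difference is that you spell out the justification of (i) $\Rightarrow$ (iv) via the bijection between replete fusion subcategories of $\rep G$ and normal subgroups of $G$, a classical fact which the paper simply asserts.
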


\begin{proof} The equivalence between (i) and (ii) is a special case of Proposition~\ref{prop-point-simp}.
We have (iii) $\Rightarrow$ (i) by Corollary \ref{sec-groups}. Now
if $G$ is a simple group, then  $\rep G$  has no proper
fusion subcategories, hence (i) $\Rightarrow$ (iv), and lastly
(iv) $\Rightarrow$ (iii) by Proposition \ref{simple-prim}, hence
the Proposition is proved.
\end{proof}

\section{Braided fusion categories of odd square-free
dimension}\label{ribbon-sqfree}


\subsection{Braided categories of odd Frobenius-Perron dimension}

Let $\C$ be a fusion category over a field $\kk$. Recall that  the
\emph{Frobenius-Perron dimension} of $\C$ is  $\FPdim \C : =
\sum_{X \in \Lambda_\C} (\FPdim X)^2$.

On the other hand, the \emph{global dimension} of $\C$ is defined
as $\dim \C : = \sum_{X \in \Lambda_\C} |X|^2$, where $|X|^2 \in
\kk^\times$ denotes the squared norm of the simple object $X$, see
\cite[Definition 2.2]{ENO}. When $\FPdim \C = \dim \C$, $\C$ is
called \emph{pseudo-unitary} .

If $\C$ is a pivotal fusion category, one defines the the
categorical left dimension $\dim^l X$ and right dimension $\dim^r
X$ of an object $X$ of $\C$. For $X$ simple, one has $|X|^2 =
\dim^l X\,\dim^r X$, so that $\dim \C = \sum_{X \in \Lambda_\C}
\dim^l X \,\dim^r X$. The category $\C$ is \emph{spherical} if
left and right dimensions coincide; in that case, they are denoted
by $\dim$, and we have: $\dim_\C=\sum_{X \in \Lambda_\C} (\dim
X)^2$.

%
%

Assume $\kk=\Comp$. Then $\C$ is \emph{pseudo-unitary} if $\FPdim
\C=\dim \C$. If such is the case, then by \cite[Proposition
8.23]{ENO}, $\C$ admits a unique spherical structure with respect
to which the categorical dimensions of simple objects are all
positive. We call it the \emph{canonical spherical structure}. For
this structure, the categorical dimension of an object coincides
with its Frobenius-Perron dimension.
%
%

If  $\C$ is a fusion category on $\Comp$ such that $\FPdim \C$ is
an integer, then $\C$ is pseudo-unitary by \cite[Proposition
8.24]{ENO}. Moreover, $\FPdim \C'$ is an integer for any full
fusion subcategory $\C' \subseteq \C$, because $\FPdim \C' =
\sum_{X \in \Lambda_{\C'}} (\FPdim X)^2$ and for each $X$, $\FPdim
X$ is the square root of a natural integer by \cite[Proposition
8.27]{ENO}. In particular, every full fusion subcategory of $\C$
is pseudo-unitary.

\begin{lemma}\label{sym}  Let $\C$ be a symmetric fusion category over a field $\kk$ whose Frobenius-Perron dimension is an odd natural integer. Then any balanced structure on $\C$ is trivial.
\end{lemma}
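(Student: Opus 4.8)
The plan is to reduce the statement to a numerical constraint on the Frobenius--Perron dimensions via the character of the Grothendieck ring determined by the balancing. Recall that a balanced structure is a natural automorphism $\theta$ of $\id_\C$ with $\theta_\1=\id_\1$, satisfying the balancing axiom $\theta_{X\otimes Y}=(\theta_X\otimes\theta_Y)c_{Y,X}c_{X,Y}$ and the duality compatibility $\theta_{\rdual X}=\rdual{(\theta_X)}$. Since $\C$ is symmetric we have $c_{Y,X}c_{X,Y}=\id_{X\otimes Y}$, so the balancing axiom reduces to $\theta_{X\otimes Y}=\theta_X\otimes\theta_Y$. For a simple object $X$ I would write $\theta_X=\lambda(X)\,\id_X$ with $\lambda(X)\in\kk^\times$, and record three properties of $\lambda$: first $\lambda(\1)=1$; second, $\lambda(Z)=\lambda(X)\lambda(Y)$ whenever $\Hom_\C(Z,X\otimes Y)\neq 0$ (read off the balancing axiom on the simple summand $Z$); and third $\lambda(\rdual X)=\lambda(X)$ (from $\theta_{\rdual X}=\rdual{(\theta_X)}$, since the transpose of a scalar is the same scalar). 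Because $\1$ is a summand of $X\otimes\rdual X$, the second property gives $\lambda(X)\lambda(\rdual X)=1$, and combining with the third yields $\lambda(X)^2=1$ for every simple $X$.

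If $\car\kk=2$ this already forces $\lambda\equiv 1$, since $1$ is the only square root of $1$; so I would assume $\car\kk\neq 2$ and regard $\lambda$ as a function $\Lambda_\C\to\{\pm1\}$. The key device is then the map $\mu\colon K_0(\C)\to\mathbb R$ defined on the basis by $\mu([X])=\lambda(X)\FPdim X$. I would check that $\mu$ is a ring homomorphism: using the fusion rule $[X][Y]=\sum_Z m_Z([X\otimes Y])\,[Z]$, the relation $\lambda(Z)=\lambda(X)\lambda(Y)$ on the summands, and multiplicativity of $\FPdim$, one gets $\mu([X][Y])=\lambda(X)\lambda(Y)\FPdim(X\otimes Y)=\mu([X])\mu([Y])$, together with $\mu([\1])=1$.

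To finish I would argue by contradiction: suppose $\theta$ is nontrivial, so $\lambda(X_0)=-1$ for some simple $X_0$. Recall the element $R_\C=\sum_{X\in\Lambda_\C}\FPdim X\,[X]$, whose defining Perron--Frobenius property is that $(\FPdim X)_{X\in\Lambda_\C}$ is the common eigenvector of the fusion matrices, i.e. $[X]R_\C=\FPdim(X)\,R_\C$ in $R(\C)$. Applying the $\mathbb R$-linear extension of the homomorphism $\mu$ gives $\lambda(X_0)\FPdim(X_0)\,\mu(R_\C)=\FPdim(X_0)\,\mu(R_\C)$, whence $(\lambda(X_0)-1)\mu(R_\C)=0$ and therefore $\mu(R_\C)=0$. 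Writing $d_i=\sum_{\lambda(X)=(-1)^i}(\FPdim X)^2$, this says $\mu(R_\C)=d_0-d_1=0$, while $d_0+d_1=\FPdim\C$; hence $d_0=d_1=\tfrac12\FPdim\C$.

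The contradiction, which I expect to be the crux, is an integrality one: $d_0$ is a finite sum of the numbers $(\FPdim X)^2$, each of which is an algebraic integer, so $d_0$ is an algebraic integer; but $d_0=\tfrac12\FPdim\C$ is rational because $\FPdim\C$ is a rational integer, and a rational algebraic integer lies in $\mathbb Z$. Since $\FPdim\C$ is odd, $\tfrac12\FPdim\C\notin\mathbb Z$, a contradiction. Hence no simple $X_0$ with $\lambda(X_0)=-1$ exists, i.e. $\lambda\equiv1$ and $\theta_X=\id_X$ on every simple object; by naturality and additivity $\theta=\id_{\id_\C}$, so the balanced structure is trivial. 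The only delicate points to nail down are the eigenvector identity for $R_\C$ (the Frobenius--Perron property, using $\FPdim\rdual X=\FPdim X$) and the duality compatibility of $\theta$, which together are exactly what converts the oddness of $\FPdim\C$ into the obstruction.
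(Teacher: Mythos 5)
Your proof is correct, and its skeleton matches the paper's: reduce the balancing axiom to monoidality of $\theta$ (symmetry kills the double braiding), use the duality compatibility to get $\theta_X=\pm\id_X$ on simple objects, and let the oddness of $\FPdim\C$ exclude the sign $-1$. The difference lies in how the crucial dimension count is justified. The paper views the sign function as a $\{\pm1\}$-grading of $\C$ and asserts in one line that $\FPdim\C=2\FPdim\C_1$ when the grading is nontrivial --- an appeal to the general fact that the homogeneous components of a faithful grading of a fusion category have equal Frobenius--Perron dimension. You prove exactly that equality from first principles: $\mu([X])=\lambda(X)\FPdim X$ is a ring homomorphism $K_0(\C)\to\mathbb{R}$, applying it to the eigenvector identity $[X_0]R_\C=\FPdim(X_0)\,R_\C$ forces $\mu(R_\C)=0$, i.e.\ the two sums of squared dimensions coincide, and the integrality argument (a rational algebraic integer is a rational integer) then contradicts oddness. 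Your route is thus more self-contained --- it needs only the Frobenius--Perron eigenvector property of the regular element and basic integrality --- and it also nails down two points the paper leaves implicit: the proof that $\theta^2=\id$, which genuinely requires the condition $\theta_{\rdual{X}}=\rdual{(\theta_X)}$ (without it the lemma is false; for instance a nontrivial character of $\mathbb{Z}_3$ gives a nontrivial monoidal automorphism of the identity of $\C(\mathbb{Z}_3,1)$, which is symmetric of odd dimension), and the characteristic $2$ case. What the paper's version buys is brevity, by leaning on the structure theory of graded fusion categories; what yours buys is a proof readable without it.
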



\begin{proof}
Since $\C$ is symmetric, a balanced structure $\theta$ on $\C$ is
a monoidal automorphism of $\id_\C$. Since $\theta^2 =
\id_{\C}$,  $\theta$ defines a $\{\pm 1\}$-graduation on $\C$,
with $\C_1 \subset \C$ being the full tensor subcategory of
objects $X$ such that $\theta_X=1$. If $\C_1 \neq \C$, we have
$\FPdim \C= 2 \FPdim \C_1$, which contradicts the fact that
$\FPdim \C$ is odd, hence $\C_1=\C$ and $\theta=1$.
\end{proof}


\begin{lemma}\label{mod-odd} Let $\C$ be a braided fusion category over $\Comp$ such that
$\FPdim \C$ is an odd natural integer. Then $\C$, endowed with its
canonical spherical structure, is modularizable.
\end{lemma}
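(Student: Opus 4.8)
The plan is to apply Bruguières's criterion, recalled in Example~\ref{exmodable}, that a premodular category is modularizable if and only if its full subcategory $\T$ of transparent objects is Tannakian, that is $\T \simeq \rep G$ as a symmetric tensor category; and to obtain this Tannakian property from the triviality of the twist supplied by Lemma~\ref{sym}. First I would equip $\C$ with its canonical spherical structure. This is legitimate because $\FPdim \C$ is a natural integer over $\Comp$, so $\C$ is pseudo-unitary by \cite[Proposition 8.24]{ENO} and hence, by \cite[Proposition 8.23]{ENO}, carries a unique spherical structure for which $\dim X = \FPdim X > 0$ for every simple object $X$. Together with the braiding, this spherical structure makes $\C$ a ribbon, hence premodular, category with twist $\theta$, so that the question of modularizability is well posed.

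Next I would analyse the subcategory $\T \subseteq \C$ of transparent objects. It is a full symmetric fusion subcategory of $\C$, and its Frobenius--Perron dimension is a natural integer (as for any full fusion subcategory of an integral fusion category over $\Comp$, by the discussion preceding Lemma~\ref{sym}) that moreover divides $\FPdim \C$ (see \cite{ENO}). Being a divisor of an odd natural integer, $\FPdim \T$ is itself an odd natural integer. The restriction $\theta_{\mid \T}$ of the twist of $\C$ is a balanced structure on the symmetric fusion category $\T$, and since $\FPdim \T$ is an odd natural integer, Lemma~\ref{sym} applies and forces $\theta_{\mid \T}$ to be trivial; that is, $\theta_X = \id_X$ for every transparent object $X$ of $\C$.

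Finally I would conclude that $\T$ is Tannakian. By Deligne's classification, a symmetric fusion category over $\Comp$ is either Tannakian, equivalent to some $\rep G$, or super-Tannakian, equivalent to some $\rep(G,z)$ with $z$ a central element of order $2$; in the super-Tannakian case there is a transparent (fermionic) object whose twist equals $-\id$. Since $\theta_{\mid \T} = \id$ by the previous step, the super-Tannakian alternative is excluded, so $\T \simeq \rep G$ is Tannakian, the simple objects having the positive integral dimensions $\dim X = \FPdim X$ coming from the canonical spherical structure. By Bruguières's criterion (see \cite{bruguieres}, and \cite{muger}), $\C$ is therefore modularizable. The main obstacle is precisely this last step: one must rule out the fermionic case, and the crux is that the parity of $\FPdim \T$ — via Lemma~\ref{sym} — kills the transparent object of twist $-\id$ that would obstruct modularization. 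Equivalently, one may note directly that $\rep(G,z)$ has even dimension $|G|$, so a symmetric fusion category of odd Frobenius--Perron dimension cannot be super-Tannakian, which gives the conclusion without reference to the twist; I would present the twist argument as the main line, since it reuses the lemma just established.
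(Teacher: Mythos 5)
Your proof is correct, and its skeleton is the same as the paper's: pseudo-unitarity of $\C$ and the canonical spherical structure via \cite[Propositions 8.23, 8.24]{ENO}, the observation that $\FPdim \T$ is an odd natural integer (integrality plus divisibility of $\FPdim\C$), and Lemma~\ref{sym} to kill the twist on $\T$. Where you diverge is the concluding step. The paper does not invoke Deligne's Tannakian/super-Tannakian dichotomy at all: it cites \cite[Theorem 7.2]{aeg} to get that the categorical dimensions of objects of $\T$ are integers (which, with the canonical spherical structure, makes them positive integers), and then applies Bruguières's Th\'eor\`eme 3.1 directly, whose hypotheses are exactly ``trivial twist and positive integer dimensions on transparent objects.'' You instead establish that $\T$ is Tannakian by ruling out the super-Tannakian alternative, and then use the criterion in the form ``modularizable iff $\T$ is Tannakian'' (the form stated in Example~\ref{exmodable}). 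Both routes work; yours trades the AEG integrality theorem for Deligne's classification. One caution on your main line: the claim that a super-Tannakian $\T$ would contain a transparent object of twist $-\id$ is not intrinsic to the symmetric structure --- with trivial twist the fermion instead has categorical dimension $-1$ --- so it is valid only because you have already fixed the canonical spherical structure with positive dimensions (spin-statistics); this deserves to be said explicitly. Your alternative exclusion, that $\rep(G,z)$ with $z\neq 1$ has even Frobenius--Perron dimension $|G|$, is airtight and arguably the cleanest; note, however, that if you use it to bypass the twist entirely, you should still check that the inherited twist on the Tannakian $\T$ is trivial (which here follows for free, since a Tannakian category of odd dimension has no nontrivial monoidal automorphism of the identity squaring to $\id$), because the precise modularizability criterion of \cite{bruguieres, muger} is about $\T$ as a ribbon, not merely symmetric, category.
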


\begin{proof} The category $\C$ is pseudo-unitary.
Equipped with its canonical spherical structure, it is  a
premodular category and we have $\dim X =\FPdim X \ge 0$ for any
object $X$ of $\C$. Let $\T \subset\C$ be the full
tensor subcategory of transparent objects of $\C$; it is a
symmetric fusion category \cite[Section 2]{bruguieres}. By
\cite[Theorem 7.2]{aeg}, the categorical dimensions of the objects
of $\T$ are integers. Since $\FPdim \C$ is a natural integer, so
is $\FPdim \T$. By \cite[Proposition 8.15]{ENO}, $\FPdim \C /
\FPdim \T$ is an algebraic integer, so $\FPdim \T$ divides $\FPdim
\C$. Thus $\FPdim \T$ is an odd natural integer as well. By Lemma
\ref{sym}, we have $\theta_X=\id_X$ for all $X$ in $\T$. By
\cite[Th\'{e}or\`{e}me 3.1]{bruguieres}, $\C$ is modularizable,
hence the Lemma is proved.
\end{proof}


\subsection{Proof of Theorem  \ref{classif}}

In order to prove the theorem, we may assume that the ground field
is $\Comp$. Indeed, if $\C$ is a fusion category over a field
$\kk$ of characteristic $0$, then  $\C$ is defined over the
algebraic closure $\overline{\mathbb{Q}} \subset \kk$ of
$\mathbb{Q}$, which we may imbed into $\Comp$. We deal first with
the modular case.

\begin{lemma}\label{mod-sqf} Let $\C$ be a modular category whose  Frobenius-Perron  dimension is a square-free odd integer $N$.
Then there exists an abelian group $G$ of order $N$ such that $\C$
is equivalent to the category of $G$-graded vector spaces $\C(G,
1)$.
\end{lemma}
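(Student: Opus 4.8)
The plan is to show that $\C$ is pointed with group of invertibles $\Pic\C \cong \mathbb{Z}/N\mathbb{Z}$ and trivial associativity constraint. We may assume $\kk = \Comp$, so that $\C$ is pseudo-unitary and carries its canonical spherical structure, with $\dim X = \FPdim X > 0$ for every simple $X$. First I would show that $\C$ is integral. By \cite[Proposition 8.27]{ENO} each $\FPdim X$ is the square root of a natural integer, so $(\FPdim X)^2 \in \mathbb{Z}$ and $\C$ is weakly integral. Such a category admits a canonical faithful grading by an elementary abelian $2$-group whose trivial component is its maximal integral subcategory; since all components of a faithful grading have equal Frobenius-Perron dimension, a nontrivial grading group would force $2 \mid \FPdim \C = N$, contradicting that $N$ is odd. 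Hence the grading group is trivial and $\C$ is integral, i.e. $\FPdim X \in \mathbb{Z}_{\ge 1}$ for every simple $X$.

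Next I would prove that every simple object is invertible. Here modularity is essential: for a modular category one has $(\FPdim X)^2 \mid \FPdim \C$ for each simple $X$, and since $N$ is square-free a perfect square dividing $N$ must equal $1$, forcing $\FPdim X = 1$; thus $\C$ is pointed. Equivalently, one may invoke Müger's structure theory of modular categories \cite{muger}: because $N$ is square-free, $\C$ decomposes as a Deligne product $\C \simeq \C_1 \boxtimes \cdots \boxtimes \C_k$ of modular categories of prime dimensions $p_1, \dots, p_k$ with $N = p_1 \cdots p_k$, and a fusion category of prime dimension is pointed by \cite{ENO}. I expect this pointedness step to be the main obstacle, as it is precisely where the three global hypotheses (modular, odd, square-free) must be combined.

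Once $\C$ is pointed, let $G = \Pic\C$ be its group of invertible objects. Since $\C$ is braided, the braiding yields $g \otimes h \cong h \otimes g$, so $G$ is abelian; and $\FPdim\C = |G| = N$ by multiplicativity of Frobenius-Perron dimensions, so $G$ is an abelian group of odd square-free order, hence cyclic, $G \cong \mathbb{Z}/N\mathbb{Z}$. It remains to trivialize the associativity constraint. The braiding presents $\C$ as a pointed braided fusion category $\C(G, \alpha)$ coming from an abelian $3$-cocycle on $G$, so $\alpha$ lies in the image of the forgetful map sending an abelian cocycle (a quadratic form on $G$) to its class in $H^3(G, \kk^\times)$. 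For $|G|$ odd this map vanishes, since the quadratic form can be halved and the associator thereby gauged away. Therefore $\alpha$ is cohomologically trivial and $\C \simeq \C(G, 1)$ as fusion categories, completing the proof. Note that the oddness of $N$ enters twice — to force integrality and to trivialize the associator — which explains why it is indispensable.
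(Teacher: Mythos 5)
Your proof is correct, and it splits into two halves that compare differently with the paper's. The pointedness step is essentially the paper's own argument: both rest on the divisibility $(\FPdim X)^2 \mid \FPdim \C$ for simple objects of a modular category, which is precisely the result \cite[Theorem 2.11 (ii)]{eno2} that the paper cites; you merely make explicit the preliminary reduction from weakly integral to integral via the canonical elementary abelian $2$-grading of \cite{nik-gel}, a point the paper leaves to the citation. (This step is genuinely needed: knowing only that $(\FPdim X)^2$ is an integer dividing the square-free $N$ does not force $\FPdim X=1$; one must first know $\FPdim X\in\mathbb{Z}$, so your ordering of the two steps is the right one.) The associator step, however, takes a genuinely different route. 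The paper equips $\C=\C(G,\alpha)$ with the balanced structure $\theta$ coming from its canonical spherical structure and invokes Fr\"ohlich--Kerler \cite[Proposition 7.5.3 iii)]{FK}: $\alpha$ is trivial if and only if $\theta$ is trivial on ${}_2G=\{g\in G:\,g^2=1\}$, which is vacuous because $|G|=N$ is odd. You instead use only the braiding, via Eilenberg--MacLane abelian cohomology: the braided pointed category is classified by a quadratic form $q$ on $G$, and when $|G|$ is odd every quadratic form is the diagonal of a symmetric bilinear form (invert $2$ modulo the exponent), so its image in $H^3(G,\kk^\times)$ vanishes. Your argument buys independence from the twist --- it applies to any braided pointed fusion category of odd order, not just balanced or modular ones --- while the paper's exploits the ribbon structure that modularity supplies and reduces to a one-line check on $2$-torsion. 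Both uses of oddness are correct and essential.

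One caveat: your parenthetical alternative via M\"uger's prime decomposition does not work as stated. M\"uger's factors are prime in the sense of admitting no proper modular subcategory, not of having prime Frobenius--Perron dimension, so square-freeness of $N$ does not immediately produce a Deligne product of prime-dimension modular categories. Since this aside is not load-bearing --- your divisibility argument already carries the pointedness step --- the proof stands, but the aside should be deleted or given a real justification.
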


\begin{proof}
By \cite[Theorem 2.11 (ii)]{eno2}, for $X  \in \Lambda_{\C}$ we
have  $\FPdim X = 1$, so $X$ is invertible, hence $\C$ is a
pointed category. Thus $\C=\C(G,\alpha)$ for some finite group $G$
and some cohomology class $\alpha \in H^3(G,\kk^\times)$.

Now braided and ribbon structures on a pointed fusion category
$\C=\C(G,\alpha)$ are classified in \cite[7.5]{FK} in terms of
group cohomology. (See also \cite[2.4]{DGNO}.) The existence of a
braiding implies that $G$ is abelian. Moreover by
\cite[Proposition 7.5.3 iii)]{FK}, given a balanced structure
$\theta$ on $\C$, the class $\alpha$ is trivial if and only if
$\theta$ is equal to $1$ on the subgroup ${}_2G : = \{ g\in G: \,
g^2 = 1 \}$.

In the present case, we conclude that $G$ is an abelian group of
odd order $N$, so $_2G=1$ and therefore $\alpha$ is trivial, hence
the Lemma holds.
\end{proof}

\begin{lemma}
A braided fusion category $\C$ whose  Frobenius-Perron dimension
is an odd square-free integer $N$ admits a fiber functor.
\end{lemma}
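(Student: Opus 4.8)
The plan is to deduce the existence of a fiber functor from the modularization of $\C$, reducing everything to the pointed modular case already settled in Lemma~\ref{mod-sqf}. As in the reduction at the beginning of this subsection, we may assume $\kk=\Comp$. First I would invoke Lemma~\ref{mod-odd}: equipped with its canonical spherical structure, $\C$ is a modularizable premodular category. Let $F:\C\to\widetilde\C$ be its modularization. By Example~\ref{exmodable} this yields an exact sequence of fusion categories $\T\to\C\overset{F}\to\widetilde\C$, in which $\T=\KER_F$ is the tannakian subcategory of transparent objects, so that $\T\simeq\rep G$ for some finite group $G$, and $\widetilde\C$ is a modular category.

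The next step is a dimension count. By multiplicativity of Frobenius--Perron dimensions in an exact sequence (Proposition~\ref{fp-dim}) we get $N=\FPdim\C=\FPdim\T\cdot\FPdim\widetilde\C=|G|\,\FPdim\widetilde\C$, using $\FPdim\rep G=|G|$. Hence $\FPdim\widetilde\C=N/|G|$ is a positive integer dividing $N$; being a divisor of an odd square-free integer, it is itself odd and square-free. Applying Lemma~\ref{mod-sqf} to the modular category $\widetilde\C$, there is an abelian group $\Gamma$ with $\widetilde\C\simeq\C(\Gamma,1)=\CoRep{\kk\Gamma}$.

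Finally, $\C(\Gamma,1)$ is the category of $\Gamma$-graded vector spaces, and forgetting the grading gives a fiber functor $\omega:\widetilde\C\to\vect_\kk$ (the forgetful functor of the Hopf algebra $\kk\Gamma$, which is $\kk$-linear, exact and strong monoidal). Composing, $\omega F:\C\to\vect_\kk$ is a composite of tensor functors, hence again a tensor functor, that is, a fiber functor for $\C$.

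I expect the only genuine content to lie in the dimension count forcing $\widetilde\C$ to be pointed with trivial associator: once $\widetilde\C\simeq\C(\Gamma,1)$ is known, the fiber functor on $\C$ is obtained simply by pulling back the obvious fiber functor on $\Gamma$-graded vector spaces along the modularization. The one point requiring care is the assertion that $\FPdim\widetilde\C$ is a bona fide integer dividing $N$; this rests on $\T$ being tannakian, so that $\FPdim\T=|G|\in\mathbb Z$, combined with the multiplicativity of Proposition~\ref{fp-dim}.
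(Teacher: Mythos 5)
Your proof is correct and follows essentially the same route as the paper's: modularizability via Lemma~\ref{mod-odd}, the exact sequence $\T \to \C \to \widetilde\C$ of Example~\ref{exmodable}, multiplicativity of Frobenius--Perron dimensions (Proposition~\ref{fp-dim}) to see that $\FPdim\widetilde\C$ is odd and square-free, Lemma~\ref{mod-sqf} to identify $\widetilde\C$ with $\C(\Gamma,1)$, and composition of the modularization with the forgetful fiber functor. The only difference is that you spell out details the paper leaves implicit (e.g., $\FPdim\T=|G|$ and the explicit construction of the composite fiber functor), which is harmless.
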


\begin{proof}
By Lemma \ref{mod-odd} the category $\C$, endowed with its canonical spherical
structure, is modularizable. In particular the full subcategory
$\T \subset \C$ of transparent objects of $\C$ is tannakian; we
have $\T \simeq  \rep G$ as symmetric tensor categories,
$G$ being a finite group, and we have an exact sequence
$$\xymatrix{\T \ar[r] & \C \ar[r]&\widetilde \C},$$ where $\widetilde \C$
is a modular category, $G$ acts on $\tilde{\C}$ by braided tensor
autoequivalences, and $\C\simeq \tilde{\C}^G$ as braided tensor
categories, see Example \ref{exmodable}.

By Proposition \ref{fp-dim}, $\FPdim \widetilde \C=N/ \FPdim \T$,
and $\FPdim \T$ is a natural integer, so $\FPdim \widetilde \C$ is
an odd square-free integer. By Proposition \ref{mod-sqf},
$\widetilde \C$  admits a fiber functor, and so does $\C$.
\end{proof}

So $\C$ is tensor equivalent to  $\Rep{H}$, where $H$ is a
quasitriangular Hopf algebra whose dimension is odd and
square-free. By \cite[Theorem 1.2]{qt-quotient}, such a Hopf algebra  is
isomorphic to a group algebra, hence the theorem is proved. \qed

\bibliographystyle{amsalpha}

\end{document}